\documentclass[a4paper]{amsart}
\pdfoutput=1
\usepackage{physics}
\usepackage{amsmath}
\usepackage{tikz}
\usepackage{mathdots}
\usepackage{yhmath}
\usepackage{cancel}
\usepackage{color}
\usepackage{siunitx}
\usepackage{array}
\usepackage{graphicx}
\usepackage{multirow}
\usepackage{amssymb}
\usepackage{gensymb}
\usepackage{enumitem}
\usepackage{tabularx}
\usepackage{extarrows}
\usepackage{booktabs}
\usepackage{pgfplots} 
\usetikzlibrary{fadings}
\usetikzlibrary{patterns}
\usetikzlibrary{shadows.blur}
\usetikzlibrary{shapes}
\usepackage{amsthm}
\usepackage{amsfonts}
\usepackage{amsxtra}
\usepackage{float}
\usepackage{pdflscape}
\usepackage{slashed}
\usepackage{colortbl}
\usepackage{cite}
\usepackage{graphicx}
\usepackage{textcomp}
\usepackage{subcaption}
\usepackage{amsbsy} 
\usepackage{mathrsfs} 
\usepackage{verbatim}
\usepackage{stmaryrd}
\usepackage{makecell}
\usepackage[all,2cell]{xy}
\usepackage{etex}
\usepackage{mathtools}
\usetikzlibrary{arrows.meta}
\usepackage{tikz-cd}
\usepackage{adjustbox}
\usepackage{quiver}
\usepackage[bookmarksnumbered,colorlinks,plainpages]{hyperref}
\usetikzlibrary{positioning,arrows}
\usetikzlibrary{decorations.pathreplacing}
\newtheorem*{Thm-A}{Theorem A}
\newtheorem*{Thm-B}{Theorem B}

\newtheorem{thm}{Theorem}[section]

\newtheorem{cor}[thm]{Corollary}
\newtheorem{lem}[thm]{Lemma}
\newtheorem{obs}[thm]{Observation}

\newtheorem{prop}[thm]{Proposition}

\theoremstyle{definition}
\newtheorem{defn}[thm]{Definition}
\newtheorem{rem}[thm]{\bf Remark}

\numberwithin{equation}{section}

\newcommand{\vx}{{\vec{x}}}

\def\Hom{{\rm Hom}}
\def\Ext{{\rm Ext}}

\def\bbL{{\mathbb L}}

\begin{document}
	\title[Geometric Model for Vector Bundles]{Geometric Model for Vector Bundles via Infinite Marked Strips}

	\author[J. Chen, S. Ruan and J. Zhang] {Jianmin Chen, Shiquan Ruan and Jinfeng Zhang$^*$}
	
	\thanks{$^*$ the corresponding author}
	\footnote[0]{\textcolor{black}{2020} \textit{Mathematics Subject Classification}.  14F06, 18E10, 05E10, 16S99, 57M50.}
	\keywords{weighted projective line, geometric model; vector bundle.}%
	
	\dedicatory{}%
	\commby{}%
	
	\begin{abstract}
We present a geometric model for the category of vector bundles over the weighted projective line of type $(2,2,n)$. This model is based on the orbit space of an infinite marked strip under a specific group action. We establish a bijection between indecomposable bundles and orbits of line segments on the strip, which yields geometric interpretations for various aspects, including the Picard group action, vector bundle duality, dimension of $\Ext^1$, projective cover and injective hull of extension bundle, etc.
	\end{abstract}
	
	\maketitle
	\section{Introduction}
\subsection{Background} 
Recently, several works have provided geometric-combinatorial descriptions for various categories. For instance, Caldero, Chapoton, and Schiffler \cite{MR2187656} realized the cluster category of type $A_n$ using a regular $(n+3)$-gon. This work was later extended to the Dynkin type $D_n$ using a regular $n$-gon with a puncture by Schiffler \cite{MR2366159}. Subsequently, a series of studies have focused on cluster categories related to triangulated surfaces. For instance, Br\"ustle and Zhang \cite{MR2870100} studied the unpunctured case,  Qiu and Zhou \cite{MR3705277} investigated the punctured case, and Amiot and Plamondon\cite{MR4289034} explored the punctured case via group actions. Concurrently, progress has been made in developing geometric models for abelian categories, such as tube categories \cite{MR2921637} and module categories over gentle algebras \cite{MR4294120} and skew-gentle algebras \cite{skew-gentlealgebra}. In these studies, curves on surfaces correspond to indecomposable objects in the categories. Applications of these geometric models include interpreting the dimension of $\operatorname{Ext}^1$ as intersection numbers of curves, corresponding triangulations of surfaces to tilting objects, and so on. For more related work, see \cite{ MR3081629, opper2018geometric, MR3826729, chang2022geometric, ptolemyAn, ptolemyDn, MR3183483}.

Weighted projective lines and their coherent sheaves categories were introduced by Geigle and Lenzing in \cite{MR915180}, in order to give a geometric realization of canonical algebras in the sense of Ringel \cite{MR774589}. The study of weighted projective lines has been closely related to many branches of mathematics, such as Lie theory \cite{MR2734343,MR4165467,MR2970461,MR2031167}, singularity theory \cite{MR2640048, Hbner1996ExzeptionelleVU,hubner1989classification,MR1308987,MR1491919,MR2890586} and homological mirror symmetry \cite{MR1987870,MR0568900}.  In \cite{chen2023geometric}, the authors presented a geometric model for the category of coherent sheaves over a weighted projective line of type $(p, q)$ using a marked annulus. 

In this paper, we consider the weighted projective line $\mathbb{X}$ of type $(2, 2, n)$. As is known that each indecomposable coherent sheaf over $\mathbb{X}$ is either a vector bundle or a torsion sheaf. The torsion subcategory of ${\rm coh}\mbox{-}\mathbb{X}$ consists of a collection of mutually orthogonal tube categories. Baur and Marsh \cite{MR2921637} have provided the geometric model for tube categories.  We focus on the category ${\rm vect}\mbox{-}\mathbb{X}$ of vector bundles over $\mathbb{X}$.

Note that the category ${\rm vect}\mbox{-}\mathbb{X}$ differs from module categories in various aspects:
\begin{itemize}
    \item [-] ${\rm vect}\mbox{-}\mathbb{X}$ has Picard group action and admits vector bundle duality.
    \item [-] Any object in ${\rm vect}\mbox{-}\mathbb{X}$ has an infinite chains of subobjects.
    \item [-] ${\rm vect}\mbox{-}\mathbb{X}$ has various exact structures, which related to Kleinian/Fuchsian singularities.
    \item [-] As a full subcategory of ${\rm coh}\mbox{-}\mathbb{X}$, ${\rm vect}\mbox{-}\mathbb{X}$ has no projective or injective objects.
    \item [-] ${\rm vect}\mbox{-}\mathbb{X}$ carries a distinguished Frobenius exact structure induced from the category of graded Cohen–Macaulay modules over its coordinate ring, which ensures that ${\rm vect}\mbox{-}\mathbb{X}$ has enough projective-injective objects.
\end{itemize}

These motivate us to seek a geometric model to reflects the intrinsic properties of ${\rm vect}\mbox{-}\mathbb{X}$. Let $\widetilde{\mathcal{S}}$ be an infinite strip with countably infinite marked points on both the upper and lower boundaries, and let $G$ be the group generated by two bijections, $\sigma_n$ and $\theta$, on $\widetilde{\mathcal{S}}$.  The bijection $\sigma_n$ translates points, while $\theta$ reflects all points with respect to a certain point (see \S \ref{A marked infinite strip}). We present a geometric model for the category ${\rm vect}\mbox{-}\mathbb{X}$ in terms of the orbit space $\mathcal{S}$ of $\widetilde{\mathcal{S}}$ respect to $G$. 

\subsection{Main results}
The Picard group $\mathbb{L}$ on the weighted projective line $\mathbb{X}$ of type $(2,2,n)$ is  generated by $\vec{x}_1, \vec{x}_2, \vec{x}_3$ subject to the relations $2 \vec{x}_1=2 \vec{x}_2=n \vec{x}_3$. The generalized extension bundles are defined by the unique middle term (up to isomorphism) of the non-split exact sequence
\[
\begin{tikzcd}[ampersand replacement=\&,cramped,sep=small]
    0 \& {L(\vec{\omega})} \& \mathsf{E}_{L}\langle \vec{x} \rangle \& {L(\vec{x})} \& 0
    \arrow[from=1-2, to=1-3]
    \arrow[from=1-3, to=1-4]
    \arrow[from=1-1, to=1-2]
    \arrow[from=1-4, to=1-5]
\end{tikzcd}
\]
for some line bundle \(L\) and \(\vec{x}\) in \(\mathbb{L}\) with \(0 \leq \vec{x} \leq (n-1)\vec{x}_3\). Let $F$ be the degree shift automorphism by $\vec{x}_1-\vec{x}_2\in \mathbb{L}$ on ${\rm vect}\mbox{-}\mathbb{X}$. We denote by ${\rm vect}^F\mbox{-}\mathbb{X}$ the full subcategory of ${\rm vect}\mbox{-}\mathbb{X}$ consisting of $F$-stable objects. For a category $\mathcal{C}$, we denote by $\operatorname{ind}\mathcal{C}$ the set of isomorphism classes of indecomposable objects in $\mathcal{C}$. 
 
 Let $\widetilde{\operatorname{Seg}(M)}$ be the set of $G$-orbits of line segments connecting marked points on the upper and lower boundaries of $\widetilde{\mathcal{S}}$. We represent the elements in \(\widetilde{\operatorname{Seg}(M)}\) as \(\widetilde{[i,j]}\), where \([i,j]\) denotes a line segment whose endpoints are indicated by indices \(i\) and \(j\). Depending on whether the line segment passes through a specific point, we refine \(\widetilde{\operatorname{Seg}(M)}\) to construct a new set \(\widetilde{\operatorname{Seg}^*(M)}\) of $G$-orbits of line segments (see \S \ref{Section 4.3}).
 Let $\mathcal{MG}(\mathcal{S})$ be the mapping class group of $\mathcal{S}$. We fix an arbitrary line bundle $L_0$ in ${\rm vect}\mbox{-}\mathbb{X}$. The main results of the paper are summarized as follows.

\begin{Thm-A}\label{thmA}(Propostions \ref{correspondence1}, \ref{group iso}, \ref{MCG and L action}, and \ref{correspondence}) Keep notations as above. There exists a group isomorphism
\[
\psi: \mathcal{MG}(\mathcal{S}) \xrightarrow[]{\cong} \mathbb{L}/\mathbb{Z}(\vec{x}_1 - \vec{x}_2)
\]
and a bijection
  \begin{align*}
\phi: \widetilde{\operatorname{Seg}(M)} & \rightarrow \operatorname{ind}({\rm vect}^F\mbox{-}\mathbb{X}), \\
\widetilde{[i,j]} & \mapsto \mathsf{E}_{L_0(-i\vec{x}_3)}\langle (i+j-1)\vec{x}_3 \rangle
\end{align*}
such that the following diagram  is commutative
\begin{figure}[H]
    \centering
\begin{tikzcd}
{\mathcal{MG} (\mathcal{S})} \arrow[r, "\psi"]                                     & \mathbb{L}/\mathbb{Z}(\vec{x}_1-\vec{x}_2)                                                    \\
\widetilde{\operatorname{Seg}(M)} \arrow[r, "\phi"] \arrow[loop, distance=3em, in=125, out=55] & \operatorname{ind}({\rm vect}^F\mbox{-}\mathbb{X}) \arrow[loop, distance=3em, in=125, out=55]
\end{tikzcd}
\end{figure}
\noindent in the sense that
\[
\phi(x\cdot \widetilde{[i,j]}) = \psi(x)\cdot \phi(\widetilde{[i,j]}) \quad \text{for all } x \in \mathcal{MG} (\mathcal{S}) \text{ and } \widetilde{[i,j]} \in \widetilde{\operatorname{Seg}(M)}.
\]
Moreover, $\phi$ induces a bijection $\widehat{\phi}:\widetilde{\operatorname{Seg}^*(M)} \rightarrow \operatorname{ind}({\rm vect}\mbox{-}\mathbb{X})$, which is explicitly described by the following table:
		\begin{center}
\begin{tabular}{|c|c|}
\hline
\text{$G$-orbits in $\widetilde{\operatorname{Seg}^*(M)}$} & \text{Indecomposable objects in ${\rm vect}\mbox{-}\mathbb{X} $} \\
\hline
$\widetilde{[i,-i]^{+}}$ & $F(L_0(-(i+1)\vec{x}_3))$ \\
$\widetilde{[i,-i]^{-}}$ & $L_0(-(i+1)\vec{x}_3)$ \\
$\widetilde{[i,n-i]^{+}}$ & $L_0(\vec{x}_1-(i+1)\vec{x}_3)$ \\
$\widetilde{[i,n-i]^{-}}$ & $F(L_0(\vec{x}_1-(i+1)\vec{x}_3))$ \\
$\widetilde{[i,k-i]}$ & $\mathsf{E}_{L_0(-i\vec{x}_3)}\langle (k-1)\vec{x}_3 \rangle$ \\
\hline
\end{tabular}
\end{center}
   where $i,k$ are integers with $1\leq k\leq n-1$. 
\end{Thm-A}
 As applications, we give geometric interpretations for the slope of indecomposable bundles, the Picard group action and vector bundle duality (see Propositions \ref{slop}, \ref{vector bundle duality} and Corollary \ref{L-action}). Moreover, we interpret the dimensions of extension spaces $\Ext^1$ between two indecomposable bundles as intersection indices of their correspondence $G$-orbits in $\widetilde{\operatorname{Seg}^*(M)}$ (see Theorem \ref{dimension and positive intersection}).
 
As we know that ${\rm vect}\mbox{-}\mathbb{X}$ carries a distinguished Frobenius exact structure, with the system of all line bundles as the indecomposable projective-injective objects. We realize the universal properties of projective covers and injective envelopes of extension bundles as the proximity between line segments. 
\begin{Thm-B}(Proposition \ref{cover and hull})
Assume that $X=\widehat{\phi}(\widetilde{[i,j]})$ is an extension bundle in ${\rm vect}\mbox{-}\mathbb{X} $ for some $\widetilde{[i,j]} \in \widetilde{\operatorname{Seg}^*(M)}$. Then its projective cover $\mathfrak{P}(X)$ and its injective hull $\mathfrak{I}(X)$ are given by
  \[\mathfrak{P}(X)=\widehat{\phi}(\widetilde{[i,\leftarrow]})\oplus \widehat{\phi}(\widetilde{[\rightarrow,j]}) \text{ and } 
 \mathfrak{I}(X)=\widehat{\phi}(\widetilde{[i,\rightarrow]})\oplus \widehat{\phi}(\widetilde{[\leftarrow,j]}),\]
which can be illustrated as below:
\begin{figure}[H]
    \tikzset{every picture/.style={line width=0.75pt}}          
\begin{tikzpicture}[x=0.75pt,y=0.75pt,yscale=-1,xscale=1]
\draw    (73.33,109.33) -- (253.08,109.33) ;
\draw    (73.58,149.33) -- (253.33,149.33) ;  
\draw [color={rgb, 255:red, 0; green, 0; blue, 0 }  ,draw opacity=1 ]   (120.88,148.97) -- (193.33,109.33) ;
\draw [shift={(193.33,109.33)}, rotate = 331.32] [color={rgb, 255:red, 0; green, 0; blue, 0 }  ,draw opacity=1 ][fill={rgb, 255:red, 0; green, 0; blue, 0 }  ,fill opacity=1 ][line width=0.75]      (0, 0) circle [x radius= 1.34, y radius= 1.34]   ;
\draw [shift={(120.88,148.97)}, rotate = 331.32] [color={rgb, 255:red, 0; green, 0; blue, 0 }  ,draw opacity=1 ][fill={rgb, 255:red, 0; green, 0; blue, 0 }  ,fill opacity=1 ][line width=0.75]      (0, 0) circle [x radius= 1.34, y radius= 1.34]   ;
\draw    (193.33,109.33) -- (93.33,149.33) ;
\draw [shift={(93.33,149.33)}, rotate = 158.2] [color={rgb, 255:red, 0; green, 0; blue, 0 }  ][fill={rgb, 255:red, 0; green, 0; blue, 0 }  ][line width=0.75]      (0, 0) circle [x radius= 1.34, y radius= 1.34]   ;
\draw [shift={(143.33,129.33)}, rotate = 158.2] [color={rgb, 255:red, 0; green, 0; blue, 0 }  ][fill={rgb, 255:red, 0; green, 0; blue, 0 }  ][line width=0.75]      (0, 0) circle [x radius= 1.34, y radius= 1.34]   ;
\draw [shift={(193.33,109.33)}, rotate = 158.2] [color={rgb, 255:red, 0; green, 0; blue, 0 }  ][fill={rgb, 255:red, 0; green, 0; blue, 0 }  ][line width=0.75]      (0, 0) circle [x radius= 1.34, y radius= 1.34]   ; 
\draw    (226.54,109.31) -- (120.88,148.97) ;
\draw [shift={(120.88,148.97)}, rotate = 159.42] [color={rgb, 255:red, 0; green, 0; blue, 0 }  ][fill={rgb, 255:red, 0; green, 0; blue, 0 }  ][line width=0.75]      (0, 0) circle [x radius= 1.34, y radius= 1.34]   ;
\draw [shift={(173.71,129.14)}, rotate = 159.42] [color={rgb, 255:red, 0; green, 0; blue, 0 }  ][fill={rgb, 255:red, 0; green, 0; blue, 0 }  ][line width=0.75]      (0, 0) circle [x radius= 1.34, y radius= 1.34]   ;
\draw [shift={(226.54,109.31)}, rotate = 159.42] [color={rgb, 255:red, 0; green, 0; blue, 0 }  ][fill={rgb, 255:red, 0; green, 0; blue, 0 }  ][line width=0.75]      (0, 0) circle [x radius= 1.34, y radius= 1.34]   ;
\draw    (293.58,109.33) -- (473.33,109.33) ;
\draw    (293.58,149.33) -- (473.33,149.33) ;
\draw [color={rgb, 255:red, 0; green, 0; blue, 0 }  ,draw opacity=1 ]   (373.33,149.33) -- (413.33,109.33) ;
\draw [shift={(413.33,109.33)}, rotate = 315] [color={rgb, 255:red, 0; green, 0; blue, 0 }  ,draw opacity=1 ][fill={rgb, 255:red, 0; green, 0; blue, 0 }  ,fill opacity=1 ][line width=0.75]      (0, 0) circle [x radius= 1.34, y radius= 1.34]   ;
\draw [shift={(393.33,129.33)}, rotate = 315] [color={rgb, 255:red, 0; green, 0; blue, 0 }  ,draw opacity=1 ][fill={rgb, 255:red, 0; green, 0; blue, 0 }  ,fill opacity=1 ][line width=0.75]      (0, 0) circle [x radius= 1.34, y radius= 1.34]   ;
\draw [shift={(373.33,149.33)}, rotate = 315] [color={rgb, 255:red, 0; green, 0; blue, 0 }  ,draw opacity=1 ][fill={rgb, 255:red, 0; green, 0; blue, 0 }  ,fill opacity=1 ][line width=0.75]      (0, 0) circle [x radius= 1.34, y radius= 1.34]   ;
\draw    (340.88,148.97) -- (385.79,109.69) ;
\draw [shift={(385.79,109.69)}, rotate = 318.83] [color={rgb, 255:red, 0; green, 0; blue, 0 }  ][fill={rgb, 255:red, 0; green, 0; blue, 0 }  ][line width=0.75]      (0, 0) circle [x radius= 1.34, y radius= 1.34]   ;
\draw [shift={(363.33,129.33)}, rotate = 318.83] [color={rgb, 255:red, 0; green, 0; blue, 0 }  ][fill={rgb, 255:red, 0; green, 0; blue, 0 }  ][line width=0.75]      (0, 0) circle [x radius= 1.34, y radius= 1.34]   ;
\draw [shift={(340.88,148.97)}, rotate = 318.83] [color={rgb, 255:red, 0; green, 0; blue, 0 }  ][fill={rgb, 255:red, 0; green, 0; blue, 0 }  ][line width=0.75]      (0, 0) circle [x radius= 1.34, y radius= 1.34]   ;
\draw [color={rgb, 255:red, 0; green, 0; blue, 0 }  ,draw opacity=1 ]   (340.88,148.97) -- (413.33,109.33) ;
\draw [shift={(413.33,109.33)}, rotate = 331.32] [color={rgb, 255:red, 0; green, 0; blue, 0 }  ,draw opacity=1 ][fill={rgb, 255:red, 0; green, 0; blue, 0 }  ,fill opacity=1 ][line width=0.75]      (0, 0) circle [x radius= 1.34, y radius= 1.34]   ;
\draw [shift={(340.88,148.97)}, rotate = 331.32] [color={rgb, 255:red, 0; green, 0; blue, 0 }  ,draw opacity=1 ][fill={rgb, 255:red, 0; green, 0; blue, 0 }  ,fill opacity=1 ][line width=0.75]      (0, 0) circle [x radius= 1.34, y radius= 1.34]   ;
\draw   (190.67,105.83) .. controls (188.88,101.52) and (185.83,100.26) .. (181.52,102.04) -- (119.53,127.72) .. controls (113.37,130.27) and (109.4,129.39) .. (107.61,125.08) .. controls (109.4,129.39) and (107.21,132.83) .. (101.05,135.38)(103.83,134.23) -- (95.46,137.69) .. controls (91.15,139.48) and (89.88,142.53) .. (91.67,146.84) ;
\draw   (124.17,152.5) .. controls (125.84,156.85) and (128.86,158.19) .. (133.21,156.52) -- (202.44,129.9) .. controls (208.67,127.51) and (212.62,128.5) .. (214.29,132.85) .. controls (212.62,128.5) and (214.89,125.12) .. (221.11,122.73)(218.31,123.8) -- (224.64,121.37) .. controls (229,119.7) and (230.34,116.68) .. (228.66,112.32) ;
\draw   (382.83,107.33) .. controls (379.82,103.77) and (376.53,103.5) .. (372.97,106.51) -- (353.42,123.05) .. controls (348.33,127.36) and (344.28,127.73) .. (341.27,124.17) .. controls (344.28,127.73) and (343.25,131.66) .. (338.16,135.97)(340.45,134.03) -- (338.16,135.97) .. controls (334.59,138.98) and (334.32,142.27) .. (337.34,145.84) ;
\draw   (376.17,151.5) .. controls (379.49,154.78) and (382.79,154.76) .. (386.07,151.44) -- (401.34,135.97) .. controls (406.03,131.22) and (410.03,130.49) .. (413.35,133.77) .. controls (410.03,130.49) and (410.71,126.48) .. (415.4,121.73)(413.29,123.87) -- (415.4,121.73) .. controls (418.68,118.41) and (418.66,115.11) .. (415.34,111.84) ;

\draw (57.33,144.93) node [anchor=north west][inner sep=0.75pt]  [font=\tiny]  {$\partial ^{\prime }$}; 
\draw (59.13,104.6) node [anchor=north west][inner sep=0.75pt]  [font=\tiny]  {$\partial $};
\draw (112.17,156.07) node [anchor=north west][inner sep=0.75pt]  [font=\tiny]  {$( i,0)$}; 
\draw (186.8,97.2) node [anchor=north west][inner sep=0.75pt]  [font=\tiny]  {$( j,1)$}; 
\draw (279.2,143.73) node [anchor=north west][inner sep=0.75pt]  [font=\tiny]  {$\partial ^{\prime }$}; 
\draw (279.8,105.4) node [anchor=north west][inner sep=0.75pt]  [font=\tiny]  {$\partial $}; 
\draw (330.67,155.07) node [anchor=north west][inner sep=0.75pt]  [font=\tiny]  {$( i,0)$}; 
\draw (405.87,96.8) node [anchor=north west][inner sep=0.75pt]  [font=\tiny]  {$( j,1)$}; 
\draw (134,120.6) node [anchor=north west][inner sep=0.75pt]  [font=\tiny]  {\scalebox{0.6}[0.6]{$P_{k}$}}; 
\draw (310.87,115.33) node [anchor=north west][inner sep=0.75pt]  [font=\tiny]  {$\left[ i,\leftarrow \right]$}; 
\draw (202.77,133.67) node [anchor=north west][inner sep=0.75pt]  [font=\tiny]  {$\left[ i,\rightarrow \right]$}; 
\draw (416.9,129.97) node [anchor=north west][inner sep=0.75pt]  [font=\tiny]  {$\left[\rightarrow ,j\right]$}; 
\draw (82.37,113.4) node [anchor=north west][inner sep=0.75pt]  [font=\tiny]  {$\left[\leftarrow ,j\right]$}; 
\draw (173,129) node [anchor=north west][inner sep=0.75pt]  [font=\tiny]  {\scalebox{0.6}[0.6]{$P_{k+1}$}}; 
\draw (354,120.6) node [anchor=north west][inner sep=0.75pt]  [font=\tiny]  {\scalebox{0.6}[0.6]{$P_{k}$}}; 
\draw (393,128) node [anchor=north west][inner sep=0.75pt]  [font=\tiny]  {\scalebox{0.6}[0.6]{$P_{k+1}$}};
\end{tikzpicture}
\end{figure}
\end{Thm-B}
We refer to  Definition \ref{def line} for the relevant notations.
\subsection{Context}The paper is organized as follows.  
In \S \ref{Section 2}, we recall some basic facts on the weighted projective lines of type $(2,2,n)$. 
In \S \ref{Section 3}, we provide the definition and basic properties of generalized extension bundles. 
In \S \ref{Section 4}, we introduce the orbit space of an infinite marked strip under a $G$-action as a geometric model of ${\rm vect}\mbox{-}\mathbb{X}$. We establish a correspondence between $G$-orbits of line segments and indecomposable bundles, and give some applications of this geometric model. In \S \ref{Section 5}, we interpret dimensions of $\Ext^1$ as intersection indices of $G$-orbits of line segments.
In \S \ref{Projective cover and injective hull}, we explore the geometric interpretation for the projective covers and injective hulls of extension bundles. In Appendix \ref{Appendix}, we collect some exact sequences containing generalized extension bundles in ${\rm coh}\mbox{-}\mathbb{X}$.
\subsection{Conventions}
Throughout this paper,  $\mathbb{X}:=\mathbb{X}(2, 2, n)$ denotes the weighted projective line of weight type $(2, 2, n)$, where the integer $n$ is at least two and  
 the base field $\mathbf{k}$ is supposed to be algebraically closed. The cardinality of a set $I$ is denoted by $\mid I\mid$. For a rational number $r$, we denote by $\lfloor r\rfloor$ the largest integer less than or equal to $r$. 
\section{Weighted projective lines of type \textnormal{(2,2,n)}}\label{Section 2}
We recall some basic notions and facts about weighted projective lines from \cite{MR3028577, MR2931898}. We restrict our treatment to the case of type $(2, 2, n)$.
 
 Let $\mathbb{L}:=\mathbb{L}(2, 2, n)$ be the rank one abelian group on three generators $\vec{x}_1, \vec{x}_2, \vec{x}_3$ subject to the relations $2 \vec{x}_1=2 \vec{x}_2=n \vec{x}_3=: \vec{c}$, where $\vec{c}$ is called the \emph{canonical element}. We call $\mathbb{L}$ the \emph{Picard group} on $\mathbb{X}$. We define the \emph{dualizing element} of $\mathbb{L}$ as $\vec{\omega}=\vec{c}-\left(\vec{x}_1+\vec{x}_2+\vec{x}_3\right)$.  The abelian group $\mathbb{L}$ is ordered by defining the positive cone $\{\vec{x} \in \mathbb{L} \mid \vec{x} \geq \vec{0}\}$ to consist of the elements of the form $l_1 \vec{x}_1+l_2 \vec{x}_2+l_3 \vec{x}_3$, where $l_1, l_2, l_3 \geq 0$. Each element $\vec{x} \in \mathbb{L}$ can be written in normal form
\begin{equation}\label{normal form}
    \vec{x}=l_1 \vec{x}_1+l_2 \vec{x}_2+l_3 \vec{x}_3+l \vec{c}
\end{equation}
with unique $l_i, l \in \mathbb{Z}, 0 \leq l_1,l_2<2$ and $0 \leq l_3<n$. 
 
 Denote by $R$ the commutative algebra $R:=\mathbf{k}\left[x_1, x_2, x_3\right] /\left(x_1^{2}+x_2^{2}+x_3^{n}\right)$. The algebra $R$ is $\mathbb{L}$-graded by setting deg $x_i:=\vec{x}_i(i=1,2,3)$, hence $R=\bigoplus_{\vec{x} \in \mathbb{L}} R_{\vec{x}}$. Here, by \cite[Proposition 1.3]{MR915180}, the set $$\{x_1^{l_1+2a}x_2^{l_2+2b}x_3^{l_3}\; |\; a+b=l, a, b\geq 0\}$$ form a $\mathbf{k}$-basis of $R_{\vec{x}}$. Then the homogeneous component $R_{\vec{x}}$ is nonzero  if and only if  $\vec{x} \geq \vec{0}$, and equivalently, $l \geq 0$ in the normal form \eqref{normal form} of $\vec{x}$. 
 
 By an $\mathbb{L}$ graded version of the Serre construction \cite{MR0068874}, the weighted projective line $\mathbb{X}$ of weight type $\left(2,2, n\right)$ is given by its category of coherent sheaves coh-$\mathbb{X}:={\rm mod}^{\bbL}\mbox{-}R/{{\rm mod}_0^{\bbL}\mbox{-}R}$, the quotient category of finitely generated $\mathbb{L}$-graded modules modulo the Serre subcategory of graded modules of finite length. The image $\mathcal{O}$ of $R$ in ${\rm mod}^{\bbL}\mbox{-}R/{{\rm mod}_0^{\bbL}\mbox{-}R}$ serves as the \emph{structure sheaf} of coh-$\mathbb{X}$, and $\mathbb{L}$ acts on coh-$\mathbb{X}$ by degree shift.  

 The category ${\rm coh}\mbox{-}\mathbb{X}$ is a connected, hereditary, abelian, ${\rm Hom}$-finite, $\mathbf{k}$-linear, noetherian category satisfies Serre duality in the form 
 \[D\operatorname{Ext}{ }^1(X, Y)\cong\operatorname{Hom}(Y, X(\vec{\omega})),\]
functorially in $X$ and $Y$, where $D=\Hom_\mathbf{k}(-,\mathbf{k})$. Moreover, Serre duality implies the existence of almost split sequences for coh-$\mathbb{X}$ with the Auslander-Reiten translation $\tau$ given by the shift with $\vec{\omega}$.

 The category ${\rm coh}\mbox{-}\mathbb{X}$ can be expressed as  ${\rm coh}\mbox{-}\mathbb{X}={\rm vect}\mbox{-}\mathbb{X}\vee{\rm coh}_{0}\mbox{-}\mathbb{X}$, where ${\rm vect}\mbox{-}\mathbb{X}$ (resp. ${\rm coh}_{0}\mbox{-}\mathbb{X}$) denotes the full subcategory of ${\rm coh}\mbox{-}\mathbb{X}$ consisting of coherent
sheaves without any simple subobjects (resp. coherent
sheaves of finite length) in ${\rm coh}\mbox{-}\mathbb{X}$. The symbol $\vee$ indicates that that each indecomposable object of ${\rm coh}\mbox{-}\mathbb{X}$ is either in ${\rm vect}\mbox{-}\mathbb{X}$ or in ${\rm coh}_{0}\mbox{-}\mathbb{X}$, and there are no non-zero morphisms from ${\rm coh}_{0}\mbox{-}\mathbb{X}$ to ${\rm vect}\mbox{-}\mathbb{X}.$ 

The objects in ${\rm vect}\mbox{-}\mathbb{X}$ are called \emph{vector bundles.} There is a specific type of vector bundles called \emph{line bundles}. Up to isomorphism, each line bundle  has
the form $\mathcal{O}(\vec{x})$ for a uniquely determined $\vec{x} \in \mathbb{L}$. In addition, we have natural isomorphisms 
\begin{equation}\label{hom space}
    \operatorname{Hom}(\mathcal{O}(\vec{x}), \mathcal{O}(\vec{y}))\cong R_{\vec{y}-\vec{x}}.
\end{equation}
 In the rest of the paper, We fix an arbitrary line bundle $L_0$ in ${\rm vect}\mbox{-}\mathbb{X}$. Recall that there are exact sequences 
 \begin{equation}\label{important exact sequence1}
  \begin{tikzcd}[ampersand replacement=\&,cramped,sep=small]
	0 \& {L_0(-\vec{x}_i)} \& L_0 \& {S_i} \& 0
	\arrow[from=1-1, to=1-2]
	\arrow["{{x_i}}", from=1-2, to=1-3]
	\arrow[from=1-3, to=1-4]
	\arrow[from=1-4, to=1-5]
\end{tikzcd}
\end{equation} 
\begin{equation}\label{important exact sequence2}
 \begin{tikzcd}[ampersand replacement=\&,cramped,sep=small]
	0 \& {L_0(-\vec{c})} \& L_0 \& {S_x} \& 0
	\arrow[from=1-1, to=1-2]
	\arrow[from=1-2, to=1-3]
	\arrow[from=1-3, to=1-4]
	\arrow[from=1-4, to=1-5]
\end{tikzcd}
\end{equation}
 defining simple sheaves $S_i$, concentrated in an exceptional point $x_i(1\leq i \leq 3)$ and $S_x$, concentrated in an ordinary point $x$. Moreover, $S_i\left(\vec{x}_j\right)$ equals $S_i$ or $S_i(-\vec{\omega})$ according as $j \neq i$ or $j=i$, and further that $S_x(\vec{y})=S_x$ holds for any $\vec{y}$ from $\mathbb{L}$. 

Define $\vec{\delta}:=2\vec{\omega}+\vec{c}=(n-2)\vec{x}_3$ as the \emph{dominant element} of $\mathbb{L}$. For any line bundle $L$ and $\vec{x}$ in $\mathbb{L}$ with $0 \leq \vec{x} \leq \vec{\delta}$, we have 
\begin{equation}\label{dim of ext eqe 1}
{\rm Ext^{1}}(L(\vec{x}),L(\vec{\omega}))\cong D\Hom(L,L(\vec{x}))\cong \mathbf{k}. 
\end{equation}
Consequently, the indecomposable middle term $E$ in the non-split exact sequence
\begin{equation}\label{extension bundle}
    \begin{tikzcd}[ampersand replacement=\&,cramped,sep=small]
	0 \& {L(\vec{\omega})} \& E \& {L(\vec{x})} \& 0
	\arrow[from=1-2, to=1-3]
	\arrow[from=1-3, to=1-4]
	\arrow[from=1-4, to=1-5]
	\arrow[from=1-1, to=1-2]
\end{tikzcd}
\end{equation}
	is uniquely determined up to isomorphism. We refer to $E_L\langle\vec{x}\rangle:=E$ as the \emph{extension bundle} for $L$ and $\vec{x}$. In particular, for $\vec{x} = 0$, the sequence \eqref{extension bundle} is almost-split, and $E_L := E_L\langle 0\rangle$ is called the \emph{Auslander bundle} associated with $L$.  
 
Each indecomposable bundle in ${\rm vect}\mbox{-}\mathbb{X}$ is either a line bundle or an extension bundle. Let $K_0(\mathbb{X})$ be the \emph{Grothendieck group} of ${\rm coh}\mbox{-}\mathbb{X}$. Then the class $\mathcal{O}(\vec{x}), 0 \leq \vec{x} \leq \vec{c}$, forms a $\mathbb{Z}$-basis of $K_0(\mathbb{X})$. There are two additive functions, \emph{rank} and \emph{degree}, on $K_0(\mathbb{X})$. The rank function $\mathrm{rk}: K_0(\mathbb{X})\rightarrow \mathbb{Z}$ is characterized by
$$
\operatorname{rk}(\mathcal{O}(\vec{x}))=1 \quad \text { for } \vec{x} \in \mathbb{L},
$$
and the degree function deg : $K_0(\mathbb{X}) \rightarrow \mathbb{Z}$ is uniquely determined by setting
$$
\operatorname{deg}(\mathcal{O}(\vec{x}))=\delta(\vec{x}) \quad \text { for } \vec{x} \in \mathbb{L},
$$
where $\delta: \mathbb{L} \rightarrow \mathbb{Z}$ is the group homomorphism defined on generators by
$$
\delta\left(\vec{x}_1\right)=\delta\left(\vec{x}_2\right)=\frac{\text { l.c.m. }(2, n)}{2} \text { and } \quad \delta\left(\vec{x}_3\right)=\frac{\text { l.c.m. }(2, n)}{n} .
$$

For each non-zero sheaf $X \in {\rm coh}\mbox{-}\mathbb{X}$, define the \emph{slope} of $X$ as
$$
\mu X=\frac{\operatorname{deg} X}{\operatorname{rk} X} .
$$

It is an element in $\mathbb{Q} \cup\{\infty\}$. According to \cite{MR915180}, each vector bundle has positive rank, its slope belongs to $\mathbb{Q}$; each non-zero object in ${\rm coh}_0\mbox{-}\mathbb{X}$ has rank 0 and positive degree, its slope is $\infty$. In \cite{MR2384609} Lenzing proved that each indecomposable vector bundle $X$ is exceptional in coh $\mathbb{X}$, that is, $X$ is extension-free and $\operatorname{End}(X)=\mathbf{k}$. Moreover, for any two indecomposable vector bundles $X$ and $X^{\prime}$, $\operatorname{Hom}\left(X, X^{\prime}\right) \neq 0$ implies $\mu X \leq \mu X^{\prime}$.

\section{Generalized extension bundles}\label{Section 3}
To facilitate subsequent work, we first give the definition
and basic properties of generalized extension bundles.

The following proposition describes the features of extension bundles.
\begin{prop}(\cite[Proposition 2.3]{dong2024two})\label{Grothendieck group}
    Assume $\vec{x},\vec{y},\vec{z}$ are elements in $\mathbb{L}$ such that $0\leq \vec{x},\vec{y}\leq  \vec{\delta}$. Then $E_L\langle\vec{x}\rangle=E_{L(\vec{z})}\langle\vec{y}\rangle$ if and only if one of following holds: 
    \begin{itemize}
        \item $\vec{y}=\vec{x}$ and $\vec{z}=0$ or $\vec{x}_1-\vec{x}_2$; 
        \item  $\vec{y}=\vec{\delta}-\vec{x}$ and $\vec{z}=\vec{x}+\vec{x}_3-\vec{x}_1$ or $\vec{x}+\vec{x}_3-\vec{x}_2$.
    \end{itemize}   
\end{prop}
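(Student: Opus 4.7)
The plan is to reformulate the proposition as a classification of line-bundle subobjects $M \hookrightarrow E := E_L\langle\vec{x}\rangle$ whose quotient $E/M$ is again a line bundle. Each such pair determines a realization $E \cong E_{L(\vec{z})}\langle\vec{y}\rangle$ via $M = L(\vec{z}+\vec{\omega})$ and $E/M = L(\vec{z}+\vec{y})$, since $E$ is indecomposable so the extension is automatically non-split, and conversely every realization of the extension bundle arises this way. The proposition then becomes the assertion that $E$ admits exactly four such subobjects, one per item in the list.

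\textbf{If direction.} The case $\vec{z}=0$ is tautological. For $\vec{z}=\vec{x}_1-\vec{x}_2$, I would apply $\Hom(L(\vec{\omega}+\vec{x}_1-\vec{x}_2),-)$ to the defining sequence of $E$ and, using the natural isomorphism $\Hom(\mathcal{O}(\vec{u}),\mathcal{O}(\vec{v})) \cong R_{\vec{v}-\vec{u}}$ together with Serre duality, verify that both $\Hom(L(\vec{\omega}+\vec{x}_1-\vec{x}_2),L(\vec{\omega}))$ and $\Ext^1(L(\vec{\omega}+\vec{x}_1-\vec{x}_2),L(\vec{\omega}))$ vanish while $\Hom(L(\vec{\omega}+\vec{x}_1-\vec{x}_2),L(\vec{x})) \cong \mathbf{k}$. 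The relation $2\vec{x}_1 = 2\vec{x}_2$ is essential for putting the relevant degrees into normal form. The long exact sequence then produces the required inclusion $L(\vec{\omega}+\vec{x}_1-\vec{x}_2) \hookrightarrow E$, whose cokernel, by rank and degree, is forced to be $L(\vec{x}+\vec{x}_1-\vec{x}_2)$. The two remaining cases, with $\vec{y} = \vec{\delta}-\vec{x}$ and $\vec{z} = \vec{x}+\vec{x}_3-\vec{x}_i$ for $i = 1,2$, are handled symmetrically via the inclusions $L(\vec{x}-\vec{x}_i) \hookrightarrow E$ induced by multiplication by $x_i \in R_{\vec{x}_i}$, with the analogous Hom/Ext analysis hinging on the identity $\vec{x}_3-\vec{x}_i+\vec{\omega} = -\vec{x}_j$ for $\{i,j\}=\{1,2\}$.

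\textbf{Only if direction and main obstacle.} Conversely, given $E \cong E_{L(\vec{z})}\langle\vec{y}\rangle$, the line bundles $L(\vec{z}+\vec{\omega})$ and $L(\vec{z}+\vec{y})$ appear as a subobject of $E$ and its quotient respectively. I would classify all such line-bundle subobjects $L(\vec{u}) \hookrightarrow E$ with line-bundle quotient by enumerating candidates $\vec{u} \in \mathbb{L}$ for which $\Hom(L(\vec{u}),E) \neq 0$ (computed via the long exact sequence associated to the defining extension of $E$) and for which the associated cokernel is torsion-free. Together with the degree constraint $\delta(\vec{u})+\delta(\vec{\omega}+\vec{x}-\vec{u}) = \delta(\vec{\omega})+\delta(\vec{x})$ and the range restriction $0 \leq \vec{y} \leq \vec{\delta}$, this reduces to a finite enumeration in the normal form of $\mathbb{L}$. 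I expect the main obstacle to lie in this enumeration: showing that the only admissible $\vec{u}$ are $\vec{\omega}$, $\vec{\omega}+\vec{x}_1-\vec{x}_2$, $\vec{x}-\vec{x}_1$, and $\vec{x}-\vec{x}_2$ requires a disciplined case analysis using the relations $2\vec{x}_1 = 2\vec{x}_2 = n\vec{x}_3$ and the slope inequality $\mu X \leq \mu X'$ for nonzero morphisms $X \to X'$ in ${\rm coh}\mbox{-}\mathbb{X}$ to eliminate all other candidates.
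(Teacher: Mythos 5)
This proposition is quoted by the paper from \cite[Proposition~2.3]{dong2024two} without proof, so there is no internal argument to compare against; the label and the later use of \cite[Proposition~2.2]{dong2024two} in the proof of Proposition 5.6 indicate that the cited argument runs through the Grothendieck group, deriving necessity from $[L(\vec z+\vec\omega)]+[L(\vec z+\vec y)]=[L(\vec\omega)]+[L(\vec x)]$ and the basis $\{[\mathcal{O}(\vec u)]\mid 0\le\vec u\le\vec c\}$ of $K_0(\mathbb{X})$, and sufficiency from explicit pushout/pullback diagrams of the kind displayed in Section 3 and Appendix A. Your reformulation as a classification of line subbundles of $E$ with line-bundle quotient is a legitimate alternative, and your $\Hom/\Ext$ computations are correct (e.g.\ $R_{\vec x_2-\vec x_1}=0$ and $R_{\vec x-\vec\omega-\vec x_1+\vec x_2}=R_{(l_3+1)\vec x_3}\cong\mathbf{k}$ for $\vec x=l_3\vec x_3$, so the lift $L(\vec\omega+\vec x_1-\vec x_2)\to E$ exists and is unique up to scalar).

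The genuine gap is the assertion that the cokernel of this lift is ``forced by rank and degree'' to be $L(\vec x+\vec x_1-\vec x_2)$. Rank and degree do not determine a coherent sheaf on $\mathbb{X}$: the cokernel $Q$ could a priori have a nonzero torsion subsheaf $tQ$ with $Q/tQ$ a line bundle of smaller degree, and even among line bundles degree does not single one out (e.g.\ $L$ and $L^*$ have equal degree, so one needs the determinant or the $K_0$-class, not just $\delta$). The slope inequality only bounds $\deg tQ$ by $\tfrac12\delta(\vec x-\vec\omega)=\tfrac{(l_3+1)\bar p}{2n}$, which for $l_3\ge 1$ does not exclude simple sheaves concentrated at $x_3$ (of degree $\bar p/n$). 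Thus torsion-freeness of the cokernel --- equivalently, saturatedness of the subbundle --- is the real content of the ``if'' direction; it requires either the explicit diagrams with the simples $S_1,S_2$ (pull back the defining sequence along $x_i$ and verify the resulting extension by $S_i(\vec x)$ is non-split), or the full enumeration of line subbundles that you postpone to the converse, which makes the argument circular as written. Two smaller points: the ``degree constraint'' $\delta(\vec u)+\delta(\vec\omega+\vec x-\vec u)=\delta(\vec\omega)+\delta(\vec x)$ in your converse is an identity (since $\delta$ is a group homomorphism) and carries no information --- the constraint actually needed is $\vec u+\vec v=\vec x+\vec\omega$ from the determinant or the $K_0$-class; and the pairing of $\vec z=\vec x+\vec x_3-\vec x_1$ with the subbundle $L(\vec x-\vec x_1)$ is off by a swap (one finds $\vec z+\vec\omega=\vec x-\vec x_2$), though this is only bookkeeping.
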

\begin{rem}
    In the rest, for any line bundle $L$, we adopt $L^*$ to represent $L(\vec{x}_1-\vec{x}_2)$. Clearly, $(L^*)^*=L$ holds, and by Proposition \ref{Grothendieck group}, we have $E_L\langle\vec{x}\rangle=E_{L^*}\langle\vec{x}\rangle$.
\end{rem}

Consider the following commutative diagram obtained  by pushout 
\[
\begin{tikzcd}[ampersand replacement=\&,cramped]
	\& 0 \& 0 \\
	0 \& {L(\vec{\omega})} \& {L^*(\vec{x}_1-\vec{\omega})} \& {S_2(-\vec{x}_2)} \& 0 \\
	0 \& {L(\vec{x}_1-\vec{\omega})} \& {L((n-1)\vec{x}_3)} \& {S_2(-\vec{x}_2)} \& 0 \\
	\& {S_1(-\vec{x}_1)} \& {S_1(-\vec{x}_1)} \\
	\& 0 \& 0
	\arrow["{x_2}", from=3-2, to=3-3]
	\arrow[from=2-4, to=2-5]
	\arrow[from=3-3, to=4-3]
	\arrow[from=1-3, to=2-3]
	\arrow[Rightarrow, no head, from=4-2, to=4-3]
	\arrow[Rightarrow, no head, from=2-4, to=3-4]
	\arrow[from=3-4, to=3-5]
	\arrow[from=1-2, to=2-2]
	\arrow["{x_2}", from=2-2, to=2-3]
	\arrow["{x_1}"', from=2-2, to=3-2]
	\arrow["{x_1}", from=2-3, to=3-3]
	\arrow[from=4-2, to=5-2]
	\arrow[from=4-3, to=5-3]
	\arrow[from=3-2, to=4-2]
	\arrow[from=3-1, to=3-2]
	\arrow[from=2-1, to=2-2]
	\arrow[from=2-3, to=2-4]
	\arrow[from=3-3, to=3-4]
\end{tikzcd}
\]
with exact rows and columns.  Since the equation \eqref{dim of ext eqe 1} also holds for $\vec{x}= (n-1)\vec{x}_3$, we obtain the following non-split exact sequence
\[\begin{tikzcd}[ampersand replacement=\&,cramped,sep=small]
	0 \& {L(\vec{\omega})} \& {L(\vec{x}_1-\vec{\omega})\oplus L^*(\vec{x}_1-\vec{\omega})} \& {L((n-1)\vec{x}_3)} \& 0
	\arrow[from=1-3, to=1-4]
	\arrow[from=1-1, to=1-2]
	\arrow[from=1-2, to=1-3]
	\arrow[from=1-4, to=1-5]
\end{tikzcd}\]
uniquely, up to isomorphism. This inspires us to extend the notation $``E_L\langle\vec{x}\rangle"$ as follows:

\begin{defn}\label{generalized extension bundle}
    For $\vec{x}=l_3\vec{x}_3+l\vec{c} \in  \mathbb{L}$ written in normal form, define
\[\mathsf{E}_L\langle\vec{x}\rangle : = 
\begin{cases}
E_{L(l\vec{x}_1)}\langle l_3\vec{x}_3\rangle, & \text{if } 0 \leq l_3 \leq n-2; \\
L((l+1)\vec{x}_1-\vec{x}_3) \oplus L^*((l+1)\vec{x}_1-\vec{x}_3), & \text{otherwise.}
\end{cases}\]
\end{defn}

\begin{rem}\label{rem1}
 (i) We refer to $\mathsf{E}_L\langle\vec{x}\rangle$ as \emph{generalized extension bundles} in ${\rm vect}\mbox{-}\mathbb{X}$.   By comparing the classes of $\mathsf{E}_L\langle\vec{x}\rangle$ in $K_0(\mathbb{X})$, we can check that \[\mathsf{E}_L\langle\vec{x}\rangle=\mathsf{E}_{L^*}\langle\vec{x}\rangle=\mathsf{E}_{L(\vec{x}-\vec{x}_1+\vec{x}_3)}\langle\vec{\delta}-\vec{x}\rangle=\mathsf{E}_{L^*(\vec{x}-\vec{x}_1+\vec{x}_3)}\langle\vec{\delta}-\vec{x}\rangle,\] 
 as a general version of Propositon \ref{Grothendieck group}. 
 
 (ii) All the generalized extension bundles are parameterized by the set  \[\{ \mathsf{E}_{L(i\vec{x}_3)}\langle j\vec{x}_3\rangle \mid -1\leq j\leq n-1, i,j\in \mathbb{Z}\}.\] 
 Moreover, $\mathsf{E}_{L(i\vec{x}_3)}\langle j\vec{x}_3\rangle$ is indecomposable if and only if $0 \leq j\leq   n-2$.
\end{rem}

 Recall from \cite[Proposition 5.1]{MR3028577} that the Auslander-Reiten quiver $\Gamma({\rm vect}\mbox{-}\mathbb{X})$ of ${\rm vect}\mbox{-}\mathbb{X}$ has the form $\mathbb{Z} \tilde{D}_{n+2}$, where the quiver $\tilde{D}_{n+2}$ can be given as follows:
\[\begin{adjustbox}{scale=1}
\begin{tikzcd}[ampersand replacement=\&,cramped,sep=small]
	\& \bullet \&\&\&\& \bullet \\
	{\tilde{D}_{n+2}:} \&\& \bullet \& \cdots \& \bullet \\
	\& \bullet \&\&\&\& \bullet
	\arrow["{{{\alpha_1^+}}}", from=1-2, to=2-3]
	\arrow["{{{\alpha_1^-}}}"', from=3-2, to=2-3]
	\arrow["{{{\alpha_2}}}", from=2-3, to=2-4]
	\arrow["{{{\alpha_{n-1}}}}", from=2-4, to=2-5]
	\arrow["{{{\alpha_n^+}}}", from=2-5, to=1-6]
	\arrow["{{{\alpha_n^-}}}"', from=2-5, to=3-6]
\end{tikzcd}
\end{adjustbox}\]
For example, let $n=4$. The Auslander-Reiten quiver $\Gamma({\rm vect}\mbox{-}\mathbb{X})$ is illustrated as below:
\begin{figure}[H]
    \hspace{2cm}
\xymatrix@-1pc @!R=4.5pt @!C=4.5pt{
&&&&&&&\vx_1&&&&&&&&\vx_1+\vec{c}&&&\\
&&&\blacktriangle\ar[rd]            &&\vartriangle\ar[rd]&&\blacktriangle\ar[rd]&&\vartriangle\ar[rd]            &&\blacktriangle\ar[rd]            &&\vartriangle\ar[rd]&&\blacktriangle\ar[rd]&&\vartriangle\\
\langle 2\vx_3 \rangle && \cdot\ar[r]\ar[ru]\ar[rd]&\vartriangle\ar[r]      &\rule[.2ex]{1ex}{1ex}\ar[ru]\ar[r]\ar[rd]&\blacktriangle\ar[r]&\cdot \ar[ru]\ar[r]\ar[rd]&\vartriangle\ar[r]&\cdot\ar[ru]\ar[r]\ar[rd]&      \blacktriangle\ar[r]&\cdot\ar[ru]\ar[rd]\ar[r]&\vartriangle\ar[r]      &\cdot\ar[ru]\ar[r]\ar[rd]&\blacktriangle\ar[r]&\cdot\ar[ru]\ar[r]\ar[rd]&\vartriangle\ar[r]&\cdot\ar[ru]\ar[r]\ar[rd]&      \blacktriangle\\
\langle \vx_3 \rangle  &\cdots&&\rule[.2ex]{1ex}{1ex}\ar[ru]\ar[rd]      &&\cdot\ar[ru]\ar[rd]&&\cdot\ar[ru]\ar[rd]&&      \cdot\ar[ru]\ar[rd]&&\cdot\ar[ru]\ar[rd]      &&\cdot\ar[ru]\ar[rd]&&\cdot\ar[ru]\ar[rd]&&  \cdot &\cdots   \\
\langle 0 \rangle &&\rule[.2ex]{1ex}{1ex}\ar[r]\ar[ru]\ar[rd]&\circ\ar[r]      &\cdot\ar[ru]\ar[r]\ar[rd]&\bullet\ar[r]&\cdot\ar[ru]\ar[r]\ar[rd]&\circ\ar[r]&\cdot\ar[ru]\ar[r]\ar[rd]&      \bullet\ar[r]&\cdot\ar[ru]\ar[rd]\ar[r]&\circ\ar[r]      &\cdot\ar[ru]\ar[r]\ar[rd]&\bullet\ar[r]&\cdot\ar[ru]\ar[r]\ar[rd]&\circ\ar[r]&\cdot\ar[ru]\ar[r]\ar[rd]&      \bullet\\
&\circ\ar[ru]&&\bullet\ar[ru]      &&\circ\ar[ru]&&\bullet\ar[ru]&&      \circ\ar[ru]&&\bullet\ar[ru]      &&\circ\ar[ru]&&\bullet\ar[ru]&&      \circ\\
 &&&0      &&\vx_3&&2\vx_3&&3\vx_3&&\vec{c}&
}
\end{figure}
\noindent In the illustration, we mark the positions of the extension bundles $E_{L_0}\langle \vec{x}\rangle$, $0\leq \vec{x}\leq \vec{\delta}$, by the symbol $\rule[.2ex]{1ex}{1ex}$, and the position of the line bundles $L_0(j\vec{x}_3)$, $L_0^*(j\vec{x}_3)$, $L_0(\vec{x}_1+j\vec{x}_3)$,  $L_0^*(\vec{x}_1+j\vec{x}_3)$, $j\in\mathbb{Z}$ by the symbols $\bullet$, $\circ$, $\blacktriangle$, $\vartriangle$ respectively. The remaining extension bundles are marked by the symbol $\cdot$. 

Let $F$ denote the degree shift automorphism by $\vec{x}_1-\vec{x}_2$ on ${\rm vect}\mbox{-}\mathbb{X}$.  An object $X$ in ${\rm vect}\mbox{-}\mathbb{X}$ is \emph{$F$-stable} provided that $X$ is isomorphic to $F(X)$. We denote by ${\rm vect}^F\mbox{-}\mathbb{X}$ the full subcategory of ${\rm vect}\mbox{-}\mathbb{X}$ consisting of $F$-stable objects. 

\begin{obs}
    Generalized extension bundles are $F$-stable objects in ${\rm vect}^F\mbox{-}\mathbb{X}$. Moreover, indecomposable objects in ${\rm vect}^F\mbox{-}\mathbb{X}$ are generalized extension bundles. 
\end{obs}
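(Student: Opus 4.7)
The plan hinges on the observation that $F$ is an involution: since $2(\vec{x}_1-\vec{x}_2)=\vec{c}-\vec{c}=0$ in $\mathbb{L}$, one has $F^2=\mathrm{id}$, so every $F$-orbit on the indecomposables of ${\rm vect}\mbox{-}\mathbb{X}$ has size one or two. Combined with Krull--Schmidt, this lets the statement split cleanly into the two required halves.

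For the first claim I would verify that $\mathsf{E}_L\langle\vec{x}\rangle$ is $F$-stable case by case, following Definition \ref{generalized extension bundle}. When $0\le l_3\le n-2$, twisting the defining sequence \eqref{extension bundle} by $\vec{x}_1-\vec{x}_2$ identifies $F(E_{L(l\vec{x}_1)}\langle l_3\vec{x}_3\rangle)$ with $E_{L(l\vec{x}_1)^*}\langle l_3\vec{x}_3\rangle$, which equals $E_{L(l\vec{x}_1)}\langle l_3\vec{x}_3\rangle$ by the first bullet of Proposition \ref{Grothendieck group} applied with $\vec{z}=\vec{x}_1-\vec{x}_2$ (equivalently, by Remark \ref{rem1}(i)). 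In the remaining case $l_3=n-1$, the definition presents $\mathsf{E}_L\langle\vec{x}\rangle$ as $M\oplus M^*$ with $M=L((l+1)\vec{x}_1-\vec{x}_3)$, and $F$ merely swaps the two summands.

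For the converse, I would take an indecomposable $Y$ in ${\rm vect}^F\mbox{-}\mathbb{X}$, decompose it in ${\rm vect}\mbox{-}\mathbb{X}$ via Krull--Schmidt, and group the summands into $F$-orbits: each orbit collects to an $F$-stable direct summand of $Y$, so indecomposability in ${\rm vect}^F\mbox{-}\mathbb{X}$ forces $Y$ to consist of a single orbit. Thus either $Y=X$ with $X$ an $F$-fixed indecomposable, or $Y=X\oplus F(X)$ with $F(X)\not\cong X$. No line bundle is $F$-fixed because $\vec{x}_1-\vec{x}_2\ne 0$ in $\mathbb{L}$, whereas every extension bundle $E_L\langle\vec{y}\rangle$ with $0\le\vec{y}\le\vec{\delta}$ is $F$-fixed by the computation of the previous paragraph. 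Hence in the first case $X$ is an extension bundle, which is a generalized extension bundle by definition; in the second case $X$ is a line bundle $M$, and $Y=M\oplus M^*$ matches the decomposable branch of Definition \ref{generalized extension bundle} after writing $M=L((l+1)\vec{x}_1-\vec{x}_3)$ for a suitable $L$ and $l$.

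The only mild subtlety I anticipate is the final bookkeeping step: confirming that every $M\oplus M^*$ fits the parametrization of Definition \ref{generalized extension bundle} in the decomposable case. This amounts to noting that the shift action of $\mathbb{L}$ on line bundles is transitive, so any $M$ can be re-written in the required form; apart from this, everything reduces to Proposition \ref{Grothendieck group} and the $F$-orbit analysis above.
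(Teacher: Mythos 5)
The paper records this statement as an \emph{Observation} and supplies no proof, so there is nothing to compare against; judged on its own, your argument is correct and complete. The two key points are exactly right: $F^2=\mathrm{id}$ because $2(\vec{x}_1-\vec{x}_2)=\vec{0}$ in $\mathbb{L}$, and $F$-stability of $\mathsf{E}_L\langle\vec{x}\rangle$ follows from $E_L\langle\vec{x}\rangle=E_{L^*}\langle\vec{x}\rangle$ (Proposition \ref{Grothendieck group}, Remark \ref{rem1}) in the indecomposable case and from the swap of the two summands $M\oplus M^*$ in the decomposable case; conversely the Krull--Schmidt orbit analysis, together with the facts that no line bundle is $F$-fixed (uniqueness of normal forms gives $\vec{x}_1-\vec{x}_2\neq\vec{0}$) while every extension bundle is, reduces an indecomposable of ${\rm vect}^F\mbox{-}\mathbb{X}$ to either an extension bundle or a sum $M\oplus M^*$, both of which are covered by Definition \ref{generalized extension bundle}. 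Your final bookkeeping step is also fine: taking $L=M(\vec{x}_3-\vec{x}_1)$ and $l=0$ exhibits any $M\oplus M^*$ in the decomposable branch of the definition.
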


Let $\sigma_F$ be the automorphism  of $\Gamma(\text{vect-}\mathbb{X})$ induced by the automorphism $F$ on $\text{vect-}\mathbb{X}$. We observe that $\sigma_F$ exchanges the $\blacktriangle$ and $\vartriangle$ on the same vertical lines, as well as the $\circ$ and $\bullet$, while fixing all other vertices. Consequently, we can construct a  valued translation quiver $(\Gamma, d)$ by folding $\Gamma(\text{vect-}\mathbb{X})$ as follows:
\begin{itemize}
\item $\Gamma_0=\{\text{vertex orbits of}\;\sigma_F\}$,
\item $\Gamma_1=\{\text{arrow orbits of}\; \sigma_F\}$,
\item For each $x \in \Gamma_0$, $d(x)$ is the number of vertices in the orbit $x$,
\item For each $\alpha: x\rightarrow y \in \Gamma_1$, $d(\alpha)$ is the pair $(d(y), d(x))$.
\end{itemize}
In fact, $\Gamma$ is of the form $\mathbb{Z}A_{n+1}$, where $A_{n+1}$ is the quiver 
\[\begin{tikzcd}[ampersand replacement=\&,cramped]
	{A_{n+1}:} \& \bullet \& \bullet \& \cdots \& \bullet
	\arrow[from=1-2, to=1-3]
	\arrow["{n+1}"'{pos=1.3}, shift right, draw=none, from=1-4, to=1-5]
	\arrow["1"'{pos=-0.2}, shift right, draw=none, from=1-2, to=1-3]
	\arrow[from=1-3, to=1-4]
	\arrow["2"'{pos=-0.2}, shift right, draw=none, from=1-3, to=1-4]
	\arrow[from=1-4, to=1-5]
\end{tikzcd}.\]
For example, let $n=4$. Then $(\Gamma,d)$ is as follows:
\begin{figure}[H]
    \hspace{0.1cm}
\xymatrix@-1pc @!R=4.5pt @!C=4.5pt{
&\bullet \ar@{->>}[rd]&&\bullet \ar[rd]            &&\bullet\ar@{->>}[rd]&&\bullet\ar[rd]&&\bullet\ar@{->>}[rd]            &&\bullet\ar@{->>}[rd]            &&\bullet\ar@{->>}[rd]&&\bullet\ar@{->>}[rd]&&\bullet\\
 && \cdot\ar@{=>}[ru]\ar[rd]&      &\cdot\ar@{=>}[ru]\ar[rd]&&\cdot \ar@{=>}[ru]\ar[rd]&&\cdot\ar[ru]\ar[rd]&      &\cdot\ar@{=>}[ru]\ar[rd]&     &\cdot\ar@{=>}[ru]\ar[rd]&&\cdot\ar@{=>}[ru]\ar[rd]&&\cdot\ar@{=>}[ru]\ar[rd]&      \\
 \cdots&\cdot\ar[ru]\ar[rd]&&\cdot\ar[ru]\ar[rd]      &&\cdot\ar[ru]\ar[rd]&&\cdot\ar[ru]\ar[rd]&&      \cdot\ar[ru]\ar[rd]&&\cdot\ar[ru]\ar[rd]      &&\cdot\ar[ru]\ar[rd]&&\cdot\ar[ru]\ar[rd]&&  \cdot &\cdots    \\
 && \cdot\ar[ru]\ar@{=>}[rd]&      &\cdot\ar[ru]\ar@{=>}[rd]&&\cdot \ar[ru]\ar@{=>}[rd]&&\cdot\ar[ru]\ar@{=>}[rd]&      &\cdot\ar[ru]\ar@{=>}[rd]&     &\cdot\ar[ru]\ar@{=>}[rd]&&\cdot\ar[ru]\ar@{=>}[rd]&&\cdot\ar[ru]\ar@{=>}[rd]&      \\
&\bullet\ar@{->>}[ru]&&\bullet\ar@{->>}[ru]      &&\bullet\ar@{->>}[ru]&&\bullet\ar@{->>}[ru]&&      \bullet\ar@{->>}[ru]&&\bullet\ar@{->>}[ru]      &&\bullet\ar@{->>}[ru]&&\bullet\ar@{->>}[ru]&&      \bullet
}
\end{figure}
\noindent In the illustration, we mark the points with the value 2 by the symbol $\bullet$,  while the remaining points (with the value 1) are marked by the symbol $\cdot$. We use the symbols $\rightarrow$, $\Rightarrow$, and $\twoheadrightarrow$ to represent arrows with values $(1,1)$, $(2,1)$, and $(1,2)$ respectively. Indeed, this  valued translation quiver serves as the ``Auslander-Reiten quiver'' of ${\rm vect}^F\mbox{-}\mathbb{X}$ in following sense:
\begin{itemize}
    \item [(a)] The points of $(\Gamma,d)$ correspond to the indecomposable objects in ${\rm vect}^F\mbox{-}\mathbb{X}$;
    \item [(b)] Let $[X]$ and $[Y]$ be the points in ${\rm vect}^F\mbox{-}\mathbb{X}$ corresponding to the indecomposable object $X,Y$ in ${\rm vect}^F\mbox{-}\mathbb{X}$. Then $\Hom(X,Y)\neq 0$ if and only if there exists a path from $[X]$ to $[Y]$ in $(\Gamma,d)$;
    \item [(c)] The valuation of $[X]$ equals the number of indecomposable direct summands (from ${\rm vect}\mbox{-}\mathbb{X}$) contained in $X$.
\end{itemize}

Denote by $\tau_F$ the translation of the valued translation quiver $(\Gamma, d)$. Recall that automorphisms of $(\Gamma, d)$ are defined as the quiver automorphisms that commute with the translation $\tau_F$ and are compatible with $d$.   
From \S \ref{Section 3}, we already know that $\Gamma$ is of the form $\mathbb{Z}A_{n+1}$. The set of points is \[(\mathbb{Z}A_{n+1})_0=\mathbb{Z}\times A_{n+1}=\{(s,i)\mid s\in \mathbb{Z} \text{ and } 1\leq i \leq n+1\}.\] 
The translation $\tau_F$ sends the point $(s,i)$ to $(s+1,i)$ for any $s\in \mathbb{Z}$ and $1\leq i \leq n+1$. Let $\rho$ be the automorphism that sends the point $(s, i)$ to the point $(s+i-1,n-i+2)$ for any $s\in \mathbb{Z}$ and $1\leq i \leq n+1$. As shown in the figure
\begin{figure}[H]
    \centering

\tikzset{every picture/.style={line width=0.75pt}}          

\begin{tikzpicture}[x=0.75pt,y=0.75pt,yscale=-1,xscale=1]

\draw    (190.63,79.83) .. controls (212.64,85.46) and (222.69,96.62) .. (228.99,118.16) ;
\draw [shift={(229.47,119.83)}, rotate = 254.52] [color={rgb, 255:red, 0; green, 0; blue, 0 }  ][line width=0.75]    (4.37,-1.32) .. controls (2.78,-0.56) and (1.32,-0.12) .. (0,0) .. controls (1.32,0.12) and (2.78,0.56) .. (4.37,1.32)   ;
  
\draw    (148.3,118.67) .. controls (176.43,112.65) and (183.82,96.19) .. (188.85,81.64) ;
\draw [shift={(189.47,79.83)}, rotate = 108.7] [color={rgb, 255:red, 0; green, 0; blue, 0 }  ][line width=0.75]    (4.37,-1.32) .. controls (2.78,-0.56) and (1.32,-0.12) .. (0,0) .. controls (1.32,0.12) and (2.78,0.56) .. (4.37,1.32)   ;
  
\draw    (229.47,119.83) -- (189.47,79.83) ;
  
\draw    (345.17,108.17) .. controls (357.61,99.34) and (364.36,96.35) .. (383.37,91.79) ;
\draw [shift={(385.17,91.37)}, rotate = 166.76] [color={rgb, 255:red, 0; green, 0; blue, 0 }  ][line width=0.75]    (4.37,-1.32) .. controls (2.78,-0.56) and (1.32,-0.12) .. (0,0) .. controls (1.32,0.12) and (2.78,0.56) .. (4.37,1.32)   ;
  
\draw    (129.5,79.83) -- (269.5,79.83) ;
  
\draw    (306.5,79.83) -- (446.5,79.83) ;
  
\draw    (129.5,119.83) -- (269.5,119.83) ;
  
\draw    (306.5,119.83) -- (446.5,119.83) ;
  
\draw    (356.37,119.77) -- (345.17,108.17) ;
  
\draw    (386.33,91.37) -- (373.5,79.83) ;
  
\draw    (345.17,108.17) -- (373.5,79.83) ;
  
\draw    (356.37,119.77) -- (386.33,91.37) ;
  
\draw  [color={rgb, 255:red, 208; green, 2; blue, 27 }  ,draw opacity=1 ][fill={rgb, 255:red, 208; green, 2; blue, 27 }  ,fill opacity=1 ] (344,108.17) .. controls (344,107.52) and (344.52,107) .. (345.17,107) .. controls (345.81,107) and (346.33,107.52) .. (346.33,108.17) .. controls (346.33,108.81) and (345.81,109.33) .. (345.17,109.33) .. controls (344.52,109.33) and (344,108.81) .. (344,108.17) -- cycle ;
  
\draw  [color={rgb, 255:red, 208; green, 2; blue, 27 }  ,draw opacity=1 ][fill={rgb, 255:red, 208; green, 2; blue, 27 }  ,fill opacity=1 ] (385.17,91.37) .. controls (385.17,90.72) and (385.69,90.2) .. (386.33,90.2) .. controls (386.98,90.2) and (387.5,90.72) .. (387.5,91.37) .. controls (387.5,92.01) and (386.98,92.53) .. (386.33,92.53) .. controls (385.69,92.53) and (385.17,92.01) .. (385.17,91.37) -- cycle ;
  
\draw    (148.3,119.83) -- (189.47,79.83) ;
  
\draw  [color={rgb, 255:red, 208; green, 2; blue, 27 }  ,draw opacity=1 ][fill={rgb, 255:red, 208; green, 2; blue, 27 }  ,fill opacity=1 ] (147.13,119.83) .. controls (147.13,119.19) and (147.66,118.67) .. (148.3,118.67) .. controls (148.94,118.67) and (149.47,119.19) .. (149.47,119.83) .. controls (149.47,120.48) and (148.94,121) .. (148.3,121) .. controls (147.66,121) and (147.13,120.48) .. (147.13,119.83) -- cycle ;
  
\draw  [color={rgb, 255:red, 208; green, 2; blue, 27 }  ,draw opacity=1 ][fill={rgb, 255:red, 208; green, 2; blue, 27 }  ,fill opacity=1 ] (188.3,79.83) .. controls (188.3,79.19) and (188.82,78.67) .. (189.47,78.67) .. controls (190.11,78.67) and (190.63,79.19) .. (190.63,79.83) .. controls (190.63,80.48) and (190.11,81) .. (189.47,81) .. controls (188.82,81) and (188.3,80.48) .. (188.3,79.83) -- cycle ;
  
\draw  [color={rgb, 255:red, 208; green, 2; blue, 27 }  ,draw opacity=1 ][fill={rgb, 255:red, 208; green, 2; blue, 27 }  ,fill opacity=1 ] (228.3,119.83) .. controls (228.3,119.19) and (228.82,118.67) .. (229.47,118.67) .. controls (230.11,118.67) and (230.63,119.19) .. (230.63,119.83) .. controls (230.63,120.48) and (230.11,121) .. (229.47,121) .. controls (228.82,121) and (228.3,120.48) .. (228.3,119.83) -- cycle ;

\draw (321.27,105.4) node [anchor=north west][inner sep=0.75pt]  [font=\tiny]  {$( s,i)$};
 
\draw (388.33,89.77) node [anchor=north west][inner sep=0.75pt]  [font=\tiny]  {$( s+i-1,n-i+2)$};
 
\draw (359.07,99) node [anchor=north west][inner sep=0.75pt]  [font=\tiny]  {$\rho $};
 
\draw (131.5,123.23) node [anchor=north west][inner sep=0.75pt]  [font=\tiny]  {$( s,1)$};
 
\draw (172.3,68.03) node [anchor=north west][inner sep=0.75pt]  [font=\tiny]  {$( s,n+1)$};
 
\draw (179.07,103) node [anchor=north west][inner sep=0.75pt]  [font=\tiny]  {$\rho $};
 
\draw (210.47,124.23) node [anchor=north west][inner sep=0.75pt]  [font=\tiny]  {$( s+n+1,1)$};
 
\draw (221.07,89.8) node [anchor=north west][inner sep=0.75pt]  [font=\tiny]  {$\rho $};
 
\draw (348.43,143.6) node [anchor=north west][inner sep=0.75pt]  [font=\tiny]  {$\ 2\leq \ i\leq \ n$};
 
\draw (169.43,144.6) node [anchor=north west][inner sep=0.75pt]  [font=\tiny]  {$\ i=1\ or\ n+1$};

\end{tikzpicture}

\end{figure}
\noindent Since, for each point of $\Gamma$, the number of incoming and outgoing arrows is preserved under a quiver automorphism, it is evident that every automorphism of $\Gamma$ can be expressed as a combination of $\tau_F$ and $\rho$. We note the following relations between these two generators:
\begin{itemize}
    \item  the translation $\tau_F$ is of infinite order;
    \item $\rho$ commutes with $\tau_F$;  
    \item $\rho^2$ equals $\tau_F^n$.
\end{itemize}
Thus, we get a presentation of the group of automorphisms of $(\Gamma, d)$ as
\begin{equation}\label{equ 111}
    \operatorname{Aut}(\Gamma, d)=\langle \tau_F,\rho\mid \rho^2=\tau_F^n, \tau_F\rho=\rho\tau_F\rangle.
\end{equation}

\begin{rem}
    Note that the Picard group $\mathbb{L}$ does not act faithfully on the category ${\rm vect}^F\mbox{-}\mathbb{X}$, since the degree shift by $\vec{x}_1-\vec{x}_2$ fixes all generalized extension bundles. Consequently, $\mathbb{L}/\mathbb{Z}(\vec{x}_1-\vec{x}_2)$ acts on the category ${\rm vect}^F\mbox{-}\mathbb{X}$.  It can be verified that $\tau_F$ corresponds to the degree shift by $\vec{x}_3$ and $\rho$ corresponds to the degree shift by $\vec{x}_1$ in $\mathbb{L}/\mathbb{Z}(\vec{x}_1-\vec{x}_2)$. Therefore, in the rest, we identify $\operatorname{Aut}(\Gamma, d)$ with $\mathbb{L}/\mathbb{Z}(\vec{x}_1-\vec{x}_2)$.
\end{rem}

\section{An infinite marked strip under a specific group action}\label{Section 4}
In this section, we present the orbit space of an infinite marked strip with respect to a $G$-action as a geometric model for ${\rm vect}\mbox{-}\mathbb{X}$. We give the correspondence between $G$-orbits of line segments and indecomposable objects in ${\rm vect}\mbox{-}\mathbb{X}$.  Moreover, we give geometric interpretations for various aspects including the automorphism group of $(\Gamma, d)$, the slope of indecomposable bundles, the action of the Picard group action and vector bundle duality. 

\subsection{An infinite marked strip}\label{A marked infinite strip}
Consider an infinite  marked strip in the plane, which can be represented in $\mathbb{R}^2$ as $\widetilde{\mathcal{S}}=\{(x,y)\in\mathbb{R}^{2}\mid 0\leq y\leq 1\}$. The lines $y=0$ and $y=1$ respectively form the lower boundary $\partial^\prime$ and upper boundary $\partial$ of $\widetilde{\mathcal{S}}$. Let $M$ be the set of points $\{(i,0),(j,1)\mid i,j\in \mathbb{Z}\}$, called \emph{marked points} of $\widetilde{\mathcal{S}}$. 

Consider two bijections $\sigma_n$ and $\theta$ defined on the strip $\widetilde{\mathcal{S}}$, where:

\begin{itemize}
    \item $\sigma_n$ translates all points on $\widetilde{\mathcal{S}}$ along the positive $x$-axis by $n$ units:
    \[
    \sigma_n: \widetilde{\mathcal{S}}\rightarrow\widetilde{\mathcal{S}} \quad (x, y)\mapsto (x+n, y);
    \]
    
    \item $\theta$ reflects all points on $\widetilde{\mathcal{S}}$ with respect to the point $(0,\frac{1}{2})$:
    \[
    \theta: \widetilde{\mathcal{S}}\rightarrow\widetilde{\mathcal{S}} \quad (x, y)\mapsto (-x, 1-y).
    \]
\end{itemize}

The bijections $\sigma_n$ and $\theta$ are depicted as follows.
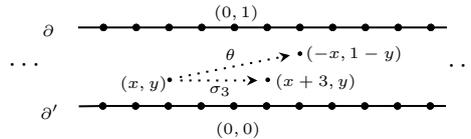
\begin{figure}[H]
    \centering
\tikzset{every picture/.style={line width=0.75pt}}          
\begin{tikzpicture}[x=0.75pt,y=0.75pt,yscale=-1,xscale=1]
\draw    (43,19) -- (55.67,19) ;
\draw [shift={(55.67,19)}, rotate = 360] [color={rgb, 255:red, 0; green, 0; blue, 0 }  ][fill={rgb, 255:red, 0; green, 0; blue, 0 }  ][line width=0.75]      (0, 0) circle [x radius= 1.34, y radius= 1.34]   ;
\draw    (55.67,19) -- (72.02,19) ;
\draw [shift={(72.02,19)}, rotate = 0] [color={rgb, 255:red, 0; green, 0; blue, 0 }  ][fill={rgb, 255:red, 0; green, 0; blue, 0 }  ][line width=0.75]      (0, 0) circle [x radius= 1.34, y radius= 1.34]   ;
\draw [shift={(55.67,19)}, rotate = 0] [color={rgb, 255:red, 0; green, 0; blue, 0 }  ][fill={rgb, 255:red, 0; green, 0; blue, 0 }  ][line width=0.75]      (0, 0) circle [x radius= 1.34, y radius= 1.34]   ;
\draw    (72.02,19) -- (88.36,19) ;
\draw [shift={(88.36,19)}, rotate = 0] [color={rgb, 255:red, 0; green, 0; blue, 0 }  ][fill={rgb, 255:red, 0; green, 0; blue, 0 }  ][line width=0.75]      (0, 0) circle [x radius= 1.34, y radius= 1.34]   ;
\draw [shift={(72.02,19)}, rotate = 0] [color={rgb, 255:red, 0; green, 0; blue, 0 }  ][fill={rgb, 255:red, 0; green, 0; blue, 0 }  ][line width=0.75]      (0, 0) circle [x radius= 1.34, y radius= 1.34]   ;
\draw    (88.36,19) -- (104.7,19) ;
\draw [shift={(104.7,19)}, rotate = 0] [color={rgb, 255:red, 0; green, 0; blue, 0 }  ][fill={rgb, 255:red, 0; green, 0; blue, 0 }  ][line width=0.75]      (0, 0) circle [x radius= 1.34, y radius= 1.34]   ;
\draw [shift={(88.36,19)}, rotate = 0] [color={rgb, 255:red, 0; green, 0; blue, 0 }  ][fill={rgb, 255:red, 0; green, 0; blue, 0 }  ][line width=0.75]      (0, 0) circle [x radius= 1.34, y radius= 1.34]   ;
\draw    (104.7,19) -- (121.04,19) ;
\draw [shift={(121.04,19)}, rotate = 0] [color={rgb, 255:red, 0; green, 0; blue, 0 }  ][fill={rgb, 255:red, 0; green, 0; blue, 0 }  ][line width=0.75]      (0, 0) circle [x radius= 1.34, y radius= 1.34]   ;
\draw [shift={(104.7,19)}, rotate = 0] [color={rgb, 255:red, 0; green, 0; blue, 0 }  ][fill={rgb, 255:red, 0; green, 0; blue, 0 }  ][line width=0.75]      (0, 0) circle [x radius= 1.34, y radius= 1.34]   ;
\draw    (121.04,19) -- (137.38,19) ;
\draw [shift={(137.38,19)}, rotate = 0] [color={rgb, 255:red, 0; green, 0; blue, 0 }  ][fill={rgb, 255:red, 0; green, 0; blue, 0 }  ][line width=0.75]      (0, 0) circle [x radius= 1.34, y radius= 1.34]   ;
\draw [shift={(121.04,19)}, rotate = 0] [color={rgb, 255:red, 0; green, 0; blue, 0 }  ][fill={rgb, 255:red, 0; green, 0; blue, 0 }  ][line width=0.75]      (0, 0) circle [x radius= 1.34, y radius= 1.34]   ;
\draw    (137.38,19) -- (153.72,19) ;
\draw [shift={(153.72,19)}, rotate = 0] [color={rgb, 255:red, 0; green, 0; blue, 0 }  ][fill={rgb, 255:red, 0; green, 0; blue, 0 }  ][line width=0.75]      (0, 0) circle [x radius= 1.34, y radius= 1.34]   ;
\draw [shift={(137.38,19)}, rotate = 0] [color={rgb, 255:red, 0; green, 0; blue, 0 }  ][fill={rgb, 255:red, 0; green, 0; blue, 0 }  ][line width=0.75]      (0, 0) circle [x radius= 1.34, y radius= 1.34]   ;
\draw    (153.72,19) -- (170.06,19) ;
\draw [shift={(170.06,19)}, rotate = 0] [color={rgb, 255:red, 0; green, 0; blue, 0 }  ][fill={rgb, 255:red, 0; green, 0; blue, 0 }  ][line width=0.75]      (0, 0) circle [x radius= 1.34, y radius= 1.34]   ;
\draw [shift={(153.72,19)}, rotate = 0] [color={rgb, 255:red, 0; green, 0; blue, 0 }  ][fill={rgb, 255:red, 0; green, 0; blue, 0 }  ][line width=0.75]      (0, 0) circle [x radius= 1.34, y radius= 1.34]   ;
\draw    (170.06,19) -- (186.4,19) ;
\draw [shift={(186.4,19)}, rotate = 0] [color={rgb, 255:red, 0; green, 0; blue, 0 }  ][fill={rgb, 255:red, 0; green, 0; blue, 0 }  ][line width=0.75]      (0, 0) circle [x radius= 1.34, y radius= 1.34]   ;
\draw [shift={(170.06,19)}, rotate = 0] [color={rgb, 255:red, 0; green, 0; blue, 0 }  ][fill={rgb, 255:red, 0; green, 0; blue, 0 }  ][line width=0.75]      (0, 0) circle [x radius= 1.34, y radius= 1.34]   ;
\draw    (186.4,19) -- (202.74,19) ;
\draw [shift={(202.74,19)}, rotate = 0] [color={rgb, 255:red, 0; green, 0; blue, 0 }  ][fill={rgb, 255:red, 0; green, 0; blue, 0 }  ][line width=0.75]      (0, 0) circle [x radius= 1.34, y radius= 1.34]   ;
\draw [shift={(186.4,19)}, rotate = 0] [color={rgb, 255:red, 0; green, 0; blue, 0 }  ][fill={rgb, 255:red, 0; green, 0; blue, 0 }  ][line width=0.75]      (0, 0) circle [x radius= 1.34, y radius= 1.34]   ;
\draw    (202.74,19) -- (219.08,19) ;
\draw [shift={(219.08,19)}, rotate = 0] [color={rgb, 255:red, 0; green, 0; blue, 0 }  ][fill={rgb, 255:red, 0; green, 0; blue, 0 }  ][line width=0.75]      (0, 0) circle [x radius= 1.34, y radius= 1.34]   ;
\draw [shift={(202.74,19)}, rotate = 0] [color={rgb, 255:red, 0; green, 0; blue, 0 }  ][fill={rgb, 255:red, 0; green, 0; blue, 0 }  ][line width=0.75]      (0, 0) circle [x radius= 1.34, y radius= 1.34]   ;
\draw    (43.33,58.67) -- (55.34,58.5) ;
\draw [shift={(55.34,58.5)}, rotate = 359.2] [color={rgb, 255:red, 0; green, 0; blue, 0 }  ][fill={rgb, 255:red, 0; green, 0; blue, 0 }  ][line width=0.75]      (0, 0) circle [x radius= 1.34, y radius= 1.34]   ;
\draw    (55.34,58.5) -- (71.68,58.5) ;
\draw [shift={(71.68,58.5)}, rotate = 0] [color={rgb, 255:red, 0; green, 0; blue, 0 }  ][fill={rgb, 255:red, 0; green, 0; blue, 0 }  ][line width=0.75]      (0, 0) circle [x radius= 1.34, y radius= 1.34]   ;
\draw [shift={(55.34,58.5)}, rotate = 0] [color={rgb, 255:red, 0; green, 0; blue, 0 }  ][fill={rgb, 255:red, 0; green, 0; blue, 0 }  ][line width=0.75]      (0, 0) circle [x radius= 1.34, y radius= 1.34]   ;
\draw    (71.68,58.5) -- (88.02,58.5) ;
\draw [shift={(88.02,58.5)}, rotate = 0] [color={rgb, 255:red, 0; green, 0; blue, 0 }  ][fill={rgb, 255:red, 0; green, 0; blue, 0 }  ][line width=0.75]      (0, 0) circle [x radius= 1.34, y radius= 1.34]   ;
\draw [shift={(71.68,58.5)}, rotate = 0] [color={rgb, 255:red, 0; green, 0; blue, 0 }  ][fill={rgb, 255:red, 0; green, 0; blue, 0 }  ][line width=0.75]      (0, 0) circle [x radius= 1.34, y radius= 1.34]   ;
\draw    (88.02,58.5) -- (104.36,58.5) ;
\draw [shift={(104.36,58.5)}, rotate = 0] [color={rgb, 255:red, 0; green, 0; blue, 0 }  ][fill={rgb, 255:red, 0; green, 0; blue, 0 }  ][line width=0.75]      (0, 0) circle [x radius= 1.34, y radius= 1.34]   ;
\draw [shift={(88.02,58.5)}, rotate = 0] [color={rgb, 255:red, 0; green, 0; blue, 0 }  ][fill={rgb, 255:red, 0; green, 0; blue, 0 }  ][line width=0.75]      (0, 0) circle [x radius= 1.34, y radius= 1.34]   ;
\draw    (104.36,58.5) -- (120.7,58.5) ;
\draw [shift={(120.7,58.5)}, rotate = 0] [color={rgb, 255:red, 0; green, 0; blue, 0 }  ][fill={rgb, 255:red, 0; green, 0; blue, 0 }  ][line width=0.75]      (0, 0) circle [x radius= 1.34, y radius= 1.34]   ;
\draw [shift={(104.36,58.5)}, rotate = 0] [color={rgb, 255:red, 0; green, 0; blue, 0 }  ][fill={rgb, 255:red, 0; green, 0; blue, 0 }  ][line width=0.75]      (0, 0) circle [x radius= 1.34, y radius= 1.34]   ;
\draw    (120.7,58.5) -- (137.05,58.5) ;
\draw [shift={(137.05,58.5)}, rotate = 0] [color={rgb, 255:red, 0; green, 0; blue, 0 }  ][fill={rgb, 255:red, 0; green, 0; blue, 0 }  ][line width=0.75]      (0, 0) circle [x radius= 1.34, y radius= 1.34]   ;
\draw [shift={(120.7,58.5)}, rotate = 0] [color={rgb, 255:red, 0; green, 0; blue, 0 }  ][fill={rgb, 255:red, 0; green, 0; blue, 0 }  ][line width=0.75]      (0, 0) circle [x radius= 1.34, y radius= 1.34]   ;
\draw    (137.05,58.5) -- (153.39,58.5) ;
\draw [shift={(153.39,58.5)}, rotate = 0] [color={rgb, 255:red, 0; green, 0; blue, 0 }  ][fill={rgb, 255:red, 0; green, 0; blue, 0 }  ][line width=0.75]      (0, 0) circle [x radius= 1.34, y radius= 1.34]   ;
\draw [shift={(137.05,58.5)}, rotate = 0] [color={rgb, 255:red, 0; green, 0; blue, 0 }  ][fill={rgb, 255:red, 0; green, 0; blue, 0 }  ][line width=0.75]      (0, 0) circle [x radius= 1.34, y radius= 1.34]   ;
\draw    (153.39,58.5) -- (169.73,58.5) ;
\draw [shift={(169.73,58.5)}, rotate = 0] [color={rgb, 255:red, 0; green, 0; blue, 0 }  ][fill={rgb, 255:red, 0; green, 0; blue, 0 }  ][line width=0.75]      (0, 0) circle [x radius= 1.34, y radius= 1.34]   ;
\draw [shift={(153.39,58.5)}, rotate = 0] [color={rgb, 255:red, 0; green, 0; blue, 0 }  ][fill={rgb, 255:red, 0; green, 0; blue, 0 }  ][line width=0.75]      (0, 0) circle [x radius= 1.34, y radius= 1.34]   ;
\draw    (169.73,58.5) -- (186.07,58.5) ;
\draw [shift={(186.07,58.5)}, rotate = 0] [color={rgb, 255:red, 0; green, 0; blue, 0 }  ][fill={rgb, 255:red, 0; green, 0; blue, 0 }  ][line width=0.75]      (0, 0) circle [x radius= 1.34, y radius= 1.34]   ;
\draw [shift={(169.73,58.5)}, rotate = 0] [color={rgb, 255:red, 0; green, 0; blue, 0 }  ][fill={rgb, 255:red, 0; green, 0; blue, 0 }  ][line width=0.75]      (0, 0) circle [x radius= 1.34, y radius= 1.34]   ;
\draw    (186.07,58.5) -- (202.41,58.5) ;
\draw [shift={(202.41,58.5)}, rotate = 0] [color={rgb, 255:red, 0; green, 0; blue, 0 }  ][fill={rgb, 255:red, 0; green, 0; blue, 0 }  ][line width=0.75]      (0, 0) circle [x radius= 1.34, y radius= 1.34]   ;
\draw [shift={(186.07,58.5)}, rotate = 0] [color={rgb, 255:red, 0; green, 0; blue, 0 }  ][fill={rgb, 255:red, 0; green, 0; blue, 0 }  ][line width=0.75]      (0, 0) circle [x radius= 1.34, y radius= 1.34]   ;
\draw    (202.41,58.5) -- (218.75,58.5) ;
\draw [shift={(218.75,58.5)}, rotate = 0] [color={rgb, 255:red, 0; green, 0; blue, 0 }  ][fill={rgb, 255:red, 0; green, 0; blue, 0 }  ][line width=0.75]      (0, 0) circle [x radius= 1.34, y radius= 1.34]   ;
\draw [shift={(202.41,58.5)}, rotate = 0] [color={rgb, 255:red, 0; green, 0; blue, 0 }  ][fill={rgb, 255:red, 0; green, 0; blue, 0 }  ][line width=0.75]      (0, 0) circle [x radius= 1.34, y radius= 1.34]   ;
\draw   (88,45.33) .. controls (88,44.97) and (88.3,44.67) .. (88.67,44.67) .. controls (89.03,44.67) and (89.33,44.97) .. (89.33,45.33) .. controls (89.33,45.7) and (89.03,46) .. (88.67,46) .. controls (88.3,46) and (88,45.7) .. (88,45.33) -- cycle ;
\draw  [dash pattern={on 0.84pt off 2.51pt}]  (94,45.67) -- (129.5,45.67) ;
\draw [shift={(132.5,45.67)}, rotate = 180] [fill={rgb, 255:red, 0; green, 0; blue, 0 }  ][line width=0.08]  [draw opacity=0] (5.36,-2.57) -- (0,0) -- (5.36,2.57) -- (3.56,0) -- cycle    ;
\draw   (137,45.33) .. controls (137,44.97) and (137.3,44.67) .. (137.67,44.67) .. controls (138.03,44.67) and (138.33,44.97) .. (138.33,45.33) .. controls (138.33,45.7) and (138.03,46) .. (137.67,46) .. controls (137.3,46) and (137,45.7) .. (137,45.33) -- cycle ;
\draw    (219.08,19) -- (227.08,19) ;
\draw [shift={(219.08,19)}, rotate = 0] [color={rgb, 255:red, 0; green, 0; blue, 0 }  ][fill={rgb, 255:red, 0; green, 0; blue, 0 }  ][line width=0.75]      (0, 0) circle [x radius= 1.34, y radius= 1.34]   ;
\draw    (218.75,58.5) -- (226.58,58.5) ;
\draw [shift={(218.75,58.5)}, rotate = 0] [color={rgb, 255:red, 0; green, 0; blue, 0 }  ][fill={rgb, 255:red, 0; green, 0; blue, 0 }  ][line width=0.75]      (0, 0) circle [x radius= 1.34, y radius= 1.34]   ;
\draw   (153.08,32.17) .. controls (153.08,31.8) and (153.37,31.5) .. (153.74,31.5) .. controls (154.11,31.5) and (154.41,31.8) .. (154.41,32.17) .. controls (154.41,32.53) and (154.11,32.83) .. (153.74,32.83) .. controls (153.37,32.83) and (153.08,32.53) .. (153.08,32.17) -- cycle ;
\draw  [dash pattern={on 0.84pt off 2.51pt}]  (93.76,44) -- (145.82,33.59) ;
\draw [shift={(148.76,33)}, rotate = 168.69] [fill={rgb, 255:red, 0; green, 0; blue, 0 }  ][line width=0.08]  [draw opacity=0] (5.36,-2.57) -- (0,0) -- (5.36,2.57) -- (3.56,0) -- cycle    ;

\draw (22.67,56.4) node [anchor=north west][inner sep=0.75pt]  [font=\tiny]  {$\partial ^{\prime }$};
\draw (23.67,16.4) node [anchor=north west][inner sep=0.75pt]  [font=\tiny]  {$\partial $};
\draw (226,34.2) node [anchor=north west][inner sep=0.75pt]    {$\cdots $};
\draw (6.8,33.4) node [anchor=north west][inner sep=0.75pt]    {$\cdots $};
\draw (110.2,64.9) node [anchor=north west][inner sep=0.75pt]  [font=\tiny]  {$( 0,0)$};
\draw (109.3,6.5) node [anchor=north west][inner sep=0.75pt]  [font=\tiny]  {$( 0,1)$};
\draw (62.5,41.9) node [anchor=north west][inner sep=0.75pt]  [font=\tiny]  {$( x,y)$};
\draw (115,28.57) node [anchor=north west][inner sep=0.75pt]  [font=\tiny]  {$\theta $};
\draw (107,46.9) node [anchor=north west][inner sep=0.75pt]  [font=\tiny]  {$\sigma _{3}$};
\draw (140,41.07) node [anchor=north west][inner sep=0.75pt]  [font=\tiny]  {$( x+3,y)$};
\draw (155.08,26.9) node [anchor=north west][inner sep=0.75pt]  [font=\tiny]  {$( -x,1-y)$};
\end{tikzpicture}
\caption{$\sigma_n$ and $\theta$ on the strip $\widetilde{\mathcal{S}}$ with $n=3$}
\end{figure}

We denote by $[I,J]$, where $I, J\in \widetilde{\mathcal{S}}$, the line segment in $\widetilde{\mathcal{S}}$ with endpoints $I$ and $J$. By abuse of notation, we also use $[i,j]$ to represent the line segment with endpoints $(i,0)$ and $(j,1)$. Let $\operatorname{Seg}(M)$ be the collection of line segments $\{[i,j]\;|\; i,j\in \mathbb{Z}\}$.  Regard the line segments in $\operatorname{Seg}(M)$ as sets of points. The maps $\sigma_n$ and $\theta$ naturally induce two bijections on $\operatorname{Seg}(M)$, also denoted by $\sigma_n$ and $\theta$. Precisely, 
\begin{align*}
    \sigma_n: & \operatorname{Seg}(M)\rightarrow\operatorname{Seg}(M) && [i,j]\mapsto [i+n, j+n]; \\
    \theta: & \operatorname{Seg}(M)\rightarrow\operatorname{Seg}(M) && [i,j]\mapsto [-j,-i].
\end{align*}

 Let $G$ be the group  generated by $\sigma_n$ and $\theta$. Naturally, there is a $G$-action on $\widetilde{\mathcal{S}}$. Denote by $\widetilde{\operatorname{Seg}(M)}$ the set of $G$-orbits of segments in $\operatorname{Seg}(M)$. We have the following proposition.
 \begin{prop}{}{}\label{correspondence1}
     There exists a bijection
    \begin{align*}
\phi: \widetilde{\operatorname{Seg}(M)} & \rightarrow \operatorname{ind}({\rm vect}^F\mbox{-}\mathbb{X}) \\
\widetilde{[i,j]} & \mapsto \mathsf{E}_{L_0(-i\vec{x}_3)}\langle (i+j-1)\vec{x}_3 \rangle.
\end{align*}
\end{prop}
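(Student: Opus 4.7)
The plan is to verify well-definedness of $\phi$ on $G$-orbits, then exhibit a parametrization of $\operatorname{ind}({\rm vect}^F\mbox{-}\mathbb{X})$ matching the target of $\phi$, and finally derive injectivity by matching algebraic identifications of generalized extension bundles with the action of $G$ on line segments.

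First, for well-definedness it suffices to verify invariance under the generators $\sigma_n$ and $\theta$. For $\sigma_n$, the equality $\phi(\widetilde{[i+n,j+n]}) = \phi(\widetilde{[i,j]})$ reduces, via Definition \ref{generalized extension bundle}, to the observation that replacing $L$ by $L(-\vec{c})$ and incrementing the $\vec{c}$-coefficient of $\vec{x}$ by $2$ leaves the underlying $E_{L(l\vec{x}_1)}\langle l_3\vec{x}_3\rangle$ unchanged in normal form (because $\vec{c} = 2\vec{x}_1$). For $\theta$, applying the Serre-type identity $\mathsf{E}_L\langle\vec{x}\rangle = \mathsf{E}_{L(\vec{x}-\vec{x}_1+\vec{x}_3)}\langle\vec{\delta}-\vec{x}\rangle$ from Remark \ref{rem1}(i) to $\phi(\widetilde{[i,j]}) = \mathsf{E}_{L_0(-i\vec{x}_3)}\langle(i+j-1)\vec{x}_3\rangle$ yields $\mathsf{E}_{L_0(j\vec{x}_3 - \vec{x}_1)}\langle(n-1-i-j)\vec{x}_3\rangle$; a small normal-form adjustment of the form $\mathsf{E}_{L'}\langle\vec{x}'\rangle = \mathsf{E}_{L'(-\vec{x}_1)}\langle\vec{x}' + \vec{c}\rangle$ (immediate from the definition) converts this into $\phi(\widetilde{[-j,-i]})$.

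Next, for surjectivity I would use the observation preceding the statement: every object of $\operatorname{ind}({\rm vect}^F\mbox{-}\mathbb{X})$ is a generalized extension bundle $\mathsf{E}_L\langle k\vec{x}_3\rangle$, including the cases such as $L \oplus L^*$ which are indecomposable as $F$-stable objects though decomposable in ${\rm vect}\mbox{-}\mathbb{X}$. Writing $L = L_0(\vec{y})$ and using the identities $\mathsf{E}_L\langle\vec{x}\rangle = \mathsf{E}_{L^*}\langle\vec{x}\rangle$ and the Serre-type identity from Remark \ref{rem1}(i), together with the normal-form identity just noted, I can absorb the $\vec{x}_1$- and $\vec{c}$-components of $\vec{y}$ and reduce to $\vec{y} = m\vec{x}_3$ for some $m \in \mathbb{Z}$. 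Then setting $i = -m$ and $j = k + m + 1$ produces the preimage $\widetilde{[i,j]}$.

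For injectivity, suppose $\phi(\widetilde{[i_1,j_1]}) \cong \phi(\widetilde{[i_2,j_2]})$. The complete list of identifications among generalized extension bundles consists of (a) the $L \leftrightarrow L^*$ identity, (b) the Serre-type identity of Remark \ref{rem1}(i), and (c) the normal-form identity $\mathsf{E}_L\langle\vec{x}\rangle = \mathsf{E}_{L(-\vec{x}_1)}\langle\vec{x}+\vec{c}\rangle$ arising directly from Definition \ref{generalized extension bundle}. Translating these identifications back to the pairs $(i,j)$, relation (c) corresponds precisely to the action of $\sigma_n$, while (b) (with (a) absorbed automatically) corresponds to $\theta$. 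Hence $[i_1,j_1]$ and $[i_2,j_2]$ lie in the same $G$-orbit. The main obstacle is this last step: one must ensure that the catalog of algebraic identifications is complete — in particular, that the cases where $\mathsf{E}_L\langle\vec{x}\rangle$ is decomposable in ${\rm vect}\mbox{-}\mathbb{X}$ introduce no new coincidences and that no spurious extra relations arise beyond those implemented by $\sigma_n$ and $\theta$ — which requires a careful extension of Proposition \ref{Grothendieck group} to arbitrary $\vec{x} \in \mathbb{L}$.
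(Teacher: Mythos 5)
Your proposal is correct and follows essentially the same route as the paper: well-definedness via the identities of Remark \ref{rem1}(i), surjectivity from the parametrization of generalized extension bundles, and injectivity by reducing coincidences among the bundles $\mathsf{E}_{L_0(-i\vec{x}_3)}\langle(i+j-1)\vec{x}_3\rangle$ to the $G$-action on segments. The completeness of the catalog of identifications that you flag as the main obstacle is exactly what Remark \ref{rem1} supplies (the general form of Proposition \ref{Grothendieck group} in part (i) together with the parametrization in part (ii)); the paper's own injectivity argument leans on the same remark, after first normalizing the orbit representative so that $-n < i+j \leq n$.
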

\begin{proof}
Firstly, we claim that $\phi$ is well-defined. Assume $[i,j]\sim [s,t]$ in $\operatorname{Seg}(M)$. Then there exists an element $g$ in $G$ such that $g\cdot[i,j]=[s,t]$. Through straightforward verification, we know that $\theta^2=1_G$ and $\sigma_n^k\theta=\theta\sigma_n^{-k}$, for any $k\in \mathbb{Z}$. Therefore, we can set $g$ as $\sigma_n^k$ or $\sigma_n^k\theta$ for some $k\in \mathbb{Z}$. If $g=\sigma_n^k$, then
\begin{align*}
    \phi(\widetilde{[s,t]}) &= \phi(\widetilde{g\cdot[i,j]}) = \phi(\widetilde{[i+kn,j+kn]})\\
    &= \mathsf{E}_{L_0(-i\vec{x}_3-k\vec{c})}\langle (i+j+2kn-1)\vec{x}_3\rangle \\
    &= \mathsf{E}_{L_0(-i\vec{x}_3)}\langle (i+j-1)\vec{x}_3 \rangle \\
    &= \phi(\widetilde{[i,j]}).
\end{align*}
If $g=\sigma_n^k\theta$, then
\begin{align*}
    \phi(\widetilde{[s,t]}) &= \phi( \widetilde{g\cdot[i,j]}) = \phi(\widetilde{[-j+kn,-i+kn]}) \\
&= \mathsf{E}_{L_0(j\vec{x}_3-k\vec{c})}\langle (-i-j+2kn-1)\vec{x}_3\rangle \\
&= \mathsf{E}_{L_0(j\vec{x}_3)}\langle (-i-j-1)\vec{x}_3 \rangle \\
&=\mathsf{E}_{L_0(j\vec{x}_3)((-i-j)\vec{x}_3-\vec{x}_1)}\langle \vec{\delta}+(i+j+1)\vec{x}_3 \rangle\\
&= \mathsf{E}_{L_0(-i\vec{x}_3)}\langle (i+j-1)\vec{x}_3 \rangle \\
&= \phi(\widetilde{[i,j]}).
\end{align*}
Therefore, $\phi$ is well-defined. 

Clearly, $\phi$ is surjective. It remains to show that $\phi$ is injective. Take an element $\widetilde{[i,j]}$ in $\widetilde{\operatorname{Seg}(M)}$, we can always assume that $i,j$ satisfy $-n<i+j\leq n$. This is because, given $i+j\in \mathbb{Z}$ and $\mathbb{R}=\cup_{k\in \mathbb{Z}}((2k-1)n,(2k+1)n]$, there exists a unique $k\in \mathbb{Z}$ such that $(2k-1)n<i+j\leq (2k+1)n$. Let $[i^\prime,j^\prime]=\sigma_n^{-k}\cdot[i,j]$, then $i^\prime,j^\prime$ satisfy $-n<i^\prime+j^\prime\leq n$ and $\widetilde{[i,j]}=\widetilde{[i^\prime,j^\prime]}$.  Now, suppose $\widetilde{[i,j]}$ and $\widetilde{[s,t]}$ are elements in $\widetilde{\operatorname{Seg}(M)}$, satisfying $\phi(\widetilde{[i,j]})=\phi(\widetilde{[s,t]})$, i.e.,  \[\mathsf{E}_{L_0(-i\vec{x}_3)}\langle (i+j-1)\vec{x}_3 \rangle=\mathsf{E}_{L_0(-s\vec{x}_3)}\langle (s+t-1)\vec{x}_3 \rangle.\] 
The preceding discussion allows us to assume that $-n<i+j, s+t\leq n$. Then by Remark \ref{rem1}, $i-s=0$ and $i+j-1=s+t-1$. Consequently, $\widetilde{[i,j]}=\widetilde{[s,t]}$. Therefore, $\phi$ is a bijection.
\end{proof}

\begin{rem} 
 Let $(-i,0)$  be the marked point on the lower boundary $\partial^\prime$ of $\widetilde{\mathcal{S}}$. Consider the line segments in $\operatorname{Seg}(M)$ with $(-i,0)$ as one endpoint. 
\begin{figure}[H]
    \centering
 \tikzset{every picture/.style={line width=0.75pt}}          
 \begin{tikzpicture}[x=0.75pt,y=0.75pt,yscale=-1,xscale=1]
 \draw    (26.5,53.5) -- (39.17,53.5) ;
\draw [shift={(39.17,53.5)}, rotate = 360] [color={rgb, 255:red, 0; green, 0; blue, 0 }  ][fill={rgb, 255:red, 0; green, 0; blue, 0 }  ][line width=0.75]      (0, 0) circle [x radius= 1.34, y radius= 1.34]   ;
  \draw    (39.17,53.5) -- (55.52,53.5) ;
\draw [shift={(55.52,53.5)}, rotate = 0] [color={rgb, 255:red, 0; green, 0; blue, 0 }  ][fill={rgb, 255:red, 0; green, 0; blue, 0 }  ][line width=0.75]      (0, 0) circle [x radius= 1.34, y radius= 1.34]   ;
\draw [shift={(39.17,53.5)}, rotate = 0] [color={rgb, 255:red, 0; green, 0; blue, 0 }  ][fill={rgb, 255:red, 0; green, 0; blue, 0 }  ][line width=0.75]      (0, 0) circle [x radius= 1.34, y radius= 1.34]   ;
  \draw    (55.52,53.5) -- (71.86,53.5) ;
\draw [shift={(71.86,53.5)}, rotate = 0] [color={rgb, 255:red, 0; green, 0; blue, 0 }  ][fill={rgb, 255:red, 0; green, 0; blue, 0 }  ][line width=0.75]      (0, 0) circle [x radius= 1.34, y radius= 1.34]   ;
\draw [shift={(55.52,53.5)}, rotate = 0] [color={rgb, 255:red, 0; green, 0; blue, 0 }  ][fill={rgb, 255:red, 0; green, 0; blue, 0 }  ][line width=0.75]      (0, 0) circle [x radius= 1.34, y radius= 1.34]   ;
  \draw    (71.86,53.5) -- (88.2,53.5) ;
\draw [shift={(88.2,53.5)}, rotate = 0] [color={rgb, 255:red, 0; green, 0; blue, 0 }  ][fill={rgb, 255:red, 0; green, 0; blue, 0 }  ][line width=0.75]      (0, 0) circle [x radius= 1.34, y radius= 1.34]   ;
\draw [shift={(71.86,53.5)}, rotate = 0] [color={rgb, 255:red, 0; green, 0; blue, 0 }  ][fill={rgb, 255:red, 0; green, 0; blue, 0 }  ][line width=0.75]      (0, 0) circle [x radius= 1.34, y radius= 1.34]   ;
  \draw    (88.2,53.5) -- (104.54,53.5) ;
\draw [shift={(104.54,53.5)}, rotate = 0] [color={rgb, 255:red, 0; green, 0; blue, 0 }  ][fill={rgb, 255:red, 0; green, 0; blue, 0 }  ][line width=0.75]      (0, 0) circle [x radius= 1.34, y radius= 1.34]   ;
\draw [shift={(88.2,53.5)}, rotate = 0] [color={rgb, 255:red, 0; green, 0; blue, 0 }  ][fill={rgb, 255:red, 0; green, 0; blue, 0 }  ][line width=0.75]      (0, 0) circle [x radius= 1.34, y radius= 1.34]   ;
  \draw    (104.54,53.5) -- (120.88,53.5) ;
\draw [shift={(120.88,53.5)}, rotate = 0] [color={rgb, 255:red, 0; green, 0; blue, 0 }  ][fill={rgb, 255:red, 0; green, 0; blue, 0 }  ][line width=0.75]      (0, 0) circle [x radius= 1.34, y radius= 1.34]   ;
\draw [shift={(104.54,53.5)}, rotate = 0] [color={rgb, 255:red, 0; green, 0; blue, 0 }  ][fill={rgb, 255:red, 0; green, 0; blue, 0 }  ][line width=0.75]      (0, 0) circle [x radius= 1.34, y radius= 1.34]   ;
  \draw    (120.88,53.5) -- (137.22,53.5) ;
\draw [shift={(137.22,53.5)}, rotate = 0] [color={rgb, 255:red, 0; green, 0; blue, 0 }  ][fill={rgb, 255:red, 0; green, 0; blue, 0 }  ][line width=0.75]      (0, 0) circle [x radius= 1.34, y radius= 1.34]   ;
\draw [shift={(120.88,53.5)}, rotate = 0] [color={rgb, 255:red, 0; green, 0; blue, 0 }  ][fill={rgb, 255:red, 0; green, 0; blue, 0 }  ][line width=0.75]      (0, 0) circle [x radius= 1.34, y radius= 1.34]   ;
  \draw    (137.22,53.5) -- (153.56,53.5) ;
\draw [shift={(153.56,53.5)}, rotate = 0] [color={rgb, 255:red, 0; green, 0; blue, 0 }  ][fill={rgb, 255:red, 0; green, 0; blue, 0 }  ][line width=0.75]      (0, 0) circle [x radius= 1.34, y radius= 1.34]   ;
\draw [shift={(137.22,53.5)}, rotate = 0] [color={rgb, 255:red, 0; green, 0; blue, 0 }  ][fill={rgb, 255:red, 0; green, 0; blue, 0 }  ][line width=0.75]      (0, 0) circle [x radius= 1.34, y radius= 1.34]   ;
  \draw    (153.56,53.5) -- (169.9,53.5) ;
\draw [shift={(169.9,53.5)}, rotate = 0] [color={rgb, 255:red, 0; green, 0; blue, 0 }  ][fill={rgb, 255:red, 0; green, 0; blue, 0 }  ][line width=0.75]      (0, 0) circle [x radius= 1.34, y radius= 1.34]   ;
\draw [shift={(153.56,53.5)}, rotate = 0] [color={rgb, 255:red, 0; green, 0; blue, 0 }  ][fill={rgb, 255:red, 0; green, 0; blue, 0 }  ][line width=0.75]      (0, 0) circle [x radius= 1.34, y radius= 1.34]   ;
  \draw    (169.9,53.5) -- (186.24,53.5) ;
\draw [shift={(186.24,53.5)}, rotate = 0] [color={rgb, 255:red, 0; green, 0; blue, 0 }  ][fill={rgb, 255:red, 0; green, 0; blue, 0 }  ][line width=0.75]      (0, 0) circle [x radius= 1.34, y radius= 1.34]   ;
\draw [shift={(169.9,53.5)}, rotate = 0] [color={rgb, 255:red, 0; green, 0; blue, 0 }  ][fill={rgb, 255:red, 0; green, 0; blue, 0 }  ][line width=0.75]      (0, 0) circle [x radius= 1.34, y radius= 1.34]   ;
  \draw    (186.24,53.5) -- (202.58,53.5) ;
\draw [shift={(202.58,53.5)}, rotate = 0] [color={rgb, 255:red, 0; green, 0; blue, 0 }  ][fill={rgb, 255:red, 0; green, 0; blue, 0 }  ][line width=0.75]      (0, 0) circle [x radius= 1.34, y radius= 1.34]   ;
\draw [shift={(186.24,53.5)}, rotate = 0] [color={rgb, 255:red, 0; green, 0; blue, 0 }  ][fill={rgb, 255:red, 0; green, 0; blue, 0 }  ][line width=0.75]      (0, 0) circle [x radius= 1.34, y radius= 1.34]   ;
  \draw    (26.83,93.17) -- (38.84,93) ;
\draw [shift={(38.84,93)}, rotate = 359.2] [color={rgb, 255:red, 0; green, 0; blue, 0 }  ][fill={rgb, 255:red, 0; green, 0; blue, 0 }  ][line width=0.75]      (0, 0) circle [x radius= 1.34, y radius= 1.34]   ;
  \draw    (38.84,93) -- (55.18,93) ;
\draw [shift={(55.18,93)}, rotate = 0] [color={rgb, 255:red, 0; green, 0; blue, 0 }  ][fill={rgb, 255:red, 0; green, 0; blue, 0 }  ][line width=0.75]      (0, 0) circle [x radius= 1.34, y radius= 1.34]   ;
\draw [shift={(38.84,93)}, rotate = 0] [color={rgb, 255:red, 0; green, 0; blue, 0 }  ][fill={rgb, 255:red, 0; green, 0; blue, 0 }  ][line width=0.75]      (0, 0) circle [x radius= 1.34, y radius= 1.34]   ;
  \draw    (55.18,93) -- (71.52,93) ;
\draw [shift={(71.52,93)}, rotate = 0] [color={rgb, 255:red, 0; green, 0; blue, 0 }  ][fill={rgb, 255:red, 0; green, 0; blue, 0 }  ][line width=0.75]      (0, 0) circle [x radius= 1.34, y radius= 1.34]   ;
\draw [shift={(55.18,93)}, rotate = 0] [color={rgb, 255:red, 0; green, 0; blue, 0 }  ][fill={rgb, 255:red, 0; green, 0; blue, 0 }  ][line width=0.75]      (0, 0) circle [x radius= 1.34, y radius= 1.34]   ;
  \draw    (71.52,93) -- (87.86,93) ;
\draw [shift={(87.86,93)}, rotate = 0] [color={rgb, 255:red, 0; green, 0; blue, 0 }  ][fill={rgb, 255:red, 0; green, 0; blue, 0 }  ][line width=0.75]      (0, 0) circle [x radius= 1.34, y radius= 1.34]   ;
\draw [shift={(71.52,93)}, rotate = 0] [color={rgb, 255:red, 0; green, 0; blue, 0 }  ][fill={rgb, 255:red, 0; green, 0; blue, 0 }  ][line width=0.75]      (0, 0) circle [x radius= 1.34, y radius= 1.34]   ;
  \draw    (87.86,93) -- (104.2,93) ;
\draw [shift={(104.2,93)}, rotate = 0] [color={rgb, 255:red, 0; green, 0; blue, 0 }  ][fill={rgb, 255:red, 0; green, 0; blue, 0 }  ][line width=0.75]      (0, 0) circle [x radius= 1.34, y radius= 1.34]   ;
\draw [shift={(87.86,93)}, rotate = 0] [color={rgb, 255:red, 0; green, 0; blue, 0 }  ][fill={rgb, 255:red, 0; green, 0; blue, 0 }  ][line width=0.75]      (0, 0) circle [x radius= 1.34, y radius= 1.34]   ;
  \draw    (104.2,93) -- (120.55,93) ;
\draw [shift={(120.55,93)}, rotate = 0] [color={rgb, 255:red, 0; green, 0; blue, 0 }  ][fill={rgb, 255:red, 0; green, 0; blue, 0 }  ][line width=0.75]      (0, 0) circle [x radius= 1.34, y radius= 1.34]   ;
\draw [shift={(104.2,93)}, rotate = 0] [color={rgb, 255:red, 0; green, 0; blue, 0 }  ][fill={rgb, 255:red, 0; green, 0; blue, 0 }  ][line width=0.75]      (0, 0) circle [x radius= 1.34, y radius= 1.34]   ;
  \draw    (120.55,93) -- (136.89,93) ;
\draw [shift={(136.89,93)}, rotate = 0] [color={rgb, 255:red, 0; green, 0; blue, 0 }  ][fill={rgb, 255:red, 0; green, 0; blue, 0 }  ][line width=0.75]      (0, 0) circle [x radius= 1.34, y radius= 1.34]   ;
\draw [shift={(120.55,93)}, rotate = 0] [color={rgb, 255:red, 0; green, 0; blue, 0 }  ][fill={rgb, 255:red, 0; green, 0; blue, 0 }  ][line width=0.75]      (0, 0) circle [x radius= 1.34, y radius= 1.34]   ;
  \draw    (136.89,93) -- (153.23,93) ;
\draw [shift={(153.23,93)}, rotate = 0] [color={rgb, 255:red, 0; green, 0; blue, 0 }  ][fill={rgb, 255:red, 0; green, 0; blue, 0 }  ][line width=0.75]      (0, 0) circle [x radius= 1.34, y radius= 1.34]   ;
\draw [shift={(136.89,93)}, rotate = 0] [color={rgb, 255:red, 0; green, 0; blue, 0 }  ][fill={rgb, 255:red, 0; green, 0; blue, 0 }  ][line width=0.75]      (0, 0) circle [x radius= 1.34, y radius= 1.34]   ;
  \draw    (153.23,93) -- (169.57,93) ;
\draw [shift={(169.57,93)}, rotate = 0] [color={rgb, 255:red, 0; green, 0; blue, 0 }  ][fill={rgb, 255:red, 0; green, 0; blue, 0 }  ][line width=0.75]      (0, 0) circle [x radius= 1.34, y radius= 1.34]   ;
\draw [shift={(153.23,93)}, rotate = 0] [color={rgb, 255:red, 0; green, 0; blue, 0 }  ][fill={rgb, 255:red, 0; green, 0; blue, 0 }  ][line width=0.75]      (0, 0) circle [x radius= 1.34, y radius= 1.34]   ;
  \draw    (169.57,93) -- (185.91,93) ;
\draw [shift={(185.91,93)}, rotate = 0] [color={rgb, 255:red, 0; green, 0; blue, 0 }  ][fill={rgb, 255:red, 0; green, 0; blue, 0 }  ][line width=0.75]      (0, 0) circle [x radius= 1.34, y radius= 1.34]   ;
\draw [shift={(169.57,93)}, rotate = 0] [color={rgb, 255:red, 0; green, 0; blue, 0 }  ][fill={rgb, 255:red, 0; green, 0; blue, 0 }  ][line width=0.75]      (0, 0) circle [x radius= 1.34, y radius= 1.34]   ;
  \draw    (185.91,93) -- (202.25,93) ;
\draw [shift={(202.25,93)}, rotate = 0] [color={rgb, 255:red, 0; green, 0; blue, 0 }  ][fill={rgb, 255:red, 0; green, 0; blue, 0 }  ][line width=0.75]      (0, 0) circle [x radius= 1.34, y radius= 1.34]   ;
\draw [shift={(185.91,93)}, rotate = 0] [color={rgb, 255:red, 0; green, 0; blue, 0 }  ][fill={rgb, 255:red, 0; green, 0; blue, 0 }  ][line width=0.75]      (0, 0) circle [x radius= 1.34, y radius= 1.34]   ;
  \draw    (202.58,53.5) -- (210.58,53.5) ;
\draw [shift={(202.58,53.5)}, rotate = 0] [color={rgb, 255:red, 0; green, 0; blue, 0 }  ][fill={rgb, 255:red, 0; green, 0; blue, 0 }  ][line width=0.75]      (0, 0) circle [x radius= 1.34, y radius= 1.34]   ;
  \draw    (202.25,93) -- (210.08,93) ;
\draw [shift={(202.25,93)}, rotate = 0] [color={rgb, 255:red, 0; green, 0; blue, 0 }  ][fill={rgb, 255:red, 0; green, 0; blue, 0 }  ][line width=0.75]      (0, 0) circle [x radius= 1.34, y radius= 1.34]   ;
  
\draw [color={rgb, 255:red, 0; green, 0; blue, 0 }  ,draw opacity=1 ]   (104.2,93) -- (120.88,53.5) ;
  \draw    (104.2,93) -- (186.24,53.5) ;
  \draw    (104.2,93) -- (202.58,53.5) ;
  \draw    (104.2,93) -- (169.9,53.5) ;
  \draw    (104.2,93) -- (153.56,53.5) ;
  \draw    (104.2,93) -- (137.22,53.5) ;
  \draw    (104.2,93) -- (104.54,53.5) ;
  \draw    (104.2,93) -- (39.17,53.5) ;
  \draw    (104.2,93) -- (55.52,53.5) ;
  \draw    (104.2,93) -- (71.86,53.5) ;
  \draw    (104.2,93) -- (88.2,53.5) ;
  \draw (6.17,90.9) node [anchor=north west][inner sep=0.75pt]  [font=\tiny]  {$\partial ^{\prime }$};
 \draw (7.17,50.9) node [anchor=north west][inner sep=0.75pt]  [font=\tiny]  {$\partial $};
 \draw (203,66.2) node [anchor=north west][inner sep=0.75pt]    {$\cdots $};
 \draw (0.8,66.4) node [anchor=north west][inner sep=0.75pt]    {$\cdots $};
 \draw (93.7,99.4) node [anchor=north west][inner sep=0.75pt]  [font=\tiny]  {$( -i,0)$};
\end{tikzpicture}
\end{figure}
\noindent It is easy to verify that their $G$-orbits are distinct. By Proposition \ref{correspondence1}, we have the correspondence \[\widetilde{[-i,k+i+1]}\mapsto \phi(\widetilde{[-i,k+i+1]})= \mathsf{E}_{L_0(i\vec{x}_3)}\langle k\vec{x}_3\rangle,\] where $k\in\mathbb{Z}$. Indeed, the generalized extension bundles in the set \[\{ \mathsf{E}_{L_0(i\vec{x}_3)}\langle k\vec{x}_3\rangle \mid k\in \mathbb{Z}\}\] lie on a  polyline in the valued quiver $(\Gamma ,d)$, where the polyline is obtained by reflecting a line with respect to the upper and lower horizontal lines $l_1$ and $l_2$, as illustrated in the figure below:

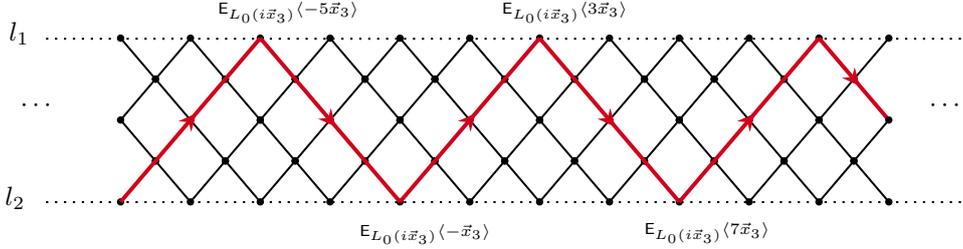
\begin{figure}[H]
    \centering
\tikzset{every picture/.style={line width=0.75pt}}          
\begin{tikzpicture}[x=0.75pt,y=0.75pt,yscale=-1,xscale=1]
 \draw    (115.1,152.15) -- (149.96,193.33) ;
  \draw    (115.1,193.33) -- (184.82,110.98) ;
  \draw    (115.1,193.33) -- (132.53,172.74) ;
\draw [shift={(132.53,172.74)}, rotate = 310.25] [color={rgb, 255:red, 0; green, 0; blue, 0 }  ][fill={rgb, 255:red, 0; green, 0; blue, 0 }  ][line width=0.75]      (0, 0) circle [x radius= 1.34, y radius= 1.34]   ;
\draw [shift={(115.1,193.33)}, rotate = 310.25] [color={rgb, 255:red, 0; green, 0; blue, 0 }  ][fill={rgb, 255:red, 0; green, 0; blue, 0 }  ][line width=0.75]      (0, 0) circle [x radius= 1.34, y radius= 1.34]   ;
  \draw    (132.53,172.74) -- (149.96,152.15) ;
\draw [shift={(149.96,152.15)}, rotate = 310.25] [color={rgb, 255:red, 0; green, 0; blue, 0 }  ][fill={rgb, 255:red, 0; green, 0; blue, 0 }  ][line width=0.75]      (0, 0) circle [x radius= 1.34, y radius= 1.34]   ;
\draw [shift={(132.53,172.74)}, rotate = 310.25] [color={rgb, 255:red, 0; green, 0; blue, 0 }  ][fill={rgb, 255:red, 0; green, 0; blue, 0 }  ][line width=0.75]      (0, 0) circle [x radius= 1.34, y radius= 1.34]   ;
  \draw    (149.96,152.15) -- (167.39,131.57) ;
\draw [shift={(167.39,131.57)}, rotate = 310.25] [color={rgb, 255:red, 0; green, 0; blue, 0 }  ][fill={rgb, 255:red, 0; green, 0; blue, 0 }  ][line width=0.75]      (0, 0) circle [x radius= 1.34, y radius= 1.34]   ;
\draw [shift={(149.96,152.15)}, rotate = 310.25] [color={rgb, 255:red, 0; green, 0; blue, 0 }  ][fill={rgb, 255:red, 0; green, 0; blue, 0 }  ][line width=0.75]      (0, 0) circle [x radius= 1.34, y radius= 1.34]   ;
  \draw    (167.39,131.57) -- (184.82,110.98) ;
\draw [shift={(184.82,110.98)}, rotate = 310.25] [color={rgb, 255:red, 0; green, 0; blue, 0 }  ][fill={rgb, 255:red, 0; green, 0; blue, 0 }  ][line width=0.75]      (0, 0) circle [x radius= 1.34, y radius= 1.34]   ;
\draw [shift={(167.39,131.57)}, rotate = 310.25] [color={rgb, 255:red, 0; green, 0; blue, 0 }  ][fill={rgb, 255:red, 0; green, 0; blue, 0 }  ][line width=0.75]      (0, 0) circle [x radius= 1.34, y radius= 1.34]   ;
  \draw    (184.82,193.33) -- (202.25,172.74) ;
\draw [shift={(202.25,172.74)}, rotate = 310.25] [color={rgb, 255:red, 0; green, 0; blue, 0 }  ][fill={rgb, 255:red, 0; green, 0; blue, 0 }  ][line width=0.75]      (0, 0) circle [x radius= 1.34, y radius= 1.34]   ;
\draw [shift={(184.82,193.33)}, rotate = 310.25] [color={rgb, 255:red, 0; green, 0; blue, 0 }  ][fill={rgb, 255:red, 0; green, 0; blue, 0 }  ][line width=0.75]      (0, 0) circle [x radius= 1.34, y radius= 1.34]   ;
  \draw    (202.25,172.74) -- (219.68,152.15) ;
\draw [shift={(219.68,152.15)}, rotate = 310.25] [color={rgb, 255:red, 0; green, 0; blue, 0 }  ][fill={rgb, 255:red, 0; green, 0; blue, 0 }  ][line width=0.75]      (0, 0) circle [x radius= 1.34, y radius= 1.34]   ;
\draw [shift={(202.25,172.74)}, rotate = 310.25] [color={rgb, 255:red, 0; green, 0; blue, 0 }  ][fill={rgb, 255:red, 0; green, 0; blue, 0 }  ][line width=0.75]      (0, 0) circle [x radius= 1.34, y radius= 1.34]   ;
  \draw    (219.68,152.15) -- (237.11,131.57) ;
\draw [shift={(237.11,131.57)}, rotate = 310.25] [color={rgb, 255:red, 0; green, 0; blue, 0 }  ][fill={rgb, 255:red, 0; green, 0; blue, 0 }  ][line width=0.75]      (0, 0) circle [x radius= 1.34, y radius= 1.34]   ;
\draw [shift={(219.68,152.15)}, rotate = 310.25] [color={rgb, 255:red, 0; green, 0; blue, 0 }  ][fill={rgb, 255:red, 0; green, 0; blue, 0 }  ][line width=0.75]      (0, 0) circle [x radius= 1.34, y radius= 1.34]   ;
  \draw    (237.11,131.57) -- (254.55,110.98) ;
\draw [shift={(254.55,110.98)}, rotate = 310.25] [color={rgb, 255:red, 0; green, 0; blue, 0 }  ][fill={rgb, 255:red, 0; green, 0; blue, 0 }  ][line width=0.75]      (0, 0) circle [x radius= 1.34, y radius= 1.34]   ;
\draw [shift={(237.11,131.57)}, rotate = 310.25] [color={rgb, 255:red, 0; green, 0; blue, 0 }  ][fill={rgb, 255:red, 0; green, 0; blue, 0 }  ][line width=0.75]      (0, 0) circle [x radius= 1.34, y radius= 1.34]   ;
  \draw    (428.85,193.33) -- (446.28,172.74) ;
\draw [shift={(446.28,172.74)}, rotate = 310.25] [color={rgb, 255:red, 0; green, 0; blue, 0 }  ][fill={rgb, 255:red, 0; green, 0; blue, 0 }  ][line width=0.75]      (0, 0) circle [x radius= 1.34, y radius= 1.34]   ;
\draw [shift={(428.85,193.33)}, rotate = 310.25] [color={rgb, 255:red, 0; green, 0; blue, 0 }  ][fill={rgb, 255:red, 0; green, 0; blue, 0 }  ][line width=0.75]      (0, 0) circle [x radius= 1.34, y radius= 1.34]   ;
  \draw    (446.28,172.74) -- (463.71,152.15) ;
\draw [shift={(463.71,152.15)}, rotate = 310.25] [color={rgb, 255:red, 0; green, 0; blue, 0 }  ][fill={rgb, 255:red, 0; green, 0; blue, 0 }  ][line width=0.75]      (0, 0) circle [x radius= 1.34, y radius= 1.34]   ;
\draw [shift={(446.28,172.74)}, rotate = 310.25] [color={rgb, 255:red, 0; green, 0; blue, 0 }  ][fill={rgb, 255:red, 0; green, 0; blue, 0 }  ][line width=0.75]      (0, 0) circle [x radius= 1.34, y radius= 1.34]   ;
  \draw    (463.71,152.15) -- (481.14,131.57) ;
\draw [shift={(481.14,131.57)}, rotate = 310.25] [color={rgb, 255:red, 0; green, 0; blue, 0 }  ][fill={rgb, 255:red, 0; green, 0; blue, 0 }  ][line width=0.75]      (0, 0) circle [x radius= 1.34, y radius= 1.34]   ;
\draw [shift={(463.71,152.15)}, rotate = 310.25] [color={rgb, 255:red, 0; green, 0; blue, 0 }  ][fill={rgb, 255:red, 0; green, 0; blue, 0 }  ][line width=0.75]      (0, 0) circle [x radius= 1.34, y radius= 1.34]   ;
  \draw    (481.14,131.57) -- (498.57,110.98) ;
\draw [shift={(498.57,110.98)}, rotate = 310.25] [color={rgb, 255:red, 0; green, 0; blue, 0 }  ][fill={rgb, 255:red, 0; green, 0; blue, 0 }  ][line width=0.75]      (0, 0) circle [x radius= 1.34, y radius= 1.34]   ;
\draw [shift={(481.14,131.57)}, rotate = 310.25] [color={rgb, 255:red, 0; green, 0; blue, 0 }  ][fill={rgb, 255:red, 0; green, 0; blue, 0 }  ][line width=0.75]      (0, 0) circle [x radius= 1.34, y radius= 1.34]   ;
  \draw    (149.96,193.33) -- (167.39,172.74) ;
\draw [shift={(167.39,172.74)}, rotate = 310.25] [color={rgb, 255:red, 0; green, 0; blue, 0 }  ][fill={rgb, 255:red, 0; green, 0; blue, 0 }  ][line width=0.75]      (0, 0) circle [x radius= 1.34, y radius= 1.34]   ;
\draw [shift={(149.96,193.33)}, rotate = 310.25] [color={rgb, 255:red, 0; green, 0; blue, 0 }  ][fill={rgb, 255:red, 0; green, 0; blue, 0 }  ][line width=0.75]      (0, 0) circle [x radius= 1.34, y radius= 1.34]   ;
  \draw    (167.39,172.74) -- (184.82,152.15) ;
\draw [shift={(184.82,152.15)}, rotate = 310.25] [color={rgb, 255:red, 0; green, 0; blue, 0 }  ][fill={rgb, 255:red, 0; green, 0; blue, 0 }  ][line width=0.75]      (0, 0) circle [x radius= 1.34, y radius= 1.34]   ;
\draw [shift={(167.39,172.74)}, rotate = 310.25] [color={rgb, 255:red, 0; green, 0; blue, 0 }  ][fill={rgb, 255:red, 0; green, 0; blue, 0 }  ][line width=0.75]      (0, 0) circle [x radius= 1.34, y radius= 1.34]   ;
  \draw    (184.82,152.15) -- (202.25,131.57) ;
\draw [shift={(202.25,131.57)}, rotate = 310.25] [color={rgb, 255:red, 0; green, 0; blue, 0 }  ][fill={rgb, 255:red, 0; green, 0; blue, 0 }  ][line width=0.75]      (0, 0) circle [x radius= 1.34, y radius= 1.34]   ;
\draw [shift={(184.82,152.15)}, rotate = 310.25] [color={rgb, 255:red, 0; green, 0; blue, 0 }  ][fill={rgb, 255:red, 0; green, 0; blue, 0 }  ][line width=0.75]      (0, 0) circle [x radius= 1.34, y radius= 1.34]   ;
  \draw    (202.25,131.57) -- (219.68,110.98) ;
\draw [shift={(219.68,110.98)}, rotate = 310.25] [color={rgb, 255:red, 0; green, 0; blue, 0 }  ][fill={rgb, 255:red, 0; green, 0; blue, 0 }  ][line width=0.75]      (0, 0) circle [x radius= 1.34, y radius= 1.34]   ;
\draw [shift={(202.25,131.57)}, rotate = 310.25] [color={rgb, 255:red, 0; green, 0; blue, 0 }  ][fill={rgb, 255:red, 0; green, 0; blue, 0 }  ][line width=0.75]      (0, 0) circle [x radius= 1.34, y radius= 1.34]   ;
  \draw    (393.99,193.33) -- (411.42,172.74) ;
\draw [shift={(411.42,172.74)}, rotate = 310.25] [color={rgb, 255:red, 0; green, 0; blue, 0 }  ][fill={rgb, 255:red, 0; green, 0; blue, 0 }  ][line width=0.75]      (0, 0) circle [x radius= 1.34, y radius= 1.34]   ;
\draw [shift={(393.99,193.33)}, rotate = 310.25] [color={rgb, 255:red, 0; green, 0; blue, 0 }  ][fill={rgb, 255:red, 0; green, 0; blue, 0 }  ][line width=0.75]      (0, 0) circle [x radius= 1.34, y radius= 1.34]   ;
  \draw    (411.42,172.74) -- (428.85,152.15) ;
\draw [shift={(428.85,152.15)}, rotate = 310.25] [color={rgb, 255:red, 0; green, 0; blue, 0 }  ][fill={rgb, 255:red, 0; green, 0; blue, 0 }  ][line width=0.75]      (0, 0) circle [x radius= 1.34, y radius= 1.34]   ;
\draw [shift={(411.42,172.74)}, rotate = 310.25] [color={rgb, 255:red, 0; green, 0; blue, 0 }  ][fill={rgb, 255:red, 0; green, 0; blue, 0 }  ][line width=0.75]      (0, 0) circle [x radius= 1.34, y radius= 1.34]   ;
  \draw    (428.85,152.15) -- (446.28,131.57) ;
\draw [shift={(446.28,131.57)}, rotate = 310.25] [color={rgb, 255:red, 0; green, 0; blue, 0 }  ][fill={rgb, 255:red, 0; green, 0; blue, 0 }  ][line width=0.75]      (0, 0) circle [x radius= 1.34, y radius= 1.34]   ;
\draw [shift={(428.85,152.15)}, rotate = 310.25] [color={rgb, 255:red, 0; green, 0; blue, 0 }  ][fill={rgb, 255:red, 0; green, 0; blue, 0 }  ][line width=0.75]      (0, 0) circle [x radius= 1.34, y radius= 1.34]   ;
  \draw    (446.28,131.57) -- (463.71,110.98) ;
\draw [shift={(463.71,110.98)}, rotate = 310.25] [color={rgb, 255:red, 0; green, 0; blue, 0 }  ][fill={rgb, 255:red, 0; green, 0; blue, 0 }  ][line width=0.75]      (0, 0) circle [x radius= 1.34, y radius= 1.34]   ;
\draw [shift={(446.28,131.57)}, rotate = 310.25] [color={rgb, 255:red, 0; green, 0; blue, 0 }  ][fill={rgb, 255:red, 0; green, 0; blue, 0 }  ][line width=0.75]      (0, 0) circle [x radius= 1.34, y radius= 1.34]   ;
  \draw    (359.13,193.33) -- (376.56,172.74) ;
\draw [shift={(376.56,172.74)}, rotate = 310.25] [color={rgb, 255:red, 0; green, 0; blue, 0 }  ][fill={rgb, 255:red, 0; green, 0; blue, 0 }  ][line width=0.75]      (0, 0) circle [x radius= 1.34, y radius= 1.34]   ;
\draw [shift={(359.13,193.33)}, rotate = 310.25] [color={rgb, 255:red, 0; green, 0; blue, 0 }  ][fill={rgb, 255:red, 0; green, 0; blue, 0 }  ][line width=0.75]      (0, 0) circle [x radius= 1.34, y radius= 1.34]   ;
  \draw    (376.56,172.74) -- (393.99,152.15) ;
\draw [shift={(393.99,152.15)}, rotate = 310.25] [color={rgb, 255:red, 0; green, 0; blue, 0 }  ][fill={rgb, 255:red, 0; green, 0; blue, 0 }  ][line width=0.75]      (0, 0) circle [x radius= 1.34, y radius= 1.34]   ;
\draw [shift={(376.56,172.74)}, rotate = 310.25] [color={rgb, 255:red, 0; green, 0; blue, 0 }  ][fill={rgb, 255:red, 0; green, 0; blue, 0 }  ][line width=0.75]      (0, 0) circle [x radius= 1.34, y radius= 1.34]   ;
  \draw    (393.99,152.15) -- (411.42,131.57) ;
\draw [shift={(411.42,131.57)}, rotate = 310.25] [color={rgb, 255:red, 0; green, 0; blue, 0 }  ][fill={rgb, 255:red, 0; green, 0; blue, 0 }  ][line width=0.75]      (0, 0) circle [x radius= 1.34, y radius= 1.34]   ;
\draw [shift={(393.99,152.15)}, rotate = 310.25] [color={rgb, 255:red, 0; green, 0; blue, 0 }  ][fill={rgb, 255:red, 0; green, 0; blue, 0 }  ][line width=0.75]      (0, 0) circle [x radius= 1.34, y radius= 1.34]   ;
  \draw    (411.42,131.57) -- (428.85,110.98) ;
\draw [shift={(428.85,110.98)}, rotate = 310.25] [color={rgb, 255:red, 0; green, 0; blue, 0 }  ][fill={rgb, 255:red, 0; green, 0; blue, 0 }  ][line width=0.75]      (0, 0) circle [x radius= 1.34, y radius= 1.34]   ;
\draw [shift={(411.42,131.57)}, rotate = 310.25] [color={rgb, 255:red, 0; green, 0; blue, 0 }  ][fill={rgb, 255:red, 0; green, 0; blue, 0 }  ][line width=0.75]      (0, 0) circle [x radius= 1.34, y radius= 1.34]   ;
  \draw    (324.27,193.33) -- (341.7,172.74) ;
\draw [shift={(341.7,172.74)}, rotate = 310.25] [color={rgb, 255:red, 0; green, 0; blue, 0 }  ][fill={rgb, 255:red, 0; green, 0; blue, 0 }  ][line width=0.75]      (0, 0) circle [x radius= 1.34, y radius= 1.34]   ;
\draw [shift={(324.27,193.33)}, rotate = 310.25] [color={rgb, 255:red, 0; green, 0; blue, 0 }  ][fill={rgb, 255:red, 0; green, 0; blue, 0 }  ][line width=0.75]      (0, 0) circle [x radius= 1.34, y radius= 1.34]   ;
  \draw    (341.7,172.74) -- (359.13,152.15) ;
\draw [shift={(359.13,152.15)}, rotate = 310.25] [color={rgb, 255:red, 0; green, 0; blue, 0 }  ][fill={rgb, 255:red, 0; green, 0; blue, 0 }  ][line width=0.75]      (0, 0) circle [x radius= 1.34, y radius= 1.34]   ;
\draw [shift={(341.7,172.74)}, rotate = 310.25] [color={rgb, 255:red, 0; green, 0; blue, 0 }  ][fill={rgb, 255:red, 0; green, 0; blue, 0 }  ][line width=0.75]      (0, 0) circle [x radius= 1.34, y radius= 1.34]   ;
  \draw    (359.13,152.15) -- (376.56,131.57) ;
\draw [shift={(376.56,131.57)}, rotate = 310.25] [color={rgb, 255:red, 0; green, 0; blue, 0 }  ][fill={rgb, 255:red, 0; green, 0; blue, 0 }  ][line width=0.75]      (0, 0) circle [x radius= 1.34, y radius= 1.34]   ;
\draw [shift={(359.13,152.15)}, rotate = 310.25] [color={rgb, 255:red, 0; green, 0; blue, 0 }  ][fill={rgb, 255:red, 0; green, 0; blue, 0 }  ][line width=0.75]      (0, 0) circle [x radius= 1.34, y radius= 1.34]   ;
  \draw    (376.56,131.57) -- (393.99,110.98) ;
\draw [shift={(393.99,110.98)}, rotate = 310.25] [color={rgb, 255:red, 0; green, 0; blue, 0 }  ][fill={rgb, 255:red, 0; green, 0; blue, 0 }  ][line width=0.75]      (0, 0) circle [x radius= 1.34, y radius= 1.34]   ;
\draw [shift={(376.56,131.57)}, rotate = 310.25] [color={rgb, 255:red, 0; green, 0; blue, 0 }  ][fill={rgb, 255:red, 0; green, 0; blue, 0 }  ][line width=0.75]      (0, 0) circle [x radius= 1.34, y radius= 1.34]   ;
  \draw    (254.55,193.33) -- (271.98,172.74) ;
\draw [shift={(271.98,172.74)}, rotate = 310.25] [color={rgb, 255:red, 0; green, 0; blue, 0 }  ][fill={rgb, 255:red, 0; green, 0; blue, 0 }  ][line width=0.75]      (0, 0) circle [x radius= 1.34, y radius= 1.34]   ;
\draw [shift={(254.55,193.33)}, rotate = 310.25] [color={rgb, 255:red, 0; green, 0; blue, 0 }  ][fill={rgb, 255:red, 0; green, 0; blue, 0 }  ][line width=0.75]      (0, 0) circle [x radius= 1.34, y radius= 1.34]   ;
  \draw    (271.98,172.74) -- (289.41,152.15) ;
\draw [shift={(289.41,152.15)}, rotate = 310.25] [color={rgb, 255:red, 0; green, 0; blue, 0 }  ][fill={rgb, 255:red, 0; green, 0; blue, 0 }  ][line width=0.75]      (0, 0) circle [x radius= 1.34, y radius= 1.34]   ;
\draw [shift={(271.98,172.74)}, rotate = 310.25] [color={rgb, 255:red, 0; green, 0; blue, 0 }  ][fill={rgb, 255:red, 0; green, 0; blue, 0 }  ][line width=0.75]      (0, 0) circle [x radius= 1.34, y radius= 1.34]   ;
  \draw    (289.41,152.15) -- (306.84,131.57) ;
\draw [shift={(306.84,131.57)}, rotate = 310.25] [color={rgb, 255:red, 0; green, 0; blue, 0 }  ][fill={rgb, 255:red, 0; green, 0; blue, 0 }  ][line width=0.75]      (0, 0) circle [x radius= 1.34, y radius= 1.34]   ;
\draw [shift={(289.41,152.15)}, rotate = 310.25] [color={rgb, 255:red, 0; green, 0; blue, 0 }  ][fill={rgb, 255:red, 0; green, 0; blue, 0 }  ][line width=0.75]      (0, 0) circle [x radius= 1.34, y radius= 1.34]   ;
  \draw    (306.84,131.57) -- (324.27,110.98) ;
\draw [shift={(324.27,110.98)}, rotate = 310.25] [color={rgb, 255:red, 0; green, 0; blue, 0 }  ][fill={rgb, 255:red, 0; green, 0; blue, 0 }  ][line width=0.75]      (0, 0) circle [x radius= 1.34, y radius= 1.34]   ;
\draw [shift={(306.84,131.57)}, rotate = 310.25] [color={rgb, 255:red, 0; green, 0; blue, 0 }  ][fill={rgb, 255:red, 0; green, 0; blue, 0 }  ][line width=0.75]      (0, 0) circle [x radius= 1.34, y radius= 1.34]   ;
  \draw    (289.41,193.33) -- (306.84,172.74) ;
\draw [shift={(306.84,172.74)}, rotate = 310.25] [color={rgb, 255:red, 0; green, 0; blue, 0 }  ][fill={rgb, 255:red, 0; green, 0; blue, 0 }  ][line width=0.75]      (0, 0) circle [x radius= 1.34, y radius= 1.34]   ;
\draw [shift={(289.41,193.33)}, rotate = 310.25] [color={rgb, 255:red, 0; green, 0; blue, 0 }  ][fill={rgb, 255:red, 0; green, 0; blue, 0 }  ][line width=0.75]      (0, 0) circle [x radius= 1.34, y radius= 1.34]   ;
  \draw    (306.84,172.74) -- (324.27,152.15) ;
\draw [shift={(324.27,152.15)}, rotate = 310.25] [color={rgb, 255:red, 0; green, 0; blue, 0 }  ][fill={rgb, 255:red, 0; green, 0; blue, 0 }  ][line width=0.75]      (0, 0) circle [x radius= 1.34, y radius= 1.34]   ;
\draw [shift={(306.84,172.74)}, rotate = 310.25] [color={rgb, 255:red, 0; green, 0; blue, 0 }  ][fill={rgb, 255:red, 0; green, 0; blue, 0 }  ][line width=0.75]      (0, 0) circle [x radius= 1.34, y radius= 1.34]   ;
  \draw    (324.27,152.15) -- (341.7,131.57) ;
\draw [shift={(341.7,131.57)}, rotate = 310.25] [color={rgb, 255:red, 0; green, 0; blue, 0 }  ][fill={rgb, 255:red, 0; green, 0; blue, 0 }  ][line width=0.75]      (0, 0) circle [x radius= 1.34, y radius= 1.34]   ;
\draw [shift={(324.27,152.15)}, rotate = 310.25] [color={rgb, 255:red, 0; green, 0; blue, 0 }  ][fill={rgb, 255:red, 0; green, 0; blue, 0 }  ][line width=0.75]      (0, 0) circle [x radius= 1.34, y radius= 1.34]   ;
  \draw    (341.7,131.57) -- (359.13,110.98) ;
\draw [shift={(359.13,110.98)}, rotate = 310.25] [color={rgb, 255:red, 0; green, 0; blue, 0 }  ][fill={rgb, 255:red, 0; green, 0; blue, 0 }  ][line width=0.75]      (0, 0) circle [x radius= 1.34, y radius= 1.34]   ;
\draw [shift={(341.7,131.57)}, rotate = 310.25] [color={rgb, 255:red, 0; green, 0; blue, 0 }  ][fill={rgb, 255:red, 0; green, 0; blue, 0 }  ][line width=0.75]      (0, 0) circle [x radius= 1.34, y radius= 1.34]   ;
  \draw    (219.68,193.33) -- (237.11,172.74) ;
\draw [shift={(237.11,172.74)}, rotate = 310.25] [color={rgb, 255:red, 0; green, 0; blue, 0 }  ][fill={rgb, 255:red, 0; green, 0; blue, 0 }  ][line width=0.75]      (0, 0) circle [x radius= 1.34, y radius= 1.34]   ;
\draw [shift={(219.68,193.33)}, rotate = 310.25] [color={rgb, 255:red, 0; green, 0; blue, 0 }  ][fill={rgb, 255:red, 0; green, 0; blue, 0 }  ][line width=0.75]      (0, 0) circle [x radius= 1.34, y radius= 1.34]   ;
  \draw    (237.11,172.74) -- (254.55,152.15) ;
\draw [shift={(254.55,152.15)}, rotate = 310.25] [color={rgb, 255:red, 0; green, 0; blue, 0 }  ][fill={rgb, 255:red, 0; green, 0; blue, 0 }  ][line width=0.75]      (0, 0) circle [x radius= 1.34, y radius= 1.34]   ;
\draw [shift={(237.11,172.74)}, rotate = 310.25] [color={rgb, 255:red, 0; green, 0; blue, 0 }  ][fill={rgb, 255:red, 0; green, 0; blue, 0 }  ][line width=0.75]      (0, 0) circle [x radius= 1.34, y radius= 1.34]   ;
  \draw    (254.55,152.15) -- (271.98,131.57) ;
\draw [shift={(271.98,131.57)}, rotate = 310.25] [color={rgb, 255:red, 0; green, 0; blue, 0 }  ][fill={rgb, 255:red, 0; green, 0; blue, 0 }  ][line width=0.75]      (0, 0) circle [x radius= 1.34, y radius= 1.34]   ;
\draw [shift={(254.55,152.15)}, rotate = 310.25] [color={rgb, 255:red, 0; green, 0; blue, 0 }  ][fill={rgb, 255:red, 0; green, 0; blue, 0 }  ][line width=0.75]      (0, 0) circle [x radius= 1.34, y radius= 1.34]   ;
  \draw    (271.98,131.57) -- (289.41,110.98) ;
\draw [shift={(289.41,110.98)}, rotate = 310.25] [color={rgb, 255:red, 0; green, 0; blue, 0 }  ][fill={rgb, 255:red, 0; green, 0; blue, 0 }  ][line width=0.75]      (0, 0) circle [x radius= 1.34, y radius= 1.34]   ;
\draw [shift={(271.98,131.57)}, rotate = 310.25] [color={rgb, 255:red, 0; green, 0; blue, 0 }  ][fill={rgb, 255:red, 0; green, 0; blue, 0 }  ][line width=0.75]      (0, 0) circle [x radius= 1.34, y radius= 1.34]   ;
  
\draw    (184.82,110.98) -- (254.55,193.33) ;
  \draw    (463.71,110.98) -- (498.57,152.15) ;
  \draw    (115.1,110.98) -- (184.82,193.33) ;
\draw [shift={(115.1,110.98)}, rotate = 49.75] [color={rgb, 255:red, 0; green, 0; blue, 0 }  ][fill={rgb, 255:red, 0; green, 0; blue, 0 }  ][line width=0.75]      (0, 0) circle [x radius= 1.34, y radius= 1.34]   ;
  \draw  [dash pattern={on 0.84pt off 2.51pt}]  (77.88,111.29) -- (537.42,111.29) ;
  \draw    (219.68,110.98) -- (289.41,193.33) ;
  \draw    (254.55,110.98) -- (324.27,193.33) ;
  \draw    (289.41,110.98) -- (359.13,193.33) ;
  \draw    (324.27,110.98) -- (393.99,193.33) ;
  \draw    (359.13,110.98) -- (428.85,193.33) ;
  \draw    (393.99,110.98) -- (463.71,193.33) ;
  \draw    (428.85,110.98) -- (498.57,193.33) ;
\draw [shift={(498.57,193.33)}, rotate = 49.75] [color={rgb, 255:red, 0; green, 0; blue, 0 }  ][fill={rgb, 255:red, 0; green, 0; blue, 0 }  ][line width=0.75]      (0, 0) circle [x radius= 1.34, y radius= 1.34]   ;
  
\draw    (149.96,110.98) -- (219.68,193.33) ;
  \draw  [dash pattern={on 0.84pt off 2.51pt}]  (77.88,193.44) -- (538.3,193.44) ;
  \draw [color={rgb, 255:red, 208; green, 2; blue, 27 }  ,draw opacity=1 ][line width=1.5]    (184.82,110.98) -- (254.55,193.33) ;
\draw [shift={(222.53,155.51)}, rotate = 229.75] [fill={rgb, 255:red, 208; green, 2; blue, 27 }  ,fill opacity=1 ][line width=0.08]  [draw opacity=0] (8.75,-4.2) -- (0,0) -- (8.75,4.2) -- (5.81,0) -- cycle    ;
  \draw [color={rgb, 255:red, 208; green, 2; blue, 27 }  ,draw opacity=1 ][line width=1.5]    (324.27,110.98) -- (393.99,193.33) ;
\draw [shift={(361.97,155.51)}, rotate = 229.75] [fill={rgb, 255:red, 208; green, 2; blue, 27 }  ,fill opacity=1 ][line width=0.08]  [draw opacity=0] (8.75,-4.2) -- (0,0) -- (8.75,4.2) -- (5.81,0) -- cycle    ;
  \draw [color={rgb, 255:red, 208; green, 2; blue, 27 }  ,draw opacity=1 ][line width=1.5]    (254.55,193.33) -- (324.27,110.98) ;
\draw [shift={(292.25,148.8)}, rotate = 130.25] [fill={rgb, 255:red, 208; green, 2; blue, 27 }  ,fill opacity=1 ][line width=0.08]  [draw opacity=0] (8.75,-4.2) -- (0,0) -- (8.75,4.2) -- (5.81,0) -- cycle    ;
  \draw [color={rgb, 255:red, 208; green, 2; blue, 27 }  ,draw opacity=1 ][line width=1.5]    (393.99,193.33) -- (463.71,110.98) ;
\draw [shift={(431.69,148.8)}, rotate = 130.25] [fill={rgb, 255:red, 208; green, 2; blue, 27 }  ,fill opacity=1 ][line width=0.08]  [draw opacity=0] (8.75,-4.2) -- (0,0) -- (8.75,4.2) -- (5.81,0) -- cycle    ;
  \draw [color={rgb, 255:red, 208; green, 2; blue, 27 }  ,draw opacity=1 ][line width=1.5]    (463.71,110.98) -- (498.57,152.15) ;
\draw [shift={(483.98,134.93)}, rotate = 229.75] [fill={rgb, 255:red, 208; green, 2; blue, 27 }  ,fill opacity=1 ][line width=0.08]  [draw opacity=0] (8.75,-4.2) -- (0,0) -- (8.75,4.2) -- (5.81,0) -- cycle    ;
  \draw [color={rgb, 255:red, 208; green, 2; blue, 27 }  ,draw opacity=1 ][line width=1.5]    (115.1,193.33) -- (184.82,110.98) ;
\draw [shift={(152.81,148.8)}, rotate = 130.25] [fill={rgb, 255:red, 208; green, 2; blue, 27 }  ,fill opacity=1 ][line width=0.08]  [draw opacity=0] (8.75,-4.2) -- (0,0) -- (8.75,4.2) -- (5.81,0) -- cycle    ;
  \draw    (132.53,131.57) -- (149.96,110.98) ;
\draw [shift={(149.96,110.98)}, rotate = 310.25] [color={rgb, 255:red, 0; green, 0; blue, 0 }  ][fill={rgb, 255:red, 0; green, 0; blue, 0 }  ][line width=0.75]      (0, 0) circle [x radius= 1.34, y radius= 1.34]   ;
\draw [shift={(132.53,131.57)}, rotate = 310.25] [color={rgb, 255:red, 0; green, 0; blue, 0 }  ][fill={rgb, 255:red, 0; green, 0; blue, 0 }  ][line width=0.75]      (0, 0) circle [x radius= 1.34, y radius= 1.34]   ;
  \draw    (115.1,152.15) -- (132.53,131.57) ;
\draw [shift={(132.53,131.57)}, rotate = 310.25] [color={rgb, 255:red, 0; green, 0; blue, 0 }  ][fill={rgb, 255:red, 0; green, 0; blue, 0 }  ][line width=0.75]      (0, 0) circle [x radius= 1.34, y radius= 1.34]   ;
\draw [shift={(115.1,152.15)}, rotate = 310.25] [color={rgb, 255:red, 0; green, 0; blue, 0 }  ][fill={rgb, 255:red, 0; green, 0; blue, 0 }  ][line width=0.75]      (0, 0) circle [x radius= 1.34, y radius= 1.34]   ;
  \draw    (463.71,193.33) -- (481.14,172.74) ;
\draw [shift={(481.14,172.74)}, rotate = 310.25] [color={rgb, 255:red, 0; green, 0; blue, 0 }  ][fill={rgb, 255:red, 0; green, 0; blue, 0 }  ][line width=0.75]      (0, 0) circle [x radius= 1.34, y radius= 1.34]   ;
\draw [shift={(463.71,193.33)}, rotate = 310.25] [color={rgb, 255:red, 0; green, 0; blue, 0 }  ][fill={rgb, 255:red, 0; green, 0; blue, 0 }  ][line width=0.75]      (0, 0) circle [x radius= 1.34, y radius= 1.34]   ;
  \draw    (481.14,172.74) -- (498.57,152.15) ;
\draw [shift={(498.57,152.15)}, rotate = 310.25] [color={rgb, 255:red, 0; green, 0; blue, 0 }  ][fill={rgb, 255:red, 0; green, 0; blue, 0 }  ][line width=0.75]      (0, 0) circle [x radius= 1.34, y radius= 1.34]   ;
\draw [shift={(481.14,172.74)}, rotate = 310.25] [color={rgb, 255:red, 0; green, 0; blue, 0 }  ][fill={rgb, 255:red, 0; green, 0; blue, 0 }  ][line width=0.75]      (0, 0) circle [x radius= 1.34, y radius= 1.34]   ;

 \draw (233.02,202.93) node [anchor=north west][inner sep=0.75pt]  [font=\tiny]  {$\mathsf{E}_{L_0( i\vec{x}_{3})} \langle -\vec{x}_{3} \rangle $};
 \draw (162.3,91.29) node [anchor=north west][inner sep=0.75pt]  [font=\tiny]  {$\mathsf{E}_{L_0( i\vec{x}_{3})} \langle -5\vec{x}_{3} \rangle $};
 \draw (304.13,91.25) node [anchor=north west][inner sep=0.75pt]  [font=\tiny]  {$\mathsf{E}_{L_0( i\vec{x}_{3})} \langle 3\vec{x}_{3} \rangle $};
 \draw (63.6,140.54) node [anchor=north west][inner sep=0.75pt]    {$\cdots $};
 \draw (517.27,140.38) node [anchor=north west][inner sep=0.75pt]    {$\cdots $};
 \draw (58,101.4) node [anchor=north west][inner sep=0.75pt]    {$l_{1}$};
 \draw (56,184.4) node [anchor=north west][inner sep=0.75pt]    {$l_{2}$};
 \draw (375.13,202.25) node [anchor=north west][inner sep=0.75pt]  [font=\tiny]  {$\mathsf{E}_{L_0( i\vec{x}_{3})} \langle 7\vec{x}_{3} \rangle $};
\end{tikzpicture}
\caption{The polyline in $(\Gamma ,d)$ for $n=4$}
\end{figure}
\noindent As the parameter $k$ increases, the endpoint $(k+i+1,1)$ of the line segment $[-i,k+i+1]$ moves rightward, and the corresponding generalized extension bundle $\mathsf{E}_{L_0(i\vec{x}_3)}\langle k\vec{x}_3\rangle$ shifts along the direction of the polyline in the valued quiver $(\Gamma ,d)$. Conversely, as $k$ decreases, the endpoint $(k+i+1,1)$ moves leftward, and the generalized extension bundle shifts in the opposite direction of the polyline.
\end{rem}

As an application of Proposition \ref{correspondence1}, we have
\begin{prop}{}{}\label{exact sequence1}
Let $k_1$ and $k_2$ be positive integers, and let $[i,j]\in \operatorname{Seg}(M)$ be a diagonal of the quadrilateral as follows:
\begin{figure}[H]
    \centering

\tikzset{every picture/.style={line width=0.75pt}}          

\begin{tikzpicture}[x=0.75pt,y=0.75pt,yscale=-1,xscale=1]

\draw    (158.33,105) -- (338.08,105) ;
  
\draw    (158,144.5) -- (337.75,144.5) ;
  
\draw [color={rgb, 255:red, 0; green, 0; blue, 0 }  ,draw opacity=1 ]   (240.1,144.43) -- (233.5,105.5) ;
\draw [shift={(240.1,144.43)}, rotate = 260.38] [color={rgb, 255:red, 0; green, 0; blue, 0 }  ,draw opacity=1 ][fill={rgb, 255:red, 0; green, 0; blue, 0 }  ,fill opacity=1 ][line width=0.75]      (0, 0) circle [x radius= 1.34, y radius= 1.34]   ;
  
\draw [color={rgb, 255:red, 0; green, 0; blue, 0 }  ,draw opacity=1 ]   (293.52,105.15) -- (183.47,144.4) ;
\draw [shift={(183.47,144.4)}, rotate = 160.37] [color={rgb, 255:red, 0; green, 0; blue, 0 }  ,draw opacity=1 ][fill={rgb, 255:red, 0; green, 0; blue, 0 }  ,fill opacity=1 ][line width=0.75]      (0, 0) circle [x radius= 1.34, y radius= 1.34]   ;
\draw [shift={(293.52,105.15)}, rotate = 160.37] [color={rgb, 255:red, 0; green, 0; blue, 0 }  ,draw opacity=1 ][fill={rgb, 255:red, 0; green, 0; blue, 0 }  ,fill opacity=1 ][line width=0.75]      (0, 0) circle [x radius= 1.34, y radius= 1.34]   ;
  
\draw [color={rgb, 255:red, 0; green, 0; blue, 0 }  ,draw opacity=1 ]   (233.5,105.5) -- (183.47,144.4) ;
  
\draw [color={rgb, 255:red, 0; green, 0; blue, 0 }  ,draw opacity=1 ]   (293.52,105.15) -- (240.1,144.43) ;
  
\draw    (293.52,105.15) ;
\draw [shift={(293.52,105.15)}, rotate = 0] [color={rgb, 255:red, 0; green, 0; blue, 0 }  ][fill={rgb, 255:red, 0; green, 0; blue, 0 }  ][line width=0.75]      (0, 0) circle [x radius= 1.34, y radius= 1.34]   ;
  
\draw    (233.5,105.5) ;
\draw [shift={(233.5,105.5)}, rotate = 0] [color={rgb, 255:red, 0; green, 0; blue, 0 }  ][fill={rgb, 255:red, 0; green, 0; blue, 0 }  ][line width=0.75]      (0, 0) circle [x radius= 1.34, y radius= 1.34]   ;
  
\draw    (183.47,144.4) ;
\draw [shift={(183.47,144.4)}, rotate = 0] [color={rgb, 255:red, 0; green, 0; blue, 0 }  ][fill={rgb, 255:red, 0; green, 0; blue, 0 }  ][line width=0.75]      (0, 0) circle [x radius= 1.34, y radius= 1.34]   ;
  
\draw    (240.1,144.43) ;
\draw [shift={(240.1,144.43)}, rotate = 0] [color={rgb, 255:red, 0; green, 0; blue, 0 }  ][fill={rgb, 255:red, 0; green, 0; blue, 0 }  ][line width=0.75]      (0, 0) circle [x radius= 1.34, y radius= 1.34]   ;

\draw (141.67,142.4) node [anchor=north west][inner sep=0.75pt]  [font=\tiny]  {$\partial ^{\prime }$};
 
\draw (142.67,102.4) node [anchor=north west][inner sep=0.75pt]  [font=\tiny]  {$\partial $};
 
\draw (230.2,149.93) node [anchor=north west][inner sep=0.75pt]  [font=\tiny]  {$( i,0)$};
 
\draw (225.83,93.73) node [anchor=north west][inner sep=0.75pt]  [font=\tiny]  {$( j,1)$};
 
\draw (279.83,94.13) node [anchor=north west][inner sep=0.75pt]  [font=\tiny]  {$( j+k_{2} ,1)$};
 
\draw (167,149.93) node [anchor=north west][inner sep=0.75pt]  [font=\tiny]  {$( i-k_{1} ,0)$};

\end{tikzpicture}
\end{figure}
\noindent Then there exists an exact sequence in ${\rm vect}^F\mbox{-}\mathbb{X}$ of the following form:
\[0\longrightarrow \phi({\widetilde{[i,j]}})\longrightarrow \phi({\widetilde{[i-k_1,j]}})\oplus\phi({\widetilde{[i,j+k_2]}})\longrightarrow \phi({\widetilde{[i-k_1,j+k_2]}})\longrightarrow 0.\]
\noindent In particular, $\phi(\widetilde{[i-k_1,j]})=\phi(\widetilde{[i,j+k_2]})$ if and only if one of following holds: 
    \begin{itemize}
        \item $k_1=k_2$ and $i+j\equiv 0\; (\bmod\; n) $; 
        \item  $k_1=k_2\equiv 0\; (\bmod\; n)$.
    \end{itemize}   
\end{prop}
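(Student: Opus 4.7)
The plan is to translate the claim via the bijection $\phi$ of Proposition \ref{correspondence1} and then build the exact sequence by combining basic short exact sequences of generalized extension bundles. By Proposition \ref{correspondence1}, the four corners of the quadrilateral correspond to
\[
\mathsf{E}_{L_0(-i\vec{x}_3)}\langle (i+j-1)\vec{x}_3\rangle, \quad \mathsf{E}_{L_0((k_1-i)\vec{x}_3)}\langle (i+j-k_1-1)\vec{x}_3\rangle,
\]
\[
\mathsf{E}_{L_0(-i\vec{x}_3)}\langle (i+j+k_2-1)\vec{x}_3\rangle, \quad \mathsf{E}_{L_0((k_1-i)\vec{x}_3)}\langle (i+j+k_2-k_1-1)\vec{x}_3\rangle,
\]
so the claim becomes a four-term short exact sequence in $\text{vect}^F\text{-}\mathbb{X}$ relating the two ``diagonal'' bundles and the two ``side'' bundles.

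To construct it, I would induct on $k_1+k_2$. The base case $k_1=k_2=1$ corresponds to an elementary mesh in the valued translation quiver $\mathbb{Z}A_{n+1}$ governing $\text{vect}^F\text{-}\mathbb{X}$, so the required short exact sequence is one of the almost-split-type relations collected in the appendix. For the inductive step, split the large quadrilateral into two adjacent sub-quadrilaterals by a shorter intermediate segment (say joining $(i-1,0)$ to $(j+k_2,1)$), apply the induction hypothesis to each, and glue the resulting two short exact sequences via a pullback (equivalently, the $3\times 3$ lemma). Hereditariness of $\text{coh}\text{-}\mathbb{X}$ together with exceptionality of indecomposable vector bundles, giving $\dim\Ext^1=1$ via Serre duality, ensures that the glued middle term decomposes as claimed without stray summands.

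For the ``moreover'' clause, since $\phi$ is a bijection on $G$-orbits, $\phi(\widetilde{[i-k_1,j]})=\phi(\widetilde{[i,j+k_2]})$ if and only if the two segments lie in the same $G$-orbit. Using $\theta^2=1_G$ and $\sigma_n^k\theta=\theta\sigma_n^{-k}$, every element of $G$ has the form $\sigma_n^k$ or $\sigma_n^k\theta$ for some $k\in\mathbb{Z}$. Applying $\sigma_n^k$ to $[i-k_1,j]$ yields $[i-k_1+kn,\,j+kn]$, which equals $[i,j+k_2]$ iff $k_1=k_2=kn$, that is $k_1=k_2\equiv 0\pmod n$; applying $\sigma_n^k\theta$ yields $[-j+kn,\,k_1-i+kn]$, which equals $[i,j+k_2]$ iff $kn=i+j$ and $k_1=k_2$, that is $k_1=k_2$ and $i+j\equiv 0\pmod n$. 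These are exactly the two stated conditions.

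The main obstacle is the decomposition step in the inductive gluing: one must rule out the possibility that the middle term of the combined sequence acquires indecomposable summands other than the two expected side-bundles, especially in degenerate configurations where some of the four corners coincide (precisely the situations described in the ``moreover'' part). Careful bookkeeping using Remark \ref{rem1} to detect coincidences of generalized extension bundles, together with the rigidity provided by the $\mathbb{Z}A_{n+1}$ structure of the Auslander--Reiten quiver of $\text{vect}^F\text{-}\mathbb{X}$, should close this gap.
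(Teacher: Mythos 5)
Your overall strategy --- translate through $\phi$, reduce to the elementary $k_1=k_2=1$ squares supplied by the appendix, and assemble the general case by composing them --- is essentially the paper's. The paper arranges the elementary bicartesian squares of Proposition \ref{PPS1} into a single $k_1\times k_2$ grid and invokes the pullback lemma once to conclude that the outer square is bicartesian; your induction on $k_1+k_2$ with an intermediate splitting segment is a step-by-step version of that same composition. The ``main obstacle'' you flag --- ruling out stray summands in the middle term --- does not actually arise in the paper's formulation: the short exact sequence attached to a bicartesian square with corners $A,B,C,D$ is by construction $0\to A\to B\oplus C\to D\to 0$, so once the outer square is known to be bicartesian the middle term is the direct sum of the two off-diagonal corners and there is nothing left to decompose. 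If you keep the whole argument at the level of bicartesian squares (rather than splicing exact sequences and then trying to recognize their middle terms), your inductive gluing closes without any extra bookkeeping, and the degenerate configurations where corners coincide cause no trouble since the statement never asserts the two middle summands are nonisomorphic. Your verification of the ``moreover'' clause, using that every element of $G$ has the form $\sigma_n^k$ or $\sigma_n^k\theta$, coincides with the paper's.
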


\begin{proof}
 Let $\vec{x}=(i+j-1)\vec{x}_3, \vec{y}=k_1\vec{x}_3$ and $\vec{z}=k_2\vec{x}_3$. We have the commutative diagram
\[\begin{tikzcd}[ampersand replacement=\&,cramped]
	{\mathsf{E}_{L_0(-i\vec{x}_3)}\langle\vec{x}\rangle} \& \cdot \& \cdots \& \cdot \& {\mathsf{E}_{L_0(\vec{y}-i\vec{x}_3)}\langle\vec{x}-\vec{y}\rangle} \\
	\cdot \& \cdot \& \cdots \& \cdot \& \cdot \\
	\vdots \& \vdots \&\& \vdots \& \vdots \\
	\cdot \& \cdot \& \cdots \& \cdot \& \cdot \\
	{\mathsf{E}_{L_0(-i\vec{x}_3)}\langle\vec{x}+\vec{z}\rangle} \& \cdot \& \cdots \& \cdot \& {\mathsf{E}_{L_0(\vec{y}-i\vec{x}_3)}\langle\vec{x}-\vec{y}+\vec{z}\rangle}
	\arrow[from=1-1, to=1-2]
	\arrow[from=1-1, to=2-1]
	\arrow[from=1-2, to=1-3]
	\arrow[from=1-2, to=2-2]
	\arrow[from=1-3, to=1-4]
	\arrow[from=1-4, to=1-5]
	\arrow[from=1-4, to=2-4]
	\arrow[from=1-5, to=2-5]
	\arrow[from=2-1, to=2-2]
	\arrow[from=2-1, to=3-1]
	\arrow[from=2-2, to=2-3]
	\arrow[from=2-2, to=3-2]
	\arrow[from=2-3, to=2-4]
	\arrow[from=2-4, to=2-5]
	\arrow[from=2-4, to=3-4]
	\arrow[from=2-5, to=3-5]
	\arrow[from=3-1, to=4-1]
	\arrow[from=3-2, to=4-2]
	\arrow[from=3-4, to=4-4]
	\arrow[from=3-5, to=4-5]
	\arrow[from=4-1, to=4-2]
	\arrow[from=4-1, to=5-1]
	\arrow[from=4-2, to=4-3]
	\arrow[from=4-2, to=5-2]
	\arrow[from=4-3, to=4-4]
	\arrow[from=4-4, to=4-5]
	\arrow[from=4-4, to=5-4]
	\arrow[from=4-5, to=5-5]
	\arrow[from=5-1, to=5-2]
	\arrow[from=5-2, to=5-3]
	\arrow[from=5-3, to=5-4]
	\arrow[from=5-4, to=5-5]
\end{tikzcd}\]
where the smallest squares are bicartesian, as obtained by Proposition \ref{PPS1}. Employing the pullback lemma, a statement about the composition of pullback/pushout diagrams, the largest square is bicartesian. Under the bijection $\phi$, the largest bicartesian square induces the required exact sequence.

Since $\phi(\widetilde{[i-k_1,j]})=\phi(\widetilde{[i,j+k_2]})$ if and only if there exists $g\in G$ such that $g\cdot[i-k_1,j]=[i,j+k_2]$ if and only if $k_1=k_2$ and $i+j\equiv 0\;(\bmod\; n)$  or  $k_1=k_2\equiv 0\;(\bmod\; n)$, the remaining assertion follows.
\end{proof}

\subsection{Geometric interpretation for the automorphism group of $(\Gamma, d)$}
For the concept of the mapping class group of a marked surface, please refer to \cite{MR2946087}. For a marked surface $\Sigma$, let $\mathcal{MG}(\Sigma)$ denote its mapping class group. Let $\mathcal{D}_{2,n}$ denote a twice-punctured disk with $n$ marked points on its boundary. As introduced in \cite{MR4289034}, given $\mathcal{D}_{2,n}$ with a triangulation (in the sense of \cite[Definition 2.6]{MR2448067}), one can construct a cylinder ${\rm Cyl}_{n,n}$ with $n$ marked points on its upper and lower boundaries. By equipping ${\rm Cyl}_{n,n}$ with specific topological and complex structures induced from $\mathcal{D}_{2,n}$, the cylinder ${\rm Cyl}_{n,n}$ can be transformed into a Riemann surface with a biholomorphism $\sigma: {\rm Cyl}_{n,n} \rightarrow {\rm Cyl}_{n,n}$. Specifically, $\sigma$ acts as an involution on the cylinder ${\rm Cyl}_{n,n}$, rotating every point about a horizontal line passing through the center of ${\rm Cyl}_{n,n}$ by 180 degrees. Moreover, the orbit space ${\rm Cyl}_{n,n}/\langle\sigma\rangle$ is homeomorphic to $\mathcal{D}_{2,n}$, inducing an orbifold structure on $\mathcal{D}_{2,n}$.

In the previous subsection, we consider $\widetilde{\mathcal{S}}$ as a set of points without specifying its topological or complex structure. By equipping specific topological and complex structures induced from ${\rm Cyl}_{n,n}$ onto $\widetilde{\mathcal{S}}$, the strip $\widetilde{\mathcal{S}}$ can naturally be regarded as the universal covering space of ${\rm Cyl}_{n,n}$ with the covering map $p: \widetilde{\mathcal{S}} \rightarrow {\rm Cyl}_{n,n}$. Moreover, the self-homeomorphisms $\sigma_n, \theta$ of $\widetilde{\mathcal{S}}$ satisfy $p \sigma_n = p$ and $p \theta = \sigma p$. In this context, the orbit space $\mathcal{S} = \widetilde{\mathcal{S}} / G$ is homeomorphic to ${\rm Cyl}_{n,n}/\langle\sigma\rangle$. Consequently, we obtain the following commutative diagram:
\[\begin{tikzcd}[ampersand replacement=\&,cramped]
	{\widetilde{\mathcal{S}}} \& {\widetilde{\mathcal{S}}/\langle\sigma_n\rangle} \& {{\rm Cyl}_{n,n}} \\
	\& {\mathcal{S}} \& {{\rm Cyl}_{n,n}/\langle\sigma\rangle} \& {\mathcal{D}_{2,n}}
	\arrow["{\pi_1}", two heads, from=1-1, to=1-2]
	\arrow["p", shift left, curve={height=-18pt}, shorten <=7pt, two heads, from=1-1, to=1-3]
	\arrow["\pi"', two heads, from=1-1, to=2-2]
	\arrow["\cong", from=1-2, to=1-3]
	\arrow["{\pi_2}", two heads, from=1-2, to=2-2]
	\arrow[two heads, from=1-3, to=2-3]
	\arrow["\cong", from=2-2, to=2-3]
	\arrow["\cong", from=2-3, to=2-4]
\end{tikzcd}\]
\noindent where $\pi,\pi_1$ and $\pi_2$ are canonical projection maps. For example, let $n=3.$
\begin{figure}[H]
    \centering

\begin{adjustbox}{scale=0.7}

\tikzset {_ubxy1tlzn/.code = {\pgfsetadditionalshadetransform{ \pgftransformshift{\pgfpoint{0 bp } { 0 bp }  }  \pgftransformrotate{0 }  \pgftransformscale{2 }  }}}
\pgfdeclarehorizontalshading{_mdgo3dp8u}{150bp}{rgb(0bp)=(0.81,0.91,0.98);
rgb(37.5bp)=(0.81,0.91,0.98);
rgb(62.5bp)=(0.39,0.58,0.76);
rgb(100bp)=(0.39,0.58,0.76)}
\tikzset{every picture/.style={line width=0.75pt}}       
\begin{tikzpicture}[x=0.75pt,y=0.75pt,yscale=-1,xscale=1]
\draw  [draw opacity=0][dash pattern={on 0.84pt off 2.51pt}] (280.26,147.63) .. controls (280.26,147.63) and (280.26,147.63) .. (280.26,147.63) .. controls (280.26,142.04) and (292.76,137.51) .. (308.18,137.51) .. controls (323.6,137.51) and (336.1,142.04) .. (336.1,147.63) -- (308.18,147.63) -- cycle ; \draw [dash pattern={on 0.84pt off 2.51pt}] [dash pattern={on 0.84pt off 2.51pt}]  (280.26,147.63) .. controls (280.26,142.04) and (292.76,137.51) .. (308.18,137.51) .. controls (323.6,137.51) and (336.1,142.04) .. (336.1,147.63) ;  
\path  [shading=_mdgo3dp8u,_ubxy1tlzn] (335.85,72.73) -- (335.85,147.96) .. controls (335.85,152.56) and (323.41,156.3) .. (308.06,156.3) .. controls (292.71,156.3) and (280.26,152.56) .. (280.26,147.96) -- (280.26,72.73) .. controls (280.26,68.13) and (292.71,64.39) .. (308.06,64.39) .. controls (323.41,64.39) and (335.85,68.13) .. (335.85,72.73) .. controls (335.85,77.34) and (323.41,81.07) .. (308.06,81.07) .. controls (292.71,81.07) and (280.26,77.34) .. (280.26,72.73) ; 
 \draw   (335.85,72.73) -- (335.85,147.96) .. controls (335.85,152.56) and (323.41,156.3) .. (308.06,156.3) .. controls (292.71,156.3) and (280.26,152.56) .. (280.26,147.96) -- (280.26,72.73) .. controls (280.26,68.13) and (292.71,64.39) .. (308.06,64.39) .. controls (323.41,64.39) and (335.85,68.13) .. (335.85,72.73) .. controls (335.85,77.34) and (323.41,81.07) .. (308.06,81.07) .. controls (292.71,81.07) and (280.26,77.34) .. (280.26,72.73) ; 
\draw  [fill={rgb, 255:red, 0; green, 0; blue, 0 }  ,fill opacity=1 ] (318.7,80.73) .. controls (318.7,80.06) and (319.23,79.51) .. (319.88,79.51) .. controls (320.53,79.51) and (321.06,80.06) .. (321.06,80.73) .. controls (321.06,81.41) and (320.53,81.95) .. (319.88,81.95) .. controls (319.23,81.95) and (318.7,81.41) .. (318.7,80.73) -- cycle ;
\draw  [fill={rgb, 255:red, 0; green, 0; blue, 0 }  ,fill opacity=1 ] (318.56,64.89) .. controls (318.56,64.22) and (319.08,63.67) .. (319.73,63.67) .. controls (320.38,63.67) and (320.91,64.22) .. (320.91,64.89) .. controls (320.91,65.57) and (320.38,66.12) .. (319.73,66.12) .. controls (319.08,66.12) and (318.56,65.57) .. (318.56,64.89) -- cycle ;
\draw  [fill={rgb, 255:red, 0; green, 0; blue, 0 }  ,fill opacity=1 ] (279.08,74.29) .. controls (279.08,73.61) and (279.61,73.06) .. (280.26,73.06) .. controls (280.91,73.06) and (281.44,73.61) .. (281.44,74.29) .. controls (281.44,74.96) and (280.91,75.51) .. (280.26,75.51) .. controls (279.61,75.51) and (279.08,74.96) .. (279.08,74.29) -- cycle ;
\draw  [fill={rgb, 255:red, 0; green, 0; blue, 0 }  ,fill opacity=1 ] (319.38,155.07) .. controls (319.38,154.4) and (319.91,153.85) .. (320.56,153.85) .. controls (321.21,153.85) and (321.73,154.4) .. (321.73,155.07) .. controls (321.73,155.75) and (321.21,156.3) .. (320.56,156.3) .. controls (319.91,156.3) and (319.38,155.75) .. (319.38,155.07) -- cycle ;
\draw  [fill={rgb, 255:red, 0; green, 0; blue, 0 }  ,fill opacity=1 ] (279.08,148.85) .. controls (279.08,148.17) and (279.61,147.63) .. (280.26,147.63) .. controls (280.91,147.63) and (281.44,148.17) .. (281.44,148.85) .. controls (281.44,149.52) and (280.91,150.07) .. (280.26,150.07) .. controls (279.61,150.07) and (279.08,149.52) .. (279.08,148.85) -- cycle ;
\draw  [fill={rgb, 255:red, 0; green, 0; blue, 0 }  ,fill opacity=1 ] (319.07,139.95) .. controls (319.07,139.28) and (319.6,138.73) .. (320.25,138.73) .. controls (320.9,138.73) and (321.43,139.28) .. (321.43,139.95) .. controls (321.43,140.63) and (320.9,141.17) .. (320.25,141.17) .. controls (319.6,141.17) and (319.07,140.63) .. (319.07,139.95) -- cycle ;
\draw  [draw opacity=0][dash pattern={on 0.84pt off 2.51pt}] (280.26,148.85) .. controls (280.26,148.84) and (280.26,148.83) .. (280.26,148.82) .. controls (280.26,143.37) and (292.67,138.96) .. (307.98,138.96) .. controls (322.85,138.96) and (334.98,143.12) .. (335.68,148.34) -- (307.98,148.82) -- cycle ; 
\draw [dash pattern={on 0.84pt off 2.51pt}] [dash pattern={on 0.84pt off 2.51pt}]  (280.26,148.85) .. controls (280.26,148.84) and (280.26,148.83) .. (280.26,148.82) .. controls (280.26,143.37) and (292.67,138.96) .. (307.98,138.96) .. controls (322.85,138.96) and (334.98,143.12) .. (335.68,148.34) ;  
\draw [color={rgb, 255:red, 208; green, 2; blue, 27 }  ,draw opacity=1 ][line width=0.75]  [dash pattern={on 0.84pt off 2.51pt}]  (261.57,113.47) -- (355.49,112.75) ;
\draw   (449,111.83) .. controls (449,88.82) and (467.65,70.17) .. (490.67,70.17) .. controls (513.68,70.17) and (532.33,88.82) .. (532.33,111.83) .. controls (532.33,134.85) and (513.68,153.5) .. (490.67,153.5) .. controls (467.65,153.5) and (449,134.85) .. (449,111.83) -- cycle ;
\draw  [fill={rgb, 255:red, 0; green, 0; blue, 0 }  ,fill opacity=1 ] (475.95,112.23) .. controls (475.95,111.56) and (476.48,111.01) .. (477.13,111.01) .. controls (477.78,111.01) and (478.31,111.56) .. (478.31,112.23) .. controls (478.31,112.91) and (477.78,113.45) .. (477.13,113.45) .. controls (476.48,113.45) and (475.95,112.91) .. (475.95,112.23) -- cycle ;
\draw  [fill={rgb, 255:red, 0; green, 0; blue, 0 }  ,fill opacity=1 ] (504.95,112.23) .. controls (504.95,111.56) and (505.48,111.01) .. (506.13,111.01) .. controls (506.78,111.01) and (507.31,111.56) .. (507.31,112.23) .. controls (507.31,112.91) and (506.78,113.45) .. (506.13,113.45) .. controls (505.48,113.45) and (504.95,112.91) .. (504.95,112.23) -- cycle ;
\draw  [fill={rgb, 255:red, 0; green, 0; blue, 0 }  ,fill opacity=1 ] (456.95,137.23) .. controls (456.95,136.56) and (457.48,136.01) .. (458.13,136.01) .. controls (458.78,136.01) and (459.31,136.56) .. (459.31,137.23) .. controls (459.31,137.91) and (458.78,138.45) .. (458.13,138.45) .. controls (457.48,138.45) and (456.95,137.91) .. (456.95,137.23) -- cycle ;
\draw  [fill={rgb, 255:red, 0; green, 0; blue, 0 }  ,fill opacity=1 ] (489.49,70.39) .. controls (489.49,69.71) and (490.02,69.17) .. (490.67,69.17) .. controls (491.32,69.17) and (491.84,69.71) .. (491.84,70.39) .. controls (491.84,71.06) and (491.32,71.61) .. (490.67,71.61) .. controls (490.02,71.61) and (489.49,71.06) .. (489.49,70.39) -- cycle ;
\draw  [fill={rgb, 255:red, 0; green, 0; blue, 0 }  ,fill opacity=1 ] (521.95,137.23) .. controls (521.95,136.56) and (522.48,136.01) .. (523.13,136.01) .. controls (523.78,136.01) and (524.31,136.56) .. (524.31,137.23) .. controls (524.31,137.91) and (523.78,138.45) .. (523.13,138.45) .. controls (522.48,138.45) and (521.95,137.91) .. (521.95,137.23) -- cycle ;
\draw    (348.74,107.63) .. controls (343.99,107.88) and (342.99,113.63) .. (344.74,115.39) .. controls (346.27,116.93) and (347.67,116.8) .. (351.8,116.07) ;
\draw [shift={(353.74,115.72)}, rotate = 169.99] [color={rgb, 255:red, 0; green, 0; blue, 0 }  ][line width=0.75]    (4.37,-1.32) .. controls (2.78,-0.56) and (1.32,-0.12) .. (0,0) .. controls (1.32,0.12) and (2.78,0.56) .. (4.37,1.32)   ;
\draw    (32.17,94.9) -- (44.84,94.9) ;
\draw [shift={(44.84,94.9)}, rotate = 0] [color={rgb, 255:red, 0; green, 0; blue, 0 }  ][fill={rgb, 255:red, 0; green, 0; blue, 0 }  ][line width=0.75]      (0, 0) circle [x radius= 1.34, y radius= 1.34]   ;
\draw    (44.84,94.9) -- (61.18,94.9) ;
\draw [shift={(61.18,94.9)}, rotate = 0] [color={rgb, 255:red, 0; green, 0; blue, 0 }  ][fill={rgb, 255:red, 0; green, 0; blue, 0 }  ][line width=0.75]      (0, 0) circle [x radius= 1.34, y radius= 1.34]   ;
\draw [shift={(44.84,94.9)}, rotate = 0] [color={rgb, 255:red, 0; green, 0; blue, 0 }  ][fill={rgb, 255:red, 0; green, 0; blue, 0 }  ][line width=0.75]      (0, 0) circle [x radius= 1.34, y radius= 1.34]   ;
\draw    (61.18,94.9) -- (77.52,94.9) ;
\draw [shift={(77.52,94.9)}, rotate = 0] [color={rgb, 255:red, 0; green, 0; blue, 0 }  ][fill={rgb, 255:red, 0; green, 0; blue, 0 }  ][line width=0.75]      (0, 0) circle [x radius= 1.34, y radius= 1.34]   ;
\draw [shift={(61.18,94.9)}, rotate = 0] [color={rgb, 255:red, 0; green, 0; blue, 0 }  ][fill={rgb, 255:red, 0; green, 0; blue, 0 }  ][line width=0.75]      (0, 0) circle [x radius= 1.34, y radius= 1.34]   ;
\draw    (77.52,94.9) -- (93.86,94.9) ;
\draw [shift={(93.86,94.9)}, rotate = 0] [color={rgb, 255:red, 0; green, 0; blue, 0 }  ][fill={rgb, 255:red, 0; green, 0; blue, 0 }  ][line width=0.75]      (0, 0) circle [x radius= 1.34, y radius= 1.34]   ;
\draw [shift={(77.52,94.9)}, rotate = 0] [color={rgb, 255:red, 0; green, 0; blue, 0 }  ][fill={rgb, 255:red, 0; green, 0; blue, 0 }  ][line width=0.75]      (0, 0) circle [x radius= 1.34, y radius= 1.34]   ;
\draw    (93.86,94.9) -- (110.2,94.9) ;
\draw [shift={(110.2,94.9)}, rotate = 0] [color={rgb, 255:red, 0; green, 0; blue, 0 }  ][fill={rgb, 255:red, 0; green, 0; blue, 0 }  ][line width=0.75]      (0, 0) circle [x radius= 1.34, y radius= 1.34]   ;
\draw [shift={(93.86,94.9)}, rotate = 0] [color={rgb, 255:red, 0; green, 0; blue, 0 }  ][fill={rgb, 255:red, 0; green, 0; blue, 0 }  ][line width=0.75]      (0, 0) circle [x radius= 1.34, y radius= 1.34]   ;
\draw    (110.2,94.9) -- (126.55,94.9) ;
\draw [shift={(126.55,94.9)}, rotate = 0] [color={rgb, 255:red, 0; green, 0; blue, 0 }  ][fill={rgb, 255:red, 0; green, 0; blue, 0 }  ][line width=0.75]      (0, 0) circle [x radius= 1.34, y radius= 1.34]   ;
\draw [shift={(110.2,94.9)}, rotate = 0] [color={rgb, 255:red, 0; green, 0; blue, 0 }  ][fill={rgb, 255:red, 0; green, 0; blue, 0 }  ][line width=0.75]      (0, 0) circle [x radius= 1.34, y radius= 1.34]   ;
\draw    (126.55,94.9) -- (142.89,94.9) ;
\draw [shift={(142.89,94.9)}, rotate = 0] [color={rgb, 255:red, 0; green, 0; blue, 0 }  ][fill={rgb, 255:red, 0; green, 0; blue, 0 }  ][line width=0.75]      (0, 0) circle [x radius= 1.34, y radius= 1.34]   ;
\draw [shift={(126.55,94.9)}, rotate = 0] [color={rgb, 255:red, 0; green, 0; blue, 0 }  ][fill={rgb, 255:red, 0; green, 0; blue, 0 }  ][line width=0.75]      (0, 0) circle [x radius= 1.34, y radius= 1.34]   ;
\draw    (142.89,94.9) -- (159.23,94.9) ;
\draw [shift={(159.23,94.9)}, rotate = 0] [color={rgb, 255:red, 0; green, 0; blue, 0 }  ][fill={rgb, 255:red, 0; green, 0; blue, 0 }  ][line width=0.75]      (0, 0) circle [x radius= 1.34, y radius= 1.34]   ;
\draw [shift={(142.89,94.9)}, rotate = 0] [color={rgb, 255:red, 0; green, 0; blue, 0 }  ][fill={rgb, 255:red, 0; green, 0; blue, 0 }  ][line width=0.75]      (0, 0) circle [x radius= 1.34, y radius= 1.34]   ;
\draw    (159.23,94.9) -- (175.57,94.9) ;
\draw [shift={(175.57,94.9)}, rotate = 0] [color={rgb, 255:red, 0; green, 0; blue, 0 }  ][fill={rgb, 255:red, 0; green, 0; blue, 0 }  ][line width=0.75]      (0, 0) circle [x radius= 1.34, y radius= 1.34]   ;
\draw [shift={(159.23,94.9)}, rotate = 0] [color={rgb, 255:red, 0; green, 0; blue, 0 }  ][fill={rgb, 255:red, 0; green, 0; blue, 0 }  ][line width=0.75]      (0, 0) circle [x radius= 1.34, y radius= 1.34]   ;
\draw    (175.57,94.9) -- (191.91,94.9) ;
\draw [shift={(191.91,94.9)}, rotate = 0] [color={rgb, 255:red, 0; green, 0; blue, 0 }  ][fill={rgb, 255:red, 0; green, 0; blue, 0 }  ][line width=0.75]      (0, 0) circle [x radius= 1.34, y radius= 1.34]   ;
\draw [shift={(175.57,94.9)}, rotate = 0] [color={rgb, 255:red, 0; green, 0; blue, 0 }  ][fill={rgb, 255:red, 0; green, 0; blue, 0 }  ][line width=0.75]      (0, 0) circle [x radius= 1.34, y radius= 1.34]   ;
\draw    (191.91,94.9) -- (208.25,94.9) ;
\draw [shift={(208.25,94.9)}, rotate = 0] [color={rgb, 255:red, 0; green, 0; blue, 0 }  ][fill={rgb, 255:red, 0; green, 0; blue, 0 }  ][line width=0.75]      (0, 0) circle [x radius= 1.34, y radius= 1.34]   ;
\draw [shift={(191.91,94.9)}, rotate = 0] [color={rgb, 255:red, 0; green, 0; blue, 0 }  ][fill={rgb, 255:red, 0; green, 0; blue, 0 }  ][line width=0.75]      (0, 0) circle [x radius= 1.34, y radius= 1.34]   ;
\draw    (32.5,134.57) -- (44.51,134.4) ;
\draw [shift={(44.51,134.4)}, rotate = 359.2] [color={rgb, 255:red, 0; green, 0; blue, 0 }  ][fill={rgb, 255:red, 0; green, 0; blue, 0 }  ][line width=0.75]      (0, 0) circle [x radius= 1.34, y radius= 1.34]   ;
\draw    (44.51,134.4) -- (60.85,134.4) ;
\draw [shift={(60.85,134.4)}, rotate = 0] [color={rgb, 255:red, 0; green, 0; blue, 0 }  ][fill={rgb, 255:red, 0; green, 0; blue, 0 }  ][line width=0.75]      (0, 0) circle [x radius= 1.34, y radius= 1.34]   ;
\draw [shift={(44.51,134.4)}, rotate = 0] [color={rgb, 255:red, 0; green, 0; blue, 0 }  ][fill={rgb, 255:red, 0; green, 0; blue, 0 }  ][line width=0.75]      (0, 0) circle [x radius= 1.34, y radius= 1.34]   ;
\draw    (60.85,134.4) -- (77.19,134.4) ;
\draw [shift={(77.19,134.4)}, rotate = 0] [color={rgb, 255:red, 0; green, 0; blue, 0 }  ][fill={rgb, 255:red, 0; green, 0; blue, 0 }  ][line width=0.75]      (0, 0) circle [x radius= 1.34, y radius= 1.34]   ;
\draw [shift={(60.85,134.4)}, rotate = 0] [color={rgb, 255:red, 0; green, 0; blue, 0 }  ][fill={rgb, 255:red, 0; green, 0; blue, 0 }  ][line width=0.75]      (0, 0) circle [x radius= 1.34, y radius= 1.34]   ;
\draw    (77.19,134.4) -- (93.53,134.4) ;
\draw [shift={(93.53,134.4)}, rotate = 0] [color={rgb, 255:red, 0; green, 0; blue, 0 }  ][fill={rgb, 255:red, 0; green, 0; blue, 0 }  ][line width=0.75]      (0, 0) circle [x radius= 1.34, y radius= 1.34]   ;
\draw [shift={(77.19,134.4)}, rotate = 0] [color={rgb, 255:red, 0; green, 0; blue, 0 }  ][fill={rgb, 255:red, 0; green, 0; blue, 0 }  ][line width=0.75]      (0, 0) circle [x radius= 1.34, y radius= 1.34]   ;
\draw    (93.53,134.4) -- (109.87,134.4) ;
\draw [shift={(109.87,134.4)}, rotate = 0] [color={rgb, 255:red, 0; green, 0; blue, 0 }  ][fill={rgb, 255:red, 0; green, 0; blue, 0 }  ][line width=0.75]      (0, 0) circle [x radius= 1.34, y radius= 1.34]   ;
\draw [shift={(93.53,134.4)}, rotate = 0] [color={rgb, 255:red, 0; green, 0; blue, 0 }  ][fill={rgb, 255:red, 0; green, 0; blue, 0 }  ][line width=0.75]      (0, 0) circle [x radius= 1.34, y radius= 1.34]   ;
\draw    (109.87,134.4) -- (126.21,134.4) ;
\draw [shift={(126.21,134.4)}, rotate = 0] [color={rgb, 255:red, 0; green, 0; blue, 0 }  ][fill={rgb, 255:red, 0; green, 0; blue, 0 }  ][line width=0.75]      (0, 0) circle [x radius= 1.34, y radius= 1.34]   ;
\draw [shift={(109.87,134.4)}, rotate = 0] [color={rgb, 255:red, 0; green, 0; blue, 0 }  ][fill={rgb, 255:red, 0; green, 0; blue, 0 }  ][line width=0.75]      (0, 0) circle [x radius= 1.34, y radius= 1.34]   ;
\draw    (126.21,134.4) -- (142.55,134.4) ;
\draw [shift={(142.55,134.4)}, rotate = 0] [color={rgb, 255:red, 0; green, 0; blue, 0 }  ][fill={rgb, 255:red, 0; green, 0; blue, 0 }  ][line width=0.75]      (0, 0) circle [x radius= 1.34, y radius= 1.34]   ;
\draw [shift={(126.21,134.4)}, rotate = 0] [color={rgb, 255:red, 0; green, 0; blue, 0 }  ][fill={rgb, 255:red, 0; green, 0; blue, 0 }  ][line width=0.75]      (0, 0) circle [x radius= 1.34, y radius= 1.34]   ;
\draw    (142.55,134.4) -- (158.89,134.4) ;
\draw [shift={(158.89,134.4)}, rotate = 0] [color={rgb, 255:red, 0; green, 0; blue, 0 }  ][fill={rgb, 255:red, 0; green, 0; blue, 0 }  ][line width=0.75]      (0, 0) circle [x radius= 1.34, y radius= 1.34]   ;
\draw [shift={(142.55,134.4)}, rotate = 0] [color={rgb, 255:red, 0; green, 0; blue, 0 }  ][fill={rgb, 255:red, 0; green, 0; blue, 0 }  ][line width=0.75]      (0, 0) circle [x radius= 1.34, y radius= 1.34]   ;
\draw    (158.89,134.4) -- (175.23,134.4) ;
\draw [shift={(175.23,134.4)}, rotate = 0] [color={rgb, 255:red, 0; green, 0; blue, 0 }  ][fill={rgb, 255:red, 0; green, 0; blue, 0 }  ][line width=0.75]      (0, 0) circle [x radius= 1.34, y radius= 1.34]   ;
\draw [shift={(158.89,134.4)}, rotate = 0] [color={rgb, 255:red, 0; green, 0; blue, 0 }  ][fill={rgb, 255:red, 0; green, 0; blue, 0 }  ][line width=0.75]      (0, 0) circle [x radius= 1.34, y radius= 1.34]   ;
\draw    (175.23,134.4) -- (191.58,134.4) ;
\draw [shift={(191.58,134.4)}, rotate = 0] [color={rgb, 255:red, 0; green, 0; blue, 0 }  ][fill={rgb, 255:red, 0; green, 0; blue, 0 }  ][line width=0.75]      (0, 0) circle [x radius= 1.34, y radius= 1.34]   ;
\draw [shift={(175.23,134.4)}, rotate = 0] [color={rgb, 255:red, 0; green, 0; blue, 0 }  ][fill={rgb, 255:red, 0; green, 0; blue, 0 }  ][line width=0.75]      (0, 0) circle [x radius= 1.34, y radius= 1.34]   ;
\draw    (191.58,134.4) -- (207.92,134.4) ;
\draw [shift={(207.92,134.4)}, rotate = 0] [color={rgb, 255:red, 0; green, 0; blue, 0 }  ][fill={rgb, 255:red, 0; green, 0; blue, 0 }  ][line width=0.75]      (0, 0) circle [x radius= 1.34, y radius= 1.34]   ;
\draw [shift={(191.58,134.4)}, rotate = 0] [color={rgb, 255:red, 0; green, 0; blue, 0 }  ][fill={rgb, 255:red, 0; green, 0; blue, 0 }  ][line width=0.75]      (0, 0) circle [x radius= 1.34, y radius= 1.34]   ;
\draw   (77.17,121.23) .. controls (77.17,120.87) and (77.47,120.57) .. (77.83,120.57) .. controls (78.2,120.57) and (78.5,120.87) .. (78.5,121.23) .. controls (78.5,121.6) and (78.2,121.9) .. (77.83,121.9) .. controls (77.47,121.9) and (77.17,121.6) .. (77.17,121.23) -- cycle ;
\draw  [dash pattern={on 0.84pt off 2.51pt}]  (83.17,121.57) -- (118.67,121.57) ;
\draw [shift={(121.67,121.57)}, rotate = 180] [fill={rgb, 255:red, 0; green, 0; blue, 0 }  ][line width=0.08]  [draw opacity=0] (5.36,-2.57) -- (0,0) -- (5.36,2.57) -- (3.56,0) -- cycle    ;
\draw   (126.17,121.23) .. controls (126.17,120.87) and (126.47,120.57) .. (126.83,120.57) .. controls (127.2,120.57) and (127.5,120.87) .. (127.5,121.23) .. controls (127.5,121.6) and (127.2,121.9) .. (126.83,121.9) .. controls (126.47,121.9) and (126.17,121.6) .. (126.17,121.23) -- cycle ;
\draw    (208.25,94.9) -- (216.25,94.9) ;
\draw [shift={(208.25,94.9)}, rotate = 0] [color={rgb, 255:red, 0; green, 0; blue, 0 }  ][fill={rgb, 255:red, 0; green, 0; blue, 0 }  ][line width=0.75]      (0, 0) circle [x radius= 1.34, y radius= 1.34]   ;
\draw    (207.92,134.4) -- (215.75,134.4) ;
\draw [shift={(207.92,134.4)}, rotate = 0] [color={rgb, 255:red, 0; green, 0; blue, 0 }  ][fill={rgb, 255:red, 0; green, 0; blue, 0 }  ][line width=0.75]      (0, 0) circle [x radius= 1.34, y radius= 1.34]   ;
\draw   (142.24,108.07) .. controls (142.24,107.7) and (142.54,107.4) .. (142.91,107.4) .. controls (143.28,107.4) and (143.58,107.7) .. (143.58,108.07) .. controls (143.58,108.43) and (143.28,108.73) .. (142.91,108.73) .. controls (142.54,108.73) and (142.24,108.43) .. (142.24,108.07) -- cycle ;
\draw  [dash pattern={on 0.84pt off 2.51pt}]  (82.92,119.9) -- (134.98,109.49) ;
\draw [shift={(137.92,108.9)}, rotate = 168.69] [fill={rgb, 255:red, 0; green, 0; blue, 0 }  ][line width=0.08]  [draw opacity=0] (5.36,-2.57) -- (0,0) -- (5.36,2.57) -- (3.56,0) -- cycle    ;
\draw (290,167.23) node [anchor=north west][inner sep=0.75pt]  [font=\tiny]  {${\rm Cyl}_{3,3} /\langle \sigma\rangle $};
\draw (343.17,99.4) node [anchor=north west][inner sep=0.75pt]  [font=\tiny]  {$\sigma $};
\draw (233.42,103.73) node [anchor=north west][inner sep=0.75pt]    {$\cong $};
\draw (419.33,102.23) node [anchor=north west][inner sep=0.75pt]    {$\cong $};
\draw (480.75,164.65) node [anchor=north west][inner sep=0.75pt]  [font=\tiny]  {$\mathcal{D}_{2,3}$};
\draw (356.91,108.28) node [anchor=north west][inner sep=0.75pt]  [font=\tiny]  {$Rotate\ 180^{\circ }$};
\draw (205,110) node [anchor=north west][inner sep=0.75pt]    {$\cdots $};
\draw (99.37,140.8) node [anchor=north west][inner sep=0.75pt]  [font=\tiny]  {$( 0,0)$};
\draw (98.47,82.4) node [anchor=north west][inner sep=0.75pt]  [font=\tiny]  {$( 0,1)$};
\draw (51.67,117.8) node [anchor=north west][inner sep=0.75pt]  [font=\tiny]  {$( x,y)$};
\draw (104.17,104.47) node [anchor=north west][inner sep=0.75pt]  [font=\tiny]  {$\theta $};
\draw (96.17,122.8) node [anchor=north west][inner sep=0.75pt]  [font=\tiny]  {$\sigma _{3}$};
\draw (129.17,116.97) node [anchor=north west][inner sep=0.75pt]  [font=\tiny]  {$( x+3,y)$};
\draw (144.24,102.8) node [anchor=north west][inner sep=0.75pt]  [font=\tiny]  {$( -x,1-y)$};
\draw (70,163.9) node [anchor=north west][inner sep=0.75pt]  [font=\tiny]  {$\mathcal{S}=\widetilde{\mathcal{S}} /G\ with\ G=\langle \sigma _{3} ,\theta \rangle \ $};
\draw (10,110) node [anchor=north west][inner sep=0.75pt]    {$\cdots $};
\end{tikzpicture}
\end{adjustbox}
\end{figure}
Therefore, the mapping class groups $\mathcal{MG}(\mathcal{S})$, $\mathcal{MG}({\rm Cyl}_{n,n}/\langle\sigma\rangle)$, and $\mathcal{MG}(\mathcal{D}_{2,n})$ are isomorphic. Recall \cite[Theorem 4.13]{MR2946087} that the mapping class group of $\mathcal{D}_{2,n}$ is represented by
 \begin{equation}\label{equ 2221}
\langle r, s \mid s^2 = r^n, rs = sr \rangle,
\end{equation}
where $s$ maps one puncture to the other and $s^2$ is isotopic to a rotation of the boundary by an angle of $2\pi$ that fixes the punctures, and $r$ is a generator such that $r^n = s^2$. Now we present a set of generators for $\mathcal{MG}(\mathcal{S})$.  Consider two elements $\widetilde{\alpha}$ and $\widetilde{\beta}$ in $\mathcal{MG}(\widetilde{\mathcal{S}})$, where
\begin{align*}
\widetilde{\alpha}: \widetilde{\mathcal{S}} & \rightarrow \widetilde{\mathcal{S}} & \qquad \widetilde{\beta}: \widetilde{\mathcal{S}} & \rightarrow \widetilde{\mathcal{S}} \\
(x,y) & \mapsto (x+ny,y) & (x,y) & \mapsto (x+2y-1,y)
\end{align*}
 The canonical projection map $\pi: \widetilde{\mathcal{S}}\rightarrow\mathcal{S}$ induces two maps $\alpha$ and $\beta$ on $\mathcal{S}$  that make the following diagrams commute: 
\[\begin{tikzcd}[ampersand replacement=\&,cramped]
	{\widetilde{\mathcal{S}}} \& {\widetilde{\mathcal{S}}} \&\& {\widetilde{\mathcal{S}}} \& {\widetilde{\mathcal{S}}} \\
	{\mathcal{S}} \& {\mathcal{S}} \&\& {\mathcal{S}} \& {\mathcal{S}}
	\arrow["{\widetilde{\alpha}}", from=1-1, to=1-2]
	\arrow["\pi"', from=1-1, to=2-1]
	\arrow["\pi", from=1-2, to=2-2]
	\arrow["{\widetilde{\beta}}", from=1-4, to=1-5]
	\arrow["\pi"', from=1-4, to=2-4]
	\arrow["\pi", from=1-5, to=2-5]
	\arrow["\alpha"', from=2-1, to=2-2]
	\arrow["\beta"', from=2-4, to=2-5]
\end{tikzcd}\]
 Moreover, one can check that
 \begin{equation}\label{equ 222}
\alpha^2=\beta^n \quad\text{and}\quad \alpha \beta=\beta \alpha.
\end{equation}
 By comparing equation \eqref{equ 2221} and equation \eqref{equ 222}, we have $\mathcal{MG}(\mathcal{S})=\langle \alpha,\beta\rangle$. Note that distinct $G$-orbits in $\widetilde{\operatorname{Seg}(M)}$ naturally represent different homotopy classes of curves in $\mathcal{S}$ relative to their endpoints. We define the action of $\mathcal{MG} (\mathcal{S})$ on $\widetilde{\operatorname{Seg}(M)}$ as follows: 
 \[x\cdot\widetilde{[i,j]}= \widetilde{[i-t,j+t+kn]},\]  
for any $x=\alpha^k\beta^t\in \mathcal{MG} (\mathcal{S})$ and  $\widetilde{[i,j]}\in \widetilde{\operatorname{Seg}(M)}$, where $0\leq k \leq 1$ and $t\in \mathbb{Z}$. 
\begin{prop}\label{group iso}
 The automorphism group $\operatorname{Aut}(\Gamma, d)=\mathbb{L}/\mathbb{Z}(\vec{x}_1-\vec{x}_2)$ of $(\Gamma, d)$ is isomorphic to the mapping class group $\mathcal{MG}(\mathcal{S})$.
\end{prop}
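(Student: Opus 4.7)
The plan is to exploit the coincidence of presentations already established for the two groups. Equation \eqref{equ 111} gives
\[
\operatorname{Aut}(\Gamma,d)=\langle \tau_F,\rho\mid \rho^2=\tau_F^n,\ \tau_F\rho=\rho\tau_F\rangle,
\]
while the discussion preceding the proposition produces generators $\alpha,\beta$ for $\mathcal{MG}(\mathcal{S})$ satisfying the relations $\alpha^2=\beta^n$ and $\alpha\beta=\beta\alpha$ displayed in \eqref{equ 222}, inherited from the presentation \eqref{equ 2221} of $\mathcal{MG}(\mathcal{D}_{2,n})$ via the chain of homeomorphisms $\mathcal{S}\cong {\rm Cyl}_{n,n}/\langle\sigma\rangle\cong \mathcal{D}_{2,n}$. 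Thus both groups admit the same abstract presentation as the abelian group $\langle a,b\mid a^2=b^n,\ ab=ba\rangle$, and the proof amounts to matching the generators.

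The candidate isomorphism is
\[
\psi:\mathcal{MG}(\mathcal{S})\longrightarrow \operatorname{Aut}(\Gamma,d)=\mathbb{L}/\mathbb{Z}(\vec{x}_1-\vec{x}_2),\qquad \alpha\longmapsto \rho,\quad \beta\longmapsto \tau_F,
\]
which under the identification of $\operatorname{Aut}(\Gamma,d)$ with $\mathbb{L}/\mathbb{Z}(\vec{x}_1-\vec{x}_2)$ sends $\alpha$ to the class of $\vec{x}_1$ and $\beta$ to the class of $\vec{x}_3$. To see that $\psi$ is a well-defined homomorphism, I would check that the images obey the defining relations of $\mathcal{MG}(\mathcal{S})$: the image of $\alpha^2=\beta^n$ becomes $2\vec{x}_1=n\vec{x}_3$, which holds in $\mathbb{L}$ (and hence in the quotient) since both sides equal the canonical element $\vec{c}$; commutativity is automatic in the abelian group $\mathbb{L}/\mathbb{Z}(\vec{x}_1-\vec{x}_2)$.

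To finish, I would produce the inverse by $\rho\mapsto \alpha$, $\tau_F\mapsto \beta$ and run the symmetric verification against the presentation \eqref{equ 111}: the relation $\rho^2=\tau_F^n$ corresponds to $\alpha^2=\beta^n$, which is part of \eqref{equ 222}, and commutativity again transfers. Since the two composites fix the respective generating sets, they are identity morphisms, and $\psi$ is an isomorphism. I do not expect a substantive obstacle in this group-theoretic step; the only delicate point, already handled in the build-up to the proposition, is that \eqref{equ 222} is a \emph{complete} set of relations for $\mathcal{MG}(\mathcal{S})$, which is precisely what the topological identification with $\mathcal{MG}(\mathcal{D}_{2,n})$ guarantees. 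Once this is granted, the two presentations match verbatim and the isomorphism is immediate.
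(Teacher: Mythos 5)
Your proposal is correct and follows essentially the same route as the paper: the paper's proof likewise compares the presentations \eqref{equ 111} and \eqref{equ 222} and sends $\alpha\mapsto\vec{x}_1$, $\beta\mapsto\vec{x}_3$, leaving the verification of the relations (both reducing to $2\vec{x}_1=n\vec{x}_3=\vec{c}$ and commutativity) implicit. You merely spell out the well-definedness check and the inverse more explicitly, and you correctly flag that completeness of the relations in \eqref{equ 222} is the one point that rests on the earlier topological identification with $\mathcal{MG}(\mathcal{D}_{2,n})$.
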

\begin{proof}
By comparing equation \eqref{equ 111} and equation \eqref{equ 222}, we observe that the mapping 
\[
\begin{aligned}
\psi: \mathcal{MG} (\mathcal{S}) & \rightarrow \mathbb{L}/\mathbb{Z}(\vec{x}_1-\vec{x}_2) \\
\alpha & \mapsto \vec{x}_1 \\
\beta & \mapsto \vec{x}_3,
\end{aligned}
\]
 establishes a group isomorphism from $\mathcal{MG}(\mathcal{S})$ to $\mathbb{L}/\mathbb{Z}(\vec{x}_1-\vec{x}_2)$.
\end{proof}

Recall from Proposition \ref{correspondence} that $\phi$ is a bijection from  $\widetilde{\operatorname{Seg}(M)}$ to $\operatorname{ind}({\rm vect}^F\mbox{-}\mathbb{X})$.
\begin{prop}\label{MCG and L action}
We have the following commutative diagram 
\begin{figure}[H]
    \centering
\begin{tikzcd}
{\mathcal{MG} (\mathcal{S})} \arrow[r, "\psi"]                                     & \mathbb{L}/\mathbb{Z}(\vec{x}_1-\vec{x}_2)                                                    \\
\widetilde{\operatorname{Seg}(M)} \arrow[r, "\phi"] \arrow[loop, distance=3em, in=125, out=55] & \operatorname{ind}({\rm vect}^F\mbox{-}\mathbb{X}) \arrow[loop, distance=3em, in=125, out=55]
\end{tikzcd}
\end{figure}
\noindent in the sense:
\begin{equation}\label{compatible}
    \phi(x\cdot \widetilde{[i,j]}) = \psi(x)\cdot \phi(\widetilde{[i,j]}) \quad \text{for all } x \in \mathcal{MG} (\mathcal{S}) \text{ and } \widetilde{[i,j]} \in \widetilde{\operatorname{Seg}(M)}.
\end{equation}
\end{prop}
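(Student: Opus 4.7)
The plan is to reduce the equivariance identity \eqref{compatible} to a check on the two generators $\alpha,\beta$ of $\mathcal{MG}(\mathcal{S})$ and then extend multiplicatively. Concretely, if \eqref{compatible} holds for $x_1$ and $x_2$ against every orbit, then
$$\phi(x_1 x_2\cdot\widetilde{[i,j]}) \;=\; \psi(x_1)\cdot\phi(x_2\cdot\widetilde{[i,j]}) \;=\; \psi(x_1 x_2)\cdot\phi(\widetilde{[i,j]}),$$
and an analogous argument (applying $\psi(x^{-1})$ to both sides of the identity for $x$) handles inverses. Since $\mathcal{MG}(\mathcal{S})=\langle\alpha,\beta\rangle$, the identity then propagates to the whole group.

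For $x=\beta$, I would unwind the defining formulas: the prescribed action gives $\beta\cdot\widetilde{[i,j]}=\widetilde{[i-1,j+1]}$, so Proposition \ref{correspondence1} yields $\phi(\beta\cdot\widetilde{[i,j]})=\mathsf{E}_{L_0(-(i-1)\vec{x}_3)}\langle(i+j-1)\vec{x}_3\rangle$. On the other hand, $\psi(\beta)\cdot\phi(\widetilde{[i,j]})$ is the degree shift by $\vec{x}_3$ of $\mathsf{E}_{L_0(-i\vec{x}_3)}\langle(i+j-1)\vec{x}_3\rangle$. Shifting the defining short exact sequence \eqref{extension bundle} by $\vec{x}_3$ gives $\mathsf{E}_L\langle\vec{x}\rangle(\vec{x}_3)=\mathsf{E}_{L(\vec{x}_3)}\langle\vec{x}\rangle$, valid in both clauses of Definition \ref{generalized extension bundle} (using $L(\vec{x}_3)^{*}=L^{*}(\vec{x}_3)$ in the split case), so the two sides agree.

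For $x=\alpha$, the action gives $\alpha\cdot\widetilde{[i,j]}=\widetilde{[i,j+n]}$, hence $\phi(\alpha\cdot\widetilde{[i,j]})=\mathsf{E}_{L_0(-i\vec{x}_3)}\langle(i+j+n-1)\vec{x}_3\rangle$. Writing $(i+j-1)\vec{x}_3=l_3\vec{x}_3+l\vec{c}$ in normal form \eqref{normal form}, one has $(i+j+n-1)\vec{x}_3=l_3\vec{x}_3+(l+1)\vec{c}$. Examining both clauses of Definition \ref{generalized extension bundle}, the effect of replacing $l$ by $l+1$ is precisely to replace the base line bundle $L_0(-i\vec{x}_3)(l\vec{x}_1)$ by $L_0(-i\vec{x}_3)((l+1)\vec{x}_1)$, i.e., to apply the degree shift by $\vec{x}_1=\psi(\alpha)$ to $\mathsf{E}_{L_0(-i\vec{x}_3)}\langle(i+j-1)\vec{x}_3\rangle$. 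This matches $\psi(\alpha)\cdot\phi(\widetilde{[i,j]})$.

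The main point requiring care is the boundary case $l_3=n-1$ in the $\alpha$-step, where $\mathsf{E}_L\langle\vec{x}\rangle$ degenerates to a direct sum of two line bundles $L((l+1)\vec{x}_1-\vec{x}_3)\oplus L^{*}((l+1)\vec{x}_1-\vec{x}_3)$; however, both summands shift uniformly under $\vec{x}_1$, so the verification proceeds identically to the indecomposable case. Beyond this bookkeeping, the argument is routine once the normal form of $\vec{x}$ is tracked.
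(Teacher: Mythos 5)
Your proposal is correct and follows essentially the same route as the paper: reduce the equivariance identity to the generators $\alpha$ and $\beta$ and verify each by unwinding the degree-shift formulas for generalized extension bundles (the paper's computation for $\alpha$ is exactly your rewriting $\mathsf{E}_{L_0(\vec{x}_1-i\vec{x}_3)}\langle(i+j-1)\vec{x}_3\rangle=\mathsf{E}_{L_0(-i\vec{x}_3)}\langle(i+j-1)\vec{x}_3+\vec{c}\rangle$, and its $\beta$-step is your identity $\mathsf{E}_L\langle\vec{x}\rangle(\vec{x}_3)=\mathsf{E}_{L(\vec{x}_3)}\langle\vec{x}\rangle$). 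Your explicit treatment of multiplicativity and of the decomposable case $l_3=n-1$ is slightly more careful than the paper's terse two-line computation, but the argument is the same.
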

\begin{proof}
It suffices to consider the cases when \( x = \alpha \) and \( x = \beta \). When \( x = \alpha \),
\[
\begin{aligned}
\psi(x) \cdot \phi(\widetilde{[i,j]}) &=  \mathsf{E}_{L_0(-i\vec{x}_3)} \langle (i+j-1)\vec{x}_3 \rangle(\vec{x}_1) \\
&= \mathsf{E}_{L_0(\vec{x}_1 - i\vec{x}_3)} \langle (i+j-1)\vec{x}_3 \rangle \\
&= \mathsf{E}_{L_0(-i\vec{x}_3)} \langle (i+j-1)\vec{x}_3 + \vec{c} \rangle \\
&= \phi(\widetilde{[i,j+n]}) \\
&= \phi(x\cdot \widetilde{[i,j]}).
\end{aligned}
\]

When \( x = \beta\),
\[
\begin{aligned}
\psi(x) \cdot \phi(\widetilde{[i,j]}) &=  \mathsf{E}_{L_0(-i\vec{x}_3)} \langle (i+j-1)\vec{x}_3 \rangle (\vec{x}_3)\\
&= \phi(\widetilde{[i-1,j+1]}) \\
&= \phi(x\cdot \widetilde{[i,j]}).
\end{aligned}
\]
\end{proof}

\subsection{Refinement of the bijection $\phi$}\label{Section 4.3}
 Let $\mathcal{P}=\{P_k \mid k\in \mathbb{Z}\}$, where $P_k$ is the point whose coordinate is $(\frac{kn}{2},\frac{1}{2})$. Denote by $\operatorname{Seg}_0(M)$ the set of all line segments in $\operatorname{Seg}(M)$ that pass through points in $\mathcal{P}$. We prepare the following observation, which follows directly from Proposition \ref{correspondence1}. 

 \begin{obs}
      For $[i,j] \in \operatorname{Seg}(M)$, the following conditions are equivalent.
\begin{itemize}
    \item $[i,j]\in \operatorname{Seg}_0(M)$;
    \item $i+j\equiv 0\;(\bmod\; n)$;
    \item $\phi(\widetilde{[i,j]})$ is formed by the direct sum of two line bundles.
\end{itemize}
 \end{obs}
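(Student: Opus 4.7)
The plan is to establish the three equivalences by direct unpacking of definitions, leveraging Proposition \ref{correspondence1} and the case-split in Definition \ref{generalized extension bundle}.

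For the equivalence of the first two bullets, I would parametrize the segment $[i,j]$ as $(1-t)(i,0)+t(j,1)$ for $t\in[0,1]$; it meets the horizontal line $y=\tfrac{1}{2}$ in the unique point $(\tfrac{i+j}{2},\tfrac{1}{2})$. This point equals some $P_k=(\tfrac{kn}{2},\tfrac{1}{2})$ precisely when $i+j=kn$ for some $k\in\mathbb{Z}$, that is, $i+j\equiv 0\;(\bmod\; n)$.

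For the equivalence of the second and third bullets, Proposition \ref{correspondence1} gives $\phi(\widetilde{[i,j]})=\mathsf{E}_{L_0(-i\vec{x}_3)}\langle (i+j-1)\vec{x}_3\rangle$. Writing $(i+j-1)\vec{x}_3=l_3\vec{x}_3+l\vec{c}$ in normal form (with $0\leq l_3<n$), Definition \ref{generalized extension bundle} shows that this generalized extension bundle decomposes as a direct sum of two line bundles exactly when $l_3=n-1$, and is indecomposable otherwise (cf.\ Remark \ref{rem1}(ii)). Since $l_3\equiv i+j-1\;(\bmod\; n)$, the condition $l_3=n-1$ is equivalent to $i+j\equiv 0\;(\bmod\; n)$, which completes the chain of equivalences.

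The argument is essentially a direct calculation, so no substantial obstacle arises; the only care needed is in tracking the normal form of $(i+j-1)\vec{x}_3$ and selecting the correct branch of Definition \ref{generalized extension bundle}.
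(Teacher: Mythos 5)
Your proposal is correct and follows essentially the same route the paper intends: the paper states this observation as following directly from Proposition \ref{correspondence1}, and your argument simply spells out that derivation — the segment meets $y=\tfrac12$ at $(\tfrac{i+j}{2},\tfrac12)$, which lies in $\mathcal{P}$ iff $i+j\equiv 0\ (\bmod\ n)$, and the normal form of $(i+j-1)\vec{x}_3$ has $l_3=n-1$ exactly in that case, which by Definition \ref{generalized extension bundle} and Remark \ref{rem1}(ii) is precisely when $\mathsf{E}_{L_0(-i\vec{x}_3)}\langle (i+j-1)\vec{x}_3\rangle$ decomposes into two line bundles. No gaps.
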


Specifically, for each segment $[i,j]$ in $\operatorname{Seg}_0(M)$, suppose $i+j=kn$ for some $k\in \mathbb{Z}$. We have
 \[\phi(\widetilde{[i,j]})=\mathsf{E}_{L_0(-i\vec{x}_3)}\langle (kn-1)\vec{x}_3 \rangle=L_0(k\vec{x}_1-(i+1)\vec{x}_3)\oplus L_0^*(k\vec{x}_1-(i+1)\vec{x}_3).\] 
 To refine the bijection $\phi$, a natural idea is to divide the segments in $\operatorname{Seg}_0(M)$ at their midpoints, constructing a new set of segments as follows:
\[
\operatorname{Seg}_0^*(M) =\{[i,j]^+,[i,j]^-\mid [i,j]\in \operatorname{Seg}_0(M)\},
\]
where $[i,j]^+$ represents the segment with endpoints $(i,0)$ and $(\frac{i+j}{2},\frac{1}{2})$, and $[i,j]^-$ represents the segment with endpoints $(j,1)$ and $(\frac{i+j}{2},\frac{1}{2})$. 

\begin{figure}[H]
    \centering

\tikzset{every picture/.style={line width=0.75pt}}          

\begin{tikzpicture}[x=0.75pt,y=0.75pt,yscale=-1,xscale=1]

\draw    (50.33,100) -- (230.08,100) ;
  
\draw    (50,139.5) -- (229.75,139.5) ;
  
\draw [color={rgb, 255:red, 0; green, 0; blue, 0 }  ,draw opacity=1 ]   (96.29,139.47) -- (169.96,100.47) ;
\draw [shift={(169.96,100.47)}, rotate = 332.1] [color={rgb, 255:red, 0; green, 0; blue, 0 }  ,draw opacity=1 ][fill={rgb, 255:red, 0; green, 0; blue, 0 }  ,fill opacity=1 ][line width=0.75]      (0, 0) circle [x radius= 1.34, y radius= 1.34]   ;
\draw [shift={(133.13,119.97)}, rotate = 332.1] [color={rgb, 255:red, 0; green, 0; blue, 0 }  ,draw opacity=1 ][fill={rgb, 255:red, 0; green, 0; blue, 0 }  ,fill opacity=1 ][line width=0.75]      (0, 0) circle [x radius= 1.34, y radius= 1.34]   ;
\draw [shift={(96.29,139.47)}, rotate = 332.1] [color={rgb, 255:red, 0; green, 0; blue, 0 }  ,draw opacity=1 ][fill={rgb, 255:red, 0; green, 0; blue, 0 }  ,fill opacity=1 ][line width=0.75]      (0, 0) circle [x radius= 1.34, y radius= 1.34]   ;
   
\draw   (130.4,117.6) .. controls (128.13,113.53) and (124.95,112.63) .. (120.88,114.9) -- (116.16,117.53) .. controls (110.34,120.78) and (106.29,120.36) .. (104.02,116.29) .. controls (106.29,120.36) and (104.52,124.03) .. (98.7,127.28)(101.32,125.82) -- (98.7,127.28) .. controls (94.63,129.55) and (93.73,132.73) .. (96,136.8) ;
   
\draw   (136.4,122) .. controls (138.59,126.12) and (141.75,127.08) .. (145.87,124.88) -- (151.87,121.68) .. controls (157.75,118.55) and (161.79,119.04) .. (163.99,123.15) .. controls (161.79,119.04) and (163.63,115.41) .. (169.52,112.27)(166.87,113.68) -- (169.52,112.27) .. controls (173.63,110.08) and (174.59,106.92) .. (172.4,102.8) ;

\draw (33.67,137.4) node [anchor=north west][inner sep=0.75pt]  [font=\tiny]  {$\partial ^{\prime }$};
 
\draw (34.67,97.4) node [anchor=north west][inner sep=0.75pt]  [font=\tiny]  {$\partial $};
 
\draw (89,145.73) node [anchor=north west][inner sep=0.75pt]  [font=\tiny]  {$( i,0)$};
 
\draw (163.33,87.73) node [anchor=north west][inner sep=0.75pt]  [font=\tiny]  {$( j,1)$};
 
\draw (84,105.4) node [anchor=north west][inner sep=0.75pt]  [font=\tiny]  {$[ i,j]^{+}$};
 
\draw (164.4,123) node [anchor=north west][inner sep=0.75pt]  [font=\tiny]  {$[ i,j]^{-}$};
 
\draw (270.27,111.6) node [anchor=north west][inner sep=0.75pt]  [font=\normalsize]  {$i+j\equiv 0\ (\bmod\; n)$};

\end{tikzpicture}
    \caption{Dividing the segment $\operatorname{Seg}_0(M)$ into two halves}
\end{figure}
\noindent Let
$\operatorname{Seg}^*(M) = (\operatorname{Seg}(M) \setminus \operatorname{Seg}_0(M)) \cup \operatorname{Seg}_0^*(M).$  The $G$-action on $\operatorname{Seg}(M)$ induces a $G$-action on $\operatorname{Seg}^*(M)$ by \[g\circ[i,j]^*=(g\cdot [i,j])^*,\]
  for any $g\in G$, where the superscript $*$ is taken from the set $\{+,-,\text{empty}\}$. 
  
  Denote by $\widetilde{\operatorname{Seg}^*(M)}$ the set of $G$-orbits of segments in $\operatorname{Seg}^*(M)$. 
   \begin{prop}\label{correspondence}
    The bijection $\phi$ induces a bijection $\widehat{\phi}:\widetilde{\operatorname{Seg}^*(M)}\rightarrow\operatorname{ind}({\rm vect}\mbox{-}\mathbb{X})$. 
\end{prop}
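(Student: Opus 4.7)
The plan is to define $\widehat{\phi}$ by case analysis on the underlying line segment and then verify it is a bijection. Partition $\widetilde{\operatorname{Seg}^*(M)}$ into two pieces: those orbits coming from $\operatorname{Seg}(M)\setminus\operatorname{Seg}_0(M)$ and those coming from $\operatorname{Seg}_0^*(M)$. On the first piece, simply set $\widehat{\phi}=\phi$. On the second piece, define $\widehat{\phi}$ on the decorated orbits $\widetilde{[i,-i]^{\pm}}$ and $\widetilde{[i,n-i]^{\pm}}$ by the four rows of the table, thereby labelling the two indecomposable summands of the decomposable generalized extension bundle $\phi(\widetilde{[i,j]})=L\oplus L^{*}$ by the decorations $+$ and $-$.

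To handle the first piece, after normalizing a representative so that $0<i+j<n$ (which is possible precisely because $i+j\not\equiv 0\pmod{n}$: first use $\sigma_n$ to arrange $-n<i+j\leq n$ as in the proof of Proposition~\ref{correspondence1}, then apply $\theta$ if necessary to flip the sign), one has $0\leq i+j-1\leq n-2$, so by Remark~\ref{rem1}(ii) the image $\phi(\widetilde{[i,j]})=\mathsf{E}_{L_0(-i\vec{x}_3)}\langle(i+j-1)\vec{x}_3\rangle$ is an indecomposable extension bundle. These exhaust all indecomposable bundles in ${\rm vect}^{F}\mbox{-}\mathbb{X}$ which are not of the form $L\oplus L^{*}$. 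For the second piece, I would first classify the $G$-orbits in $\operatorname{Seg}_0(M)$ by tracking $k:=(i+j)/n$: the map $\sigma_n$ shifts $k$ by $2$ while $\theta$ sends $k$ to $-k$, so each orbit admits a canonical representative either of the form $[i,-i]$ (if $k$ is even) or $[i,n-i]$ (if $k$ is odd), with $i\in\mathbb{Z}$ labelling pairwise distinct orbits. Then a direct unwinding of Definition~\ref{generalized extension bundle} in the boundary case $l_3=n-1$ yields the explicit decomposition $\phi(\widetilde{[i,-i]})=L_0(-(i+1)\vec{x}_3)\oplus F(L_0(-(i+1)\vec{x}_3))$, and analogously for $[i,n-i]$; the four rows of the table just record a coherent labelling of the two summands by $+$ and $-$.

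For bijectivity, injectivity on the first piece is immediate from Proposition~\ref{correspondence1}, and injectivity on the second piece follows from the freeness of the Picard-group action on line bundles, so that distinct canonical representatives $[i,-i]$ and $[i',-i']$ produce non-isomorphic line bundles, and likewise for $[i,n-i]$. For surjectivity, every indecomposable object of ${\rm vect}\mbox{-}\mathbb{X}$ is either a line bundle or an indecomposable extension bundle $E_L\langle\vec{x}\rangle$ with $0\leq\vec{x}\leq\vec{\delta}$; the four possible values of $(l_1,l_2)\in\{0,1\}\times\{0,1\}$ in the normal form of an arbitrary line bundle are realized exactly by the four rows of the table as $i$ ranges over $\mathbb{Z}$, while the extension-bundle case is covered by the first piece via the last row of the table.

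The main obstacle I anticipate is verifying well-definedness of $\widehat{\phi}$ on the decorated orbits. Geometrically $\theta$ swaps the two halves $[i,j]^{+}$ and $[i,j]^{-}$ of a segment in $\operatorname{Seg}_0(M)$ through its midpoint, yet the convention $g\circ[i,j]^{*}=(g\cdot[i,j])^{*}$ transports the decoration only symbolically. One must therefore check that the $\pm$ labels in the table are consistent with the group relations $\theta^{2}=1$ and $\sigma_n^{k}\theta=\theta\sigma_n^{-k}$, i.e.\ that applying any element of $G$ to a canonical representative $[i,-i]^{\pm}$ or $[i,n-i]^{\pm}$ and then renormalizing returns the same line-bundle summand under the table's assignment. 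This reduces to a finite case analysis using the explicit formulas in the proof of Proposition~\ref{correspondence1}.
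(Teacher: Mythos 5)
Your proposal is correct and follows essentially the same route as the paper: $\widehat{\phi}$ is defined to agree with $\phi$ away from $\operatorname{Seg}_0(M)$ and to distribute the two line-bundle summands of $\phi(\widetilde{[i,j]})$ over the decorations $\pm$ according to the parity of $k=\frac{1}{n}(i+j)$. The paper merely asserts that this is ``a slight modification of $\phi$'' and hence a well-defined bijection, whereas you supply the verifications it omits (normalization of representatives, classification of the $G$-orbits in $\operatorname{Seg}_0(M)$ via the parity of $k$, and consistency of the symbolic decoration convention with the relations $\theta^2=1$ and $\sigma_n^k\theta=\theta\sigma_n^{-k}$), all of which check out.
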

  \begin{proof}
       We define the map 
   \[\widehat{\phi}:\widetilde{\operatorname{Seg}^*(M)} \rightarrow \operatorname{ind}({\rm vect}\mbox{-}\mathbb{X})\] by the formula 
   \[\widehat{\phi}(\widetilde{[i,j]})= \phi(\widetilde{[i,j]})=\mathsf{E}_{L_0(-i\vec{x}_3)}\langle (i+j-1)\vec{x}_3 \rangle,\] for $[i,j] \notin \operatorname{Seg}_0(M)$; and 
\[
\widehat{\phi}(\widetilde{[i,j]^+})= \begin{cases}
L_0(k\vec{x}_1-(i+1)\vec{x}_3), & \text{if $k\in \mathbb{Z}$ is odd},\\ 
L_0^*(k\vec{x}_1-(i+1)\vec{x}_3), & \text{if $k\in \mathbb{Z}$ is even}.
\end{cases}
\]
\[
\widehat{\phi}(\widetilde{[i,j]^-})= \begin{cases}
L_0^*(k\vec{x}_1-(i+1)\vec{x}_3), & \text{if $k\in \mathbb{Z}$ is odd},\\ 
L_0(k\vec{x}_1-(i+1)\vec{x}_3), & \text{if $k\in \mathbb{Z}$ is even}.
\end{cases}
\]
for $[i,j] \in \operatorname{Seg}_0(M)$, where $k=\frac{1}{n}(i+j)$. Since the mapping $\widehat{\phi}$ can be viewed as a slight modification of $\phi$, it is easy to see that $\widehat{\phi}$ is a well-defined bijection.
  \end{proof}
  \begin{rem}
   (i) By choosing appropriate representatives for $G$-orbits in $\widetilde{\operatorname{Seg}^*(M)}$, the bijection $\widehat{\phi}$ can be explicitly described by the following table:
		\begin{center}
\begin{tabular}{|c|c|}
\hline
\text{$G$-orbits in $\widetilde{\operatorname{Seg}^*(M)}$} & \text{Indecomposable objects in ${\rm vect}\mbox{-}\mathbb{X} $} \\
\hline
$\widetilde{[i,-i]^{+}}$ & $L_0^*(-(i+1)\vec{x}_3)$ \\
$\widetilde{[i,-i]^{-}}$ & $L_0(-(i+1)\vec{x}_3)$ \\
$\widetilde{[i,n-i]^{+}}$ & $L_0(\vec{x}_1-(i+1)\vec{x}_3)$ \\
$\widetilde{[i,n-i]^{-}}$ & $L_0^*(\vec{x}_1-(i+1)\vec{x}_3)$ \\
$\widetilde{[i,k-i]}$ & $E_{L_0(-i\vec{x}_3)}\langle (k-1)\vec{x}_3 \rangle$ \\
\hline
\end{tabular}
\end{center}
   where $i,k$ are integers with $1\leq k\leq n-1$. \\
    (ii) In the later, the bijection $\widehat{\phi}$ in Proposition \ref{correspondence} will be frequently referenced without further elaboration. By abuse of notation, for a line segment $[i,j]\in \operatorname{Seg}_0(M)$, we will denote $\widehat{\phi}(\widetilde{[i,j]^+})\oplus \widehat{\phi}(\widetilde{[i,j]^-})$ as $\widehat{\phi}(\widetilde{[i,j]})$.
\end{rem}

In the rest of this section, let $X$ be an indecomposable bundle in ${\rm vect}\mbox{-}\mathbb{X}$. Define 
\[\begin{aligned}
\varrho: \{+, -, \text{empty}\} & \rightarrow \{+, -, \text{empty}\} \\
+ & \mapsto - \\
- & \mapsto + \\
\text{empty} & \mapsto \text{empty}
\end{aligned}\]
as a permutation on the set $\{+,-,\text{empty}\}$.
\subsection{ Slopes of indecomposable bundles}
For any $\widetilde{[i,j]^*}\in \widetilde{\operatorname{Seg}^*(M)}$, since line segments in $\widetilde{[i,j]^*}$ have the same slope, we define the \emph{slope} of $\widetilde{[i,j]^*}$ by $\mu(\widetilde{[i,j]^*})=\frac{1}{j-i}$. The following observation is useful to calculate the  slope $\mu X$ of $X$. 
\begin{prop}\label{slop}
    Assume $X=\widehat{\phi}(\widetilde{[i,j]^*})$. The slope $\mu X$ of $X$ is given by 
    \[\mu X=(\frac{1}{\mu(\widetilde{[i,j]^*})}-2)\times\frac{\bar{p}}{2n}+\mu L_0,\]
    where $\bar{p}:=\operatorname{lcm}\left(2, n\right)$ and $\frac{1}{\infty}=0$ is defined. In particular, if $L_0$ is chosen as $\mathcal{O}(-\vec{\omega})$ or $\mathcal{O}(\vec{x}_3)$, we have
     \[\mu X=\frac{1}{\mu(\widetilde{[i,j]^*})}\times\frac{\bar{p}}{2n}.\]
\end{prop}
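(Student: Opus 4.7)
The plan is to compute $\mu X$ directly from the explicit description of $X$ given by the table following Proposition~\ref{correspondence}, using additivity of $\operatorname{rk}$ and $\deg$ on exact sequences. A key preliminary computation is
\[
\delta(\vec{\omega}) \;=\; \delta(\vec{c}) - \delta(\vec{x}_1) - \delta(\vec{x}_2) - \delta(\vec{x}_3) \;=\; \bar{p} - \tfrac{\bar{p}}{2} - \tfrac{\bar{p}}{2} - \tfrac{\bar{p}}{n} \;=\; -\tfrac{\bar{p}}{n},
\]
which will make the $-2$ shift appear naturally. Since the invariants $\operatorname{rk}$ and $\deg$ are $G$-equivariant via the bijection $\phi$ (Proposition~\ref{correspondence1}), we are free to work with any convenient representative $(i,j)$ of the $G$-orbit $\widetilde{[i,j]^*}$.

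The proof then splits according to whether $[i,j]\in\operatorname{Seg}_0(M)$. In the generic case $[i,j]\notin\operatorname{Seg}_0(M)$, the bundle $X = E_{L_0(-i\vec{x}_3)}\langle(i+j-1)\vec{x}_3\rangle$ fits into the defining short exact sequence \eqref{extension bundle}, so $\operatorname{rk} X = 2$ and additivity of $\deg$ gives
\[
\mu X \;=\; \mu L_0 - i\tfrac{\bar{p}}{n} + \tfrac{1}{2}\bigl(\delta(\vec{\omega}) + (i+j-1)\tfrac{\bar{p}}{n}\bigr) \;=\; \mu L_0 + \tfrac{\bar{p}}{2n}(j-i-2).
\]
In the remaining case $[i,j]\in\operatorname{Seg}_0(M)$, writing $i+j=kn$ and invoking the table, $X$ is a line bundle of the form $L_0^{(*)}(k\vec{x}_1 -(i+1)\vec{x}_3)$. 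Since $\delta(\vec{x}_1-\vec{x}_2)=0$, one has $\mu L_0^* = \mu L_0$, so in either parity of $k$
\[
\mu X \;=\; \mu L_0 + k\tfrac{\bar{p}}{2} - (i+1)\tfrac{\bar{p}}{n} \;=\; \mu L_0 + \tfrac{\bar{p}}{2n}\bigl((i+j) - 2(i+1)\bigr) \;=\; \mu L_0 + \tfrac{\bar{p}}{2n}(j-i-2).
\]
Both computations agree, and substituting $\tfrac{1}{\mu(\widetilde{[i,j]^*})} = j-i$ (and $\tfrac{1}{\infty}=0$ for vertical segments) yields the claimed identity. The special case follows from $\mu \mathcal{O}(-\vec{\omega}) = -\delta(\vec{\omega}) = \tfrac{\bar{p}}{n}$ and $\mu \mathcal{O}(\vec{x}_3) = \delta(\vec{x}_3) = \tfrac{\bar{p}}{n}$, which cancel the $-2\cdot\tfrac{\bar{p}}{2n}$ term against $\mu L_0$.

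I do not expect any serious obstacle: the proof is essentially a bookkeeping verification. The only subtle point is ensuring that the uniform definition $\mu(\widetilde{[i,j]^*}) = \tfrac{1}{j-i}$ is used consistently for the half-segments $[i,j]^\pm$ (where one endpoint is the midpoint $((i+j)/2,1/2)$ rather than a marked integer point), and remembering that $L_0$ and $L_0^*$ have equal slopes so the two parity subcases of Case~2 coalesce. Once $\delta(\vec{\omega})= -\bar{p}/n$ is in hand, the rest is a single linear computation repeated in each case.
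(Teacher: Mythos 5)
Your proposal is correct and follows essentially the same route as the paper's proof: a case split on whether $X$ is an extension bundle or a line bundle, with the rank-two case handled by additivity of $\deg$ on the defining sequence \eqref{extension bundle} and the rank-one case by a direct computation of $\delta(k\vec{x}_1-(i+1)\vec{x}_3)$, both collapsing to $\mu L_0+\tfrac{\bar p}{2n}(j-i-2)$. The explicit identification $\delta(\vec{\omega})=-\bar p/n$ and the remark that $\mu L_0^*=\mu L_0$ are exactly the ingredients the paper uses (the latter stated, the former implicit), so there is nothing to correct.
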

\begin{proof}
Firstly, we have $\deg L_0=\deg L_0^*$ and $\deg L_0=\mu L_0$. We discuss case by case based on the types of $*$ as follows:
\begin{itemize}
\item[\emph{Case 1:}] If $*$ is empty, then $X= \mathsf{E}_{L_0(-i\vec{x}_3)}\langle (i+j-1)\vec{x}_3 \rangle$ of rank two. By additivity of the degree function, we have
\[
\begin{aligned}
    \mu X=\operatorname{deg} X / \mathrm{rk} X=&\frac{1}{2}(\deg L_0(-i\vec{x}_3+\vec{\omega})+\deg L_0((j-1)\vec{x}_3))\\
                           =&(j-i-2)\times\frac{\bar{p}}{2n}+\mu L_0\\
                           =&(\frac{1}{\mu(\widetilde{[i,j]^*})}-2)\times\frac{\bar{p}}{2n}+\mu L_0;
\end{aligned}
 \]
\item[\emph{Case 2:}] If $*$ is $+$ or $-$, then $X$ is a line bundle satisfying
 \[
\begin{aligned}
    \mu X=\operatorname{deg} X / \mathrm{rk} X=&\deg L_0(\frac{i+j}{n}\vec{x}_1-(i+1)\vec{x}_3)\\
                           =&(\frac{i+j}{n}\times \frac{\bar{p}}{2}-(i+1)\times\frac{\bar{p}}{n})+\deg L_0\\
                           =&(j-i-2)\times\frac{\bar{p}}{2n}+\mu L_0\\
                           =&(\frac{1}{\mu(\widetilde{[i,j]^*})}-2)\times\frac{\bar{p}}{2n}+\mu L_0.
\end{aligned}
 \]
\end{itemize}
  
  The remaining part of the statement follows from the fact that 
  \[\mu\;\mathcal{O}(-\vec{\omega})=\mu\; \mathcal{O}(\vec{x}_3)=\frac{\bar{p}}{n}.\] 
 \end{proof}
 \subsection{The $\mathbb{L}$-action on ${\rm vect}\mbox{-}\mathbb{X}$}
Recall that the Picard group $\mathbb{L}$ is an abelian group generated by $\vec{x}_1, \vec{x}_2, \vec{x}_3$, and acts on ${\rm vect}\mbox{-}\mathbb{X}$ by the shift $X\mapsto X(\vec{x})$, where $\vec{x}\in \mathbb{L}$. To obtain the $G$-orbit $\widehat{\phi}^{-1}(X(\vec{x}))$ in $\widetilde{\operatorname{Seg}^*(M)}$ corresponding to $X(\vec{x})$, the following observation is crucial.
\begin{cor}{}{}\label{L-action}
    Assume $X=\widehat{\phi}(\widetilde{[i,j]^*})$. Then we have
    \begin{itemize}
        \item $X(\vec{x}_1)=\widehat{\phi}(\widetilde{[i,j+n]^{\varrho(*)}})$;
        \item $X(\vec{x}_2)=\widehat{\phi}(\widetilde{[i,j+n]^*})$;
        \item $X(\vec{x}_3)=\widehat{\phi}(\widetilde{[i-1,j+1]^*})$.
    \end{itemize}
\end{cor}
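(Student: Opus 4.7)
The plan is to split the argument according to whether the superscript $*$ is empty (so $X$ is an extension bundle) or nonempty (so $X$ is a line bundle), since the two cases call on different tools.

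For the case of empty $*$, i.e.\ $[i,j]\notin\operatorname{Seg}_0(M)$, I will invoke Proposition~\ref{MCG and L action} directly. Since $\widehat{\phi}$ agrees with $\phi$ on such orbits and $\psi(\alpha)=\vec{x}_1$, $\psi(\beta)=\vec{x}_3$, a direct check of the definitions gives $\alpha\cdot\widetilde{[i,j]}=\widetilde{[i,j+n]}$ and $\beta\cdot\widetilde{[i,j]}=\widetilde{[i-1,j+1]}$, which immediately produces the formulas for $X(\vec{x}_1)$ and $X(\vec{x}_3)$. For $X(\vec{x}_2)$, I will use that $\psi$ is valued in $\mathbb{L}/\mathbb{Z}(\vec{x}_1-\vec{x}_2)$ together with the observation (Observation in \S3) that extension bundles are $F$-stable; hence $X(\vec{x}_2)\cong F^{-1}(X(\vec{x}_1))\cong X(\vec{x}_1)$, consistent with $\varrho(\text{empty})=\text{empty}$.

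For the line bundle case ($*\in\{+,-\}$), I will work directly from the piecewise definition of $\widehat{\phi}$ on $\operatorname{Seg}^*_0(M)$. Writing $k:=(i+j)/n$, the action of $\vec{x}_3$ preserves $k$ (since $(i-1)+(j+1)=i+j$) and shifts the $\vec{x}_3$-coefficient in the line-bundle expression by one, giving $X(\vec{x}_3)=\widehat{\phi}(\widetilde{[i-1,j+1]^{*}})$ after matching the parity cases. The action of $\vec{x}_1$ sends $k$ to $k+1$, which flips its parity; combined with the shift $L_0\mapsto L_0(\vec{x}_1)$ (resp.\ $L_0^*\mapsto L_0^*(\vec{x}_1)$), a short case check on $k$ odd/even shows that the resulting line bundle is precisely what $\widehat{\phi}$ assigns to $\widetilde{[i,j+n]^{\varrho(*)}}$. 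Finally, for $\vec{x}_2$ I will use $\vec{x}_2=\vec{x}_1-(\vec{x}_1-\vec{x}_2)$, so $X(\vec{x}_2)\cong F^{-1}(X(\vec{x}_1))$; since $F$ swaps $L_0$ and $L_0^*$, it swaps the decorations $+\leftrightarrow -$ under $\widehat{\phi}$, i.e.\ $F^{-1}\circ\widehat{\phi}(\widetilde{[-]^{\sharp}})=\widehat{\phi}(\widetilde{[-]^{\varrho(\sharp)}})$, and applying this to the formula for $X(\vec{x}_1)$ yields $\widehat{\phi}(\widetilde{[i,j+n]^{\varrho(\varrho(*))}})=\widehat{\phi}(\widetilde{[i,j+n]^{*}})$.

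The main obstacle will be the parity bookkeeping in the $\vec{x}_1$ case for line bundles: one must carefully match the piecewise formula for $\widehat{\phi}$ at parameter $k$ against that at parameter $k+1$ and verify the appropriate swap of $L_0$ and $L_0^*$. The rest of the argument is essentially a formal manipulation using Proposition~\ref{MCG and L action} and the $F$-stability properties already recorded.
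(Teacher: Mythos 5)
Your proposal is correct and follows essentially the same route as the paper: the extension-bundle case is read off from Proposition \ref{MCG and L action}, and the line-bundle case amounts to tracking how the decorations $+,-$ transform under the shifts, which is exactly what the paper compresses into the phrase ``folding relationship'' between $\Gamma(\mathrm{vect}\mbox{-}\mathbb{X})$ and $(\Gamma,d)$. Your explicit parity check against the piecewise definition of $\widehat{\phi}$ (noting that $\vec{x}_3$ preserves $k$ while $\vec{x}_1$ flips its parity, and that $F$ swaps $L_0\leftrightarrow L_0^*$ and hence $+\leftrightarrow-$) is a valid and more self-contained substitute for that appeal.
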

\begin{proof}
 By employing the ``folding" relationship between the Auslander-Reiten quiver $\Gamma({\rm vect}\mbox{-}\mathbb{X})$ and the  valued translation quiver $(\Gamma, d)$, the assertions directly follow from Proposition \ref{MCG and L action} and Proposition \ref{correspondence}.
\end{proof}
 Therefore, for any $\vec{x}\in \mathbb{L}$, to obtain $\widehat{\phi}^{-1}(X(\vec{x}))$, one can apply a series of operations (or their inverses) to $\widetilde{[i,j]^*}$ as shown in Corollary \ref{L-action}. For example, let $\vec{x}=-\vec{\omega}=\vec{x}_1-\vec{x}_2+\vec{x}_3$, then $\tau^{-1} X=X(-\vec{\omega})=\widehat{\phi}(\widetilde{[i-1,j+1]^{\varrho(*)}})$. 

\subsection{Vector bundle duality}
Recall that vector bundle duality $^\vee: {\rm vect}\mbox{-}\mathbb{X}\rightarrow{\rm vect}\mbox{-}\mathbb{X}, X \mapsto
\mathcal{H}om(X, \mathcal{O})$, sends line bundles to line bundles, and preserves distinguished exact sequences. We give a general version of Lemma 4.3 in \cite{MR3028577}.
\begin{lem}\label{Vector bundle duality}
    Assume $\vec{x}=l_3\vec{x}_3+l\vec{c} \in  \mathbb{L}$ is  written in normal form. Then
$$
\mathsf{E}_L\langle\vec{x}\rangle^\vee=\mathsf{E}_{L^\vee(-(\vec{x}+\vec{\omega}))}\langle\vec{x}\rangle.
$$ 
In particular, for $L=\mathcal{O}$, we have $\mathsf{E}\langle\vec{x}\rangle^\vee=\mathsf{E}\langle\vec{x}\rangle(-(\vec{x}+\vec{\omega}))$.
\end{lem}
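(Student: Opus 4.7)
The plan is to split the argument into two cases according to the normal form of $\vec{x} = l_3\vec{x}_3 + l\vec{c}$: the generic case $0 \leq l_3 \leq n-2$ (where $\mathsf{E}_L\langle \vec{x}\rangle$ is a genuine extension bundle) and the degenerate case $l_3 = n-1$ (where it splits as a sum of two line bundles). The key identities I will use repeatedly are $\vec{c} = 2\vec{x}_1 = 2\vec{x}_2 = n\vec{x}_3$ and $\vec{\omega} = \vec{c} - \vec{x}_1 - \vec{x}_2 - \vec{x}_3$, together with $2(\vec{x}_1-\vec{x}_2) = 0$ in $\mathbb{L}$.

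In Case~1, the definition of $\mathsf{E}_L\langle\vec{x}\rangle = E_{L(l\vec{x}_1)}\langle l_3\vec{x}_3\rangle$ gives the non-split short exact sequence
\[
0 \longrightarrow L(l\vec{x}_1+\vec{\omega}) \longrightarrow \mathsf{E}_L\langle\vec{x}\rangle \longrightarrow L(l\vec{x}_1+l_3\vec{x}_3) \longrightarrow 0.
\]
Since $(-)^\vee$ is an exact, contravariant involution on ${\rm vect}\mbox{-}\mathbb{X}$ which takes line bundles to line bundles, applying it produces the non-split short exact sequence
\[
0 \longrightarrow L^\vee(-l\vec{x}_1-l_3\vec{x}_3) \longrightarrow \mathsf{E}_L\langle\vec{x}\rangle^\vee \longrightarrow L^\vee(-l\vec{x}_1-\vec{\omega}) \longrightarrow 0.
\]
Setting $M := L^\vee(-(\vec{x}+\vec{\omega}))$, I will check that $M(l\vec{x}_1+\vec{\omega}) = L^\vee(-l_3\vec{x}_3-l\vec{x}_1)$ and $M(l\vec{x}_1+l_3\vec{x}_3) = L^\vee(-l\vec{x}_1-\vec{\omega})$, using the simplification $-l\vec{c}+l\vec{x}_1 = -l\vec{x}_1$. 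Thus the displayed sequence is precisely the defining sequence of $\mathsf{E}_M\langle\vec{x}\rangle = E_{M(l\vec{x}_1)}\langle l_3\vec{x}_3\rangle$, and the uniqueness (up to isomorphism) of the middle term guaranteed by \eqref{dim of ext eqe 1} identifies $\mathsf{E}_L\langle\vec{x}\rangle^\vee$ with $\mathsf{E}_{L^\vee(-(\vec{x}+\vec{\omega}))}\langle\vec{x}\rangle$.

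In Case~2, $\mathsf{E}_L\langle\vec{x}\rangle = L((l+1)\vec{x}_1-\vec{x}_3) \oplus L^*((l+1)\vec{x}_1-\vec{x}_3)$ is a direct sum of line bundles. Dualizing and using $(L^*)^\vee = L^\vee(\vec{x}_2-\vec{x}_1) = (L^\vee)^*$ (because $2(\vec{x}_1-\vec{x}_2) = 0$), one gets
\[
\mathsf{E}_L\langle\vec{x}\rangle^\vee = L^\vee(-(l+1)\vec{x}_1+\vec{x}_3) \oplus (L^\vee)^*(-(l+1)\vec{x}_1+\vec{x}_3).
\]
Expanding $\mathsf{E}_M\langle\vec{x}\rangle$ directly from the definition (now with $l_3 = n-1$) and simplifying both summands $M((l+1)\vec{x}_1-\vec{x}_3)$ and $M^*((l+1)\vec{x}_1-\vec{x}_3)$ using $-(n-1)\vec{x}_3 = -\vec{c}+\vec{x}_3$ and $-(l+1)\vec{c}+(l+1)\vec{x}_1 = -(l+1)\vec{x}_1$, I will show they match the two summands above.

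The special case $L = \mathcal{O}$ follows at once since $\mathcal{O}^\vee = \mathcal{O}$ and the evident twisting identity $\mathsf{E}_{L(\vec{y})}\langle\vec{x}\rangle = \mathsf{E}_L\langle\vec{x}\rangle(\vec{y})$ (immediate from Definition~\ref{generalized extension bundle}). The main obstacle is not conceptual but clerical: the identification in both cases reduces to careful bookkeeping of shifts in $\mathbb{L}$, and the cleanest way to carry it out is to isolate the three reductions $-l\vec{c}+l\vec{x}_1 = -l\vec{x}_1$, $-\vec{c}+\vec{x}_2 = -\vec{x}_2$, and $L^\vee(\vec{x}_2-\vec{x}_1) = (L^\vee)^*$ as preliminary observations and then substitute.
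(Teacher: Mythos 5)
Your proposal is correct, and its overall shape (case split on $l_3\leq n-2$ versus $l_3=n-1$, followed by $\mathbb{L}$-arithmetic with $\vec c=2\vec x_1$ and $2(\vec x_1-\vec x_2)=0$) matches the paper's. The one genuine difference is in the generic case: the paper simply invokes Lemma~4.3 of \cite{MR3028577}, which already asserts $E_L\langle\vec x\rangle^\vee=E_{L^\vee(-(\vec x+\vec\omega))}\langle\vec x\rangle$ for honest extension bundles, and then only has to relabel the twist. You instead re-derive that fact from first principles: dualize the defining non-split sequence (exact because all terms are vector bundles and $\mathcal{E}xt^1$ of a bundle into $\mathcal{O}$ vanishes), observe the result is again non-split since $^\vee$ is an involution, match the end terms with those of the defining sequence for $\mathsf{E}_{L^\vee(-(\vec x+\vec\omega))}\langle\vec x\rangle$, and conclude by the one-dimensionality of the relevant $\operatorname{Ext}^1$ from \eqref{dim of ext eqe 1}. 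Your route is self-contained and makes the mechanism visible at the cost of a little length; the paper's is shorter but leans on the external reference. The $l_3=n-1$ case is handled identically in both (direct dualization of the two line-bundle summands, using $(L^*)^\vee=(L^\vee)^*$), with your two summands appearing in the opposite order, which is harmless for a direct sum. The deduction of the $L=\mathcal{O}$ special case via $\mathcal{O}^\vee=\mathcal{O}$ and compatibility of $\mathsf{E}_{(-)}\langle\vec x\rangle$ with degree shift is also sound.
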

\begin{proof}
For $l_3\neq n-1$, by Lemma 4.3 in \cite{MR3028577}, we have 
\begin{align*}
    \mathsf{E}_L\langle\vec{x}\rangle^\vee &= \mathsf{E}_{L(l\vec{x}_1)^\vee(-(l_3\vec{x}_3+\vec{\omega}))}\langle l_3\vec{x}_3\rangle \\
    &= \mathsf{E}_{L^\vee(-(l_3\vec{x}_3+l\vec{x}_1+\vec{\omega}))}\langle l_3\vec{x}_3\rangle \\
    &= \mathsf{E}_{L^\vee(-(\vec{x}+\vec{\omega}))}\langle \vec{x}\rangle;
\end{align*}
for $l_3=n-1$, we have 
\begin{align*}
    \mathsf{E}_L\langle\vec{x}\rangle^\vee &= L((l+1)\vec{x}_1-\vec{x}_3)^\vee \oplus L^*((l+1)\vec{x}_1-\vec{x}_3)^\vee \\
    &= L^\vee((l+1)\vec{x}_1-\vec{x}_3) \oplus (L^\vee)^*((l+1)\vec{x}_1-\vec{x}_3) \\
    &= \mathsf{E}_{L^\vee(-(l_3\vec{x}_3+l\vec{x}_1+\vec{\omega}))}\langle l_3\vec{x}_3\rangle \\
    &= \mathsf{E}_{L^\vee(-(\vec{x}+\vec{\omega}))}\langle \vec{x}\rangle. 
\end{align*}
\end{proof}

 The vector bundle duality $X^\vee$ of $X$ is given by
\begin{prop}\label{vector bundle duality}
    Assume $X=\widehat{\phi}(\widetilde{[i,j]^*})$. Then we have
   \[X^\vee=\widehat{\phi}(\widetilde{[j,i]^*}).\]
  Moreover, $X$ is fixed under $^\vee$ if and only if $i=j$. 
\end{prop}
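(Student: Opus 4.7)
The plan is to verify the formula $X^\vee=\widehat{\phi}(\widetilde{[j,i]^{*}})$ by splitting on whether the segment lies in $\operatorname{Seg}_0(M)$, and then to characterize the fixed points by a $G$-orbit calculation. For an orbit $\widetilde{[i,j]}$ with $[i,j]\notin\operatorname{Seg}_0(M)$, the object $X=\mathsf{E}_{L_0(-i\vec{x}_3)}\langle(i+j-1)\vec{x}_3\rangle$ is an indecomposable extension bundle, so Lemma~\ref{Vector bundle duality} applies directly and gives
\[
X^\vee=\mathsf{E}_{L_0^\vee((1-j)\vec{x}_3-\vec{\omega})}\langle(i+j-1)\vec{x}_3\rangle
\]
after simplifying $L_0(-i\vec{x}_3)^\vee=L_0^\vee(i\vec{x}_3)$. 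Using the relation $\vec{\omega}=\vec{c}-\vec{x}_1-\vec{x}_2-\vec{x}_3$ together with the equivalence $\mathsf{E}_L\langle\vec{x}\rangle=\mathsf{E}_{L^*}\langle\vec{x}\rangle$ from Remark~\ref{rem1}, the line-bundle argument can then be rewritten as $L_0(-j\vec{x}_3)$ modulo the two-torsion element $\vec{x}_1-\vec{x}_2$, identifying $X^\vee$ with $\mathsf{E}_{L_0(-j\vec{x}_3)}\langle(i+j-1)\vec{x}_3\rangle=\widehat{\phi}(\widetilde{[j,i]})$.

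When instead $[i,j]\in\operatorname{Seg}_0(M)$ with $i+j=kn$, consulting the table following Proposition~\ref{correspondence} writes $X$ as a line bundle of the form $L_0^{\star}(k\vec{x}_1-(i+1)\vec{x}_3)$, where the decoration $\star\in\{\text{empty},\,*\}$ is determined jointly by the parity of $k$ and the marker on the segment. The dual of a line bundle merely negates the twist; the rewrite $-k\vec{x}_1=k\vec{x}_1-kn\vec{x}_3$ coming from $2\vec{x}_1=n\vec{x}_3$ and the substitution $kn=i+j$ transform the resulting twist into $k\vec{x}_1-(j+1)\vec{x}_3$, which is precisely the $\widehat{\phi}$-image of $\widetilde{[j,i]^{*}}$: the parity of $k=(i+j)/n=(j+i)/n$ is unchanged by the swap, so the table produces the same entry with the same marker.

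For the fixed-point claim, translating $X^\vee=X$ through $\widehat{\phi}^{-1}$ reduces it to the condition $\widetilde{[j,i]^{*}}=\widetilde{[i,j]^{*}}$, i.e.\ the existence of $g\in G$ with $g\cdot[i,j]=[j,i]$. Every $g\in G$ has the form $\sigma_n^k$ or $\sigma_n^k\theta$; the equations $[i+kn,j+kn]=[j,i]$ and $[-j+kn,-i+kn]=[j,i]$ each force $i=j$, and the converse is immediate. The main obstacle I expect is the line-bundle matching in the extension bundle case, where the target $L_0(-j\vec{x}_3)$ only appears after absorbing $\vec{\omega}$ into shifts by $\vec{x}_1,\vec{x}_2,\vec{x}_3$ and invoking the $L\leftrightarrow L^*$ equivalence of Remark~\ref{rem1}; carrying this identification through modulo $\mathbb{Z}(\vec{x}_1-\vec{x}_2)$, whose presence is precisely the source of the $F$-stability that controls the parameterization, is the computational heart of the argument.
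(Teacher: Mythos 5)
Your strategy is the same as the paper's: deduce the first identity from Lemma~\ref{Vector bundle duality} together with the explicit form of $\widehat{\phi}$, and settle the fixed-point claim by checking that $g\cdot[i,j]=[j,i]$ for $g=\sigma_n^k$ or $\sigma_n^k\theta$ forces $i=j$. The fixed-point half of your argument is correct and essentially identical to the paper's.

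The gap is in the first half, at the point where you pass from $L_0^\vee$ back to $L_0$. Applying Lemma~\ref{Vector bundle duality} you correctly obtain $X^\vee=\mathsf{E}_{L_0^\vee((1-j)\vec{x}_3-\vec{\omega})}\langle(i+j-1)\vec{x}_3\rangle$, and you then assert that the subscript ``can be rewritten as $L_0(-j\vec{x}_3)$ modulo $\vec{x}_1-\vec{x}_2$'' using only $\vec{\omega}=\vec{c}-\vec{x}_1-\vec{x}_2-\vec{x}_3$ and Remark~\ref{rem1}. No identity in $\mathbb{L}$ achieves this: by Proposition~\ref{Grothendieck group} the required equality is equivalent to $L_0^\vee\cong L_0(\vec{\omega}-\vec{x}_3)$ up to the $2$-torsion element $\vec{x}_1-\vec{x}_2$, i.e.\ to $2\vec{a}=2\vec{x}_3$ when $L_0=\mathcal{O}(\vec{a})$. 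This holds for $L_0=\mathcal{O}(\vec{x}_3)$ or $\mathcal{O}(-\vec{\omega})$ (the normalizations singled out in Proposition~\ref{slop}) but fails for an arbitrary $L_0$; for instance with $L_0=\mathcal{O}$ one has $\widehat{\phi}(\widetilde{[0,1]})^\vee=E_{\mathcal{O}}^\vee=E_{\mathcal{O}(-\vec{\omega})}$, whose class in $K_0(\mathbb{X})$ differs from that of $E_{\mathcal{O}(-\vec{x}_3)}=\widehat{\phi}(\widetilde{[1,0]})$. The same silent substitution $L_0^\vee\rightsquigarrow L_0$ occurs in your line-bundle case (``the dual of a line bundle merely negates the twist''), and there the arithmetic also slips: $-k\vec{x}_1+(i+1)\vec{x}_3=k\vec{x}_1+(1-j)\vec{x}_3$, not $k\vec{x}_1-(j+1)\vec{x}_3$; the missing $2\vec{x}_3$ is exactly what the hidden hypothesis $L_0^\vee\cong L_0(-2\vec{x}_3)$ (up to $2$-torsion) must absorb. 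To close the argument you must either impose this normalization on $L_0$ explicitly — the statement as printed tacitly needs it, since the paper's own one-line proof also just cites Lemma~\ref{Vector bundle duality} and Proposition~\ref{correspondence} — or carry the unavoidable twist by $L_0^\vee$ through the computation rather than identifying it with $L_0$ by fiat. As written, the ``computational heart'' you flag at the end is precisely the step that is not a computation but an unstated assumption.
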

\begin{proof}
     The assertion follows from Lemma \ref{Vector bundle duality} and Proposition \ref{correspondence}. Now, suppose $X=X^\vee$. Then there exists $g\in G$ such that $g\circ [i,j]^*=[j,i]^*$. Since any $g$ in $G$ is always of the form $\sigma_n^k$ or $\sigma_n^k\theta$ for some $k\in \mathbb{Z}$, we can deduce that $i=j$.
\end{proof}

\section{Intersection index and extension dimension}\label{Section 5}
In this section, we interpret the dimension of extension space between two indecomposable bundles in ${\rm vect}\mbox{-}\mathbb{X} $ is equal to the intersection index between their corresponding $G$-obrits in $\widetilde{\operatorname{Seg}^*(M)}$.

\subsection{Positive intersections}
 Let  $[I, J]$ and $[S, T]$ be line segments in $\widetilde{\mathcal{S}}$.
\begin{defn}
    A point $x$ of intersection of $[I, J]$ and $[S, T]$ is
called \emph{positive intersection} of $[I, J]$ and $[S, T]$  if the following conditions are satisfied:
\begin{itemize}
    \item the point $x$ is not an endpoint of $[I, J]$ or $[S, T]$;
    \item the reciprocal of the slope of $[S, T ]$ is less than that of $[I, J]$.
\end{itemize}
\noindent  If $x$ is a positive intersection of $[I, J]$ and $[S, T]$, then the picture looks like this:
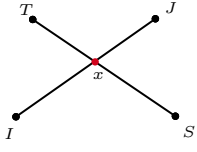
\begin{figure}[H]

\tikzset{every picture/.style={line width=0.75pt}}          

\begin{tikzpicture}[x=0.75pt,y=0.75pt,yscale=-1,xscale=1]

\draw [color={rgb, 255:red, 0; green, 0; blue, 0 }  ,draw opacity=1 ]   (70.47,79.95) -- (140.07,30.65) ;
\draw [shift={(140.07,30.65)}, rotate = 324.69] [color={rgb, 255:red, 0; green, 0; blue, 0 }  ,draw opacity=1 ][fill={rgb, 255:red, 0; green, 0; blue, 0 }  ,fill opacity=1 ][line width=0.75]      (0, 0) circle [x radius= 1.34, y radius= 1.34]   ;
\draw [shift={(70.47,79.95)}, rotate = 324.69] [color={rgb, 255:red, 0; green, 0; blue, 0 }  ,draw opacity=1 ][fill={rgb, 255:red, 0; green, 0; blue, 0 }  ,fill opacity=1 ][line width=0.75]      (0, 0) circle [x radius= 1.34, y radius= 1.34]   ;
  
\draw [color={rgb, 255:red, 0; green, 0; blue, 0 }  ,draw opacity=1 ]   (78.83,31) -- (150.07,79.65) ;
\draw [shift={(150.07,79.65)}, rotate = 34.33] [color={rgb, 255:red, 0; green, 0; blue, 0 }  ,draw opacity=1 ][fill={rgb, 255:red, 0; green, 0; blue, 0 }  ,fill opacity=1 ][line width=0.75]      (0, 0) circle [x radius= 1.34, y radius= 1.34]   ;
\draw [shift={(78.83,31)}, rotate = 34.33] [color={rgb, 255:red, 0; green, 0; blue, 0 }  ,draw opacity=1 ][fill={rgb, 255:red, 0; green, 0; blue, 0 }  ,fill opacity=1 ][line width=0.75]      (0, 0) circle [x radius= 1.34, y radius= 1.34]   ;
  
\draw    (150.07,79.65) ;
\draw [shift={(150.07,79.65)}, rotate = 0] [color={rgb, 255:red, 0; green, 0; blue, 0 }  ][fill={rgb, 255:red, 0; green, 0; blue, 0 }  ][line width=0.75]      (0, 0) circle [x radius= 1.34, y radius= 1.34]   ;
\draw [shift={(150.07,79.65)}, rotate = 0] [color={rgb, 255:red, 0; green, 0; blue, 0 }  ][fill={rgb, 255:red, 0; green, 0; blue, 0 }  ][line width=0.75]      (0, 0) circle [x radius= 1.34, y radius= 1.34]   ;
  
\draw    (78.83,31) ;
\draw [shift={(78.83,31)}, rotate = 0] [color={rgb, 255:red, 0; green, 0; blue, 0 }  ][fill={rgb, 255:red, 0; green, 0; blue, 0 }  ][line width=0.75]      (0, 0) circle [x radius= 1.34, y radius= 1.34]   ;
\draw [shift={(78.83,31)}, rotate = 0] [color={rgb, 255:red, 0; green, 0; blue, 0 }  ][fill={rgb, 255:red, 0; green, 0; blue, 0 }  ][line width=0.75]      (0, 0) circle [x radius= 1.34, y radius= 1.34]   ;
  
\draw    (70.47,79.95) ;
\draw [shift={(70.47,79.95)}, rotate = 0] [color={rgb, 255:red, 0; green, 0; blue, 0 }  ][fill={rgb, 255:red, 0; green, 0; blue, 0 }  ][line width=0.75]      (0, 0) circle [x radius= 1.34, y radius= 1.34]   ;
\draw [shift={(70.47,79.95)}, rotate = 0] [color={rgb, 255:red, 0; green, 0; blue, 0 }  ][fill={rgb, 255:red, 0; green, 0; blue, 0 }  ][line width=0.75]      (0, 0) circle [x radius= 1.34, y radius= 1.34]   ;
  
\draw    (140.07,30.65) ;
\draw [shift={(140.07,30.65)}, rotate = 0] [color={rgb, 255:red, 0; green, 0; blue, 0 }  ][fill={rgb, 255:red, 0; green, 0; blue, 0 }  ][line width=0.75]      (0, 0) circle [x radius= 1.34, y radius= 1.34]   ;
\draw [shift={(140.07,30.65)}, rotate = 0] [color={rgb, 255:red, 0; green, 0; blue, 0 }  ][fill={rgb, 255:red, 0; green, 0; blue, 0 }  ][line width=0.75]      (0, 0) circle [x radius= 1.34, y radius= 1.34]   ;
  
\draw  [color={rgb, 255:red, 208; green, 2; blue, 27 }  ,draw opacity=1 ][fill={rgb, 255:red, 208; green, 2; blue, 27 }  ,fill opacity=1 ] (108.67,52.33) .. controls (108.67,51.6) and (109.26,51) .. (110,51) .. controls (110.74,51) and (111.33,51.6) .. (111.33,52.33) .. controls (111.33,53.07) and (110.74,53.67) .. (110,53.67) .. controls (109.26,53.67) and (108.67,53.07) .. (108.67,52.33) -- cycle ;

\draw (63.3,84.2) node [anchor=north west][inner sep=0.75pt]  [font=\tiny]  {$I$};
 
\draw (70.53,21.7) node [anchor=north west][inner sep=0.75pt]  [font=\tiny]  {$T$};
 
\draw (152.07,83.05) node [anchor=north west][inner sep=0.75pt]  [font=\tiny]  {$S$};
 
\draw (143.1,20.9) node [anchor=north west][inner sep=0.75pt]  [font=\tiny]  {$J$};
 
\draw (107.27,56.03) node [anchor=north west][inner sep=0.75pt]  [font=\tiny]  {$x$};

\end{tikzpicture}

    \caption{Positive intersection}
\end{figure}
\noindent Denote the set of the positive
intersections of $[I,J]$ and $[S,T]$ by $[I,J]\cap^+ [S,T]$.
\end{defn}

We show that for each positive intersection of $[I, J]$ and $[S, T]$, there exists an exact sequence associated with this positive intersection. 
\begin{prop}{}{}\label{intersection and exact seq}
   Assume	$[I, J], [S, T] \in \widetilde{\operatorname{Seg}^*(M)}$ and $[I, J] \cap^+ [S, T] \neq \emptyset$, as illustrated below
     \begin{figure}[H]
\tikzset{every picture/.style={line width=0.75pt}}          

\begin{tikzpicture}[x=0.75pt,y=0.75pt,yscale=-1,xscale=1]

\draw [color={rgb, 255:red, 0; green, 0; blue, 0 }  ,draw opacity=1 ]   (140.07,30.65) -- (150.07,79.65) ;
  
\draw [color={rgb, 255:red, 0; green, 0; blue, 0 }  ,draw opacity=1 ]   (78.83,31) -- (70.47,79.95) ;
  
\draw [color={rgb, 255:red, 0; green, 0; blue, 0 }  ,draw opacity=1 ]   (70.47,79.95) -- (140.07,30.65) ;
\draw [shift={(140.07,30.65)}, rotate = 324.69] [color={rgb, 255:red, 0; green, 0; blue, 0 }  ,draw opacity=1 ][fill={rgb, 255:red, 0; green, 0; blue, 0 }  ,fill opacity=1 ][line width=0.75]      (0, 0) circle [x radius= 1.34, y radius= 1.34]   ;
\draw [shift={(70.47,79.95)}, rotate = 324.69] [color={rgb, 255:red, 0; green, 0; blue, 0 }  ,draw opacity=1 ][fill={rgb, 255:red, 0; green, 0; blue, 0 }  ,fill opacity=1 ][line width=0.75]      (0, 0) circle [x radius= 1.34, y radius= 1.34]   ;
  
\draw [color={rgb, 255:red, 0; green, 0; blue, 0 }  ,draw opacity=1 ]   (78.83,31) -- (150.07,79.65) ;
\draw [shift={(150.07,79.65)}, rotate = 34.33] [color={rgb, 255:red, 0; green, 0; blue, 0 }  ,draw opacity=1 ][fill={rgb, 255:red, 0; green, 0; blue, 0 }  ,fill opacity=1 ][line width=0.75]      (0, 0) circle [x radius= 1.34, y radius= 1.34]   ;
\draw [shift={(78.83,31)}, rotate = 34.33] [color={rgb, 255:red, 0; green, 0; blue, 0 }  ,draw opacity=1 ][fill={rgb, 255:red, 0; green, 0; blue, 0 }  ,fill opacity=1 ][line width=0.75]      (0, 0) circle [x radius= 1.34, y radius= 1.34]   ;
  
\draw    (150.07,79.65) ;
\draw [shift={(150.07,79.65)}, rotate = 0] [color={rgb, 255:red, 0; green, 0; blue, 0 }  ][fill={rgb, 255:red, 0; green, 0; blue, 0 }  ][line width=0.75]      (0, 0) circle [x radius= 1.34, y radius= 1.34]   ;
\draw [shift={(150.07,79.65)}, rotate = 0] [color={rgb, 255:red, 0; green, 0; blue, 0 }  ][fill={rgb, 255:red, 0; green, 0; blue, 0 }  ][line width=0.75]      (0, 0) circle [x radius= 1.34, y radius= 1.34]   ;
  
\draw    (78.83,31) ;
\draw [shift={(78.83,31)}, rotate = 0] [color={rgb, 255:red, 0; green, 0; blue, 0 }  ][fill={rgb, 255:red, 0; green, 0; blue, 0 }  ][line width=0.75]      (0, 0) circle [x radius= 1.34, y radius= 1.34]   ;
\draw [shift={(78.83,31)}, rotate = 0] [color={rgb, 255:red, 0; green, 0; blue, 0 }  ][fill={rgb, 255:red, 0; green, 0; blue, 0 }  ][line width=0.75]      (0, 0) circle [x radius= 1.34, y radius= 1.34]   ;
  
\draw    (70.47,79.95) ;
\draw [shift={(70.47,79.95)}, rotate = 0] [color={rgb, 255:red, 0; green, 0; blue, 0 }  ][fill={rgb, 255:red, 0; green, 0; blue, 0 }  ][line width=0.75]      (0, 0) circle [x radius= 1.34, y radius= 1.34]   ;
\draw [shift={(70.47,79.95)}, rotate = 0] [color={rgb, 255:red, 0; green, 0; blue, 0 }  ][fill={rgb, 255:red, 0; green, 0; blue, 0 }  ][line width=0.75]      (0, 0) circle [x radius= 1.34, y radius= 1.34]   ;
  
\draw    (140.07,30.65) ;
\draw [shift={(140.07,30.65)}, rotate = 0] [color={rgb, 255:red, 0; green, 0; blue, 0 }  ][fill={rgb, 255:red, 0; green, 0; blue, 0 }  ][line width=0.75]      (0, 0) circle [x radius= 1.34, y radius= 1.34]   ;
\draw [shift={(140.07,30.65)}, rotate = 0] [color={rgb, 255:red, 0; green, 0; blue, 0 }  ][fill={rgb, 255:red, 0; green, 0; blue, 0 }  ][line width=0.75]      (0, 0) circle [x radius= 1.34, y radius= 1.34]   ;
  
\draw  [color={rgb, 255:red, 208; green, 2; blue, 27 }  ,draw opacity=1 ][fill={rgb, 255:red, 208; green, 2; blue, 27 }  ,fill opacity=1 ] (108.67,52.33) .. controls (108.67,51.6) and (109.26,51) .. (110,51) .. controls (110.74,51) and (111.33,51.6) .. (111.33,52.33) .. controls (111.33,53.07) and (110.74,53.67) .. (110,53.67) .. controls (109.26,53.67) and (108.67,53.07) .. (108.67,52.33) -- cycle ;

\draw (63.3,84.2) node [anchor=north west][inner sep=0.75pt]  [font=\tiny]  {$I$};
 
\draw (70.53,21.7) node [anchor=north west][inner sep=0.75pt]  [font=\tiny]  {$T$};
 
\draw (152.07,83.05) node [anchor=north west][inner sep=0.75pt]  [font=\tiny]  {$S$};
 
\draw (143.1,20.9) node [anchor=north west][inner sep=0.75pt]  [font=\tiny]  {$J$};
\end{tikzpicture}

\label{intersection and exact sequence}
\end{figure}
\noindent Then there exists an exact sequence of the following form
\begin{equation}\label{exact and int}
    \begin{tikzcd}[ampersand replacement=\&,cramped,sep=small]
	0 \& {\widehat{\phi}(\widetilde{[S,T]})} \& {\widehat{\phi}(\widetilde{[I,T]})\oplus\widehat{\phi}(\widetilde{[S,J]})} \& {\widehat{\phi}(\widetilde{[I,J]})} \& 0
	\arrow[from=1-1, to=1-2]
	\arrow[from=1-2, to=1-3]
	\arrow[from=1-3, to=1-4]
	\arrow[from=1-4, to=1-5].
\end{tikzcd}
\end{equation}
\end{prop}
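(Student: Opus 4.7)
My plan is to reduce Proposition 5.2 directly to the quadrilateral exact sequence of Proposition \ref{exact sequence1}. The key geometric observation is that a positive intersection of $[I,J]$ and $[S,T]$ realizes the two segments as the pair of diagonals of a convex quadrilateral whose lower edge lies on $\partial'$ and upper edge on $\partial$.

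Concretely, I would first choose $G$-orbit representatives that witness the intersection on $\widetilde{\mathcal{S}}$, and write the endpoints in coordinates as $I=(i-k_1,0)$, $S=(i,0)$, $T=(j,1)$, $J=(j+k_2,1)$. The positive-intersection hypothesis (the crossing point is interior to both segments, with the reciprocal-slope comparison) forces $k_1,k_2>0$, so these four points are exactly the vertices of the quadrilateral in the statement of Proposition \ref{exact sequence1}, with $[i,j]=[S,T]$ and $[i-k_1,j+k_2]=[I,J]$ as its two diagonals. Proposition \ref{exact sequence1} then yields
\[
0 \longrightarrow \phi(\widetilde{[S,T]}) \longrightarrow \phi(\widetilde{[I,T]})\oplus \phi(\widetilde{[S,J]}) \longrightarrow \phi(\widetilde{[I,J]}) \longrightarrow 0,
\]
which is exactly the sequence \eqref{exact and int} at the level of $\phi$, in $\operatorname{ind}({\rm vect}^F\mbox{-}\mathbb{X})$.

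It remains to upgrade this from $\phi$ to $\widehat{\phi}$ on $\widetilde{\operatorname{Seg}^*(M)}$. For any segment not lying in $\operatorname{Seg}_0(M)$ we have $\widehat{\phi}=\phi$, so there is nothing to do unless some of the four segments $[I,J],[S,T],[I,T],[S,J]$ is in $\operatorname{Seg}_0(M)$. In that case the corresponding $\phi$-term decomposes as $L\oplus L^*=L\oplus F(L)$. Using the abuse of notation from the remark following Proposition \ref{correspondence}, whereby $\widehat{\phi}(\widetilde{[i,j]}):=\widehat{\phi}(\widetilde{[i,j]^+})\oplus\widehat{\phi}(\widetilde{[i,j]^-})$ for $[i,j]\in\operatorname{Seg}_0(M)$, the sequence carries over verbatim. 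To recover genuine sharp sequences for the half-segments, I would then split the middle and outer terms along the two $F$-isotypic components: since $F$ is an autoequivalence, the exact sequence decomposes into two independent exact sequences, one on the ``$L_0$-side'' and one on the ``$L_0^*$-side'', and the table in Theorem A dictates which half $\pm$ belongs to which side via the parity of $k=\frac{i+j}{n}$.

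The main obstacle is precisely this last bookkeeping step: when a diagonal, say $[I,J]$, lies in $\operatorname{Seg}_0(M)$ and is presented as the half-segment $\widetilde{[I,J]^{+}}$, I must verify that the induced $F$-isotypic splitting of the quadrilateral sequence pairs it with compatible half-segment choices for $[I,T]$ and $[S,J]$ (when those also lie in $\operatorname{Seg}_0(M)$), and that the remaining half $\widetilde{[I,J]^{-}}$ appears in a parallel sequence with the opposite choices. This reduces to checking a parity/sign compatibility using the formulas for $\widehat{\phi}$ in Proposition \ref{correspondence} together with the effect of $F=(\vec{x}_1-\vec{x}_2)$ on the line bundles appearing as direct summands; once that parity check is performed, the two refined sequences on the $\pm$ halves follow without further work.
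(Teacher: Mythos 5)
Your reduction to Proposition \ref{exact sequence1} is exactly the paper's argument in the case where neither $[I,J]$ nor $[S,T]$ lies in $\operatorname{Seg}_0^*(M)$, i.e.\ where both $\widehat{\phi}(\widetilde{[I,J]})$ and $\widehat{\phi}(\widetilde{[S,T]})$ are rank-two extension bundles: there the four endpoints really are the vertices of a quadrilateral of the required type. The gap is in the remaining cases, where at least one of the two crossing segments is a half-segment with an endpoint $P_k\in\mathcal{P}$ on the line $y=\tfrac12$. Suppose $T=P_k$, so $[S,T]=[s,kn-s]^{\pm}$ and $[I,T]=[i,kn-i]^{\pm}$. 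These two half-segments end at the same interior point $P_k$ but their completions to full segments have \emph{different} upper endpoints $kn-s$ and $kn-i$; consequently the four segments of Proposition~\ref{intersection and exact seq} are not the diagonals and sides of any quadrilateral in the sense of Proposition \ref{exact sequence1}. Concretely, if you run Proposition \ref{exact sequence1} on the completed quadrilateral with vertices $(i-k_1,0),(i,0),(j,1),(j+k_2,1)$, the side through $I$ is $[i-k_1,j]$, whereas the segment $[I,T]$ actually appearing in the statement extends to $[i-k_1,j+k_1]$ --- a different object entirely. So there is no correct sequence available to ``split''.

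The proposed $F$-isotypic splitting also fails on its own terms. In the configuration above, the Proposition \ref{exact sequence1} sequence has kernel $L\oplus L^{*}$ of rank $2$, middle term a direct sum of two (generically indecomposable, $F$-stable) extension bundles of total rank $4$, and cokernel an indecomposable extension bundle of rank $2$; the target sequence has ranks $(1,3,2)$. Splitting off a subsequence of ranks $(1,3,2)$ would leave a complement of ranks $(1,1,0)$, i.e.\ a rank-one direct summand of the middle term, which does not exist since the middle summands are indecomposable of rank two. More fundamentally, $F$-stable indecomposables admit no decomposition into $F$-eigencomponents, so ``the $L_0$-side and the $L_0^{*}$-side'' of such a sequence are not defined. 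The paper instead treats these cases separately: when exactly one diagonal is a half-segment it uses the bicartesian square of Proposition \ref{bicartesian square} (with vector bundle duality, Proposition \ref{vector bundle duality}, to exchange the two sub-cases), and when both are half-segments it writes down the sequence of line bundles directly. You would need to supply arguments of this kind; they do not follow from Proposition \ref{exact sequence1} by any splitting.
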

\begin{proof}
Since the degree shift automorphisms are exact, by Corollary \ref{L-action}, it suffices to consider the following cases:
\begin{figure}[H]
    \centering

\tikzset{every picture/.style={line width=0.75pt}}          

\begin{tikzpicture}[x=0.75pt,y=0.75pt,yscale=-1,xscale=1]

\draw [color={rgb, 255:red, 0; green, 0; blue, 0 }  ,draw opacity=1 ]   (121.67,189.79) -- (106.07,170.19) ;
  
\draw    (20.03,40.77) -- (178.87,40.59) ;
  
\draw    (19.7,80.27) -- (178.87,80.99) ;
  
\draw [color={rgb, 255:red, 0; green, 0; blue, 0 }  ,draw opacity=1 ]   (109.58,80.27) -- (126.53,40.93) ;
  
\draw [color={rgb, 255:red, 0; green, 0; blue, 0 }  ,draw opacity=1 ]   (64.2,80.6) -- (85.2,41.27) ;
  
\draw    (210.03,39.97) -- (370.87,40.59) ;
  
\draw    (209.7,79.47) -- (370.87,80.19) ;
  
\draw [color={rgb, 255:red, 0; green, 0; blue, 0 }  ,draw opacity=1 ]   (299.57,79.47) -- (316.53,40.13) ;
  
\draw [color={rgb, 255:red, 0; green, 0; blue, 0 }  ,draw opacity=1 ]   (254.2,79.8) -- (264.7,60.13) ;
\draw [shift={(264.7,60.13)}, rotate = 298.1] [color={rgb, 255:red, 0; green, 0; blue, 0 }  ,draw opacity=1 ][fill={rgb, 255:red, 0; green, 0; blue, 0 }  ,fill opacity=1 ][line width=0.75]      (0, 0) circle [x radius= 1.34, y radius= 1.34]   ;
  
\draw    (19.83,150.57) -- (180.47,150.19) ;
  
\draw    (19.5,190.07) -- (180.07,190.99) ;
  
\draw [color={rgb, 255:red, 0; green, 0; blue, 0 }  ,draw opacity=1 ]   (76.47,189.79) -- (67.6,150.8) ;
  
\draw [color={rgb, 255:red, 0; green, 0; blue, 0 }  ,draw opacity=1 ]   (67.6,150.8) -- (121.67,189.79) ;
\draw [shift={(121.67,189.79)}, rotate = 35.79] [color={rgb, 255:red, 0; green, 0; blue, 0 }  ,draw opacity=1 ][fill={rgb, 255:red, 0; green, 0; blue, 0 }  ,fill opacity=1 ][line width=0.75]      (0, 0) circle [x radius= 1.34, y radius= 1.34]   ;
\draw [shift={(67.6,150.8)}, rotate = 35.79] [color={rgb, 255:red, 0; green, 0; blue, 0 }  ,draw opacity=1 ][fill={rgb, 255:red, 0; green, 0; blue, 0 }  ,fill opacity=1 ][line width=0.75]      (0, 0) circle [x radius= 1.34, y radius= 1.34]   ;
  
\draw [color={rgb, 255:red, 0; green, 0; blue, 0 }  ,draw opacity=1 ]   (64.2,80.6) -- (126.53,40.93) ;
\draw [shift={(126.53,40.93)}, rotate = 327.53] [color={rgb, 255:red, 0; green, 0; blue, 0 }  ,draw opacity=1 ][fill={rgb, 255:red, 0; green, 0; blue, 0 }  ,fill opacity=1 ][line width=0.75]      (0, 0) circle [x radius= 1.34, y radius= 1.34]   ;
\draw [shift={(64.2,80.6)}, rotate = 327.53] [color={rgb, 255:red, 0; green, 0; blue, 0 }  ,draw opacity=1 ][fill={rgb, 255:red, 0; green, 0; blue, 0 }  ,fill opacity=1 ][line width=0.75]      (0, 0) circle [x radius= 1.34, y radius= 1.34]   ;
  
\draw [color={rgb, 255:red, 0; green, 0; blue, 0 }  ,draw opacity=1 ]   (85.2,41.27) -- (109.58,80.27) ;
\draw [shift={(109.58,80.27)}, rotate = 57.99] [color={rgb, 255:red, 0; green, 0; blue, 0 }  ,draw opacity=1 ][fill={rgb, 255:red, 0; green, 0; blue, 0 }  ,fill opacity=1 ][line width=0.75]      (0, 0) circle [x radius= 1.34, y radius= 1.34]   ;
\draw [shift={(85.2,41.27)}, rotate = 57.99] [color={rgb, 255:red, 0; green, 0; blue, 0 }  ,draw opacity=1 ][fill={rgb, 255:red, 0; green, 0; blue, 0 }  ,fill opacity=1 ][line width=0.75]      (0, 0) circle [x radius= 1.34, y radius= 1.34]   ;
  
\draw [color={rgb, 255:red, 0; green, 0; blue, 0 }  ,draw opacity=1 ]   (264.7,60.13) -- (299.57,79.47) ;
\draw [shift={(299.57,79.47)}, rotate = 29] [color={rgb, 255:red, 0; green, 0; blue, 0 }  ,draw opacity=1 ][fill={rgb, 255:red, 0; green, 0; blue, 0 }  ,fill opacity=1 ][line width=0.75]      (0, 0) circle [x radius= 1.34, y radius= 1.34]   ;
\draw [shift={(264.7,60.13)}, rotate = 29] [color={rgb, 255:red, 0; green, 0; blue, 0 }  ,draw opacity=1 ][fill={rgb, 255:red, 0; green, 0; blue, 0 }  ,fill opacity=1 ][line width=0.75]      (0, 0) circle [x radius= 1.34, y radius= 1.34]   ;
  
\draw [color={rgb, 255:red, 0; green, 0; blue, 0 }  ,draw opacity=1 ]   (254.2,79.8) -- (316.53,40.13) ;
\draw [shift={(316.53,40.13)}, rotate = 327.53] [color={rgb, 255:red, 0; green, 0; blue, 0 }  ,draw opacity=1 ][fill={rgb, 255:red, 0; green, 0; blue, 0 }  ,fill opacity=1 ][line width=0.75]      (0, 0) circle [x radius= 1.34, y radius= 1.34]   ;
\draw [shift={(254.2,79.8)}, rotate = 327.53] [color={rgb, 255:red, 0; green, 0; blue, 0 }  ,draw opacity=1 ][fill={rgb, 255:red, 0; green, 0; blue, 0 }  ,fill opacity=1 ][line width=0.75]      (0, 0) circle [x radius= 1.34, y radius= 1.34]   ;
  
\draw [color={rgb, 255:red, 0; green, 0; blue, 0 }  ,draw opacity=1 ]   (106.07,170.19) -- (76.47,189.79) ;
\draw [shift={(76.47,189.79)}, rotate = 146.49] [color={rgb, 255:red, 0; green, 0; blue, 0 }  ,draw opacity=1 ][fill={rgb, 255:red, 0; green, 0; blue, 0 }  ,fill opacity=1 ][line width=0.75]      (0, 0) circle [x radius= 1.34, y radius= 1.34]   ;
\draw [shift={(106.07,170.19)}, rotate = 146.49] [color={rgb, 255:red, 0; green, 0; blue, 0 }  ,draw opacity=1 ][fill={rgb, 255:red, 0; green, 0; blue, 0 }  ,fill opacity=1 ][line width=0.75]      (0, 0) circle [x radius= 1.34, y radius= 1.34]   ;
  
\draw [color={rgb, 255:red, 0; green, 0; blue, 0 }  ,draw opacity=1 ]   (312.5,189.79) -- (296.9,170.19) ;
  
\draw    (210.67,150.57) -- (371.3,150.19) ;
  
\draw    (210.33,190.07) -- (370.9,190.99) ;
  
\draw [color={rgb, 255:red, 0; green, 0; blue, 0 }  ,draw opacity=1 ]   (267.3,189.79) -- (262.87,170.29) ;
  
\draw [color={rgb, 255:red, 0; green, 0; blue, 0 }  ,draw opacity=1 ]   (262.87,170.29) -- (312.5,189.79) ;
\draw [shift={(312.5,189.79)}, rotate = 21.44] [color={rgb, 255:red, 0; green, 0; blue, 0 }  ,draw opacity=1 ][fill={rgb, 255:red, 0; green, 0; blue, 0 }  ,fill opacity=1 ][line width=0.75]      (0, 0) circle [x radius= 1.34, y radius= 1.34]   ;
\draw [shift={(262.87,170.29)}, rotate = 21.44] [color={rgb, 255:red, 0; green, 0; blue, 0 }  ,draw opacity=1 ][fill={rgb, 255:red, 0; green, 0; blue, 0 }  ,fill opacity=1 ][line width=0.75]      (0, 0) circle [x radius= 1.34, y radius= 1.34]   ;
  
\draw [color={rgb, 255:red, 0; green, 0; blue, 0 }  ,draw opacity=1 ]   (296.9,170.19) -- (267.3,189.79) ;
\draw [shift={(267.3,189.79)}, rotate = 146.49] [color={rgb, 255:red, 0; green, 0; blue, 0 }  ,draw opacity=1 ][fill={rgb, 255:red, 0; green, 0; blue, 0 }  ,fill opacity=1 ][line width=0.75]      (0, 0) circle [x radius= 1.34, y radius= 1.34]   ;
\draw [shift={(296.9,170.19)}, rotate = 146.49] [color={rgb, 255:red, 0; green, 0; blue, 0 }  ,draw opacity=1 ][fill={rgb, 255:red, 0; green, 0; blue, 0 }  ,fill opacity=1 ][line width=0.75]      (0, 0) circle [x radius= 1.34, y radius= 1.34]   ;
  
\draw    (109.58,80.27) ;
\draw [shift={(109.58,80.27)}, rotate = 0] [color={rgb, 255:red, 0; green, 0; blue, 0 }  ][fill={rgb, 255:red, 0; green, 0; blue, 0 }  ][line width=0.75]      (0, 0) circle [x radius= 1.34, y radius= 1.34]   ;
\draw [shift={(109.58,80.27)}, rotate = 0] [color={rgb, 255:red, 0; green, 0; blue, 0 }  ][fill={rgb, 255:red, 0; green, 0; blue, 0 }  ][line width=0.75]      (0, 0) circle [x radius= 1.34, y radius= 1.34]   ;
  
\draw    (67.6,150.8) ;
\draw [shift={(67.6,150.8)}, rotate = 0] [color={rgb, 255:red, 0; green, 0; blue, 0 }  ][fill={rgb, 255:red, 0; green, 0; blue, 0 }  ][line width=0.75]      (0, 0) circle [x radius= 1.34, y radius= 1.34]   ;
\draw [shift={(67.6,150.8)}, rotate = 0] [color={rgb, 255:red, 0; green, 0; blue, 0 }  ][fill={rgb, 255:red, 0; green, 0; blue, 0 }  ][line width=0.75]      (0, 0) circle [x radius= 1.34, y radius= 1.34]   ;
  
\draw    (76.47,189.79) ;
\draw [shift={(76.47,189.79)}, rotate = 0] [color={rgb, 255:red, 0; green, 0; blue, 0 }  ][fill={rgb, 255:red, 0; green, 0; blue, 0 }  ][line width=0.75]      (0, 0) circle [x radius= 1.34, y radius= 1.34]   ;
\draw [shift={(76.47,189.79)}, rotate = 0] [color={rgb, 255:red, 0; green, 0; blue, 0 }  ][fill={rgb, 255:red, 0; green, 0; blue, 0 }  ][line width=0.75]      (0, 0) circle [x radius= 1.34, y radius= 1.34]   ;
  
\draw    (121.67,189.79) ;
\draw [shift={(121.67,189.79)}, rotate = 0] [color={rgb, 255:red, 0; green, 0; blue, 0 }  ][fill={rgb, 255:red, 0; green, 0; blue, 0 }  ][line width=0.75]      (0, 0) circle [x radius= 1.34, y radius= 1.34]   ;
\draw [shift={(121.67,189.79)}, rotate = 0] [color={rgb, 255:red, 0; green, 0; blue, 0 }  ][fill={rgb, 255:red, 0; green, 0; blue, 0 }  ][line width=0.75]      (0, 0) circle [x radius= 1.34, y radius= 1.34]   ;
  
\draw    (106.07,170.19) ;
\draw [shift={(106.07,170.19)}, rotate = 0] [color={rgb, 255:red, 0; green, 0; blue, 0 }  ][fill={rgb, 255:red, 0; green, 0; blue, 0 }  ][line width=0.75]      (0, 0) circle [x radius= 1.34, y radius= 1.34]   ;
\draw [shift={(106.07,170.19)}, rotate = 0] [color={rgb, 255:red, 0; green, 0; blue, 0 }  ][fill={rgb, 255:red, 0; green, 0; blue, 0 }  ][line width=0.75]      (0, 0) circle [x radius= 1.34, y radius= 1.34]   ;
  
\draw    (312.5,189.79) ;
\draw [shift={(312.5,189.79)}, rotate = 0] [color={rgb, 255:red, 0; green, 0; blue, 0 }  ][fill={rgb, 255:red, 0; green, 0; blue, 0 }  ][line width=0.75]      (0, 0) circle [x radius= 1.34, y radius= 1.34]   ;
\draw [shift={(312.5,189.79)}, rotate = 0] [color={rgb, 255:red, 0; green, 0; blue, 0 }  ][fill={rgb, 255:red, 0; green, 0; blue, 0 }  ][line width=0.75]      (0, 0) circle [x radius= 1.34, y radius= 1.34]   ;
  
\draw    (267.3,189.79) ;
\draw [shift={(267.3,189.79)}, rotate = 0] [color={rgb, 255:red, 0; green, 0; blue, 0 }  ][fill={rgb, 255:red, 0; green, 0; blue, 0 }  ][line width=0.75]      (0, 0) circle [x radius= 1.34, y radius= 1.34]   ;
\draw [shift={(267.3,189.79)}, rotate = 0] [color={rgb, 255:red, 0; green, 0; blue, 0 }  ][fill={rgb, 255:red, 0; green, 0; blue, 0 }  ][line width=0.75]      (0, 0) circle [x radius= 1.34, y radius= 1.34]   ;
  
\draw    (296.9,170.19) ;
\draw [shift={(296.9,170.19)}, rotate = 0] [color={rgb, 255:red, 0; green, 0; blue, 0 }  ][fill={rgb, 255:red, 0; green, 0; blue, 0 }  ][line width=0.75]      (0, 0) circle [x radius= 1.34, y radius= 1.34]   ;
\draw [shift={(296.9,170.19)}, rotate = 0] [color={rgb, 255:red, 0; green, 0; blue, 0 }  ][fill={rgb, 255:red, 0; green, 0; blue, 0 }  ][line width=0.75]      (0, 0) circle [x radius= 1.34, y radius= 1.34]   ;
  
\draw    (262.87,170.29) ;
\draw [shift={(262.87,170.29)}, rotate = 0] [color={rgb, 255:red, 0; green, 0; blue, 0 }  ][fill={rgb, 255:red, 0; green, 0; blue, 0 }  ][line width=0.75]      (0, 0) circle [x radius= 1.34, y radius= 1.34]   ;
\draw [shift={(262.87,170.29)}, rotate = 0] [color={rgb, 255:red, 0; green, 0; blue, 0 }  ][fill={rgb, 255:red, 0; green, 0; blue, 0 }  ][line width=0.75]      (0, 0) circle [x radius= 1.34, y radius= 1.34]   ;
  
\draw    (316.53,40.13) ;
\draw [shift={(316.53,40.13)}, rotate = 0] [color={rgb, 255:red, 0; green, 0; blue, 0 }  ][fill={rgb, 255:red, 0; green, 0; blue, 0 }  ][line width=0.75]      (0, 0) circle [x radius= 1.34, y radius= 1.34]   ;
\draw [shift={(316.53,40.13)}, rotate = 0] [color={rgb, 255:red, 0; green, 0; blue, 0 }  ][fill={rgb, 255:red, 0; green, 0; blue, 0 }  ][line width=0.75]      (0, 0) circle [x radius= 1.34, y radius= 1.34]   ;
  
\draw    (299.57,79.47) ;
\draw [shift={(299.57,79.47)}, rotate = 0] [color={rgb, 255:red, 0; green, 0; blue, 0 }  ][fill={rgb, 255:red, 0; green, 0; blue, 0 }  ][line width=0.75]      (0, 0) circle [x radius= 1.34, y radius= 1.34]   ;
\draw [shift={(299.57,79.47)}, rotate = 0] [color={rgb, 255:red, 0; green, 0; blue, 0 }  ][fill={rgb, 255:red, 0; green, 0; blue, 0 }  ][line width=0.75]      (0, 0) circle [x radius= 1.34, y radius= 1.34]   ;
  
\draw    (264.7,60.13) ;
\draw [shift={(264.7,60.13)}, rotate = 0] [color={rgb, 255:red, 0; green, 0; blue, 0 }  ][fill={rgb, 255:red, 0; green, 0; blue, 0 }  ][line width=0.75]      (0, 0) circle [x radius= 1.34, y radius= 1.34]   ;
\draw [shift={(264.7,60.13)}, rotate = 0] [color={rgb, 255:red, 0; green, 0; blue, 0 }  ][fill={rgb, 255:red, 0; green, 0; blue, 0 }  ][line width=0.75]      (0, 0) circle [x radius= 1.34, y radius= 1.34]   ;
  
\draw    (254.2,79.8) ;
\draw [shift={(254.2,79.8)}, rotate = 0] [color={rgb, 255:red, 0; green, 0; blue, 0 }  ][fill={rgb, 255:red, 0; green, 0; blue, 0 }  ][line width=0.75]      (0, 0) circle [x radius= 1.34, y radius= 1.34]   ;
\draw [shift={(254.2,79.8)}, rotate = 0] [color={rgb, 255:red, 0; green, 0; blue, 0 }  ][fill={rgb, 255:red, 0; green, 0; blue, 0 }  ][line width=0.75]      (0, 0) circle [x radius= 1.34, y radius= 1.34]   ;
  
\draw    (85.2,41.27) ;
\draw [shift={(85.2,41.27)}, rotate = 0] [color={rgb, 255:red, 0; green, 0; blue, 0 }  ][fill={rgb, 255:red, 0; green, 0; blue, 0 }  ][line width=0.75]      (0, 0) circle [x radius= 1.34, y radius= 1.34]   ;
\draw [shift={(85.2,41.27)}, rotate = 0] [color={rgb, 255:red, 0; green, 0; blue, 0 }  ][fill={rgb, 255:red, 0; green, 0; blue, 0 }  ][line width=0.75]      (0, 0) circle [x radius= 1.34, y radius= 1.34]   ;
  
\draw    (64.2,80.6) ;
\draw [shift={(64.2,80.6)}, rotate = 0] [color={rgb, 255:red, 0; green, 0; blue, 0 }  ][fill={rgb, 255:red, 0; green, 0; blue, 0 }  ][line width=0.75]      (0, 0) circle [x radius= 1.34, y radius= 1.34]   ;
\draw [shift={(64.2,80.6)}, rotate = 0] [color={rgb, 255:red, 0; green, 0; blue, 0 }  ][fill={rgb, 255:red, 0; green, 0; blue, 0 }  ][line width=0.75]      (0, 0) circle [x radius= 1.34, y radius= 1.34]   ;
  
\draw    (126.53,40.93) ;
\draw [shift={(126.53,40.93)}, rotate = 0] [color={rgb, 255:red, 0; green, 0; blue, 0 }  ][fill={rgb, 255:red, 0; green, 0; blue, 0 }  ][line width=0.75]      (0, 0) circle [x radius= 1.34, y radius= 1.34]   ;
\draw [shift={(126.53,40.93)}, rotate = 0] [color={rgb, 255:red, 0; green, 0; blue, 0 }  ][fill={rgb, 255:red, 0; green, 0; blue, 0 }  ][line width=0.75]      (0, 0) circle [x radius= 1.34, y radius= 1.34]   ;
  
\draw  [color={rgb, 255:red, 208; green, 2; blue, 27 }  ,draw opacity=1 ][fill={rgb, 255:red, 208; green, 2; blue, 27 }  ,fill opacity=1 ] (282.67,178.33) .. controls (282.67,177.6) and (283.26,177) .. (284,177) .. controls (284.74,177) and (285.33,177.6) .. (285.33,178.33) .. controls (285.33,179.07) and (284.74,179.67) .. (284,179.67) .. controls (283.26,179.67) and (282.67,179.07) .. (282.67,178.33) -- cycle ;
  
\draw  [color={rgb, 255:red, 208; green, 2; blue, 27 }  ,draw opacity=1 ][fill={rgb, 255:red, 208; green, 2; blue, 27 }  ,fill opacity=1 ] (98.17,174.33) .. controls (98.17,173.6) and (98.76,173) .. (99.5,173) .. controls (100.24,173) and (100.83,173.6) .. (100.83,174.33) .. controls (100.83,175.07) and (100.24,175.67) .. (99.5,175.67) .. controls (98.76,175.67) and (98.17,175.07) .. (98.17,174.33) -- cycle ;
  
\draw  [color={rgb, 255:red, 208; green, 2; blue, 27 }  ,draw opacity=1 ][fill={rgb, 255:red, 208; green, 2; blue, 27 }  ,fill opacity=1 ] (274.67,65.83) .. controls (274.67,65.1) and (275.26,64.5) .. (276,64.5) .. controls (276.74,64.5) and (277.33,65.1) .. (277.33,65.83) .. controls (277.33,66.57) and (276.74,67.17) .. (276,67.17) .. controls (275.26,67.17) and (274.67,66.57) .. (274.67,65.83) -- cycle ;
  
\draw  [color={rgb, 255:red, 208; green, 2; blue, 27 }  ,draw opacity=1 ][fill={rgb, 255:red, 208; green, 2; blue, 27 }  ,fill opacity=1 ] (95.67,60.33) .. controls (95.67,59.6) and (96.26,59) .. (97,59) .. controls (97.74,59) and (98.33,59.6) .. (98.33,60.33) .. controls (98.33,61.07) and (97.74,61.67) .. (97,61.67) .. controls (96.26,61.67) and (95.67,61.07) .. (95.67,60.33) -- cycle ;

\draw (3.37,78.17) node [anchor=north west][inner sep=0.75pt]  [font=\tiny]  {$\partial ^{\prime }$};
 
\draw (4.37,38.17) node [anchor=north west][inner sep=0.75pt]  [font=\tiny]  {$\partial $};
 
\draw (59.1,86.5) node [anchor=north west][inner sep=0.75pt]  [font=\tiny]  {$I$};
 
\draw (77.53,29.5) node [anchor=north west][inner sep=0.75pt]  [font=\tiny]  {$T$};
 
\draw (193.37,77.37) node [anchor=north west][inner sep=0.75pt]  [font=\tiny]  {$\partial ^{\prime }$};
 
\draw (194.37,37.37) node [anchor=north west][inner sep=0.75pt]  [font=\tiny]  {$\partial $};
 
\draw (3.17,187.97) node [anchor=north west][inner sep=0.75pt]  [font=\tiny]  {$\partial ^{\prime }$};
 
\draw (4.17,147.97) node [anchor=north west][inner sep=0.75pt]  [font=\tiny]  {$\partial $};
 
\draw (108.33,86.3) node [anchor=north west][inner sep=0.75pt]  [font=\tiny]  {$S$};
 
\draw (124.7,29.3) node [anchor=north west][inner sep=0.75pt]  [font=\tiny]  {$J$};
 
\draw (259.93,48.3) node [anchor=north west][inner sep=0.75pt]  [font=\tiny]  {$T$};
 
\draw (297.13,84.7) node [anchor=north west][inner sep=0.75pt]  [font=\tiny]  {$S$};
 
\draw (249.5,84.1) node [anchor=north west][inner sep=0.75pt]  [font=\tiny]  {$I$};
 
\draw (316.3,29.3) node [anchor=north west][inner sep=0.75pt]  [font=\tiny]  {$J$};
 
\draw (71.9,196.1) node [anchor=north west][inner sep=0.75pt]  [font=\tiny]  {$I$};
 
\draw (121.13,195.9) node [anchor=north west][inner sep=0.75pt]  [font=\tiny]  {$S$};
 
\draw (107.53,157.5) node [anchor=north west][inner sep=0.75pt]  [font=\tiny]  {$J$};
 
\draw (63.5,139.7) node [anchor=north west][inner sep=0.75pt]  [font=\tiny]  {$T$};
 
\draw (66.8,98.83) node [anchor=north west][inner sep=0.75pt]    {$Case\ 1$};
 
\draw (256.4,98.03) node [anchor=north west][inner sep=0.75pt]    {$Case\ 2$};
 
\draw (67.6,208.03) node [anchor=north west][inner sep=0.75pt]    {$Case\ 3$};
 
\draw (194,187.97) node [anchor=north west][inner sep=0.75pt]  [font=\tiny]  {$\partial ^{\prime }$};
 
\draw (195,147.97) node [anchor=north west][inner sep=0.75pt]  [font=\tiny]  {$\partial $};
 
\draw (262.73,196.1) node [anchor=north west][inner sep=0.75pt]  [font=\tiny]  {$I$};
 
\draw (311.97,195.9) node [anchor=north west][inner sep=0.75pt]  [font=\tiny]  {$S$};
 
\draw (296.77,157.9) node [anchor=north west][inner sep=0.75pt]  [font=\tiny]  {$J$};
 
\draw (259.13,157.7) node [anchor=north west][inner sep=0.75pt]  [font=\tiny]  {$T$};
 
\draw (256.43,208.43) node [anchor=north west][inner sep=0.75pt]    {$Case\ 4$};

\end{tikzpicture}

\end{figure}
\begin{itemize}
\item[\emph{Case 1:}]  $[I,J]$ and $[S,T]$ belong to $\operatorname{Seg}(M) \setminus \operatorname{Seg}_0(M)$. The required exact sequence can be obtained by Proposition \ref{exact sequence1}.
\item[\emph{Case 2:}] $[I,J]$ belongs to $\operatorname{Seg}(M) \setminus \operatorname{Seg}_0(M)$, and $[S,T]$ belongs to $\operatorname{Seg}_0^*(M)$. Assume $T=P_k\in \mathcal{P}$, $[S,J]=[s,j]$, $[I,J]=[i,j]$.  Without loss of generality, we may assume $k$ is even. Then  \begin{align*}
\hspace{3em}\widehat{\phi}(\widetilde{[I,J]}) &= \mathsf{E}_{L_0(-i\vec{x}_3)}\langle (i+j-1)\vec{x}_3 \rangle, & \widehat{\phi}(\widetilde{[S,J]}) &= \mathsf{E}_{L_0(-s\vec{x}_3)}\langle (s+j-1)\vec{x}_3 \rangle, \\
\hspace{3em}\widehat{\phi}(\widetilde{[S,T]}) &= L_0^*(k\vec{x}_1-(s+1)\vec{x}_3), & \widehat{\phi}(\widetilde{[I,T]}) &= L_0^*(k\vec{x}_1-(i+1)\vec{x}_3).
\end{align*}
Let $L=L_0(k\vec{x}_1-s\vec{x}_3)$, $\vec{x}=(i+j+kn-1)\vec{x}_3$, and $\vec{y}=(s-i)\vec{x}_3$. Since $[I, J] \cap^+ [S, T] \neq \emptyset$, we have $s>i$ and $i+j+kn>0$. By Proposition \ref{bicartesian square}, we have the bicartesian square  
 \[\begin{tikzcd}[ampersand replacement=\&,cramped,sep=small]
	{L(\vec{\omega})} \& {L(\vec{\omega})(\vec{y})} \\
	{\mathsf{E}_{L}\langle \vec{x}+\vec{y}\rangle} \& {\mathsf{E}_{L(\vec{y})}\langle \vec{x}\rangle}
	\arrow[from=1-1, to=1-2]
	\arrow[from=1-2, to=2-2]
	\arrow[from=1-1, to=2-1]
	\arrow[from=2-1, to=2-2]
\end{tikzcd}\]
which induces the exact sequence as required.
\item[\emph{Case 3:}]  $[I,J]$ belongs to $\operatorname{Seg}_0^*(M)$, and $[S,T]$ belongs to $\operatorname{Seg}(M) \setminus \operatorname{Seg}_0(M)$. By Proposition \ref{vector bundle duality}, this case can be transformed into  Case 2 through vector bundle duality $^\vee$.  
\item[\emph{Case 4:}] $[I,J]$ and $[S,T]$ belong to $\operatorname{Seg}_0^*(M)$. Assume $T=P_k$, $J=P_{k^\prime}$, $I=(i,0)$, and $S=(s,0)$. Without loss of generality, we may assume $k$ and $k^\prime$ are even. We have the following short exact sequence 
$0\rightarrow L_0^*(k\vec{x}_1-(s+1)\vec{x}_3)\rightarrow {L_0^*(k\vec{x}_1-(i+1)\vec{x}_3)\oplus L_0^*(k^\prime\vec{x}_1-(s+1)\vec{x}_3)}\rightarrow {L_0^*(k^\prime\vec{x}_1-(i+1)\vec{x}_3)}\rightarrow 0$ in ${\rm coh}\mbox{-}\mathbb{X}$. Note that 
\begin{align*}
\hspace{3em}\widehat{\phi}(\widetilde{[I,J]}) &= L_0^*(k^\prime\vec{x}_1-(i+1)\vec{x}_3), & \widehat{\phi}(\widetilde{[S,J]}) &= L_0^*(k^\prime\vec{x}_1-(s+1)\vec{x}_3), \\
\hspace{3em}\widehat{\phi}(\widetilde{[S,T]}) &= L_0^*(k\vec{x}_1-(s+1)\vec{x}_3), & \widehat{\phi}(\widetilde{[I,T]}) &= L_0^*(k\vec{x}_1-(i+1)\vec{x}_3).
\end{align*}
\end{itemize}
This finishes the proof.
\end{proof}

   Let $[i,j]$ be a line segment in $ \operatorname{Seg}(M) \setminus \operatorname{Seg}_0(M)$. Then $\widehat{\phi}([i,j])$ is an extension bundle and  $[i,j]$ can form a triangle with any point in $\mathcal{P}$. For each such triangle, we label the other two edges as green and blue, in ascending order of the reciprocal of their slopes, while  $[i,j]$ is colored red. 
\begin{figure}[H]
    \centering

\tikzset{every picture/.style={line width=0.75pt}}          

\begin{tikzpicture}[x=0.75pt,y=0.75pt,yscale=-1,xscale=1]

\draw    (251.33,21) -- (431.08,21) ;
  
\draw    (251,60.5) -- (430.75,60.5) ;
  
\draw [color={rgb, 255:red, 74; green, 144; blue, 226 }  ,draw opacity=1 ]   (311,41.17) -- (326.5,21.5) ;
\draw [shift={(326.5,21.5)}, rotate = 308.24] [color={rgb, 255:red, 74; green, 144; blue, 226 }  ,draw opacity=1 ][fill={rgb, 255:red, 74; green, 144; blue, 226 }  ,fill opacity=1 ][line width=0.75]      (0, 0) circle [x radius= 1.34, y radius= 1.34]   ;
\draw [shift={(311,41.17)}, rotate = 308.24] [color={rgb, 255:red, 74; green, 144; blue, 226 }  ,draw opacity=1 ][fill={rgb, 255:red, 74; green, 144; blue, 226 }  ,fill opacity=1 ][line width=0.75]      (0, 0) circle [x radius= 1.34, y radius= 1.34]   ;
  
\draw [color={rgb, 255:red, 208; green, 2; blue, 27 }  ,draw opacity=1 ]   (305.5,60.83) -- (326.5,21.5) ;
\draw [shift={(305.5,60.83)}, rotate = 298.1] [color={rgb, 255:red, 208; green, 2; blue, 27 }  ,draw opacity=1 ][fill={rgb, 255:red, 208; green, 2; blue, 27 }  ,fill opacity=1 ][line width=0.75]      (0, 0) circle [x radius= 1.34, y radius= 1.34]   ;
  
\draw [color={rgb, 255:red, 126; green, 211; blue, 33 }  ,draw opacity=1 ]   (305.5,60.83) -- (311,41.17) ;
\draw [shift={(311,41.17)}, rotate = 285.62] [color={rgb, 255:red, 126; green, 211; blue, 33 }  ,draw opacity=1 ][fill={rgb, 255:red, 126; green, 211; blue, 33 }  ,fill opacity=1 ][line width=0.75]      (0, 0) circle [x radius= 1.34, y radius= 1.34]   ;
\draw [shift={(305.5,60.83)}, rotate = 285.62] [color={rgb, 255:red, 126; green, 211; blue, 33 }  ,draw opacity=1 ][fill={rgb, 255:red, 126; green, 211; blue, 33 }  ,fill opacity=1 ][line width=0.75]      (0, 0) circle [x radius= 1.34, y radius= 1.34]   ;
  
\draw [color={rgb, 255:red, 74; green, 144; blue, 226 }  ,draw opacity=1 ]   (281.12,40.75) -- (326.5,21.5) ;
\draw [shift={(281.12,40.75)}, rotate = 337.01] [color={rgb, 255:red, 74; green, 144; blue, 226 }  ,draw opacity=1 ][fill={rgb, 255:red, 74; green, 144; blue, 226 }  ,fill opacity=1 ][line width=0.75]      (0, 0) circle [x radius= 1.34, y radius= 1.34]   ;
  
\draw [color={rgb, 255:red, 126; green, 211; blue, 33 }  ,draw opacity=1 ]   (281.12,40.75) -- (305.5,60.83) ;
  
\draw [color={rgb, 255:red, 126; green, 211; blue, 33 }  ,draw opacity=1 ]   (251.17,40.75) -- (305.5,60.83) ;
  
\draw [color={rgb, 255:red, 74; green, 144; blue, 226 }  ,draw opacity=1 ]   (371,40.75) -- (305.5,60.83) ;
\draw [shift={(305.5,60.83)}, rotate = 162.95] [color={rgb, 255:red, 74; green, 144; blue, 226 }  ,draw opacity=1 ][fill={rgb, 255:red, 74; green, 144; blue, 226 }  ,fill opacity=1 ][line width=0.75]      (0, 0) circle [x radius= 1.34, y radius= 1.34]   ;
\draw [shift={(371,40.75)}, rotate = 162.95] [color={rgb, 255:red, 74; green, 144; blue, 226 }  ,draw opacity=1 ][fill={rgb, 255:red, 74; green, 144; blue, 226 }  ,fill opacity=1 ][line width=0.75]      (0, 0) circle [x radius= 1.34, y radius= 1.34]   ;
  
\draw [color={rgb, 255:red, 74; green, 144; blue, 226 }  ,draw opacity=1 ]   (400.96,40.75) -- (305.5,60.83) ;
\draw [shift={(305.5,60.83)}, rotate = 168.12] [color={rgb, 255:red, 74; green, 144; blue, 226 }  ,draw opacity=1 ][fill={rgb, 255:red, 74; green, 144; blue, 226 }  ,fill opacity=1 ][line width=0.75]      (0, 0) circle [x radius= 1.34, y radius= 1.34]   ;
\draw [shift={(400.96,40.75)}, rotate = 168.12] [color={rgb, 255:red, 74; green, 144; blue, 226 }  ,draw opacity=1 ][fill={rgb, 255:red, 74; green, 144; blue, 226 }  ,fill opacity=1 ][line width=0.75]      (0, 0) circle [x radius= 1.34, y radius= 1.34]   ;
  
\draw [color={rgb, 255:red, 74; green, 144; blue, 226 }  ,draw opacity=1 ]   (341.04,40.75) -- (305.5,60.83) ;
\draw [shift={(305.5,60.83)}, rotate = 150.53] [color={rgb, 255:red, 74; green, 144; blue, 226 }  ,draw opacity=1 ][fill={rgb, 255:red, 74; green, 144; blue, 226 }  ,fill opacity=1 ][line width=0.75]      (0, 0) circle [x radius= 1.34, y radius= 1.34]   ;
\draw [shift={(341.04,40.75)}, rotate = 150.53] [color={rgb, 255:red, 74; green, 144; blue, 226 }  ,draw opacity=1 ][fill={rgb, 255:red, 74; green, 144; blue, 226 }  ,fill opacity=1 ][line width=0.75]      (0, 0) circle [x radius= 1.34, y radius= 1.34]   ;
  
\draw [color={rgb, 255:red, 74; green, 144; blue, 226 }  ,draw opacity=1 ]   (430.92,40.75) -- (305.5,60.83) ;
\draw [shift={(305.5,60.83)}, rotate = 170.9] [color={rgb, 255:red, 74; green, 144; blue, 226 }  ,draw opacity=1 ][fill={rgb, 255:red, 74; green, 144; blue, 226 }  ,fill opacity=1 ][line width=0.75]      (0, 0) circle [x radius= 1.34, y radius= 1.34]   ;
\draw [shift={(430.92,40.75)}, rotate = 170.9] [color={rgb, 255:red, 74; green, 144; blue, 226 }  ,draw opacity=1 ][fill={rgb, 255:red, 74; green, 144; blue, 226 }  ,fill opacity=1 ][line width=0.75]      (0, 0) circle [x radius= 1.34, y radius= 1.34]   ;
  
\draw [color={rgb, 255:red, 126; green, 211; blue, 33 }  ,draw opacity=1 ]   (326.5,21.5) -- (371,40.75) ;
\draw [shift={(371,40.75)}, rotate = 23.39] [color={rgb, 255:red, 126; green, 211; blue, 33 }  ,draw opacity=1 ][fill={rgb, 255:red, 126; green, 211; blue, 33 }  ,fill opacity=1 ][line width=0.75]      (0, 0) circle [x radius= 1.34, y radius= 1.34]   ;
\draw [shift={(326.5,21.5)}, rotate = 23.39] [color={rgb, 255:red, 126; green, 211; blue, 33 }  ,draw opacity=1 ][fill={rgb, 255:red, 126; green, 211; blue, 33 }  ,fill opacity=1 ][line width=0.75]      (0, 0) circle [x radius= 1.34, y radius= 1.34]   ;
  
\draw [color={rgb, 255:red, 126; green, 211; blue, 33 }  ,draw opacity=1 ]   (326.5,21.5) -- (400.96,40.75) ;
\draw [shift={(400.96,40.75)}, rotate = 14.5] [color={rgb, 255:red, 126; green, 211; blue, 33 }  ,draw opacity=1 ][fill={rgb, 255:red, 126; green, 211; blue, 33 }  ,fill opacity=1 ][line width=0.75]      (0, 0) circle [x radius= 1.34, y radius= 1.34]   ;
\draw [shift={(326.5,21.5)}, rotate = 14.5] [color={rgb, 255:red, 126; green, 211; blue, 33 }  ,draw opacity=1 ][fill={rgb, 255:red, 126; green, 211; blue, 33 }  ,fill opacity=1 ][line width=0.75]      (0, 0) circle [x radius= 1.34, y radius= 1.34]   ;
  
\draw [color={rgb, 255:red, 126; green, 211; blue, 33 }  ,draw opacity=1 ]   (326.5,21.5) -- (341.04,40.75) ;
\draw [shift={(326.5,21.5)}, rotate = 52.93] [color={rgb, 255:red, 126; green, 211; blue, 33 }  ,draw opacity=1 ][fill={rgb, 255:red, 126; green, 211; blue, 33 }  ,fill opacity=1 ][line width=0.75]      (0, 0) circle [x radius= 1.34, y radius= 1.34]   ;
  
\draw [color={rgb, 255:red, 126; green, 211; blue, 33 }  ,draw opacity=1 ]   (326.5,21.5) -- (430.92,40.75) ;
\draw [shift={(430.92,40.75)}, rotate = 10.45] [color={rgb, 255:red, 126; green, 211; blue, 33 }  ,draw opacity=1 ][fill={rgb, 255:red, 126; green, 211; blue, 33 }  ,fill opacity=1 ][line width=0.75]      (0, 0) circle [x radius= 1.34, y radius= 1.34]   ;
\draw [shift={(326.5,21.5)}, rotate = 10.45] [color={rgb, 255:red, 126; green, 211; blue, 33 }  ,draw opacity=1 ][fill={rgb, 255:red, 126; green, 211; blue, 33 }  ,fill opacity=1 ][line width=0.75]      (0, 0) circle [x radius= 1.34, y radius= 1.34]   ;
  
\draw [color={rgb, 255:red, 74; green, 144; blue, 226 }  ,draw opacity=1 ]   (251.17,40.75) -- (326.5,21.5) ;
\draw [shift={(251.17,40.75)}, rotate = 345.67] [color={rgb, 255:red, 74; green, 144; blue, 226 }  ,draw opacity=1 ][fill={rgb, 255:red, 74; green, 144; blue, 226 }  ,fill opacity=1 ][line width=0.75]      (0, 0) circle [x radius= 1.34, y radius= 1.34]   ;
  
\draw    (326.5,21.5) ;
\draw [shift={(326.5,21.5)}, rotate = 0] [color={rgb, 255:red, 0; green, 0; blue, 0 }  ][fill={rgb, 255:red, 0; green, 0; blue, 0 }  ][line width=0.75]      (0, 0) circle [x radius= 1.34, y radius= 1.34]   ;
  
\draw    (430.92,40.75) ;
\draw [shift={(430.92,40.75)}, rotate = 0] [color={rgb, 255:red, 0; green, 0; blue, 0 }  ][fill={rgb, 255:red, 0; green, 0; blue, 0 }  ][line width=0.75]      (0, 0) circle [x radius= 1.34, y radius= 1.34]   ;
  
\draw    (341.04,40.75) ;
\draw [shift={(341.04,40.75)}, rotate = 0] [color={rgb, 255:red, 0; green, 0; blue, 0 }  ][fill={rgb, 255:red, 0; green, 0; blue, 0 }  ][line width=0.75]      (0, 0) circle [x radius= 1.34, y radius= 1.34]   ;
  
\draw    (400.96,40.75) ;
\draw [shift={(400.96,40.75)}, rotate = 0] [color={rgb, 255:red, 0; green, 0; blue, 0 }  ][fill={rgb, 255:red, 0; green, 0; blue, 0 }  ][line width=0.75]      (0, 0) circle [x radius= 1.34, y radius= 1.34]   ;
  
\draw    (371,40.75) ;
\draw [shift={(371,40.75)}, rotate = 0] [color={rgb, 255:red, 0; green, 0; blue, 0 }  ][fill={rgb, 255:red, 0; green, 0; blue, 0 }  ][line width=0.75]      (0, 0) circle [x radius= 1.34, y radius= 1.34]   ;
  
\draw    (311,41.17) ;
\draw [shift={(311,41.17)}, rotate = 0] [color={rgb, 255:red, 0; green, 0; blue, 0 }  ][fill={rgb, 255:red, 0; green, 0; blue, 0 }  ][line width=0.75]      (0, 0) circle [x radius= 1.34, y radius= 1.34]   ;
  
\draw    (281.12,40.75) ;
\draw [shift={(281.12,40.75)}, rotate = 0] [color={rgb, 255:red, 0; green, 0; blue, 0 }  ][fill={rgb, 255:red, 0; green, 0; blue, 0 }  ][line width=0.75]      (0, 0) circle [x radius= 1.34, y radius= 1.34]   ;
  
\draw    (251.17,40.75) ;
\draw [shift={(251.17,40.75)}, rotate = 0] [color={rgb, 255:red, 0; green, 0; blue, 0 }  ][fill={rgb, 255:red, 0; green, 0; blue, 0 }  ][line width=0.75]      (0, 0) circle [x radius= 1.34, y radius= 1.34]   ;
  
\draw    (305.5,60.83) ;
\draw [shift={(305.5,60.83)}, rotate = 0] [color={rgb, 255:red, 0; green, 0; blue, 0 }  ][fill={rgb, 255:red, 0; green, 0; blue, 0 }  ][line width=0.75]      (0, 0) circle [x radius= 1.34, y radius= 1.34]   ;
  
\draw [color={rgb, 255:red, 208; green, 2; blue, 27 }  ,draw opacity=1 ]   (106.58,22.71) -- (79.59,63.26) ;
  
\draw [color={rgb, 255:red, 126; green, 211; blue, 33 }  ,draw opacity=1 ]   (106.58,22.71) -- (132.86,63.26) ;
  
\draw [color={rgb, 255:red, 74; green, 144; blue, 226 }  ,draw opacity=1 ]   (132.86,63.26) -- (79.59,63.26) ;
  
\draw    (98.06,23.76) .. controls (103.32,17.94) and (108.27,17.8) .. (112.92,23.62) ;
\draw [shift={(114.04,25.16)}, rotate = 235.89] [color={rgb, 255:red, 0; green, 0; blue, 0 }  ][line width=0.75]    (4.37,-1.32) .. controls (2.78,-0.56) and (1.32,-0.12) .. (0,0) .. controls (1.32,0.12) and (2.78,0.56) .. (4.37,1.32)   ;
  
\draw    (139.26,58.36) .. controls (139.91,65.12) and (136.36,67.73) .. (129.42,68.92) ;
\draw [shift={(127.54,69.2)}, rotate = 352.68] [color={rgb, 255:red, 0; green, 0; blue, 0 }  ][line width=0.75]    (4.37,-1.32) .. controls (2.78,-0.56) and (1.32,-0.12) .. (0,0) .. controls (1.32,0.12) and (2.78,0.56) .. (4.37,1.32)   ;
  
\draw    (84.56,68.5) .. controls (76.22,70.02) and (74.33,65.46) .. (72.81,61.25) ;
\draw [shift={(72.13,59.41)}, rotate = 68.65] [color={rgb, 255:red, 0; green, 0; blue, 0 }  ][line width=0.75]    (4.37,-1.32) .. controls (2.78,-0.56) and (1.32,-0.12) .. (0,0) .. controls (1.32,0.12) and (2.78,0.56) .. (4.37,1.32)   ;

\draw (234.67,58.4) node [anchor=north west][inner sep=0.75pt]  [font=\tiny]  {$\partial ^{\prime }$};
 
\draw (235.67,18.4) node [anchor=north west][inner sep=0.75pt]  [font=\tiny]  {$\partial $};
 
\draw (296,66.73) node [anchor=north west][inner sep=0.75pt]  [font=\tiny]  {$( i,0)$};
 
\draw (318.83,9.73) node [anchor=north west][inner sep=0.75pt]  [font=\tiny]  {$( j,1)$};
 
\draw (438,36.2) node [anchor=north west][inner sep=0.75pt]    {$\cdots $};
 
\draw (218.8,35.4) node [anchor=north west][inner sep=0.75pt]    {$\cdots $};
 
\draw (77.15,43.88) node [anchor=north west][inner sep=0.75pt]  [font=\tiny,rotate=-306.57]  {$[ i,j]$};

\end{tikzpicture}

\end{figure}

\begin{prop}{}{}\label{triangle exact}
For each colored triangle depicted above, there exists an exact sequence of the following form:
\[\begin{tikzcd}[ampersand replacement=\&,cramped,column sep=small]
				0 \& {\widehat{\phi}(\widetilde{\text{\color{green}{GRN Seg}}})} \& {\widehat{\phi}(\widetilde{\text{\color{red}{RED Seg}}})} \& {\widehat{\phi}(\widetilde{\text{\color{blue}{BLU Seg}}})} \& 0
				\arrow[from=1-1, to=1-2]
				\arrow[from=1-2, to=1-3]
				\arrow[from=1-3, to=1-4]
				\arrow[from=1-4, to=1-5]
			\end{tikzcd}\]
\end{prop}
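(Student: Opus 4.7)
The plan is a case analysis on the position of the third vertex $P_k$ relative to the red segment $[i,j]$, with the base case handled by the defining exact sequence of a generalized extension bundle and the remaining cases obtained by iterating a push-out construction from the appendix.

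Setting $q := \lfloor (i+j-1)/n \rfloor$, a comparison of slope reciprocals shows that $P_k$ lies to the left of $[i,j]$ at height $y=1/2$ (equivalently $i+j>kn$) exactly when $k\leq q$, in which case the green edge is the half-segment $[i,kn-i]^+$ from $(i,0)$ to $P_k$ and the blue edge is $[kn-j,j]^-$ from $(j,1)$ to $P_k$. The case $k\geq q+1$ ($P_k$ to the right) is symmetric, handled by using the alternative form $\mathsf{E}_{L(\vec{x}+\vec{x}_3-\vec{x}_1)}\langle \vec{\delta}-\vec{x}\rangle$ from Remark \ref{rem1}(i), or equivalently by invoking $\theta\in G$ together with Proposition \ref{vector bundle duality}. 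I therefore restrict attention to the left case.

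For the base case $k=q$, write $(i+j-1)\vec{x}_3 = r\vec{x}_3 + q\vec{c}$ in normal form with $0\leq r\leq n-2$ (the bound $r\neq n-1$ follows from $[i,j]\notin \operatorname{Seg}_0(M)$). By Definition \ref{generalized extension bundle}, $\widehat{\phi}(\widetilde{[i,j]}) = E_L\langle r\vec{x}_3\rangle$ with $L := L_0(q\vec{x}_1 - i\vec{x}_3)$, whose defining exact sequence \eqref{extension bundle} is
\[0 \to L(\vec{\omega}) \to \widehat{\phi}(\widetilde{[i,j]}) \to L(r\vec{x}_3) \to 0.\]
Using $\vec{\omega} = \vec{x}_1-\vec{x}_2-\vec{x}_3$, $n\vec{x}_3 = 2\vec{x}_1$, and the identity $L_0((q+1)\vec{x}_1-\vec{x}_2) = L_0^*(q\vec{x}_1)$, direct computation simplifies the outer terms to $L_0^*(q\vec{x}_1-(i+1)\vec{x}_3)$ and $L_0(-q\vec{x}_1+(j-1)\vec{x}_3)$, matching $\widehat{\phi}$ of the green and blue half-segments when $q$ is even. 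For $q$ odd I invoke $\mathsf{E}_L = \mathsf{E}_{L^*}$ from Remark \ref{rem1}(i), which yields the analogous sequence with the $L_0/L_0^*$ convention swapped.

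For general $k<q$, Serre duality gives $\dim\operatorname{Ext}^1(\text{blue}_k,\text{green}_k) = q-k+1$, so the extension realising Red as middle term is not the unique non-split one and must be pinned down. By Corollary \ref{L-action} one verifies $\text{green}_{k-1}=\text{green}_k(-\vec{x}_2)$ and $\text{blue}_{k-1}=\text{blue}_k(+\vec{x}_2)$, so forming the push-out of the sequence at $k$ along the inclusion $\text{green}_{k-1}\hookrightarrow \text{green}_k$ (whose cokernel is the simple torsion sheaf $S_2$ from \eqref{important exact sequence1}) produces, via Proposition \ref{bicartesian square}, a candidate sequence at $k-1$; identifying its cokernel with $\text{blue}_{k-1}$ uses that Red lies in ${\rm vect}\mbox{-}\mathbb{X}$ and therefore admits no nonzero morphism from $S_2$, forcing the relevant extension class to be non-split. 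Iterating from $k=q$ downwards yields the desired sequence for all $k\leq q$. The main obstacle is this inductive step: one must verify that the push-out produces exactly the line bundle $\text{blue}_{k-1}$ as cokernel rather than a decomposable extension of $S_2$ by $\text{blue}_k$, which requires carefully combining the absence of torsion subobjects in Red with its indecomposability to control the resulting extension class.
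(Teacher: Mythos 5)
Your reduction to the case $kn<i+j$ and your base case $k=q$ are both sound, and the base case is essentially the paper's starting point: the sequence \eqref{1-2-1} in the paper's proof is just the defining sequence \eqref{extension bundle} extended to generalized extension bundles, and at $k=q$ it reduces to your direct computation. The genuine gap is in the descending induction. If you embed $G_{k-1}:=\text{green}_{k-1}$ into $R:=\widehat{\phi}(\widetilde{[i,j]})$ by composing $G_{k-1}\hookrightarrow G_k\hookrightarrow R$, then the quotient $R/G_{k-1}$ sits in an exact sequence $0\to G_k/G_{k-1}\to R/G_{k-1}\to B_k\to 0$ in which $G_k/G_{k-1}$ is a twist of the simple torsion sheaf $S_2$. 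Hence $R/G_{k-1}$ always contains a nonzero torsion subsheaf and can never be the line bundle $B_{k-1}$. The fact you invoke, that $R\in{\rm vect}\mbox{-}\mathbb{X}$ admits no nonzero morphism from $S_2$, controls maps into $R$, not into its quotients, and does not remove this subobject. What this actually shows is that the correct monomorphism $G_{k-1}\to R$ appearing in the level-$(k-1)$ sequence does \emph{not} factor through $G_k\subset R$, so it cannot be produced from the level-$k$ sequence by any quotient or push-out along $G_{k-1}\hookrightarrow G_k$; the obstacle you flag at the end is not a technical point to be checked but a structural impossibility of the construction.

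The paper sidesteps induction on $k$ entirely: splicing the bicartesian squares of Proposition \ref{bicartesian square} with the almost-split sequence gives, for every line bundle $L$ and every $m\geq 0$, a short exact sequence $0\to L(\vec{\omega})\to\mathsf{E}_L\langle m\vec{x}_3\rangle\to L(m\vec{x}_3)\to 0$, and one then substitutes $L=L_0(k\vec{x}_1-i\vec{x}_3)$ and $m=i+j-kn-1$ for each $k$ separately, using Definition \ref{generalized extension bundle} to recognize the middle term as $\widehat{\phi}(\widetilde{[i,j]})$. If you wish to keep an inductive flavour, the step from $k$ to $k-1$ must pass through the bicartesian square of Proposition \ref{bicartesian square} relating $\mathsf{E}_{L}\langle\vec{x}+\vec{y}\rangle$ to $\mathsf{E}_{L(\vec{y})}\langle\vec{x}\rangle$ (which exchanges one line-bundle presentation of $R$ for another), rather than through the subobject chain $G_{k-1}\subset G_k$.
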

\begin{proof}
For any $\vec{x}=a\vec{x}_3, \vec{y}=b\vec{x}_3$ with $a\in \mathbb{N}, b\in \mathbb{Z}_+$, we have the following commutative diagram:
\[\begin{tikzcd}[ampersand replacement=\&,cramped,sep=small]
{L_0(\vec{\omega})} \& {L_0(\vec{\omega})(\vec{y})} \& 0 \\
{\mathsf{E}_{L_0}\langle \vec{x}+\vec{y}\rangle} \& {\mathsf{E}_{L_0(\vec{y})}\langle \vec{x}\rangle} \& {L_0(\vec{x}+\vec{y})}
\arrow[from=1-1, to=1-2]
\arrow[from=1-2, to=2-2]
\arrow[from=1-1, to=2-1]
\arrow[from=2-1, to=2-2]
\arrow[from=2-2, to=2-3]
\arrow[from=1-2, to=1-3]
\arrow[from=1-3, to=2-3]
\end{tikzcd}\]
where the left bicartesian square is given by Proposition \ref{bicartesian square}, and the right bicartesian square is induced by the almost-split sequence. As a consequence, we have the exact sequence:
\begin{equation}\label{1-2-1}
\begin{tikzcd}[ampersand replacement=\&,cramped,sep=small]
0 \& {L_0(\vec{\omega})} \& {\mathsf{E}_{L_0}\langle \vec{x}+\vec{y}\rangle} \& {L_0(\vec{x}+\vec{y})} \& 0.
\arrow[from=1-2, to=1-3]
\arrow[from=1-3, to=1-4]
\arrow[from=1-4, to=1-5]
\arrow[from=1-1, to=1-2]
\end{tikzcd}
\end{equation}
For any $P_k \in \mathcal{P}$, we first assume $kn < i+j$,  meaning $P_k$ lies to the left of the line segment $[i,j]$. Let $[I,P_k] = [i,kn-i]^+$ and $[P_k,J] = [kn-j,j]^-$. Then the reciprocals of the slopes of $[I,P_k]$, $[i,j]$, and $[P_k,J]$ form an increasing sequence. Moreover, $[I,P_k]$, $[i,j]$, and $[P_k,J]$ form a triangle with base $[i,j]$, where $P_k$ is the vertex relative to the base $[i,j]$. Without loss of generality, we may assume that $k$ is odd. Note that
\begin{align*}
\widehat{\phi}(\widetilde{[I,P_k]}) &=L_0(k\vec{x}_1-(i+1)\vec{x}_3),\\
\widehat{\phi}(\widetilde{[P_k,J]}) &=L_0^*(-k\vec{x}_1+(j-1)\vec{x}_3), \\
\widehat{\phi}(\widetilde{[i,j]}) &=E_{L_0(-i\vec{x}_3)}\langle (i+j-1)\vec{x}_3\rangle.
\end{align*}
Replacing $L$ and  $\vec{x}+\vec{y}$ in \eqref{1-2-1} with $L_0(k\vec{x}_1-i\vec{x}_3)$ and $(i+j-kn-1)\vec{x}_3$, respectively, we have the required exact sequence
\[\begin{tikzcd}[ampersand replacement=\&,cramped,sep=small]
	0 \& {\widehat{\phi}(\widetilde{[I,P_k]})} \& {\widehat{\phi}(\widetilde{[i,j]})} \& {\widehat{\phi}(\widetilde{[P_k,J]})} \& 0.
	\arrow[from=1-1, to=1-2]
	\arrow[from=1-2, to=1-3]
	\arrow[from=1-3, to=1-4]
	\arrow[from=1-4, to=1-5]
\end{tikzcd}\]
For the case where $kn>i+j$, the required exact sequence can be obtained in a similar way.
\end{proof}

 As a consequence, we provide the sufficient and necessary condition for the middle term of an extension between two line bundles to potentially be an extension bundle.

\begin{prop}{}{}
 For $\vec{x},\vec{y}\in \mathbb{L}$, there exists $\xi\in \Ext^1(L(\vec{y}),L(\vec{x}))$  such that its middle term is an extension bundle if and only if 
 \[\vec{y}-\vec{x}=\vec{x}_1+\vec{x}_2+k\vec{x}_3+l\vec{c}\] 
 in normal form with $1\leq k\leq n-1$ and  $l\geq -1$.
\end{prop}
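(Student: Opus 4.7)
The plan is to prove both directions using the triangle exact sequences of Proposition~\ref{triangle exact}, together with the algebraic identity $-\vec{\omega} = \vec{x}_1 + \vec{x}_2 + \vec{x}_3 - \vec{c}$, which rewrites the target condition as
\[
    \vec{y} - \vec{x} \;=\; -\vec{\omega} + (k-1)\vec{x}_3 + (l+1)\vec{c}.
\]

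For the ``if'' direction, given $1 \leq k \leq n-1$ and $l \geq -1$, I choose integers $i, j$ with $i + j = (l+1)n + k$, so that $i + j \not\equiv 0 \pmod{n}$ and $[i,j] \notin \operatorname{Seg}_0(M)$, and apply Proposition~\ref{triangle exact} to the triangle with base $[i, j]$ and apex $P_0 \in \mathcal{P}$. A direct computation using the explicit table of $\widehat{\phi}$ in Proposition~\ref{correspondence} (with case analysis on the parity of $m=0$) shows that the difference between the Picard-group parameters of the green (sub) and blue (quotient) sides is precisely $\vec{x}_1 + \vec{x}_2 + k\vec{x}_3 + l\vec{c}$. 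Tensoring the resulting short exact sequence by an appropriate line bundle shifts the sub to $L(\vec{x})$ and the quotient to $L(\vec{y})$, producing the required extension with middle term $\widehat{\phi}(\widetilde{[i,j]})$. Note that for $l=-1$ this specializes to the defining sequence~\eqref{extension bundle} of $\mathsf{E}_{L(\vec{x}-\vec{\omega})}\langle (k-1)\vec{x}_3\rangle$, giving a uniform treatment.

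For the ``only if'' direction, suppose $\xi$ is a nonsplit extension of $L(\vec{y})$ by $L(\vec{x})$ whose middle term is an extension bundle $E = \mathsf{E}_{L'}\langle \vec{z}\rangle$ with $\vec{0} \leq \vec{z} \leq \vec{\delta}$. Composing the inclusion $L(\vec{x}) \hookrightarrow E$ with the canonical surjection $E \twoheadrightarrow L'(\vec{z})$ yields a morphism that is either zero or nonzero. In the former case, the inclusion factors through $L'(\vec{\omega})$, forcing $L(\vec{x}) \cong L'(\vec{\omega})$ and hence $L(\vec{y}) \cong L'(\vec{z})$, so that $\vec{y} - \vec{x} = \vec{z} - \vec{\omega}$ is in the required normal form with $k := l_3 + 1 \in [1, n-1]$ and $l = -1$. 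In the latter case, I identify $E = \widehat{\phi}(\widetilde{[i,j]})$ geometrically and recognize the SES as a triangle SES from Proposition~\ref{triangle exact} with some apex $P_m \in \mathcal{P}$, and the same parameter calculation as in the ``if'' direction then puts $\vec{y} - \vec{x}$ into the form $\vec{x}_1 + \vec{x}_2 + k\vec{x}_3 + l\vec{c}$ with $1 \leq k \leq n-1$ and $l \geq 0$.

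The main obstacle is the identification step in the nonzero branch of the ``only if'' direction: one must verify that every line sub-bundle of $E = \mathsf{E}_{L'}\langle \vec{z}\rangle$ distinct from $L'(\vec{\omega})$ arises as the green side of a triangle over $\widehat{\phi}^{-1}(E)$. I plan to establish this by applying $\Hom(L(\vec{x}), -)$ to the defining sequence of $E$ and combining the resulting long exact sequence with the Hom-space formula~\eqref{hom space} and Lenzing's slope monotonicity recalled in Section~\ref{Section 2}: together these enumerate the possible line sub-bundles of $E$, up to isomorphism, as exactly $\{\,\widehat{\phi}(\widetilde{[I, P_m]}) : P_m \in \mathcal{P}\,\}$, thereby placing $L(\vec{x})$ as part of a triangle and completing the reduction.
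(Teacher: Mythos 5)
Your ``if'' direction is essentially the paper's: the authors also obtain the sufficiency directly from the triangle exact sequences of Proposition \ref{triangle exact}, and your parameter bookkeeping (which indeed yields every $k\in\{1,\dots,n-1\}$ and every $l\geq -1$ as the apex $P_m$ ranges over $\mathcal{P}$) is a correct unwinding of what the paper leaves implicit.

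The ``only if'' direction, however, has a genuine gap. The paper's necessity argument is purely a Grothendieck-group computation: writing the middle term as $E_{L(\vec{z})}\langle i\vec{x}_3\rangle$ via Proposition \ref{Grothendieck group}, it compares $[L(\vec{x})]+[L(\vec{y})]=[L(\vec{z}+\vec{\omega})]+[L(\vec{z}+i\vec{x}_3)]$ in $K_0(\mathbb{X})$ and invokes Proposition 2.2 of \cite{dong2024two} to force the normal-form coefficients of $\vec{y}-\vec{x}$ to be $\vec{x}_1+\vec{x}_2+k\vec{x}_3+l\vec{c}$ with $1\leq k\leq n-1$; the bound $l\geq -1$ then comes from Serre duality and the nonvanishing of $\Ext^1(L(\vec{y}),L(\vec{x}))$. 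You instead propose to classify all line subbundles of $E$ with line-bundle quotient and to show they are exactly the green sides of the triangles. That classification is the entire content of the necessity statement, and your proposed tool for it does not suffice: applying $\Hom(L(\vec{x}),-)$ to the defining sequence of $E$, together with \eqref{hom space} and slope monotonicity, only detects when a nonzero morphism $L(\vec{x})\to E$ exists --- and such morphisms exist for a whole cone of $\vec{x}$, far more than the discrete family $\{\widehat{\phi}(\widetilde{[I,P_m]})\}$. What singles out the triangle subbundles is the extra requirement that the cokernel be a \emph{line bundle}, and the only handle on that requirement is precisely the relation $[L(\vec{x})]+[L(\vec{y})]=[E]$ in $K_0(\mathbb{X})$, which your argument never invokes. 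A smaller (fixable) omission of the same flavour occurs in your zero branch: a nonzero map $L(\vec{x})\to L'(\vec{\omega})$ need not be an isomorphism a priori; you must use that $E/L(\vec{x})\cong L(\vec{y})$ is torsion-free to exclude a proper inclusion. As written, the necessity half of your proof does not go through without importing the $K_0$ computation that constitutes the paper's actual argument.
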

\begin{proof}
The sufficiency follows from the existence of the extensions follows from the Proposition \ref{triangle exact}. For the necessity, assume there exists an extension of $L(\vec{x})$ by $L(\vec{y})$ such that its middle term is an indecomposable bundle of rank two. By Proposition \ref{Grothendieck group}, this middle term can always be written as $E_{L(\vec{z})}\langle i\vec{x}_3\rangle$, where $0\leq i\leq n-2$. We express 
    \[\vec{x}=\mbox{$\sum_{i=1}^3$} j_i\vec{x}_i+j\vec{c},\quad\vec{y}=\mbox{$\sum_{i=1}^3$} k_i\vec{x}_i+k\vec{c}, \quad\vec{z}=\mbox{$\sum_{i=1}^3$} l_i\vec{x}_i+l\vec{c}\] 
    in normal forms. Hence, in the Grothendieck group $K_0(\mathbb{X})$, we have 
\[[E_{L(\vec{z})}\langle i\vec{x}_3\rangle] = [L(\vec{x})]+[L(\vec{y})]=[L(\vec{z}+\vec{\omega})]+[L(\vec{z}+i\vec{x}_3)].\]
Notice that $\vec{\omega}=\vec{c}-\mbox{$\sum_{i=1}^3$} \vec{x}_i$. By Proposition 2.2 in \cite{dong2024two}, we have 
$\{j_i,k_i\}=\{l_i-1 \;(\bmod \; 2),l_i\}$ for $i=1,2$ and 
$\{j_3,k_3\}=\{l_3-1 \;(\bmod\; n),l_3+i\;(\bmod \;n)\}$. Hence,  $j_i+k_i=(l_i-1)+l_i=1 \bmod 2$. Moreover, we claim that $j_3\neq k_3$, otherwise, depending on the value of $j_3$, we have either $j_3=n-1$ implying $l_3=0$ and $i=n-1$, or $j_3\neq n-1$ implying $l_3=j_3+1$ and $i=-1 \bmod n$, both of which contradict $0\leq i \leq n-2$. Therefore, $j_i\neq k_i$ for $i=1,2,3$. It follows that $\vec{y}-\vec{x}=\vec{x}_1+\vec{x}_2+k\vec{x}_3+l\vec{c}$ in normal form with $1\leq k\leq n-1$. Finally, by Serre duality, $\Ext^1(L(\vec{y}),L(\vec{x}))=D\Hom(L(\vec{x}),L(\vec{y}+\vec{\omega}))=D S_{\vec{y}+\vec{\omega}-\vec{x}}=D  S_{(k-1)\vec{x}_3+(l+1)\vec{c}}\neq 0$ if and only if the integer $l$ is at least $-1$. 
\end{proof}

\subsection{Intersection indices as dimensions of extension groups}
	Assume that $\widetilde{[i,j]^*},\widetilde{[s,t]^\star}\in \widetilde{\operatorname{Seg}^*(M)}$. We denote by $[i,j]^* \cap^+ \widetilde{[s,t]}$ the set of positive intersections of all line segments in the $G$-orbit $\widetilde{[s,t]}\in \widetilde{\operatorname{Seg}(M)}$ and the line segment $[i,j]^*$. 
\begin{defn}{}{}\label{positive intersections }  
     The \emph{intersection index}  $I_{\text{int}}(\widetilde{[i,j]^*},\widetilde{[s,t]^\star})$ is defined as follows:
   \begin{itemize}
    \item If the following conditions hold:
    \begin{itemize}
        \item [(a)] $*=\varrho(\star)$ and $* \neq \text{empty}$,
        \item [(b)] $i+j+s+t \equiv 0\bmod 2n$,
           \item [(c)] The reciprocal of the slope of $[i,j]$ is less than that of $[s,t]$,
    \end{itemize}
    then:
    \[I_{\text{int}}(\widetilde{[i,j]^*},\widetilde{[s,t]^\star}) = \lvert [i,j]^* \cap^+ \widetilde{[s,t]} \rvert + 1.\]
    \item Otherwise:
    \[I_{\text{int}}(\widetilde{[i,j]^*},\widetilde{[s,t]^\star}) = \lvert [i,j]^* \cap^+ \widetilde{[s,t]} \rvert.\]
\end{itemize}
\end{defn}
\begin{rem}
       One can check that $I_{\text{int}}(\widetilde{[i,j]^*},\widetilde{[s,t]^\star})$ is independent of the choices of $[i,j]$ and $[s,t]$. In general, $I_{\text{int}}(\widetilde{[i,j]^*},\widetilde{[s,t]^\star})\neq I_{\text{int}}(\widetilde{[s,t]^\star},\widetilde{[i,j]^*})$.
\end{rem}

To prove Theorem \ref{dimension and positive intersection}, we need some preparations. 

A path $M_0 \rightarrow M_1 \rightarrow \cdots \rightarrow M_s$ in the Auslander-Reiten quiver $\Gamma(\text{vect-}\mathbb{X})$ is called a \emph{sectional path} if $\tau M_{i+1} \neq M_{i-1}$ for all $i=1, \ldots, s-1$. Let $\Sigma_{\rightarrow}(M)$ be the set of all indecomposable objects that can be reached from $M$ by a sectional path, and let $\Sigma_{\leftarrow}(M)$ be the set of all indecomposable objects from which one can reach $M$ by a sectional path.

	\begin{prop}{}{}\label{Hom and Ext}
		Let $E_L\langle\vec{x}\rangle$ be the extension bundle. For any indecomposable bundle $X$,
		\begin{itemize}
			\item[(1)] if ${\rm Ext^{1}}(X,E_L\langle\vec{x}\rangle)\neq 0$, then ${\rm Hom}(X,L(\vec{x}))= 0$ and $${\rm dim_{k}Ext^{1}}(X,E_L\langle\vec{x}\rangle) ={\rm dim_{k}Ext^{1}}(X,L(\vec{\omega}))+{\rm dim_{k}Ext^{1}}(X,L(\vec{x}));$$
			\item[(2)] if ${\rm Ext^{1}}(E_L\langle\vec{x}\rangle, X)\neq 0$, then ${\rm Hom}(L(\vec{\omega}),X)= 0$ and  $${\rm dim_{k}Ext^{1}}(E_L\langle\vec{x}\rangle,X) ={\rm dim_{k}Ext^{1}}(L(\vec{\omega}),X)+{\rm dim_{k}Ext^{1}}(L(\vec{x}),X).$$
		\end{itemize}
	\end{prop}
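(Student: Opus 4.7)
The basic tool is the six-term exact sequence obtained by applying $\Hom(X,-)$ for part~(1) (respectively $\Hom(-,X)$ for part~(2)) to the defining extension
\[0 \to L(\vec{\omega}) \to E_L\langle\vec{x}\rangle \to L(\vec{x}) \to 0.\]
Hereditariness of ${\rm coh}\mbox{-}\mathbb{X}$ kills the $\Ext^2$-terms, so for part~(1) one gets
\[\cdots \to \Hom(X, L(\vec{x})) \xrightarrow{\delta} \Ext^1(X, L(\vec{\omega})) \to \Ext^1(X, E_L\langle\vec{x}\rangle) \to \Ext^1(X, L(\vec{x})) \to 0.\]
Alternating sums of dimensions along this sequence show that the desired additivity is equivalent to the vanishing of the connecting map $\delta$, which is automatic once $\Hom(X, L(\vec{x})) = 0$ is established. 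The situation for part~(2) is symmetric, with the corresponding connecting map going from $\Hom(L(\vec{\omega}), X)$ to $\Ext^1(L(\vec{x}), X)$.

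A cheap first step uses slopes: by Serre duality, $\Ext^1(X, L(\vec{x})) \cong D\Hom(L(\vec{x}), X(\vec{\omega}))$, and if both $\Hom(X, L(\vec{x})) \neq 0$ and $\Ext^1(X, L(\vec{x})) \neq 0$, then the slope inequality for indecomposable bundles from \cite{MR2384609} yields the impossible chain $\mu X \leq \mu L(\vec{x}) \leq \mu X(\vec{\omega}) = \mu X + \deg\vec{\omega}$, using that $\deg\vec{\omega} < 0$. Hence $\Hom(X, L(\vec{x})) \neq 0$ already forces $\Ext^1(X, L(\vec{x})) = 0$, so the six-term sequence collapses to $\Ext^1(X, E_L\langle\vec{x}\rangle) \cong \operatorname{coker}(\delta)$, and the implication we must prove reduces to the surjectivity of $\delta$ whenever $\Hom(X, L(\vec{x})) \neq 0$.

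The main obstacle is precisely this surjectivity. I plan to compute $\delta$ explicitly as a pairing. The defining extension class is a nonzero linear functional $\eta\colon \Hom(L, L(\vec{x})) \to \mathbf{k}$, using \eqref{dim of ext eqe 1} and the identification $\Ext^1(L(\vec{x}), L(\vec{\omega})) \cong DR_{\vec{x}} \cong \mathbf{k}$ in the range $0 \leq \vec{x} \leq \vec{\delta}$. After identifying $\Ext^1(X, L(\vec{\omega})) \cong D\Hom(L, X)$ by Serre duality, $\delta$ sends $f$ to the functional $g \mapsto \eta(f \circ g)$, so surjectivity of $\delta$ is equivalent to non-degeneracy in $g$ of the composition pairing $\Hom(X, L(\vec{x})) \times \Hom(L, X) \to \Hom(L, L(\vec{x}))$ relative to $\ker\eta$. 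I will verify this by splitting on whether $X$ is a line bundle (in which case the pairing becomes ordinary multiplication of homogeneous components of $R$, all of which are one-dimensional in the relevant range) or an indecomposable rank-two bundle (in which case Proposition~\ref{Grothendieck group} lets one write $X = \mathsf{E}_{L'}\langle \vec{y}\rangle$ and reduce the computation to the line-bundle case via the defining sequence of $X$). Part~(2) then follows by the symmetric argument, or equivalently by applying vector bundle duality and invoking Proposition~\ref{vector bundle duality}.
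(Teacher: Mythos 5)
Your reduction is set up correctly: once $\Hom(X,L(\vec{x}))=0$ is known, applying $\Hom(X,-)$ to the defining sequence gives the dimension formula exactly as in the paper, and the slope argument showing that $\Hom(X,L(\vec{x}))$ and $\Ext^1(X,L(\vec{x}))$ cannot both be nonzero is correct (here $\delta(\vec{\omega})=-\bar{p}/n<0$). So everything hinges on your Step 2: that $\Hom(X,L(\vec{x}))\neq 0$ forces the connecting map $\delta$ to be surjective. Your identification of $\delta(f)$ with $g\mapsto\eta(fg)$ is right, and the line-bundle case does go through cleanly — for $0\le\vec{x}\le\vec{\delta}=(n-2)\vec{x}_3$ every intermediate line bundle is $L(i\vec{x}_3)$ with $0\le i\le j$, all relevant graded pieces of $R$ are one-dimensional spanned by powers of $x_3$, and the products are nonzero.

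The genuine gap is the rank-two case, where "reduce to the line-bundle case via the defining sequence of $X$" does not work. Write $0\to L'(\vec{\omega})\xrightarrow{\iota}X\xrightarrow{\pi}L'(\vec{y})\to 0$. It can happen that $\Hom(L,X)$ is detected only through $\pi$ while $\Hom(X,L(\vec{x}))$ is detected only through $\iota$: for instance with $n=4$, $\vec{x}=2\vec{x}_3$, $X=E_{L(\vec{x}_1-\vec{x}_3)}\langle\vec{x}_3\rangle$, one finds $\Hom(L,L'(\vec{\omega}))=0=\Hom(L'(\vec{y}),L(\vec{x}))$ while $\Hom(L,L'(\vec{y}))=\mathbf{k}x_1$ and $\Hom(L'(\vec{\omega}),L(\vec{x}))=\mathbf{k}x_2$. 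The composite $f_0g$ of the (unique up to scalar) generators is then not computable from any composition pairing between line-bundle constituents; it is a Massey-product-type invariant $\langle x_2,\eta',x_1\rangle$ of the extension class $\eta'$ of $X$, and it would vanish identically if $X$ were replaced by the split extension. Proving it is nonzero is essentially equivalent to the statement you are trying to prove, and your sketch gives no mechanism for it. This is precisely the point where the paper switches to a different tool: $\Ext^1(X,E_L\langle\vec{x}\rangle)\neq 0$ together with $\Hom(X,L(\vec{x}))\neq 0$ would place suitable $\tau$-shifts of $X$ simultaneously in the sectional-path regions $\Sigma_{\rightarrow}(\tau^{-1}E_L\langle\vec{x}\rangle)$ and $\Sigma_{\leftarrow}(L(\vec{x}))$ of $\Gamma({\rm vect}\mbox{-}\mathbb{X})=\mathbb{Z}\tilde{D}_{n+2}$ (via \cite[Lemma 4.3, Corollary 4.4]{MR3313495}), and these hammocks are disjoint. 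You either need to import that combinatorial disjointness, or carry out an explicit module-level computation of the composite through $X$; neither is present in your proposal.
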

	\begin{proof}
		We only prove (1); the proof of (2) is similar. If ${\rm Ext^{1}}(X,E_L\langle\vec{x}\rangle)\neq 0$, then by Serre duality, \[{\rm Hom}(E_L\langle\vec{x}\rangle,X(\vec{\omega}))\cong D{\rm Ext}^{1}(X,E_L\langle\vec{x}\rangle)\neq 0.\]
   Assume ${\rm Hom}(X,L(\vec{x}))\neq 0$. According to Lemma 4.3 and Corollary 4.4 in \cite{MR3313495}, there exist integers $k,t \in \mathbb{N}$ such that $\tau^k X$ lies within $\Sigma_{\rightarrow}(\tau^{-1} E_L\langle\vec{x}\rangle)$ and $\tau^{-t} X$ lies within $\Sigma_{\leftarrow}(L(\vec{x}))$. However, this is not possible because $\Sigma_{\rightarrow}(\tau^{-1} E_L\langle\vec{x}\rangle)\cap \Sigma_{\leftarrow}(L(\vec{x}))=\emptyset$, as illustrated in the figure below: 
  \[\begin{adjustbox}{scale=0.5}
\tikzset{every picture/.style={line width=0.75pt}}          
\begin{tikzpicture}[x=0.75pt,y=0.75pt,yscale=-1,xscale=1]
\draw  [fill={rgb, 255:red, 248; green, 231; blue, 28 }  ,fill opacity=0.13 ] (533.6,61.25) .. controls (534.6,72.92) and (499.93,68.25) .. (488.6,74.58) .. controls (477.27,80.92) and (402.6,156.08) .. (399.93,161.92) .. controls (397.27,167.75) and (445.27,216.25) .. (449.6,221.25) .. controls (453.93,226.25) and (496.6,218.58) .. (497.6,231.58) .. controls (498.6,244.58) and (465.27,238.25) .. (466.6,247.92) .. controls (467.93,257.58) and (504.27,254.92) .. (505.6,266.25) .. controls (506.93,277.58) and (469.27,274.58) .. (465.27,271.58) .. controls (461.27,268.58) and (448.27,255.25) .. (443.93,249.25) .. controls (439.6,243.25) and (370.93,172.25) .. (370.27,165.25) .. controls (369.6,158.25) and (377.1,154) .. (379.6,150.25) .. controls (382.1,146.5) and (500.73,32.81) .. (502.6,31.25) .. controls (504.48,29.69) and (521.6,9.92) .. (531.27,19.92) .. controls (540.93,29.92) and (505.6,42.92) .. (507.6,50.25) .. controls (509.6,57.58) and (532.6,49.58) .. (533.6,61.25) -- cycle ; 
\draw  [fill={rgb, 255:red, 155; green, 155; blue, 155 }  ,fill opacity=0.3 ] (184.93,16.92) .. controls (196.93,9.25) and (234.6,59.58) .. (276.27,106.58) .. controls (317.93,153.58) and (377.93,221.25) .. (384.6,228.58) .. controls (391.27,235.92) and (438.27,268.92) .. (424.93,280.25) .. controls (411.6,291.58) and (381.93,253.25) .. (369.93,247.92) .. controls (357.93,242.58) and (332.6,251.58) .. (329.93,239.92) .. controls (327.27,228.25) and (351.17,229.92) .. (348.93,223.92) .. controls (346.69,217.91) and (296.44,162.69) .. (287.93,153.92) .. controls (279.43,145.14) and (235.6,93.25) .. (220.6,77.92) .. controls (205.6,62.58) and (180.27,84.25) .. (176.6,70.25) .. controls (172.93,56.25) and (199.27,59.92) .. (197.93,52.58) .. controls (196.6,45.25) and (172.93,24.58) .. (184.93,16.92) -- cycle ;
\draw    (387.92,154.97) -- (403.44,140.44) ;
\draw [shift={(404.9,139.08)}, rotate = 136.89] [color={rgb, 255:red, 0; green, 0; blue, 0 }  ][line width=0.75]    (6.56,-1.97) .. controls (4.17,-0.84) and (1.99,-0.18) .. (0,0) .. controls (1.99,0.18) and (4.17,0.84) .. (6.56,1.97)   ;
\draw  [dash pattern={on 0.84pt off 2.51pt}]  (425.15,118.58) -- (442.13,102.68) ;
\draw    (460.13,85.35) -- (475.65,70.82) ;
\draw [shift={(477.11,69.45)}, rotate = 136.89] [color={rgb, 255:red, 0; green, 0; blue, 0 }  ][line width=0.75]    (6.56,-1.97) .. controls (4.17,-0.84) and (1.99,-0.18) .. (0,0) .. controls (1.99,0.18) and (4.17,0.84) .. (6.56,1.97)   ;
\draw    (390.17,173.47) -- (405.88,190.98) ;
\draw [shift={(407.22,192.47)}, rotate = 228.09] [color={rgb, 255:red, 0; green, 0; blue, 0 }  ][line width=0.75]    (6.56,-1.97) .. controls (4.17,-0.84) and (1.99,-0.18) .. (0,0) .. controls (1.99,0.18) and (4.17,0.84) .. (6.56,1.97)   ; 
\draw  [dash pattern={on 0.84pt off 2.51pt}]  (420.8,207.47) -- (437.85,226.46) ;
\draw    (451.27,242.13) -- (466.98,259.63) ;
\draw [shift={(468.32,261.12)}, rotate = 228.09] [color={rgb, 255:red, 0; green, 0; blue, 0 }  ][line width=0.75]    (6.56,-1.97) .. controls (4.17,-0.84) and (1.99,-0.18) .. (0,0) .. controls (1.99,0.18) and (4.17,0.84) .. (6.56,1.97)   ;
   
\draw  [fill={rgb, 255:red, 0; green, 0; blue, 0 }  ,fill opacity=1 ] (412.73,130.23) .. controls (412.73,128.98) and (413.87,127.97) .. (415.27,127.97) .. controls (416.67,127.97) and (417.8,128.98) .. (417.8,130.23) .. controls (417.8,131.49) and (416.67,132.5) .. (415.27,132.5) .. controls (413.87,132.5) and (412.73,131.49) .. (412.73,130.23) -- cycle ;
   
\draw  [fill={rgb, 255:red, 0; green, 0; blue, 0 }  ,fill opacity=1 ] (447.4,96.23) .. controls (447.4,94.98) and (448.53,93.97) .. (449.93,93.97) .. controls (451.33,93.97) and (452.47,94.98) .. (452.47,96.23) .. controls (452.47,97.49) and (451.33,98.5) .. (449.93,98.5) .. controls (448.53,98.5) and (447.4,97.49) .. (447.4,96.23) -- cycle ;
   
\draw  [fill={rgb, 255:red, 0; green, 0; blue, 0 }  ,fill opacity=1 ] (481.07,62.57) .. controls (481.07,61.31) and (482.2,60.3) .. (483.6,60.3) .. controls (485,60.3) and (486.13,61.31) .. (486.13,62.57) .. controls (486.13,63.82) and (485,64.83) .. (483.6,64.83) .. controls (482.2,64.83) and (481.07,63.82) .. (481.07,62.57) -- cycle ;
   
\draw  [fill={rgb, 255:red, 0; green, 0; blue, 0 }  ,fill opacity=1 ] (411.4,199.23) .. controls (411.4,197.98) and (412.53,196.97) .. (413.93,196.97) .. controls (415.33,196.97) and (416.47,197.98) .. (416.47,199.23) .. controls (416.47,200.49) and (415.33,201.5) .. (413.93,201.5) .. controls (412.53,201.5) and (411.4,200.49) .. (411.4,199.23) -- cycle ;
   
\draw  [fill={rgb, 255:red, 0; green, 0; blue, 0 }  ,fill opacity=1 ] (442.57,233.07) .. controls (442.57,231.81) and (443.7,230.8) .. (445.1,230.8) .. controls (446.5,230.8) and (447.63,231.81) .. (447.63,233.07) .. controls (447.63,234.32) and (446.5,235.33) .. (445.1,235.33) .. controls (443.7,235.33) and (442.57,234.32) .. (442.57,233.07) -- cycle ;
  
\draw    (459.53,232.9) -- (473.4,232.61) ;
\draw [shift={(475.4,232.57)}, rotate = 178.8] [color={rgb, 255:red, 0; green, 0; blue, 0 }  ][line width=0.75]    (6.56,-1.97) .. controls (4.17,-0.84) and (1.99,-0.18) .. (0,0) .. controls (1.99,0.18) and (4.17,0.84) .. (6.56,1.97)   ;
   
\draw  [fill={rgb, 255:red, 0; green, 0; blue, 0 }  ,fill opacity=1 ] (480.73,232.23) .. controls (480.73,230.98) and (481.87,229.97) .. (483.27,229.97) .. controls (484.67,229.97) and (485.8,230.98) .. (485.8,232.23) .. controls (485.8,233.49) and (484.67,234.5) .. (483.27,234.5) .. controls (481.87,234.5) and (480.73,233.49) .. (480.73,232.23) -- cycle ;
  
\draw  [dash pattern={on 4.5pt off 4.5pt}]  (430.07,266.97) -- (462.27,266.58) ;
  
\draw    (495.63,50.35) -- (511.15,35.82) ;
\draw [shift={(512.61,34.45)}, rotate = 136.89] [color={rgb, 255:red, 0; green, 0; blue, 0 }  ][line width=0.75]    (6.56,-1.97) .. controls (4.17,-0.84) and (1.99,-0.18) .. (0,0) .. controls (1.99,0.18) and (4.17,0.84) .. (6.56,1.97)   ;
   
\draw  [fill={rgb, 255:red, 0; green, 0; blue, 0 }  ,fill opacity=1 ] (516.57,27.57) .. controls (516.57,26.31) and (517.7,25.3) .. (519.1,25.3) .. controls (520.5,25.3) and (521.63,26.31) .. (521.63,27.57) .. controls (521.63,28.82) and (520.5,29.83) .. (519.1,29.83) .. controls (517.7,29.83) and (516.57,28.82) .. (516.57,27.57) -- cycle ;
  
\draw    (495.37,61.9) -- (509.23,61.61) ;
\draw [shift={(511.23,61.57)}, rotate = 178.8] [color={rgb, 255:red, 0; green, 0; blue, 0 }  ][line width=0.75]    (6.56,-1.97) .. controls (4.17,-0.84) and (1.99,-0.18) .. (0,0) .. controls (1.99,0.18) and (4.17,0.84) .. (6.56,1.97)   ;
   
\draw  [fill={rgb, 255:red, 0; green, 0; blue, 0 }  ,fill opacity=1 ] (516.57,61.23) .. controls (516.57,59.98) and (517.7,58.97) .. (519.1,58.97) .. controls (520.5,58.97) and (521.63,59.98) .. (521.63,61.23) .. controls (521.63,62.49) and (520.5,63.5) .. (519.1,63.5) .. controls (517.7,63.5) and (516.57,62.49) .. (516.57,61.23) -- cycle ;
  
\draw    (421.6,260.25) -- (438.25,242.42) ;
\draw [shift={(439.61,240.95)}, rotate = 133.03] [color={rgb, 255:red, 0; green, 0; blue, 0 }  ][line width=0.75]    (6.56,-1.97) .. controls (4.17,-0.84) and (1.99,-0.18) .. (0,0) .. controls (1.99,0.18) and (4.17,0.84) .. (6.56,1.97)   ;
  
\draw    (352.91,190.67) -- (368.44,176.14) ;
\draw [shift={(369.9,174.77)}, rotate = 136.89] [color={rgb, 255:red, 0; green, 0; blue, 0 }  ][line width=0.75]    (6.56,-1.97) .. controls (4.17,-0.84) and (1.99,-0.18) .. (0,0) .. controls (1.99,0.18) and (4.17,0.84) .. (6.56,1.97)   ;
  
\draw    (319.97,174.02) -- (335.68,191.53) ;
\draw [shift={(337.02,193.01)}, rotate = 228.09] [color={rgb, 255:red, 0; green, 0; blue, 0 }  ][line width=0.75]    (6.56,-1.97) .. controls (4.17,-0.84) and (1.99,-0.18) .. (0,0) .. controls (1.99,0.18) and (4.17,0.84) .. (6.56,1.97)   ;
  
\draw  [dash pattern={on 0.84pt off 2.51pt}]  (351.27,208.01) -- (368.32,227.01) ;
  
\draw    (382.73,242.01) -- (398.45,259.51) ;
\draw [shift={(399.78,261)}, rotate = 228.09] [color={rgb, 255:red, 0; green, 0; blue, 0 }  ][line width=0.75]    (6.56,-1.97) .. controls (4.17,-0.84) and (1.99,-0.18) .. (0,0) .. controls (1.99,0.18) and (4.17,0.84) .. (6.56,1.97)   ;
  
\draw    (288.52,137.93) -- (304.23,155.44) ;
\draw [shift={(305.57,156.93)}, rotate = 228.09] [color={rgb, 255:red, 0; green, 0; blue, 0 }  ][line width=0.75]    (6.56,-1.97) .. controls (4.17,-0.84) and (1.99,-0.18) .. (0,0) .. controls (1.99,0.18) and (4.17,0.84) .. (6.56,1.97)   ;
   
\draw  [fill={rgb, 255:red, 0; green, 0; blue, 0 }  ,fill opacity=1 ] (279.13,130.65) .. controls (279.13,129.39) and (280.27,128.38) .. (281.67,128.38) .. controls (283.07,128.38) and (284.2,129.39) .. (284.2,130.65) .. controls (284.2,131.9) and (283.07,132.91) .. (281.67,132.91) .. controls (280.27,132.91) and (279.13,131.9) .. (279.13,130.65) -- cycle ;
   
\draw  [fill={rgb, 255:red, 0; green, 0; blue, 0 }  ,fill opacity=1 ] (253.87,102.11) .. controls (253.87,100.86) and (255,99.85) .. (256.4,99.85) .. controls (257.8,99.85) and (258.93,100.86) .. (258.93,102.11) .. controls (258.93,103.37) and (257.8,104.38) .. (256.4,104.38) .. controls (255,104.38) and (253.87,103.37) .. (253.87,102.11) -- cycle ;
   
\draw  [fill={rgb, 255:red, 0; green, 0; blue, 0 }  ,fill opacity=1 ] (189.07,29.11) .. controls (189.07,27.86) and (190.2,26.85) .. (191.6,26.85) .. controls (193,26.85) and (194.13,27.86) .. (194.13,29.11) .. controls (194.13,30.37) and (193,31.38) .. (191.6,31.38) .. controls (190.2,31.38) and (189.07,30.37) .. (189.07,29.11) -- cycle ;
   
\draw  [fill={rgb, 255:red, 0; green, 0; blue, 0 }  ,fill opacity=1 ] (341.2,199.78) .. controls (341.2,198.53) and (342.33,197.51) .. (343.73,197.51) .. controls (345.13,197.51) and (346.27,198.53) .. (346.27,199.78) .. controls (346.27,201.03) and (345.13,202.05) .. (343.73,202.05) .. controls (342.33,202.05) and (341.2,201.03) .. (341.2,199.78) -- cycle ;
   
\draw  [fill={rgb, 255:red, 0; green, 0; blue, 0 }  ,fill opacity=1 ] (372.7,234.28) .. controls (372.7,233.03) and (373.83,232.01) .. (375.23,232.01) .. controls (376.63,232.01) and (377.77,233.03) .. (377.77,234.28) .. controls (377.77,235.53) and (376.63,236.55) .. (375.23,236.55) .. controls (373.83,236.55) and (372.7,235.53) .. (372.7,234.28) -- cycle ;
  
\draw    (388.33,234.11) -- (402.2,233.82) ;
\draw [shift={(404.2,233.78)}, rotate = 178.8] [color={rgb, 255:red, 0; green, 0; blue, 0 }  ][line width=0.75]    (6.56,-1.97) .. controls (4.17,-0.84) and (1.99,-0.18) .. (0,0) .. controls (1.99,0.18) and (4.17,0.84) .. (6.56,1.97)   ;
   
\draw  [fill={rgb, 255:red, 0; green, 0; blue, 0 }  ,fill opacity=1 ] (409.53,233.45) .. controls (409.53,232.19) and (410.67,231.18) .. (412.07,231.18) .. controls (413.47,231.18) and (414.6,232.19) .. (414.6,233.45) .. controls (414.6,234.7) and (413.47,235.71) .. (412.07,235.71) .. controls (410.67,235.71) and (409.53,234.7) .. (409.53,233.45) -- cycle ;
   
\draw  [fill={rgb, 255:red, 0; green, 0; blue, 0 }  ,fill opacity=1 ] (189.97,65.05) .. controls (189.97,63.79) and (191.1,62.78) .. (192.5,62.78) .. controls (193.9,62.78) and (195.03,63.79) .. (195.03,65.05) .. controls (195.03,66.3) and (193.9,67.31) .. (192.5,67.31) .. controls (191.1,67.31) and (189.97,66.3) .. (189.97,65.05) -- cycle ;
  
\draw    (200.27,65.58) -- (214.5,65.64) ;
\draw [shift={(216.5,65.65)}, rotate = 180.22] [color={rgb, 255:red, 0; green, 0; blue, 0 }  ][line width=0.75]    (6.56,-1.97) .. controls (4.17,-0.84) and (1.99,-0.18) .. (0,0) .. controls (1.99,0.18) and (4.17,0.84) .. (6.56,1.97)   ;
   
\draw  [fill={rgb, 255:red, 0; green, 0; blue, 0 }  ,fill opacity=1 ] (220.83,65.98) .. controls (220.83,64.73) and (221.97,63.71) .. (223.37,63.71) .. controls (224.77,63.71) and (225.9,64.73) .. (225.9,65.98) .. controls (225.9,67.23) and (224.77,68.25) .. (223.37,68.25) .. controls (221.97,68.25) and (220.83,67.23) .. (220.83,65.98) -- cycle ;
  
\draw    (420.53,233.77) -- (434.4,233.48) ;
\draw [shift={(436.4,233.43)}, rotate = 178.8] [color={rgb, 255:red, 0; green, 0; blue, 0 }  ][line width=0.75]    (6.56,-1.97) .. controls (4.17,-0.84) and (1.99,-0.18) .. (0,0) .. controls (1.99,0.18) and (4.17,0.84) .. (6.56,1.97)   ;
   
\draw  [fill={rgb, 255:red, 0; green, 0; blue, 0 }  ,fill opacity=1 ] (339.13,236.65) .. controls (339.13,235.39) and (340.27,234.38) .. (341.67,234.38) .. controls (343.07,234.38) and (344.2,235.39) .. (344.2,236.65) .. controls (344.2,237.9) and (343.07,238.91) .. (341.67,238.91) .. controls (340.27,238.91) and (339.13,237.9) .. (339.13,236.65) -- cycle ;
  
\draw    (353.13,235.3) -- (367,235.01) ;
\draw [shift={(369,234.97)}, rotate = 178.8] [color={rgb, 255:red, 0; green, 0; blue, 0 }  ][line width=0.75]    (6.56,-1.97) .. controls (4.17,-0.84) and (1.99,-0.18) .. (0,0) .. controls (1.99,0.18) and (4.17,0.84) .. (6.56,1.97)   ;
  
\draw  [dash pattern={on 0.84pt off 2.51pt}]  (275.55,123.38) -- (262,108.33) ;
  
\draw    (233.3,75.74) -- (249.01,93.25) ;
\draw [shift={(250.35,94.73)}, rotate = 228.09] [color={rgb, 255:red, 0; green, 0; blue, 0 }  ][line width=0.75]    (6.56,-1.97) .. controls (4.17,-0.84) and (1.99,-0.18) .. (0,0) .. controls (1.99,0.18) and (4.17,0.84) .. (6.56,1.97)   ;
  
\draw    (199.03,37.61) -- (214.75,55.11) ;
\draw [shift={(216.08,56.6)}, rotate = 228.09] [color={rgb, 255:red, 0; green, 0; blue, 0 }  ][line width=0.75]    (6.56,-1.97) .. controls (4.17,-0.84) and (1.99,-0.18) .. (0,0) .. controls (1.99,0.18) and (4.17,0.84) .. (6.56,1.97)   ;
   
\draw  [fill={rgb, 255:red, 0; green, 0; blue, 0 }  ,fill opacity=1 ] (378.2,162.83) .. controls (378.2,161.58) and (379.33,160.57) .. (380.73,160.57) .. controls (382.13,160.57) and (383.27,161.58) .. (383.27,162.83) .. controls (383.27,164.09) and (382.13,165.1) .. (380.73,165.1) .. controls (379.33,165.1) and (378.2,164.09) .. (378.2,162.83) -- cycle ;
  
\draw  [dash pattern={on 0.84pt off 2.51pt}]  (319.85,65) -- (386.6,65.92) ;
  
\draw  [dash pattern={on 0.84pt off 2.51pt}]  (132.85,139.33) -- (199.6,140.25) ;
  
\draw  [dash pattern={on 0.84pt off 2.51pt}]  (518.85,140.67) -- (586.6,140.58) ;
  
\draw  [dash pattern={on 0.84pt off 2.51pt}]  (130.18,200) -- (196.93,200.92) ;
  
\draw  [dash pattern={on 0.84pt off 2.51pt}]  (519.18,206) -- (586.93,206.25) ;

\draw (299.66,162.26) node [anchor=north west][inner sep=0.75pt]  [font=\tiny]  {$E_{L} \langle \vec{x} \rangle $};
 
\draw (464.32,263.19) node [anchor=north west][inner sep=0.75pt]  [font=\tiny]  {$L(\vec{x} -\vec{\omega })$};
 
\draw (401.78,264.4) node [anchor=north west][inner sep=0.75pt]  [font=\tiny]  {$L(\vec{x})$};
 
\draw (114.18,40.4) node [anchor=north west][inner sep=0.75pt]  [font=\normalsize]  {$\Sigma _{\leftarrow } (L(\vec{x}))$};
 
\draw (537.18,33.73) node [anchor=north west][inner sep=0.75pt]    {$\Sigma _{\rightarrow } (\tau ^{-1} E\langle \vec{x} \rangle) $};

\end{tikzpicture}

\end{adjustbox}\]
It follows that ${\rm Hom}(X,L(\vec{x}))= 0$. Therefore, applying ${\rm Hom}(X,-)$ to the short exact sequence
\[\begin{tikzcd}[ampersand replacement=\&,cramped,sep=small]
	0 \& {L(\vec{\omega})} \& {E_L\langle\vec{x}\rangle} \& {L(\vec{x})} \& 0
	\arrow["{{{\eta}}}", from=1-2, to=1-3]
	\arrow["{{{{\pi}}}}", from=1-3, to=1-4]
	\arrow[from=1-4, to=1-5]
	\arrow[from=1-1, to=1-2]
,\end{tikzcd}\]
we obtain an exact sequence:
		\[\begin{tikzcd}[ampersand replacement=\&,cramped,sep=small]
	0 \& {{\rm Ext^{1}}(X,L(\vec{\omega}))} \& {{\rm Ext^{1}}(X,Y)} \& {{\rm Ext^{1}}(X,L(\vec{x}))} \& 0
	\arrow[from=1-2, to=1-3]
	\arrow[from=1-3, to=1-4]
	\arrow[from=1-4, to=1-5]
	\arrow[from=1-1, to=1-2]
\end{tikzcd},\]
   and then the remaining part of (1) follows.
	\end{proof}

	\begin{thm}{}{}\label{dimension and positive intersection}
 Let $X$ and $Y$ be two indecomposable bundles in ${\rm vect}\mbox{-}\mathbb{X} $. Then
		$${\rm dim_{k}Ext^{1}}(X,Y)=I_{\rm{int}}(\widehat{\phi}^{-1}(X),\widehat{\phi}^{-1}(Y)).$$
	\end{thm}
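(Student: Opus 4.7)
The plan is to prove the identity by establishing a matching between positive intersections and a basis of $\Ext^1(X,Y)$, using Proposition \ref{intersection and exact seq} for the lower bound and reducing to the case of two line bundles via Proposition \ref{Hom and Ext} for the upper bound.

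For the lower bound, Proposition \ref{intersection and exact seq} associates to each positive intersection $x \in [i,j]^* \cap^+ \widetilde{[s,t]}$ (coming from a specific representative of the orbit $\widetilde{[s,t]}$ that crosses $[i,j]^*$ at $x$) a non-split short exact sequence, hence a nonzero class $[\xi_x] \in \Ext^1(X,Y)$. I would verify that these classes are linearly independent by distinguishing their middle terms --- the two summands $\widehat{\phi}(\widetilde{[I,T]}) \oplus \widehat{\phi}(\widetilde{[S,J]})$ in \eqref{exact and int} differ across different choices of intersection point, as their ranks and slopes (by Proposition \ref{slop}) are determined by the geometry of the chosen representative. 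In the case where the $+1$ correction of Definition \ref{positive intersections } applies, an additional almost-split-type extension (arising from the Auslander bundle structure) accounts for the extra class.

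For the upper bound, I would proceed by reduction. Suppose $X = \widehat{\phi}(\widetilde{[i,j]})$ is an indecomposable extension bundle, so $[i,j] \notin \operatorname{Seg}_0(M)$. Pick $P_k \in \mathcal{P}$ such that the triangle formed by $[i,j]$ and $P_k$ lies entirely on the side of $[i,j]$ opposite to the relevant representatives of $\widetilde{[s,t]}$. Proposition \ref{triangle exact} then gives a short exact sequence $0 \to L \to X \to L' \to 0$ with $L, L'$ line bundles corresponding to the two non-base sides of the triangle. Applying $\Hom(-,Y)$ together with Proposition \ref{Hom and Ext} yields $\dim \Ext^1(X,Y) = \dim \Ext^1(L,Y) + \dim \Ext^1(L',Y)$ whenever the left side is nonzero. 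Geometrically, every positive intersection of a translate of $[s,t]$ with $[i,j]$ corresponds uniquely to a positive intersection of that translate with exactly one of the two non-base sides of the triangle (since these two sides together separate the base from $P_k$), giving the matching decomposition of the intersection index. Applying the same reduction to $Y$ when $Y$ is also a true extension bundle reduces the problem to the case where both $X$ and $Y$ are line bundles.

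For the base case, let $X = L_0(\vec{a}) = \widehat{\phi}(\widetilde{[i,j]^*})$ and $Y = L_0(\vec{b}) = \widehat{\phi}(\widetilde{[s,t]^\star})$. By Serre duality and \eqref{hom space},
\[
\dim_\mathbf{k} \Ext^1(X,Y) = \dim_\mathbf{k} \Hom(Y, X(\vec{\omega})) = \dim_\mathbf{k} R_{\vec{a}+\vec{\omega}-\vec{b}}.
\]
Using the explicit basis of $R_{\vec{c}}$ given in Section \ref{Section 2}, this is $\ell+1$ when $\vec{a}+\vec{\omega}-\vec{b}$ has normal form \eqref{normal form} with $\ell \geq 0$, and $0$ otherwise. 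The key combinatorial claim is that this equals $I_{\mathrm{int}}(\widehat{\phi}^{-1}(X), \widehat{\phi}^{-1}(Y))$: the $\ell$ ``interior'' intersections come from translates of $[s,t]$ by powers of $\sigma_n$ (and possibly $\sigma_n^k\theta$) that cross the half-segment $[i,j]^*$, while the $+1$ correction in Definition \ref{positive intersections } matches the extremal basis element of $R_{\vec{a}+\vec{\omega}-\vec{b}}$ and is triggered precisely when $\varrho(*) = \star$ (so the $\vec{x}_1,\vec{x}_2$ exponents in the normal form are exactly $(1,1)$) and $i+j+s+t \equiv 0 \pmod{2n}$ (which ensures the $\vec{c}$-coefficient is a non-negative integer). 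The main obstacle will be the careful case analysis in the base step: distinguishing the subcases by the parities $*,\star \in \{+,-,\text{empty}\}$, tracking the modular conditions on $i+j$ and $s+t$, and ensuring the $+1$ correction is accounted for exactly once. A secondary technical point is the compatibility of the triangle decomposition with positive-intersection counting: this requires choosing $P_k$ so that no translate of $[s,t]$ meeting $[i,j]$ positively passes through $P_k$ or the interior of the triangle beyond one of its two non-base sides.
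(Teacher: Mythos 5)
Your main line of argument---reduce to the case of two line bundles by splitting an extension bundle into its two line-bundle "ends'' via Proposition \ref{Hom and Ext}, match this with the splitting of a segment into the two half-segments through an adjacent point of $\mathcal{P}$, and then settle the line-bundle case by Serre duality, the explicit basis of $R_{\vec{x}}$, and a count of translates satisfying the relevant inequalities---is exactly the route the paper takes (its Cases 2--4 and Case 1 respectively). Two points need repair, though neither is fatal. First, your "lower bound'' is both redundant and, as stated, invalid: since your reduction already yields an \emph{equality} $\dim\Ext^1(X,Y)=\dim\Ext^1(L,Y)+\dim\Ext^1(L',Y)$ and the base case is an exact count, no separate lower bound is needed; and the claim that extension classes with pairwise non-isomorphic middle terms are linearly independent only rules out proportionality, which is insufficient once $\dim\Ext^1\geq 3$ (which does occur here, as the line-bundle formula $\max\{0,\lfloor\frac{l-k-1}{n}\rfloor\}$ is unbounded). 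Keep only the weak form of this observation---a single positive intersection produces a non-split sequence via Proposition \ref{intersection and exact seq}, hence $I_{\rm int}\neq 0$ forces $\Ext^1\neq 0$---which is what the paper uses to dispose of the vanishing case. Second, you cannot choose $P_k$ freely "on the side opposite to the representatives of $[s,t]$'': Proposition \ref{Hom and Ext} is proved only for the standard presentation $E_L\langle\vec{x}\rangle$ with $0\leq\vec{x}\leq\vec{\delta}$ (its proof uses the position of $L(\vec{x})$ and $\tau^{-1}E_L\langle\vec{x}\rangle$ relative to sectional paths, which depends on that range), and this corresponds precisely to the triangle over one of the two points of $\mathcal{P}$ adjacent to $[i,j]$. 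You must therefore use that adjacent apex and then do the bookkeeping of how positive intersections with $[i,j]$ distribute over the two sides (including intersections passing through the apex itself, which feed the $+1$ correction of Definition \ref{positive intersections }); this is the case analysis the paper carries out explicitly, and it is where the real content of the proof lies.
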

\begin{proof}
   Each indecomposable bundle in ${\rm vect}\mbox{-}\mathbb{X} $ is either a line bundle of rank one or an extension bundle of rank two. We list the cases according to the ranks of $X$ and $Y$ as follows.
\begin{itemize}
\setlength{\itemindent}{0em}
\item[\emph{Case 1:}]  $\mathrm{rk} X=\mathrm{rk} Y=1$. Then $X=L_0(\vec{x})$ and $Y=L_0(\vec{y})$ for some $\vec{x},\,\vec{y}\in\mathbb{L}.$ Assume $\vec{x}=l_1(\vec{x}_1-\vec{x}_2)+l_2\vec{x}_2+l \vec{x}_3$ and $\vec{y}=k_1(\vec{x}_1-\vec{x}_2)+k_2\vec{x}_2+k \vec{x}_3$, where $l_1,l_2,k_1,k_2\in \{0,1\}$ and $l ,k \in \mathbb{Z}$.
		According to \S \ref{Section 2},
		we have
		$$\Ext^{1}(L_0(\vec{x}),L_0(\vec{y}))\cong D\Hom(L_0(\vec{y}), L_0(\vec{x}+\vec{\omega}))\cong DS_{\vec{x}+\vec{\omega}-\vec{y}}.$$
		Since  $\vec{x}+\vec{\omega}-\vec{y}=(l_1-k_1+1)\vec{x}_1+(l_2-l_1-k_2+k_1-1)\vec{x}_2+(l -k -1)\vec{x}_3$, we get
		\begin{equation}
			{\rm dim_{k}Ext^{1}}(X,Y)=
			\left\{
			\begin{array}{ll}
   \vspace{0.5em}
				\max\{0,\lfloor\frac{l -k -1}{n}\rfloor\}   & \;\text{if}\;\, l_2=k_2\ \text{and}\ l_1=k_1;\nonumber \\	
    \vspace{0.5em}
				\max\{0,\lfloor\frac{l -k -1}{n}\rfloor+1\} & \;\text{if}\;\,l_2=k_2 \ \text{and}\ l_1\neq k_1;      \\	
    \vspace{0.5em}
				\max\{0,\lfloor\frac{l -k -1}{n}\rfloor+1\} & \;\text{if}\;\,l_2>k_2;                                \\	 
				\max\{0,\lfloor\frac{l -k -1}{n}\rfloor\}   & \;\text{if}\;\,l_2<k_2.
			\end{array}
			\right.\end{equation}
\begin{itemize}[leftmargin=0em]
    \item [(i)] If $l_1=k_1$ and $l_2=k_2$, without loss of generality, we may assume $l_1=k_1=1$ and $l_2=k_2=0$. In this case, we have 
\[I_{\text{int}}(\widehat{\phi}^{-1}(X),\widehat{\phi}^{-1}(Y))=I_{\text{int}}(\widetilde{[-l-1,l+1]^+},\widetilde{[-k-1,k+1]^+}),\] and it equals the number of integers $m$ satisfying $-k-1 +mn > -l-1 $ and $mn < 0$, as illustrated in the figure below:
\begin{figure}[H]
    \centering

\tikzset{every picture/.style={line width=0.75pt}}          

\begin{tikzpicture}[x=0.75pt,y=0.75pt,yscale=-1,xscale=1]

\draw [color={rgb, 255:red, 126; green, 211; blue, 33 }  ,draw opacity=1 ]   (392.86,180.47) -- (415.34,218.55) ;
\draw [shift={(415.34,218.55)}, rotate = 59.44] [color={rgb, 255:red, 126; green, 211; blue, 33 }  ,draw opacity=1 ][fill={rgb, 255:red, 126; green, 211; blue, 33 }  ,fill opacity=1 ][line width=0.75]      (0, 0) circle [x radius= 1.34, y radius= 1.34]   ;
\draw [shift={(392.86,180.47)}, rotate = 59.44] [color={rgb, 255:red, 126; green, 211; blue, 33 }  ,draw opacity=1 ][fill={rgb, 255:red, 126; green, 211; blue, 33 }  ,fill opacity=1 ][line width=0.75]      (0, 0) circle [x radius= 1.34, y radius= 1.34]   ;
  
\draw [color={rgb, 255:red, 126; green, 211; blue, 33 }  ,draw opacity=1 ]   (374.88,180.47) -- (397.37,218.55) ;
\draw [shift={(397.37,218.55)}, rotate = 59.44] [color={rgb, 255:red, 126; green, 211; blue, 33 }  ,draw opacity=1 ][fill={rgb, 255:red, 126; green, 211; blue, 33 }  ,fill opacity=1 ][line width=0.75]      (0, 0) circle [x radius= 1.34, y radius= 1.34]   ;
\draw [shift={(374.88,180.47)}, rotate = 59.44] [color={rgb, 255:red, 126; green, 211; blue, 33 }  ,draw opacity=1 ][fill={rgb, 255:red, 126; green, 211; blue, 33 }  ,fill opacity=1 ][line width=0.75]      (0, 0) circle [x radius= 1.34, y radius= 1.34]   ;
  
\draw [color={rgb, 255:red, 126; green, 211; blue, 33 }  ,draw opacity=1 ]   (338.93,180.47) -- (361.42,218.55) ;
\draw [shift={(361.42,218.55)}, rotate = 59.44] [color={rgb, 255:red, 126; green, 211; blue, 33 }  ,draw opacity=1 ][fill={rgb, 255:red, 126; green, 211; blue, 33 }  ,fill opacity=1 ][line width=0.75]      (0, 0) circle [x radius= 1.34, y radius= 1.34]   ;
\draw [shift={(338.93,180.47)}, rotate = 59.44] [color={rgb, 255:red, 126; green, 211; blue, 33 }  ,draw opacity=1 ][fill={rgb, 255:red, 126; green, 211; blue, 33 }  ,fill opacity=1 ][line width=0.75]      (0, 0) circle [x radius= 1.34, y radius= 1.34]   ;
  
\draw [color={rgb, 255:red, 126; green, 211; blue, 33 }  ,draw opacity=1 ]   (320.96,180.47) -- (343.44,218.55) ;
\draw [shift={(343.44,218.55)}, rotate = 59.44] [color={rgb, 255:red, 126; green, 211; blue, 33 }  ,draw opacity=1 ][fill={rgb, 255:red, 126; green, 211; blue, 33 }  ,fill opacity=1 ][line width=0.75]      (0, 0) circle [x radius= 1.34, y radius= 1.34]   ;
\draw [shift={(320.96,180.47)}, rotate = 59.44] [color={rgb, 255:red, 126; green, 211; blue, 33 }  ,draw opacity=1 ][fill={rgb, 255:red, 126; green, 211; blue, 33 }  ,fill opacity=1 ][line width=0.75]      (0, 0) circle [x radius= 1.34, y radius= 1.34]   ;
  
\draw [color={rgb, 255:red, 126; green, 211; blue, 33 }  ,draw opacity=1 ]   (267.03,180.47) -- (289.52,218.55) ;
\draw [shift={(289.52,218.55)}, rotate = 59.44] [color={rgb, 255:red, 126; green, 211; blue, 33 }  ,draw opacity=1 ][fill={rgb, 255:red, 126; green, 211; blue, 33 }  ,fill opacity=1 ][line width=0.75]      (0, 0) circle [x radius= 1.34, y radius= 1.34]   ;
\draw [shift={(267.03,180.47)}, rotate = 59.44] [color={rgb, 255:red, 126; green, 211; blue, 33 }  ,draw opacity=1 ][fill={rgb, 255:red, 126; green, 211; blue, 33 }  ,fill opacity=1 ][line width=0.75]      (0, 0) circle [x radius= 1.34, y radius= 1.34]   ;
  
\draw [color={rgb, 255:red, 126; green, 211; blue, 33 }  ,draw opacity=1 ]   (302.98,180.47) -- (325.47,218.55) ;
\draw [shift={(325.47,218.55)}, rotate = 59.44] [color={rgb, 255:red, 126; green, 211; blue, 33 }  ,draw opacity=1 ][fill={rgb, 255:red, 126; green, 211; blue, 33 }  ,fill opacity=1 ][line width=0.75]      (0, 0) circle [x radius= 1.34, y radius= 1.34]   ;
\draw [shift={(302.98,180.47)}, rotate = 59.44] [color={rgb, 255:red, 126; green, 211; blue, 33 }  ,draw opacity=1 ][fill={rgb, 255:red, 126; green, 211; blue, 33 }  ,fill opacity=1 ][line width=0.75]      (0, 0) circle [x radius= 1.34, y radius= 1.34]   ;
  
\draw [color={rgb, 255:red, 126; green, 211; blue, 33 }  ,draw opacity=1 ]   (249.06,180.47) -- (271.54,218.55) ;
\draw [shift={(271.54,218.55)}, rotate = 59.44] [color={rgb, 255:red, 126; green, 211; blue, 33 }  ,draw opacity=1 ][fill={rgb, 255:red, 126; green, 211; blue, 33 }  ,fill opacity=1 ][line width=0.75]      (0, 0) circle [x radius= 1.34, y radius= 1.34]   ;
\draw [shift={(249.06,180.47)}, rotate = 59.44] [color={rgb, 255:red, 126; green, 211; blue, 33 }  ,draw opacity=1 ][fill={rgb, 255:red, 126; green, 211; blue, 33 }  ,fill opacity=1 ][line width=0.75]      (0, 0) circle [x radius= 1.34, y radius= 1.34]   ;
  
\draw    (242,179.71) -- (421.75,179.71) ;
  
\draw    (241.67,219.21) -- (421.42,219.21) ;
   
\draw   (267,176.38) .. controls (266.99,173.88) and (265.74,172.63) .. (263.25,172.64) -- (263.25,172.64) .. controls (259.69,172.65) and (257.91,171.4) .. (257.9,168.91) .. controls (257.91,171.4) and (256.13,172.65) .. (252.57,172.66)(254.17,172.66) -- (252.57,172.66) .. controls (250.08,172.67) and (248.83,173.92) .. (248.83,176.41) ;
  
\draw [color={rgb, 255:red, 74; green, 144; blue, 226 }  ,draw opacity=1 ]   (261.8,218.78) -- (332.2,199.51) ;
\draw [shift={(332.2,199.51)}, rotate = 344.69] [color={rgb, 255:red, 74; green, 144; blue, 226 }  ,draw opacity=1 ][fill={rgb, 255:red, 74; green, 144; blue, 226 }  ,fill opacity=1 ][line width=0.75]      (0, 0) circle [x radius= 1.34, y radius= 1.34]   ;
\draw [shift={(261.8,218.78)}, rotate = 344.69] [color={rgb, 255:red, 74; green, 144; blue, 226 }  ,draw opacity=1 ][fill={rgb, 255:red, 74; green, 144; blue, 226 }  ,fill opacity=1 ][line width=0.75]      (0, 0) circle [x radius= 1.34, y radius= 1.34]   ;
  
\draw  [dash pattern={on 0.84pt off 2.51pt}]  (350.6,184.38) -- (365.8,184.38) ;
  
\draw  [dash pattern={on 0.84pt off 2.51pt}]  (297.4,214.38) -- (312.6,214.38) ;
  
\draw  [dash pattern={on 0.84pt off 2.51pt}]  (368.6,213.98) -- (383.8,213.98) ;
  
\draw  [dash pattern={on 0.84pt off 2.51pt}]  (281.4,185.18) -- (296.6,185.18) ;
  
\draw  [color={rgb, 255:red, 208; green, 2; blue, 27 }  ,draw opacity=1 ][fill={rgb, 255:red, 208; green, 2; blue, 27 }  ,fill opacity=1 ] (269.73,215.87) .. controls (269.73,215.45) and (270.08,215.11) .. (270.5,215.11) .. controls (270.92,215.11) and (271.27,215.45) .. (271.27,215.87) .. controls (271.27,216.3) and (270.92,216.64) .. (270.5,216.64) .. controls (270.08,216.64) and (269.73,216.3) .. (269.73,215.87) -- cycle ;
  
\draw  [color={rgb, 255:red, 208; green, 2; blue, 27 }  ,draw opacity=1 ][fill={rgb, 255:red, 208; green, 2; blue, 27 }  ,fill opacity=1 ] (316.23,203.87) .. controls (316.23,203.45) and (316.58,203.11) .. (317,203.11) .. controls (317.42,203.11) and (317.77,203.45) .. (317.77,203.87) .. controls (317.77,204.3) and (317.42,204.64) .. (317,204.64) .. controls (316.58,204.64) and (316.23,204.3) .. (316.23,203.87) -- cycle ;
  
\draw  [color={rgb, 255:red, 208; green, 2; blue, 27 }  ,draw opacity=1 ][fill={rgb, 255:red, 208; green, 2; blue, 27 }  ,fill opacity=1 ] (284.98,212.37) .. controls (284.98,211.95) and (285.33,211.61) .. (285.75,211.61) .. controls (286.17,211.61) and (286.52,211.95) .. (286.52,212.37) .. controls (286.52,212.8) and (286.17,213.14) .. (285.75,213.14) .. controls (285.33,213.14) and (284.98,212.8) .. (284.98,212.37) -- cycle ;
  
\draw [color={rgb, 255:red, 255; green, 255; blue, 255 }  ,draw opacity=1 ]   (197,182) -- (197.33,219) ;
  
\draw  [color={rgb, 255:red, 208; green, 2; blue, 27 }  ,draw opacity=1 ][fill={rgb, 255:red, 208; green, 2; blue, 27 }  ,fill opacity=1 ] (446.04,213.21) .. controls (446.04,212.78) and (446.38,212.44) .. (446.81,212.44) .. controls (447.23,212.44) and (447.57,212.78) .. (447.57,213.21) .. controls (447.57,213.63) and (447.23,213.97) .. (446.81,213.97) .. controls (446.38,213.97) and (446.04,213.63) .. (446.04,213.21) -- cycle ;
  
\draw [color={rgb, 255:red, 74; green, 144; blue, 226 }  ,draw opacity=1 ]   (437.56,188.5) -- (456.06,188.5) ;
\draw [shift={(456.06,188.5)}, rotate = 0] [color={rgb, 255:red, 74; green, 144; blue, 226 }  ,draw opacity=1 ][fill={rgb, 255:red, 74; green, 144; blue, 226 }  ,fill opacity=1 ][line width=0.75]      (0, 0) circle [x radius= 1.34, y radius= 1.34]   ;
\draw [shift={(437.56,188.5)}, rotate = 0] [color={rgb, 255:red, 74; green, 144; blue, 226 }  ,draw opacity=1 ][fill={rgb, 255:red, 74; green, 144; blue, 226 }  ,fill opacity=1 ][line width=0.75]      (0, 0) circle [x radius= 1.34, y radius= 1.34]   ;
  
\draw [color={rgb, 255:red, 126; green, 211; blue, 33 }  ,draw opacity=1 ]   (437.06,202.5) -- (455.56,202.5) ;
\draw [shift={(455.56,202.5)}, rotate = 0] [color={rgb, 255:red, 126; green, 211; blue, 33 }  ,draw opacity=1 ][fill={rgb, 255:red, 126; green, 211; blue, 33 }  ,fill opacity=1 ][line width=0.75]      (0, 0) circle [x radius= 1.34, y radius= 1.34]   ;
\draw [shift={(437.06,202.5)}, rotate = 0] [color={rgb, 255:red, 126; green, 211; blue, 33 }  ,draw opacity=1 ][fill={rgb, 255:red, 126; green, 211; blue, 33 }  ,fill opacity=1 ][line width=0.75]      (0, 0) circle [x radius= 1.34, y radius= 1.34]   ;

\draw  [color={rgb, 255:red, 155; green, 155; blue, 155 }  ,draw opacity=1 ] (430.72,179) -- (629.33,179) -- (629.33,219.5) -- (430.72,219.5) -- cycle ;
\draw (225.33,217.11) node [anchor=north west][inner sep=0.75pt]  [font=\tiny]  {$\partial ^{\prime }$};
 \draw (226.33,177.11) node [anchor=north west][inner sep=0.75pt]  [font=\tiny]  {$\partial $};
 \draw (247.07,162.01) node [anchor=north west][inner sep=0.75pt]  [font=\tiny]  {$n\ units$};
 \draw (464.22,209.4) node [anchor=north west][inner sep=0.75pt]  [font=\tiny]  {$a\ positive\ intersection$};
 \draw (464.72,179.73) node [anchor=north west][inner sep=0.75pt]  [font=\tiny]  {$the\ line\ segment\ [ -l-1,l+1]^{+}$};
 \draw (464.72,193.73) node [anchor=north west][inner sep=0.75pt]  [font=\tiny]  {$the\ line\ segments\ in\ \widetilde{[ -k-1,k+1]}$};
\end{tikzpicture}
\end{figure}
\noindent  Consequently, we have
		\[I_{\text{int}}(\widehat{\phi}^{-1}(X),\widehat{\phi}^{-1}(Y))=\max\{0,\lfloor\frac{l -k -1}{n}\rfloor\}={\rm dim_{k}Ext^{1}}(X,Y).\]
		
\item [(ii)] If $l_1\neq k_1$ and $l_2=k_2$, without loss of generality, we may assume $l_1=1, k_1=0$ and $l_2=k_2=0$. In this case, we have
  \[I_{\text{int}}(\widehat{\phi}^{-1}(X),\widehat{\phi}^{-1}(Y))=I_{\text{int}}(\widetilde{[-l-1,l+1]^+},\widetilde{[-k-1,k+1]^-}).\]
  Since $+=\varrho(-)$ and $(-l-1)+(l+1)+(k+1)+(-k-1)\equiv 0\bmod 2n$, by Definition \ref{positive intersections }, the intersection index equals $1$ plus the number of integers $m$ satisfying $-k-1 +mn > -l-1 $ and $mn < 0 $. It follows that
		\[I_{\text{int}}(\widehat{\phi}^{-1}(X),\widehat{\phi}^{-1}(Y))=\max\{0,\lfloor\frac{l -k -1}{n}\rfloor+1\}={\rm dim_{k}Ext^{1}}(X,Y).\]

\item [(iii)] If $l_2>k_2$, then $l_2=1$ and $k_2=0$. Without loss of generality, We may assume $l_1=k_1=1$. In this case, we have
\[I_{\text{int}}(\widehat{\phi}^{-1}(X),\widehat{\phi}^{-1}(Y))=I_{\text{int}}(\widetilde{[-l-1,n+l+1]^+},\widetilde{[-k-1,k+1]^+}),\] and it equals the number of integers $m$ satisfying $-k-1 +mn > -l-1 $ and $mn < \frac{n}{2} $, as illustrated in the figure below:
\begin{figure}[H]
    \centering

\tikzset{every picture/.style={line width=0.75pt}}          

\begin{tikzpicture}[x=0.75pt,y=0.75pt,yscale=-1,xscale=1]

\draw  [color={rgb, 255:red, 208; green, 2; blue, 27 }  ,draw opacity=1 ][fill={rgb, 255:red, 208; green, 2; blue, 27 }  ,fill opacity=1 ] (440.04,92.21) .. controls (440.04,91.78) and (440.38,91.44) .. (440.81,91.44) .. controls (441.23,91.44) and (441.57,91.78) .. (441.57,92.21) .. controls (441.57,92.63) and (441.23,92.97) .. (440.81,92.97) .. controls (440.38,92.97) and (440.04,92.63) .. (440.04,92.21) -- cycle ;
  
\draw [color={rgb, 255:red, 74; green, 144; blue, 226 }  ,draw opacity=1 ]   (431.56,67.5) -- (450.06,67.5) ;
\draw [shift={(450.06,67.5)}, rotate = 0] [color={rgb, 255:red, 74; green, 144; blue, 226 }  ,draw opacity=1 ][fill={rgb, 255:red, 74; green, 144; blue, 226 }  ,fill opacity=1 ][line width=0.75]      (0, 0) circle [x radius= 1.34, y radius= 1.34]   ;
\draw [shift={(431.56,67.5)}, rotate = 0] [color={rgb, 255:red, 74; green, 144; blue, 226 }  ,draw opacity=1 ][fill={rgb, 255:red, 74; green, 144; blue, 226 }  ,fill opacity=1 ][line width=0.75]      (0, 0) circle [x radius= 1.34, y radius= 1.34]   ;
  
\draw [color={rgb, 255:red, 126; green, 211; blue, 33 }  ,draw opacity=1 ]   (431.06,81.5) -- (449.56,81.5) ;
\draw [shift={(449.56,81.5)}, rotate = 0] [color={rgb, 255:red, 126; green, 211; blue, 33 }  ,draw opacity=1 ][fill={rgb, 255:red, 126; green, 211; blue, 33 }  ,fill opacity=1 ][line width=0.75]      (0, 0) circle [x radius= 1.34, y radius= 1.34]   ;
\draw [shift={(431.06,81.5)}, rotate = 0] [color={rgb, 255:red, 126; green, 211; blue, 33 }  ,draw opacity=1 ][fill={rgb, 255:red, 126; green, 211; blue, 33 }  ,fill opacity=1 ][line width=0.75]      (0, 0) circle [x radius= 1.34, y radius= 1.34]   ;

\draw  [color={rgb, 255:red, 155; green, 155; blue, 155 }  ,draw opacity=1 ] (424.72,58) -- (623.33,58) -- (623.33,98.5) -- (424.72,98.5) -- cycle ;
  
\draw [color={rgb, 255:red, 255; green, 255; blue, 255 }  ,draw opacity=1 ]   (191,61) -- (191.33,98) ;
  
\draw [color={rgb, 255:red, 126; green, 211; blue, 33 }  ,draw opacity=1 ]   (387.19,59.76) -- (409.67,97.84) ;
\draw [shift={(409.67,97.84)}, rotate = 59.44] [color={rgb, 255:red, 126; green, 211; blue, 33 }  ,draw opacity=1 ][fill={rgb, 255:red, 126; green, 211; blue, 33 }  ,fill opacity=1 ][line width=0.75]      (0, 0) circle [x radius= 1.34, y radius= 1.34]   ;
\draw [shift={(387.19,59.76)}, rotate = 59.44] [color={rgb, 255:red, 126; green, 211; blue, 33 }  ,draw opacity=1 ][fill={rgb, 255:red, 126; green, 211; blue, 33 }  ,fill opacity=1 ][line width=0.75]      (0, 0) circle [x radius= 1.34, y radius= 1.34]   ;
  
\draw [color={rgb, 255:red, 126; green, 211; blue, 33 }  ,draw opacity=1 ]   (369.22,59.76) -- (391.7,97.84) ;
\draw [shift={(391.7,97.84)}, rotate = 59.44] [color={rgb, 255:red, 126; green, 211; blue, 33 }  ,draw opacity=1 ][fill={rgb, 255:red, 126; green, 211; blue, 33 }  ,fill opacity=1 ][line width=0.75]      (0, 0) circle [x radius= 1.34, y radius= 1.34]   ;
\draw [shift={(369.22,59.76)}, rotate = 59.44] [color={rgb, 255:red, 126; green, 211; blue, 33 }  ,draw opacity=1 ][fill={rgb, 255:red, 126; green, 211; blue, 33 }  ,fill opacity=1 ][line width=0.75]      (0, 0) circle [x radius= 1.34, y radius= 1.34]   ;
  
\draw [color={rgb, 255:red, 126; green, 211; blue, 33 }  ,draw opacity=1 ]   (333.27,59.76) -- (355.75,97.84) ;
\draw [shift={(355.75,97.84)}, rotate = 59.44] [color={rgb, 255:red, 126; green, 211; blue, 33 }  ,draw opacity=1 ][fill={rgb, 255:red, 126; green, 211; blue, 33 }  ,fill opacity=1 ][line width=0.75]      (0, 0) circle [x radius= 1.34, y radius= 1.34]   ;
\draw [shift={(333.27,59.76)}, rotate = 59.44] [color={rgb, 255:red, 126; green, 211; blue, 33 }  ,draw opacity=1 ][fill={rgb, 255:red, 126; green, 211; blue, 33 }  ,fill opacity=1 ][line width=0.75]      (0, 0) circle [x radius= 1.34, y radius= 1.34]   ;
  
\draw [color={rgb, 255:red, 126; green, 211; blue, 33 }  ,draw opacity=1 ]   (315.29,59.76) -- (337.77,97.84) ;
\draw [shift={(337.77,97.84)}, rotate = 59.44] [color={rgb, 255:red, 126; green, 211; blue, 33 }  ,draw opacity=1 ][fill={rgb, 255:red, 126; green, 211; blue, 33 }  ,fill opacity=1 ][line width=0.75]      (0, 0) circle [x radius= 1.34, y radius= 1.34]   ;
\draw [shift={(315.29,59.76)}, rotate = 59.44] [color={rgb, 255:red, 126; green, 211; blue, 33 }  ,draw opacity=1 ][fill={rgb, 255:red, 126; green, 211; blue, 33 }  ,fill opacity=1 ][line width=0.75]      (0, 0) circle [x radius= 1.34, y radius= 1.34]   ;
  
\draw [color={rgb, 255:red, 126; green, 211; blue, 33 }  ,draw opacity=1 ]   (261.37,59.76) -- (283.85,97.84) ;
\draw [shift={(283.85,97.84)}, rotate = 59.44] [color={rgb, 255:red, 126; green, 211; blue, 33 }  ,draw opacity=1 ][fill={rgb, 255:red, 126; green, 211; blue, 33 }  ,fill opacity=1 ][line width=0.75]      (0, 0) circle [x radius= 1.34, y radius= 1.34]   ;
\draw [shift={(261.37,59.76)}, rotate = 59.44] [color={rgb, 255:red, 126; green, 211; blue, 33 }  ,draw opacity=1 ][fill={rgb, 255:red, 126; green, 211; blue, 33 }  ,fill opacity=1 ][line width=0.75]      (0, 0) circle [x radius= 1.34, y radius= 1.34]   ;
  
\draw [color={rgb, 255:red, 126; green, 211; blue, 33 }  ,draw opacity=1 ]   (297.32,59.76) -- (319.8,97.84) ;
\draw [shift={(319.8,97.84)}, rotate = 59.44] [color={rgb, 255:red, 126; green, 211; blue, 33 }  ,draw opacity=1 ][fill={rgb, 255:red, 126; green, 211; blue, 33 }  ,fill opacity=1 ][line width=0.75]      (0, 0) circle [x radius= 1.34, y radius= 1.34]   ;
\draw [shift={(297.32,59.76)}, rotate = 59.44] [color={rgb, 255:red, 126; green, 211; blue, 33 }  ,draw opacity=1 ][fill={rgb, 255:red, 126; green, 211; blue, 33 }  ,fill opacity=1 ][line width=0.75]      (0, 0) circle [x radius= 1.34, y radius= 1.34]   ;
  
\draw [color={rgb, 255:red, 126; green, 211; blue, 33 }  ,draw opacity=1 ]   (243.39,59.76) -- (265.87,97.84) ;
\draw [shift={(265.87,97.84)}, rotate = 59.44] [color={rgb, 255:red, 126; green, 211; blue, 33 }  ,draw opacity=1 ][fill={rgb, 255:red, 126; green, 211; blue, 33 }  ,fill opacity=1 ][line width=0.75]      (0, 0) circle [x radius= 1.34, y radius= 1.34]   ;
\draw [shift={(243.39,59.76)}, rotate = 59.44] [color={rgb, 255:red, 126; green, 211; blue, 33 }  ,draw opacity=1 ][fill={rgb, 255:red, 126; green, 211; blue, 33 }  ,fill opacity=1 ][line width=0.75]      (0, 0) circle [x radius= 1.34, y radius= 1.34]   ;
  
\draw    (236.33,59) -- (416.08,59) ;
  
\draw    (236,98.5) -- (415.75,98.5) ;
   
\draw   (261.33,55.67) .. controls (261.33,53.18) and (260.08,51.93) .. (257.59,51.93) -- (257.59,51.93) .. controls (254.02,51.94) and (252.24,50.69) .. (252.24,48.2) .. controls (252.24,50.69) and (250.46,51.94) .. (246.9,51.95)(248.5,51.95) -- (246.9,51.95) .. controls (244.41,51.96) and (243.16,53.21) .. (243.17,55.7) ;
  
\draw [color={rgb, 255:red, 74; green, 144; blue, 226 }  ,draw opacity=1 ]   (256.13,98.07) -- (317.58,78.59) ;
\draw [shift={(317.58,78.59)}, rotate = 342.41] [color={rgb, 255:red, 74; green, 144; blue, 226 }  ,draw opacity=1 ][fill={rgb, 255:red, 74; green, 144; blue, 226 }  ,fill opacity=1 ][line width=0.75]      (0, 0) circle [x radius= 1.34, y radius= 1.34]   ;
\draw [shift={(256.13,98.07)}, rotate = 342.41] [color={rgb, 255:red, 74; green, 144; blue, 226 }  ,draw opacity=1 ][fill={rgb, 255:red, 74; green, 144; blue, 226 }  ,fill opacity=1 ][line width=0.75]      (0, 0) circle [x radius= 1.34, y radius= 1.34]   ;
  
\draw  [dash pattern={on 0.84pt off 2.51pt}]  (342.93,63.67) -- (358.13,63.67) ;
  
\draw  [dash pattern={on 0.84pt off 2.51pt}]  (291.73,93.67) -- (306.93,93.67) ;
  
\draw  [dash pattern={on 0.84pt off 2.51pt}]  (362.93,93.27) -- (378.13,93.27) ;
  
\draw  [dash pattern={on 0.84pt off 2.51pt}]  (275.73,64.47) -- (290.93,64.47) ;
  
\draw  [color={rgb, 255:red, 208; green, 2; blue, 27 }  ,draw opacity=1 ][fill={rgb, 255:red, 208; green, 2; blue, 27 }  ,fill opacity=1 ] (264.07,95.17) .. controls (264.07,94.74) and (264.41,94.4) .. (264.83,94.4) .. controls (265.26,94.4) and (265.6,94.74) .. (265.6,95.17) .. controls (265.6,95.59) and (265.26,95.93) .. (264.83,95.93) .. controls (264.41,95.93) and (264.07,95.59) .. (264.07,95.17) -- cycle ;
  
\draw  [color={rgb, 255:red, 208; green, 2; blue, 27 }  ,draw opacity=1 ][fill={rgb, 255:red, 208; green, 2; blue, 27 }  ,fill opacity=1 ] (309.16,81.3) .. controls (309.16,80.88) and (309.51,80.54) .. (309.93,80.54) .. controls (310.35,80.54) and (310.7,80.88) .. (310.7,81.3) .. controls (310.7,81.73) and (310.35,82.07) .. (309.93,82.07) .. controls (309.51,82.07) and (309.16,81.73) .. (309.16,81.3) -- cycle ;
  
\draw  [color={rgb, 255:red, 208; green, 2; blue, 27 }  ,draw opacity=1 ][fill={rgb, 255:red, 208; green, 2; blue, 27 }  ,fill opacity=1 ] (279.21,90.59) .. controls (279.21,90.16) and (279.56,89.82) .. (279.98,89.82) .. controls (280.4,89.82) and (280.75,90.16) .. (280.75,90.59) .. controls (280.75,91.01) and (280.4,91.35) .. (279.98,91.35) .. controls (279.56,91.35) and (279.21,91.01) .. (279.21,90.59) -- cycle ;

\draw (458.22,88.4) node [anchor=north west][inner sep=0.75pt]  [font=\tiny]  {$a\ positive\ intersection$};
 
\draw (458.72,58.73) node [anchor=north west][inner sep=0.75pt]  [font=\tiny]  {$the\ line\ segment\ [ -l-1,n+l+1]^{+}$};
 
\draw (458.72,72.73) node [anchor=north west][inner sep=0.75pt]  [font=\tiny]  {$the\ line\ segments\ in\ \widetilde{[ -k-1,k+1]}$};
 
\draw (219.67,96.4) node [anchor=north west][inner sep=0.75pt]  [font=\tiny]  {$\partial ^{\prime }$};
 
\draw (220.67,56.4) node [anchor=north west][inner sep=0.75pt]  [font=\tiny]  {$\partial $};
 
\draw (241.4,41.3) node [anchor=north west][inner sep=0.75pt]  [font=\tiny]  {$n\ units$};

\end{tikzpicture}

\end{figure}
\noindent Consequently, we have
\[{\rm dim_{k}Ext^{1}}(X,Y)=\max\{0,\lfloor\frac{l -k -1}{n}\rfloor+1\}=I_{\text{int}}(\widehat{\phi}^{-1}(X),\widehat{\phi}^{-1}(Y)).\]
		
\item [(iv)] If $l_2<k_2$, then $l_2=0$ and $k_2=1$. The proof parallels (iii).
\end{itemize}	

	\item[\emph{Case 2:}] $\mathrm{rk} X=1$ and $\mathrm{rk} Y=2$. Then   $X=L_0(\vec{x})$ and $Y=E_{L_0(-i\vec{x}_3)}\langle (k-1)\vec{x}_3\rangle$, for some $\vec{x}\in\mathbb{L}$ and $i,k\in \mathbb{Z}$, where $1\leq k\leq n-1$. Write $\vec{x}=l_1(\vec{x}_1-\vec{x}_2)+l_2\vec{x}_2+l \vec{x}_3$, where $l_1,l_2\in \{0,1\}$ and $l \in \mathbb{Z}$. Without loss of generality, we may assume $l_1=1,l_2=0$. From Proposition \ref{intersection and exact seq} Case 3, we see that $I_{\rm{int}}(\widehat{\phi}^{-1}(X),\widehat{\phi}^{-1}(Y))=0$ if ${\rm Ext^{1}}(X,Y)=0$.  Let us consider the case where ${\rm Ext^{1}}(X,Y)\neq 0$. By Proposition \ref{Hom and Ext}(1), we have
		\begin{align*}
			{\rm dim_{k}Ext^{1}}(X,Y)
			& ={\rm dim_{k}Ext^{1}}(X,L_0^*(-(i+1)\vec{x}_3))+{\rm dim_{k}Ext^{1}}(X,L_0((k-i-1)\vec{x}_3))      \\
			& =I_{\text{int}}(\widetilde{[-l-1,l+1]^+},\widetilde{[i,-i]^+})+I_{\text{int}}(\widetilde{[-l-1,l+1]^+},\widetilde{[i-k,k-i]^-})      \\
			& =\max\{0,\lfloor\frac{l+i }{n}\rfloor\}+\max\{0,\lfloor\frac{l -k+i }{n}\rfloor+1\} \\
			& =|\{m\in \mathbb{Z}\;|\; i+mn > -l-1  \;\,\mbox{and}\;\, k-i+mn < l+1 \}|  \\
			& =I_{\text{int}}(\widehat{\phi}^{-1}(X),\widehat{\phi}^{-1}(Y)).
		\end{align*}
\item[\emph{Case 3:}] $\mathrm{rk} X=2$ and $\mathrm{rk} Y=1$. The proof is similar to that of Case 2.
		  
\item[\emph{Case 4:}] $\mathrm{rk} X=\mathrm{rk} Y=2$. Then $X=E_{L_0(-i\vec{x}_3)}\langle (k-1)\vec{x}_3\rangle$ and $Y=E_{L_0(-s\vec{x}_3)}\langle (t-1)\vec{x}_3\rangle$ for some $i, k, s, t\in \mathbb{Z}$ and $1\leq k, t\leq n-1$. From Proposition \ref{intersection and exact seq} Case 1, we see that $I_{\rm{int}}(\widehat{\phi}^{-1}(X),\widehat{\phi}^{-1}(Y))=0$ if ${\rm Ext^{1}}(X,Y)=0$. Let us consider the case where ${\rm Ext^{1}}(X,Y)\neq 0$. By Proposition \ref{Hom and Ext}(2) and the result in Case 1, through meticulous calculations, we can obtain
		\begin{align*}
			{\rm dim_{k}Ext^{1}}(X,Y) 
			& ={\rm dim_{k}Ext^{1}}(L_0^*(-(i+1)\vec{x}_3),Y)+{\rm dim_{k}Ext^{1}}(L_0((k-i-1)\vec{x}_3),Y)      \\
			& =I_{\text{int}}(\widetilde{[i,-i]^+},\widetilde{[s,t-s]})+I_{\text{int}}(\widetilde{[i-k,k-i]^-},\widetilde{[s,t-s]})      \\
			& =I_{\text{int}}(\widetilde{[i,k-i]},\widetilde{[s,t-s]}) \\
			& =I_{\text{int}}(\widehat{\phi}^{-1}(X),\widehat{\phi}^{-1}(Y)).
		\end{align*}
  \end{itemize}
   The proof is complete.
	\end{proof}

\section{Projective cover and injective hull}\label{Projective cover and injective hull}
In this section, we give the geometric interpretation for the projective covers and injective hulls of extension bundles.
	\begin{defn}(\cite[Definition 3.1]{MR3028577})
				A sequence $0 \rightarrow X^{\prime} \stackrel{u}{\rightarrow} X \stackrel{v}{\rightarrow} X^{\prime \prime} \rightarrow 0$ in ${\rm vect}\mbox{-}\mathbb{X} $ is called {\emph distinguished exact} if for each line bundle $L$ the induced sequence $0 \rightarrow \operatorname{Hom}\left(L, X^{\prime}\right) \rightarrow \operatorname{Hom}(L, X) \rightarrow\operatorname{Hom}\left(L, X^{\prime \prime}\right) \rightarrow 0$ is exact.
		\end{defn}
  
	The distinguished exact sequences define an exact structure on ${\rm vect}\mbox{-}\mathbb{X}$ which
				is Frobenius, such that the indecomposable projectives (resp. injectives) are exactly the line
				bundles. Moreover, ${\rm vect}\text{-}\mathbb{X}$ is equivalent to $\mathrm{CM}^{\mathbb{L}}$-$R$ as Frobenius category. \cite[Proposition 3.2]{MR3028577} With this exact structure, \({\rm vect}\mbox{-}\mathbb{X}\) is equipped to have projective covers and, dually, injective hulls \cite[Proposition 3.5]{MR3028577}. 
	\begin{defn}\label{def line}
	     
	Let $[i,j]$ be a segment in $\operatorname{Seg}(M) \setminus \operatorname{Seg}_0(M)$ and $k$ be the unique integer such that  $$kn<i+j< (k+1)n.$$ Then the segments $[i,\leftarrow], [i,\rightarrow] $ and $[\leftarrow,j], [\rightarrow,j],$ are intuitively defined as follows: 
		\begin{figure}[H]

			\tikzset{every picture/.style={line width=0.75pt}}          
			
			\begin{tikzpicture}[x=0.75pt,y=0.75pt,yscale=-1,xscale=1]

				\draw    (275.33,109) -- (455.08,109) ;
				
				\draw    (275.58,149) -- (455.33,149) ;
				
				\draw [color={rgb, 255:red, 0; green, 0; blue, 0 }  ,draw opacity=1 ]   (322.88,148.64) -- (395.33,109) ;
				\draw [shift={(395.33,109)}, rotate = 331.32] [color={rgb, 255:red, 0; green, 0; blue, 0 }  ,draw opacity=1 ][fill={rgb, 255:red, 0; green, 0; blue, 0 }  ,fill opacity=1 ][line width=0.75]      (0, 0) circle [x radius= 1.34, y radius= 1.34]   ;
				\draw [shift={(322.88,148.64)}, rotate = 331.32] [color={rgb, 255:red, 0; green, 0; blue, 0 }  ,draw opacity=1 ][fill={rgb, 255:red, 0; green, 0; blue, 0 }  ,fill opacity=1 ][line width=0.75]      (0, 0) circle [x radius= 1.34, y radius= 1.34]   ;
				
				\draw    (395.33,109) -- (295.33,149) ;
				\draw [shift={(295.33,149)}, rotate = 158.2] [color={rgb, 255:red, 0; green, 0; blue, 0 }  ][fill={rgb, 255:red, 0; green, 0; blue, 0 }  ][line width=0.75]      (0, 0) circle [x radius= 1.34, y radius= 1.34]   ;
				\draw [shift={(345.33,129)}, rotate = 158.2] [color={rgb, 255:red, 0; green, 0; blue, 0 }  ][fill={rgb, 255:red, 0; green, 0; blue, 0 }  ][line width=0.75]      (0, 0) circle [x radius= 1.34, y radius= 1.34]   ;
				\draw [shift={(395.33,109)}, rotate = 158.2] [color={rgb, 255:red, 0; green, 0; blue, 0 }  ][fill={rgb, 255:red, 0; green, 0; blue, 0 }  ][line width=0.75]      (0, 0) circle [x radius= 1.34, y radius= 1.34]   ;
				
				\draw    (226.21,109.31) -- (120.54,148.97) ;
				\draw [shift={(120.54,148.97)}, rotate = 159.42] [color={rgb, 255:red, 0; green, 0; blue, 0 }  ][fill={rgb, 255:red, 0; green, 0; blue, 0 }  ][line width=0.75]      (0, 0) circle [x radius= 1.34, y radius= 1.34]   ;
				\draw [shift={(173.38,129.14)}, rotate = 159.42] [color={rgb, 255:red, 0; green, 0; blue, 0 }  ][fill={rgb, 255:red, 0; green, 0; blue, 0 }  ][line width=0.75]      (0, 0) circle [x radius= 1.34, y radius= 1.34]   ;
				\draw [shift={(226.21,109.31)}, rotate = 159.42] [color={rgb, 255:red, 0; green, 0; blue, 0 }  ][fill={rgb, 255:red, 0; green, 0; blue, 0 }  ][line width=0.75]      (0, 0) circle [x radius= 1.34, y radius= 1.34]   ;
				
				\draw    (73.25,109.33) -- (253,109.33) ;
				
				\draw    (73.25,149.33) -- (253,149.33) ;
				
				\draw [color={rgb, 255:red, 0; green, 0; blue, 0 }  ,draw opacity=1 ]   (355.33,149) -- (395.33,109) ;
				\draw [shift={(395.33,109)}, rotate = 315] [color={rgb, 255:red, 0; green, 0; blue, 0 }  ,draw opacity=1 ][fill={rgb, 255:red, 0; green, 0; blue, 0 }  ,fill opacity=1 ][line width=0.75]      (0, 0) circle [x radius= 1.34, y radius= 1.34]   ;
				\draw [shift={(375.33,129)}, rotate = 315] [color={rgb, 255:red, 0; green, 0; blue, 0 }  ,draw opacity=1 ][fill={rgb, 255:red, 0; green, 0; blue, 0 }  ,fill opacity=1 ][line width=0.75]      (0, 0) circle [x radius= 1.34, y radius= 1.34]   ;
				\draw [shift={(355.33,149)}, rotate = 315] [color={rgb, 255:red, 0; green, 0; blue, 0 }  ,draw opacity=1 ][fill={rgb, 255:red, 0; green, 0; blue, 0 }  ,fill opacity=1 ][line width=0.75]      (0, 0) circle [x radius= 1.34, y radius= 1.34]   ;
				
				\draw    (120.54,148.97) -- (165.46,109.69) ;
				\draw [shift={(165.46,109.69)}, rotate = 318.83] [color={rgb, 255:red, 0; green, 0; blue, 0 }  ][fill={rgb, 255:red, 0; green, 0; blue, 0 }  ][line width=0.75]      (0, 0) circle [x radius= 1.34, y radius= 1.34]   ;
				\draw [shift={(143,129.33)}, rotate = 318.83] [color={rgb, 255:red, 0; green, 0; blue, 0 }  ][fill={rgb, 255:red, 0; green, 0; blue, 0 }  ][line width=0.75]      (0, 0) circle [x radius= 1.34, y radius= 1.34]   ;
				\draw [shift={(120.54,148.97)}, rotate = 318.83] [color={rgb, 255:red, 0; green, 0; blue, 0 }  ][fill={rgb, 255:red, 0; green, 0; blue, 0 }  ][line width=0.75]      (0, 0) circle [x radius= 1.34, y radius= 1.34]   ;
				
				\draw [color={rgb, 255:red, 0; green, 0; blue, 0 }  ,draw opacity=1 ]   (120.54,148.97) -- (193,109.33) ;
				\draw [shift={(193,109.33)}, rotate = 331.32] [color={rgb, 255:red, 0; green, 0; blue, 0 }  ,draw opacity=1 ][fill={rgb, 255:red, 0; green, 0; blue, 0 }  ,fill opacity=1 ][line width=0.75]      (0, 0) circle [x radius= 1.34, y radius= 1.34]   ;
				\draw [shift={(120.54,148.97)}, rotate = 331.32] [color={rgb, 255:red, 0; green, 0; blue, 0 }  ,draw opacity=1 ][fill={rgb, 255:red, 0; green, 0; blue, 0 }  ,fill opacity=1 ][line width=0.75]      (0, 0) circle [x radius= 1.34, y radius= 1.34]   ;
				
				\draw   (392.67,105.5) .. controls (390.88,101.19) and (387.83,99.92) .. (383.52,101.71) -- (321.53,127.39) .. controls (315.37,129.94) and (311.4,129.06) .. (309.61,124.75) .. controls (311.4,129.06) and (309.21,132.49) .. (303.05,135.04)(305.83,133.89) -- (297.46,137.36) .. controls (293.15,139.15) and (291.88,142.2) .. (293.67,146.51) ;
				
				\draw   (123.33,152) .. controls (125.01,156.35) and (128.03,157.69) .. (132.38,156.02) -- (201.61,129.4) .. controls (207.83,127.01) and (211.78,128) .. (213.46,132.35) .. controls (211.78,128) and (214.05,124.62) .. (220.28,122.23)(217.48,123.3) -- (223.81,120.87) .. controls (228.16,119.2) and (229.5,116.18) .. (227.83,111.82) ;
				
				\draw   (162.5,107.33) .. controls (159.49,103.77) and (156.2,103.5) .. (152.63,106.51) -- (133.09,123.05) .. controls (128,127.36) and (123.95,127.73) .. (120.94,124.17) .. controls (123.95,127.73) and (122.91,131.66) .. (117.82,135.97)(120.11,134.03) -- (117.82,135.97) .. controls (114.26,138.98) and (113.99,142.27) .. (117,145.84) ;
				
				\draw   (360.17,152.17) .. controls (363.49,155.45) and (366.79,155.43) .. (370.07,152.1) -- (385.34,136.63) .. controls (390.03,131.89) and (394.03,131.16) .. (397.35,134.44) .. controls (394.03,131.16) and (394.71,127.15) .. (399.4,122.4)(397.29,124.54) -- (399.4,122.4) .. controls (402.68,119.08) and (402.66,115.78) .. (399.34,112.5) ;

				\draw (259.33,144.6) node [anchor=north west][inner sep=0.75pt]  [font=\tiny]  {$\partial ^{\prime }$};
				
				\draw (261.13,104.27) node [anchor=north west][inner sep=0.75pt]  [font=\tiny]  {$\partial $};
				
				\draw (314.17,155.73) node [anchor=north west][inner sep=0.75pt]  [font=\tiny]  {$( i,0)$};
				
				\draw (392.8,98.2) node [anchor=north west][inner sep=0.75pt]  [font=\tiny]  {$( j,1)$};
				
				\draw (58.87,143.73) node [anchor=north west][inner sep=0.75pt]  [font=\tiny]  {$\partial ^{\prime }$};
				
				\draw (59.47,105.4) node [anchor=north west][inner sep=0.75pt]  [font=\tiny]  {$\partial $};
				
				\draw (110.33,155.07) node [anchor=north west][inner sep=0.75pt]  [font=\tiny]  {$( i,0)$};
				
				\draw (185.53,96.8) node [anchor=north west][inner sep=0.75pt]  [font=\tiny]  {$( j,1)$};
				
				\draw (337.37,120.47) node [anchor=north west][inner sep=0.75pt]  [font=\tiny]  {\scalebox{0.6}[0.6]{$P_{k}$}};
				
				\draw (90.53,115.33) node [anchor=north west][inner sep=0.75pt]  [font=\tiny]  {$\left[ i,\leftarrow \right]$};
				
				\draw (202.93,134.17) node [anchor=north west][inner sep=0.75pt]  [font=\tiny]  {$\left[ i,\rightarrow \right]$};
				
				\draw (400.4,133.13) node [anchor=north west][inner sep=0.75pt]  [font=\tiny]  {$\left[\rightarrow ,j\right]$};
				
				\draw (284.37,113.07) node [anchor=north west][inner sep=0.75pt]  [font=\tiny]  {$\left[\leftarrow ,j\right]$};
				
				\draw (175.88,127.6) node [anchor=north west][inner sep=0.75pt]  [font=\tiny]  {\scalebox{0.6}[0.6]{$P_{k+1}$}};
				
				\draw (134.43,120.47) node [anchor=north west][inner sep=0.75pt]  [font=\tiny]  {\scalebox{0.6}[0.6]{$P_{k}$}};
				
				\draw (377.33,125.4) node [anchor=north west][inner sep=0.75pt]  [font=\tiny]  {\scalebox{0.6}[0.6]{$P_{k+1}$}};

			\end{tikzpicture}
		\end{figure}
		\end{defn} 
	
Let $X$ be an extension bundle in ${\rm vect}\mbox{-}\mathbb{X}$. We denote by $\mathfrak{P}(X) \xrightarrow{\pi_X} X$ the projective cover, and dually by $X \xrightarrow{j_X} \mathfrak{I}(X)$ the injective hull of $X$. The following general result has been stated in \cite{MR3028577}. We provide a brief proof by employing the correspondence $\widehat{\phi}$ in Propostion \ref{correspondence}. It's worth emphasizing that we realize the universal properties of projective cover and injective envelope as the proximity between line segments.

\begin{prop}\label{cover and hull}
Assume $X=\widehat{\phi}(\widetilde{[i,j]^*})$ is an extension bundle in ${\rm vect}\mbox{-}\mathbb{X}$. Then 
\begin{itemize}
    \item $\mathfrak{P}(X)=\widehat{\phi}(\widetilde{[i,\leftarrow]})\oplus \widehat{\phi}(\widetilde{[\rightarrow,j]})$;
    \item $\mathfrak{I}(X)=\widehat{\phi}(\widetilde{[i,\rightarrow]})\oplus \widehat{\phi}(\widetilde{[\leftarrow,j]})$, 
\end{itemize}
\end{prop}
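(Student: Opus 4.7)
The plan is to build $\pi_X\colon\mathfrak{P}(X)\to X$ from the two canonical \emph{inclusions} supplied by Proposition~\ref{triangle exact}, and dually $j_X\colon X\to\mathfrak{I}(X)$ from the two canonical \emph{surjections}. Let $k$ be the unique integer with $kn<i+j<(k+1)n$. The triangles with apexes $P_k$ and $P_{k+1}$ over the common base $[i,j]$ yield two distinguished short exact sequences
\begin{gather*}
(\mathrm{I})\quad 0\to\widehat{\phi}(\widetilde{[i,\leftarrow]})\xrightarrow{\iota_1} X\xrightarrow{q_1}\widehat{\phi}(\widetilde{[\leftarrow,j]})\to 0,\\
(\mathrm{II})\quad 0\to\widehat{\phi}(\widetilde{[\rightarrow,j]})\xrightarrow{\iota_2} X\xrightarrow{q_2}\widehat{\phi}(\widetilde{[i,\rightarrow]})\to 0,
\end{gather*}
in which all four outer terms are line bundles, since the corresponding half-segments lie in $\operatorname{Seg}_0^*(M)$. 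Assemble these into
$\pi_X:=(\iota_1,\iota_2)\colon\widehat{\phi}(\widetilde{[i,\leftarrow]})\oplus\widehat{\phi}(\widetilde{[\rightarrow,j]})\to X$ and
$j_X:=\binom{q_1}{q_2}\colon X\to\widehat{\phi}(\widetilde{[\leftarrow,j]})\oplus\widehat{\phi}(\widetilde{[i,\rightarrow]})$.

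I would then verify that $\pi_X$ is a projective cover and $j_X$ an injective hull in the Frobenius structure. For the universal property of $\pi_X$, the task is to show that $\operatorname{Hom}(L,\pi_X)$ is surjective for every line bundle $L$: applying $\operatorname{Hom}(L,-)$ to $(\mathrm{I})$ and $(\mathrm{II})$, together with Serre duality $\operatorname{Hom}(L,X)\cong D\operatorname{Ext}^1(X,L(\vec\omega))$ and Theorem~\ref{dimension and positive intersection}, lets one express $\dim\operatorname{Hom}(L,X)$ as the sum of the analogous contributions from the two summands of $\mathfrak{P}(X)$, which forces $\operatorname{Hom}(L,\pi_X)$ to be surjective. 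Right-minimality is immediate: the two summands of $\mathfrak{P}(X)$ are non-isomorphic line bundles, their Picard-group parameters differing by construction (visible from the table following Proposition~\ref{correspondence}), so $\operatorname{End}(\mathfrak{P}(X))\cong\mathbf{k}\oplus\mathbf{k}$, and any $\phi$ with $\pi_X\phi=\pi_X$ must be the identity because $\iota_1,\iota_2\neq 0$. The statement for $\mathfrak{I}(X)$ is obtained either by the dual argument using $(q_1,q_2)$, or more economically by applying the vector bundle duality of Proposition~\ref{vector bundle duality} to the projective cover of $X^\vee=\widehat{\phi}(\widetilde{[j,i]})$ and translating through the identification $\widehat{\phi}(\widetilde{[a,b]^*})^\vee=\widehat{\phi}(\widetilde{[b,a]^*})$.

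The main obstacle is the surjectivity of $\operatorname{Hom}(L,\pi_X)$ for all line bundles $L$ simultaneously, since $L$ ranges over the infinite Picard group. The intersection-index viewpoint makes this uniform: $L$ contributes to $\operatorname{Hom}(L,X)$ precisely when $\widehat{\phi}^{-1}(L)$ has the appropriate positive intersections with $\widetilde{[i,j]}$, and every such intersection is geometrically accounted for by one of the triangular edges $[i,\leftarrow]$ or $[\rightarrow,j]$, forcing the factorization through $\iota_1$ or $\iota_2$. The genuine technical step is the case analysis matching intersection points on the strip with factorizations through the two summands; once this is carried out, minimality and the injective-hull assertion follow almost immediately.
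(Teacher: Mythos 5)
Your triangle sequences (I) and (II) are correct consequences of Proposition~\ref{triangle exact}, but the way you identify their outer terms exposes the decisive error: you read $[i,\leftarrow]$, $[\rightarrow,j]$, $[i,\rightarrow]$, $[\leftarrow,j]$ as \emph{half}-segments in $\operatorname{Seg}_0^*(M)$, so that each $\widehat{\phi}(\widetilde{[i,\leftarrow]})$ is a single line bundle and your candidate $\mathfrak{P}(X)$ has rank $2$. In Definition~\ref{def line} these are \emph{full} segments of $\operatorname{Seg}_0(M)$ passing through $P_k$ resp.\ $P_{k+1}$, and by the convention following Proposition~\ref{correspondence} each $\widehat{\phi}(\widetilde{[i,\leftarrow]})$ denotes the direct sum of the \emph{two} line bundles attached to its halves; the correct $\mathfrak{P}(X)$ therefore consists of four line bundles and has rank $4$. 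A rank-$2$ candidate cannot be a projective cover: the conflation $0\to\Omega X\to\mathfrak{P}(X)\to X\to 0$ gives $\rk\,\mathfrak{P}(X)=\rk X+\rk\,\Omega X$, and since $X$ is not projective, $\Omega X$ can be neither zero nor projective-injective (either would split the conflation and make $X$ a summand of a sum of line bundles), so $\Omega X$ contains an indecomposable of rank two and $\rk\,\mathfrak{P}(X)\geq 4$. Concretely, your map $(\iota_1,\iota_2)\colon P_1\oplus P_2\to X$ glues two rank-one subsheaves of the rank-two bundle $X$; its cokernel is a torsion sheaf rather than zero, so it is not even a deflation in the distinguished exact structure.

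The paper instead views $[i,\leftarrow]=[i,kn-i]$ and $[\rightarrow,j]=[(k+1)n-j,j]$ as opposite sides of a quadrilateral whose diagonals are $[i,j]$ and $[(k+1)n-j,kn-i]$, and applies Proposition~\ref{exact sequence1} to obtain a single exact sequence $0\to Y\to\mathfrak{P}(X)\to X\to 0$ with $Y=\phi(\widetilde{[(k+1)n-j,kn-i]})$ of rank two and $\mathfrak{P}(X)$ the four line bundles; the lifting property for every line bundle $L$ with $\Hom(L,X)\neq 0$ then reduces to the vanishing $\Ext^1(L,Y)=0$, which is checked by a short inequality argument through the intersection-index formula of Theorem~\ref{dimension and positive intersection}. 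Even leaving the rank issue aside, two further steps of your outline do not close: a dimension identity $\dim\Hom(L,X)=\dim\Hom(L,P_1)+\dim\Hom(L,P_2)$ would yield surjectivity of $\Hom(L,\pi_X)$ only if you also knew that the images of $\Hom(L,\iota_1)$ and $\Hom(L,\iota_2)$ intersect trivially, and your minimality argument needs the summands of $\mathfrak{P}(X)$ to be mutually Hom-orthogonal, not merely non-isomorphic ($\End(P_1\oplus P_2)\cong\mathbf{k}\oplus\mathbf{k}$ fails already for $\mathcal{O}\oplus\mathcal{O}(\vec{c})$). The reduction to a single $\Ext^1$-vanishing statement is precisely what lets the paper's argument go through uniformly over the infinite Picard group.
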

\begin{proof}
    We only prove (1); the proof of (2) is similar. Let $k$ be the unique integer such that  $kn< i+j< (k+1)n$. Observe that $[i,j]$ is a diagonal of the quadrilateral determined by $[i,\leftarrow]$ and $[\rightarrow,j]$, while $[(k+1)n-j,kn-i]$ is another diagonal quadrilateral in this quadrilateral.  By Proposition \ref{exact sequence1}, there is an exact sequence 
\[\begin{tikzcd}[ampersand replacement=\&,cramped,sep=small]
	0 \& Y \& {\mathfrak{P}(X)} \& X \& 0,
	\arrow[from=1-1, to=1-2]
	\arrow[from=1-2, to=1-3]
	\arrow["{{\pi_X}}", from=1-3, to=1-4]
	\arrow[from=1-4, to=1-5]
\end{tikzcd}\]
where $Y=\phi(\widetilde{[(k+1)n-j,kn-i]})$. Let $\mathscr{L}$ be the system of all line bundles. We want to show that for any $L\in \mathscr{L}$, each nonzero morphism $f: L \rightarrow X$ factors through $\pi_X:\mathfrak{P}(X)\rightarrow X$. It is sufficient to show $\Ext^1(L,Y)=0$.

We may assume that $L=\widehat{\phi}(\widetilde{[s,tn-s]^*})$ for some integers $s,t$, and whether $*$ is $+$ or $-$ is determined by the parity of $t$. Since $f\in \Hom(L,X)\neq 0$, by Serre duality, $\Ext^1(\tau^{-1}X,L)\neq 0$. Now, assume $\Ext^1(L,Y)\neq0$. Then by Theorem \ref{dimension and positive intersection}, we have 
\[I_{\rm{int}}(\widehat{\phi}^{-1}(\tau^{-1}X),\widehat{\phi}^{-1}(L))\neq 0 \text{ and  }I_{\rm{int}}(\widehat{\phi}^{-1}(L),\widehat{\phi}^{-1}(Y))\neq 0.\]
It follows that there exist integers $m,m^\prime$ such that 
		\begin{align*}
			\left\{
			\begin{array}{ll}
				s+mn>i-1 \\
				tn-s+mn<j+1
			\end{array}
			\right. \qquad
			\text{and} \qquad
			\left\{
			\begin{array}{ll}
				s+m^\prime n<(k+1)n-j \\
				tn-s+m^\prime n>kn-i
			\end{array}
			\right..
		\end{align*}
We can get 
   $$(k-m-m^\prime)-\frac{1}{n}<(k-m^\prime)+\frac{s-i}{n}<t<-m+\frac{s+j+1}{n}<(k-m-m^\prime+1)+\frac{1}{n}.$$ Such an integer $t$ does not exist, a contradiction. Therefore, $\Ext^1(L,Y)=0$. Minimality of $\pi_X$ follows from the fact that the line bundle constituents of $\mathfrak{P}(X)$ are mutually Hom-orthogonal.
\end{proof}
\begin{rem}
   From a graphical perspective, we can observe that $\mathfrak{I}(Y)$=$\mathfrak{P}(X)$. 
\end{rem}

\appendix
\section{}\label{Appendix} 
In this appendix, we collect some exact sequences containing generalized extension bundles in ${\rm coh}\mbox{-}\mathbb{X}$.  For brevity, when the morphisms in the commutative diagrams are clear from the context, we omit writing them. 
\begin{prop}{}{}\label{exact sequence11}
Assume $\vec{x}=l_3\vec{x}_3+l\vec{c} \in  \mathbb{L}$ is written in normal form. Then there exists an exact sequence in ${\rm vect}\mbox{-}\mathbb{X}$
\[\begin{tikzcd}[ampersand replacement=\&,cramped,sep=small]
	0 \& {\mathsf{E}_L\langle \vec{x}\rangle} \& {\mathsf{E}_L \langle \vec{x}+\vec{x}_3\rangle} \& {S_3(\vec{x}+\vec{x}_3)} \& 0
	\arrow[from=1-2, to=1-3]
	\arrow[from=1-3, to=1-4]
	\arrow[from=1-4, to=1-5]
	\arrow[from=1-1, to=1-2].
\end{tikzcd}\]
\end{prop}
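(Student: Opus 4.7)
My plan is to build, for $\vec{x} = l_3\vec{x}_3 + l\vec{c}$ in normal form, a commutative diagram with exact rows of the shape
\[
\begin{tikzcd}[ampersand replacement=\&, cramped, sep=small]
0 \& A \& {\mathsf{E}_L\langle\vec{x}\rangle} \& B \& 0 \\
0 \& A \& {\mathsf{E}_L\langle\vec{x}+\vec{x}_3\rangle} \& {B(\vec{x}_3)} \& 0
\arrow[from=1-1,to=1-2]
\arrow[from=1-2,to=1-3]
\arrow[from=1-3,to=1-4]
\arrow[from=1-4,to=1-5]
\arrow[from=2-1,to=2-2]
\arrow[from=2-2,to=2-3]
\arrow[from=2-3,to=2-4]
\arrow[from=2-4,to=2-5]
\arrow[Rightarrow,no head,from=1-2,to=2-2]
\arrow[from=1-3,to=2-3]
\arrow["{x_3}",from=1-4,to=2-4]
\end{tikzcd}
\]
where $A$ and $B$ are line bundles, the left vertical is the identity, and the right vertical is multiplication by $x_3$ with cokernel $S_3(\vec{x}+\vec{x}_3)$ (identified using $S_3(\vec{x}_1) = S_3$ and $S_3(\vec{c}) = S_3$ from Section \ref{Section 2}). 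Once such a diagram exists, the snake lemma forces the middle vertical to be injective with cokernel $S_3(\vec{x}+\vec{x}_3)$, giving precisely the required sequence.

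I will construct the two rows in three cases according to $l_3$. When $0 \leq l_3 \leq n-3$, both rows are the defining sequences of the extension bundles $E_{L(l\vec{x}_1)}\langle l_3\vec{x}_3\rangle$ and $E_{L(l\vec{x}_1)}\langle (l_3+1)\vec{x}_3\rangle$, so $A = L(l\vec{x}_1+\vec{\omega})$ and $B = L(l\vec{x}_1+l_3\vec{x}_3)$. When $l_3 = n-2$, the top is again the defining extension, while the bottom is the pushout sequence displayed before Definition \ref{generalized extension bundle} (shifted by $l\vec{x}_1$), whose middle term is precisely the decomposable $\mathsf{E}_L\langle(n-1)\vec{x}_3+l\vec{c}\rangle$. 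When $l_3 = n-1$, I take the top to be the split sequence
\[0 \to L((l+1)\vec{x}_1+\vec{\omega}) \to \mathsf{E}_L\langle\vec{x}\rangle \to L((l+1)\vec{x}_1-\vec{x}_3) \to 0,\]
which is legitimate because $L^*((l+1)\vec{x}_1-\vec{x}_3) = L((l+1)\vec{x}_1+\vec{\omega})$ in $\mathbb{L}$ (a direct check using $2\vec{x}_1 = 2\vec{x}_2$), and the bottom is the almost-split sequence of $L((l+1)\vec{x}_1)$, since $\mathsf{E}_L\langle\vec{x}+\vec{x}_3\rangle = E_{L((l+1)\vec{x}_1)}\langle 0\rangle$.

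Existence of the middle vertical is equivalent to showing the top row is the pullback of the bottom along $x_3: B \to B(\vec{x}_3)$. By Serre duality together with the isomorphism \eqref{hom space}, the pullback map $\Ext^1(B(\vec{x}_3),A) \to \Ext^1(B,A)$ is dual to multiplication by $x_3$ on certain graded components of $R$. In the first two cases these components are both one-dimensional and the multiplication is nonzero, so the map is an isomorphism carrying the unique non-split bottom class to the unique non-split top class. In the third case one has $R_{-\vec{x}_3} = 0$, hence $\Ext^1(B,A) = 0$, so the pullback of the Auslander sequence is automatically the (unique, split) top extension. The main obstacle I anticipate is the boundary case $l_3 = n-1$, where $\mathsf{E}_L\langle\vec{x}\rangle$ and $\mathsf{E}_L\langle\vec{x}+\vec{x}_3\rangle$ are of genuinely different types (decomposable direct sum versus the indecomposable Auslander bundle): the proof hinges on recognizing that a specific summand of the former is isomorphic to the subsheaf of the latter via a nontrivial $\mathbb{L}$-level identification, so that the identity on $A$ really does lift to a compatible middle vertical.
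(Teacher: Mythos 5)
Your proposal is correct and follows essentially the same route as the paper: in both arguments the key object is a commutative ladder between the two canonical extensions with the identity on the line-bundle subobject and multiplication by $x_3$ on the quotient, the middle term being pinned down by the one-dimensionality of $\Ext^1(L(\vec{x}),L(\vec{\omega}))$ for $0\le\vec{x}\le(n-1)\vec{x}_3$, and the boundary case $l_3=n-1$ handled via the almost-split sequence together with the vanishing $\Ext^1(L^*(\vec{\omega}),L(\vec{\omega}))=0$, which forces the split top row. Your splitting of $0\le l_3\le n-2$ into two subcases and your phrasing of the construction as a pullback (rather than the paper's pushout-then-identify) are only cosmetic differences.
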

\begin{proof}
 For $0 \leq l_3 \leq n-2$, $\mathsf{E}_L\langle \vec{x}\rangle=E_{L(l\vec{x}_1)}\langle l_3\vec{x}_3\rangle$. By applying $\Ext^1(-, L(\vec{\omega}+l\vec{x}_1))$ to $x_3$, we obtain a pushout commutative diagram
\[\begin{tikzcd}[ampersand replacement=\&,cramped]
	\&\&\& 0 \& 0 \\
	{\xi_1:} \& 0 \& {L(\vec{\omega}+l\vec{x}_1)} \& {E_{L(l\vec{x}_1)} \langle l_3\vec{x}_3\rangle} \& {L(l_3\vec{x}_3+l\vec{x}_1)} \& 0 \\
	{\xi_2:} \& 0 \& {L(\vec{\omega}+l\vec{x}_1)} \& Z \& {L((l_3+1)\vec{x}_3+l\vec{x}_1)} \& 0 \\
	\&\&\& {S_3(\vec{x}+\vec{x}_3)} \& {S_3(\vec{x}+\vec{x}_3)} \\
	\&\&\& 0 \& 0
	\arrow[from=2-2, to=2-3]
	\arrow[from=2-3, to=2-4]
	\arrow[from=2-4, to=2-5]
	\arrow[from=2-5, to=2-6]
	\arrow[from=3-2, to=3-3]
	\arrow[from=3-3, to=3-4]
	\arrow[from=3-4, to=3-5]
	\arrow[from=3-5, to=3-6]
	\arrow[Rightarrow, no head, from=2-3, to=3-3]
	\arrow[from=2-4, to=3-4]
	\arrow[from=3-4, to=4-4]
	\arrow[from=4-4, to=5-4]
	\arrow[from=1-4, to=2-4]
	\arrow["{{{{{{x_3}}}}}}", from=2-5, to=3-5]
	\arrow[from=3-5, to=4-5]
	\arrow[from=4-5, to=5-5]
	\arrow[Rightarrow, no head, from=4-4, to=4-5]
	\arrow[from=1-5, to=2-5]
\end{tikzcd}\]
with exact rows and columns. Moreover, by the heredity, $\xi_2$ is a non-split exact sequence. Since $\Ext^1(L((l_3+1)\vec{x}_3+l\vec{x}_1), L(\vec{\omega}+l\vec{x}_1))\cong \mathbf{k}$, by \eqref{extension bundle}, the middle term of the exact sequence $\xi_2$ is $Z=\mathsf{E}_{L(l\vec{x}_1)}\langle (l_3+1)\vec{x}_3\rangle=\mathsf{E}_L \langle \vec{x}+\vec{x}_3\rangle$. 

For $l_3 = n-1$,  by pullback of the almost-split sequence $\xi_3$ along $x_3$, we have the following commutative diagram
\begin{equation}\label{PPC2}
\begin{tikzcd}[ampersand replacement=\&,cramped]
	\&\&\& 0 \& 0 \\
	\& 0 \& {L(\vec{\omega})} \& {L(\vec{\omega})\oplus L^*(\vec{\omega})} \& {L^*(\vec{\omega})} \& 0 \\
	{\xi_3:} \& 0 \& {L(\vec{\omega})} \& {E_L} \& L \& 0 \\
	\&\&\& {S_3} \& {S_3} \\
	\&\&\& 0 \& 0
	\arrow["{{{{\eta_1}}}}", from=3-3, to=3-4]
	\arrow["{{{{{\pi_1}}}}}", from=3-4, to=3-5]
	\arrow[from=3-5, to=3-6]
	\arrow[from=3-2, to=3-3]
	\arrow[Rightarrow, no head, from=2-3, to=3-3]
	\arrow["{{{x_3}}}", from=2-5, to=3-5]
	\arrow[from=2-2, to=2-3]
	\arrow[from=2-5, to=2-6]
	\arrow["{{{\left( \begin{smallmatrix} 1 \\ 0 \end{smallmatrix} \right )}}}", from=2-3, to=2-4]
	\arrow["{{{\left( \begin{smallmatrix} 0 & 1 \end{smallmatrix} \right )}}}", from=2-4, to=2-5]
	\arrow[from=3-4, to=4-4]
	\arrow[from=4-4, to=5-4]
	\arrow["{{{\left( \begin{smallmatrix} \eta_1 & \eta_2 \end{smallmatrix} \right )}}}", from=2-4, to=3-4]
	\arrow[from=3-5, to=4-5]
	\arrow[Rightarrow, no head, from=4-4, to=4-5]
	\arrow[from=4-5, to=5-5]
	\arrow[from=1-4, to=2-4]
	\arrow[from=1-5, to=2-5]
\end{tikzcd}
\end{equation}
with exact rows and columns, where $\eta_2$ is the left minimal almost split starting with $L^*$ such that $\pi_1\eta_2=x_3$. Due to $\Ext^1(L^*(\vec{\omega}),L(\vec{\omega}))=0$, the first exact row is split. Observe that $\mathsf{E}_L\langle\vec{x}\rangle=E_{L(l\vec{x}_1)}\langle(n-1)\vec{x}_3\rangle=(L(\vec{\omega})\oplus L^*(\vec{\omega}))((l+1)\vec{x}_1)$ and $\mathsf{E}_L\langle\vec{x}+\vec{x}_3\rangle=E_L((l+1)\vec{x}_1)$ for all $l \in \mathbb{Z}$. 
Hnece, the required exact sequence can be obtained by applying the degree shift $(l+1)\vec{x}_1$ to the middle column exact row in \eqref{PPC2}.
\end{proof}

\begin{cor}{}{}\label{exact sequence2}
Assume $\vec{x}=l_3\vec{x}_3+l\vec{c} \in  \mathbb{L}$ is written in normal form. Then there exists an exact sequence in ${\rm vect}\mbox{-}\mathbb{X} $
\[\begin{tikzcd}[ampersand replacement=\&,cramped,sep=small]
	0 \& {\mathsf{E}_L\langle \vec{x}\rangle} \& {\mathsf{E}_{L(\vec{x}_3)}\langle \vec{x}-\vec{x}_3\rangle} \& {S_3} \& 0
	\arrow[from=1-2, to=1-3]
	\arrow[from=1-3, to=1-4]
	\arrow[from=1-4, to=1-5]
	\arrow[from=1-1, to=1-2].
\end{tikzcd}\]
\end{cor}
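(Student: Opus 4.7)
The plan is to adapt the pushout construction used in Proposition \ref{exact sequence11}, but to push out the defining sequence of $\mathsf{E}_L\langle\vec{x}\rangle$ along $x_3$ at its \emph{left} end rather than its right. Writing $\vec{x}=l_3\vec{x}_3+l\vec{c}$ in normal form, the argument naturally splits along the value of $l_3$. The calculation hinges on the universal identity $\vec{\omega}+\vec{x}_3=\vec{x}_1-\vec{x}_2$: this yields $L(l\vec{x}_1+\vec{\omega}+\vec{x}_3)=L^*(l\vec{x}_1)$ and, since $S_3(\vec{x}_j)=S_3$ for $j\neq 3$, also $S_3(l\vec{x}_1+\vec{\omega}+\vec{x}_3)=S_3$, pinning down both the middle term and the cokernel of the eventual sequence.

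For $1\leq l_3\leq n-2$, I would start from the defining sequence $0\to L(l\vec{x}_1+\vec{\omega})\to \mathsf{E}_L\langle\vec{x}\rangle\to L(\vec{x})\to 0$ and form the pushout along $x_3\colon L(l\vec{x}_1+\vec{\omega})\to L(l\vec{x}_1+\vec{\omega}+\vec{x}_3)$. The resulting bottom row represents a class in the one-dimensional space $\Ext^1(L(\vec{x}),L(l\vec{x}_1+\vec{\omega}+\vec{x}_3))$, which is precisely the space hosting the defining extension of $\mathsf{E}_{L(\vec{x}_3)}\langle\vec{x}-\vec{x}_3\rangle$. I would verify that the pushout class is nonzero using Serre duality, reducing the verification to the fact that multiplication by $x_3$ induces an isomorphism $R_{(l_3-1)\vec{x}_3}\xrightarrow{\sim}R_{l_3\vec{x}_3}$ of one-dimensional homogeneous components; by uniqueness of the indecomposable middle term, the pushout is then $\mathsf{E}_{L(\vec{x}_3)}\langle\vec{x}-\vec{x}_3\rangle$, and the snake lemma delivers the required sequence with cokernel $S_3$. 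The case $l_3=0$ is analogous, but the same pushout automatically splits because $\Ext^1(L,L^*)=0$, giving $L(l\vec{x}_1)\oplus L^*(l\vec{x}_1)$, which matches the decomposable $\mathsf{E}_{L(\vec{x}_3)}\langle(n-1)\vec{x}_3+(l-1)\vec{c}\rangle$ prescribed by Definition \ref{generalized extension bundle}.

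The remaining case $l_3=n-1$ is handled dually: here $\mathsf{E}_L\langle\vec{x}\rangle$ is decomposable, while $\mathsf{E}_{L(\vec{x}_3)}\langle(n-2)\vec{x}_3+l\vec{c}\rangle$ identifies with the Auslander bundle $E_{L((l+1)\vec{x}_1)}$ via Remark \ref{rem1}. I would pull back the AR sequence of $E_{L((l+1)\vec{x}_1)}$ along $x_3\colon L((l+1)\vec{x}_1-\vec{x}_3)\to L((l+1)\vec{x}_1)$; the relevant group $\Ext^1(L(-\vec{x}_3),L(\vec{\omega}))\cong DR_{-\vec{x}_3}$ vanishes because $-\vec{x}_3$ has negative $\vec{c}$-coefficient in normal form, forcing the pullback to split as $L((l+1)\vec{x}_1-\vec{x}_3)\oplus L^*((l+1)\vec{x}_1-\vec{x}_3)=\mathsf{E}_L\langle\vec{x}\rangle$, and the middle column of the resulting bicartesian square provides the desired sequence. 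The main obstacle throughout is the identification of the pushout as $\mathsf{E}_{L(\vec{x}_3)}\langle\vec{x}-\vec{x}_3\rangle$ in the generic range $1\leq l_3\leq n-2$, as this is the only step where a non-vanishing $\Ext^1$ forces one to track an actual extension class rather than invoke a splitting.
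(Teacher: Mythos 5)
Your proof is correct, but it takes a genuinely different route from the paper. The paper disposes of this corollary in three lines: it substitutes $(L,\vec{x})\mapsto(L(\vec{x}_1),\vec{\delta}-\vec{x})$ into Proposition \ref{exact sequence11}, rewrites the terms using the symmetry $\mathsf{E}_L\langle\vec{x}\rangle=\mathsf{E}_{L(\vec{x}-\vec{x}_1+\vec{x}_3)}\langle\vec{\delta}-\vec{x}\rangle$ of Remark \ref{rem1}, and applies the (exact) degree-shift by $-\vec{\delta}+\vec{x}-\vec{x}_3$. You instead rebuild the sequence from scratch by pushing out the defining extension of $\mathsf{E}_L\langle\vec{x}\rangle$ along $x_3$ at the subobject end, mirroring the proof technique of Proposition \ref{exact sequence11} rather than its statement. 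Your construction is sound: the cokernel identification $S_3(l\vec{x}_1+\vec{\omega}+\vec{x}_3)=S_3$, the one-dimensionality of the target $\Ext^1$, the Serre-duality reduction of non-splitness to the isomorphism $R_{(l_3-1)\vec{x}_3}\xrightarrow{\;x_3\;}R_{l_3\vec{x}_3}$, the splitting via $\Ext^1(L,L^*)\cong DR_{-\vec{x}_3}=0$ when $l_3=0$, and the identification $\mathsf{E}_{L(\vec{x}_3)}\langle(n-2)\vec{x}_3+l\vec{c}\rangle=E_{L((l+1)\vec{x}_1)}$ when $l_3=n-1$ all check out (your last case in fact reconstructs diagram \eqref{PPC2} from the paper). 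What your approach buys is a self-contained argument that makes the extension class explicit; what it costs is length and a three-way case split that the paper's symmetry trick avoids entirely. One small imprecision to fix: the quotient of the defining sequence of $\mathsf{E}_L\langle\vec{x}\rangle=E_{L(l\vec{x}_1)}\langle l_3\vec{x}_3\rangle$ is $L(l\vec{x}_1+l_3\vec{x}_3)$, not $L(\vec{x})=L(l\vec{c}+l_3\vec{x}_3)$; these differ by $l\vec{x}_1$ when $l\neq 0$, so the $\Ext^1$ group hosting the pushout class should be written as $\Ext^1(L(l\vec{x}_1+l_3\vec{x}_3),L^*(l\vec{x}_1))$. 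This does not affect the validity of the argument.
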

\begin{proof}

Replacing $L$ and $\vec{x}$ in Proposition \ref{exact sequence11} with $L(\vec{x}_1)$ and $\vec{\delta}-\vec{x}$ respectively, we obtain the exact sequence
\[\begin{tikzcd}[ampersand replacement=\&,cramped,sep=small]
	0 \& {E_{L(\vec{x}_1)}\langle \vec{\delta}-\vec{x}\rangle} \& {E_{L(\vec{x}_1)} \langle \vec{\delta}-\vec{x}+\vec{x}_3\rangle} \& {S_3(\vec{\delta}-\vec{x}+\vec{x}_3)} \& 0
	\arrow[from=1-2, to=1-3]
	\arrow[from=1-3, to=1-4]
	\arrow[from=1-4, to=1-5]
	\arrow[from=1-1, to=1-2]
\end{tikzcd}.\]
According to Remark \ref{rem1}, it is the  exact sequence
\[\begin{tikzcd}[ampersand replacement=\&,cramped,sep=small]
	0 \& {E_{L(\vec{\delta}-\vec{x}+\vec{x}_3)}\langle \vec{x}\rangle} \& {E_{L(\vec{\delta}-\vec{x}+2\vec{x}_3)} \langle \vec{x}-\vec{x}_3\rangle} \& {S_3(\vec{\delta}-\vec{x}+\vec{x}_3)} \& 0
	\arrow[from=1-2, to=1-3]
	\arrow[from=1-3, to=1-4]
	\arrow[from=1-4, to=1-5]
	\arrow[from=1-1, to=1-2].
\end{tikzcd}\]
 Then applying the degree shift by $-\vec{\delta}+\vec{x}-\vec{x}_3$ to this exact sequence, we get the required exact sequence. 
\end{proof}

The following proposition is crucial in the sequel.
\begin{prop}{}{}\label{PPS1}
    Assume $\vec{x}=l_3\vec{x}_3+l\vec{c} \in  \mathbb{L}$ is  written in normal form. Then there exists a bicartesian square in ${\rm vect}\mbox{-}\mathbb{X} $
\[\begin{tikzcd}[ampersand replacement=\&,cramped,sep=small]
	{\mathsf{E}_L\langle \vec{x}\rangle} \& {\mathsf{E}_{L(\vec{x}_3)}\langle \vec{x}-\vec{x}_3\rangle} \\
	{\mathsf{E}_L\langle \vec{x}+\vec{x}_3\rangle} \& {\mathsf{E}_{L(\vec{x}_3)}\langle \vec{x}\rangle.}
	\arrow[from=1-1, to=1-2]
	\arrow[from=1-2, to=2-2]
	\arrow[from=1-1, to=2-1]
	\arrow[from=2-1, to=2-2]
\end{tikzcd}\]Consequently, this bicartesian square admits a exact sequence  
\[\begin{tikzcd}[ampersand replacement=\&,cramped,sep=small]
	{0 } \& {\mathsf{E}_L\langle \vec{x}\rangle} \& {\mathsf{E}_L\langle \vec{x}+\vec{x}_3\rangle\oplus \mathsf{E}_{L(\vec{x}_3)}\langle \vec{x}-\vec{x}_3\rangle} \& {\mathsf{E}_{L(\vec{x}_3)}\langle \vec{x}\rangle} \& 0
	\arrow[from=1-1, to=1-2]
	\arrow[from=1-2, to=1-3]
	\arrow[from=1-3, to=1-4]
	\arrow[from=1-4, to=1-5]
\end{tikzcd}.\]
\end{prop}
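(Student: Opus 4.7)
The plan is to prove Proposition \ref{PPS1} by establishing the associated four-term exact sequence
\[
0 \longrightarrow \mathsf{E}_L\langle \vec{x}\rangle \longrightarrow \mathsf{E}_L\langle \vec{x}+\vec{x}_3\rangle \oplus \mathsf{E}_{L(\vec{x}_3)}\langle \vec{x}-\vec{x}_3\rangle \longrightarrow \mathsf{E}_{L(\vec{x}_3)}\langle \vec{x}\rangle \longrightarrow 0,
\]
from which the bicartesian property of the square is automatic (since, in an abelian category, a commutative square $A \to B$, $A \to C$, $B \to D$, $C \to D$ is bicartesian if and only if $0 \to A \to B \oplus C \to D \to 0$ is exact). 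I will obtain this sequence as the middle row of a $3 \times 3$ commutative diagram with exact rows and exact columns.

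Write $\vec{x} = l_3\vec{x}_3 + l\vec{c}$ in normal form. The argument is carried out in the generic range $1 \le l_3 \le n-2$ (where all four bundles are of the indecomposable type $E_{(-)}\langle k\vec{x}_3\rangle$); the boundary cases $l_3 = 0$ and $l_3 = n-1$, in which one of the four bundles decomposes into a sum of two line bundles per Definition \ref{generalized extension bundle}, are handled by direct case-by-case verification. In the generic case, record the four defining short exact sequences of $\mathsf{E}_L\langle\vec{x}\rangle$, $\mathsf{E}_L\langle\vec{x}+\vec{x}_3\rangle$, $\mathsf{E}_{L(\vec{x}_3)}\langle\vec{x}-\vec{x}_3\rangle$, and $\mathsf{E}_{L(\vec{x}_3)}\langle\vec{x}\rangle$; all four have left terms of the form $L(\vec{\omega}+l\vec{x}_1+\varepsilon\vec{x}_3)$ and right terms of the form $L(k\vec{x}_3+l\vec{x}_1)$ for $\varepsilon \in \{0,1\}$ and $k \in \{l_3,l_3+1\}$.

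The top and bottom rows of the $3 \times 3$ diagram are taken to be the canonical "diagonal" bicartesian short exact sequences
\[
0 \longrightarrow M \xrightarrow{\binom{1}{x_3}} M \oplus M(\vec{x}_3) \xrightarrow{(x_3,\, -1)} M(\vec{x}_3) \longrightarrow 0,
\]
specialized to $M = L(\vec{\omega}+l\vec{x}_1)$ (top) and to $M = L(l_3\vec{x}_3+l\vec{x}_1)$ (bottom); exactness of each follows by a direct verification. The three columns of the diagram are, respectively, the defining sequence of $\mathsf{E}_L\langle\vec{x}\rangle$, the direct sum of the defining sequences of $\mathsf{E}_L\langle\vec{x}+\vec{x}_3\rangle$ and $\mathsf{E}_{L(\vec{x}_3)}\langle\vec{x}-\vec{x}_3\rangle$, and the defining sequence of $\mathsf{E}_{L(\vec{x}_3)}\langle\vec{x}\rangle$; these are exact by construction. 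Once the diagram commutes, the $3 \times 3$ lemma forces the middle row to be exact, which is precisely the four-term sequence claimed.

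The main obstacle is to verify commutativity of the $3 \times 3$ diagram. The four vertical morphisms between the extension bundles are supplied by Proposition \ref{exact sequence11} (for the $x_3$-shift of the argument $\vec{x}$) and by Corollary \ref{exact sequence2} (for the $x_3$-shift of the base line bundle $L$), and each is determined only up to a nonzero scalar, since the relevant $\Ext^1$-spaces between the endpoint line bundles are one-dimensional. The compatibility of the two composite paths $\mathsf{E}_L\langle\vec{x}\rangle \to \mathsf{E}_L\langle\vec{x}+\vec{x}_3\rangle \to \mathsf{E}_{L(\vec{x}_3)}\langle\vec{x}\rangle$ and $\mathsf{E}_L\langle\vec{x}\rangle \to \mathsf{E}_{L(\vec{x}_3)}\langle\vec{x}-\vec{x}_3\rangle \to \mathsf{E}_{L(\vec{x}_3)}\langle\vec{x}\rangle$ will be checked by tracking their effect on the endpoint line bundles and absorbing scalar ambiguities by renormalization. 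An alternative route bypassing this direct check is to form the pushout $P$ of the maps $\mathsf{E}_L\langle\vec{x}\rangle \to \mathsf{E}_L\langle\vec{x}+\vec{x}_3\rangle$ and $\mathsf{E}_L\langle\vec{x}\rangle \to \mathsf{E}_{L(\vec{x}_3)}\langle\vec{x}-\vec{x}_3\rangle$, and then identify $P \cong \mathsf{E}_{L(\vec{x}_3)}\langle\vec{x}\rangle$ by comparing the induced extension $0 \to \mathsf{E}_L\langle\vec{x}+\vec{x}_3\rangle \to P \to S_3 \to 0$ with the analogous extension given by Corollary \ref{exact sequence2}, using that $\Ext^1(S_3, \mathsf{E}_L\langle\vec{x}+\vec{x}_3\rangle)$ is one-dimensional on the relevant nonsplit class.
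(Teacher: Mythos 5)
Your ``alternative route'' is, in substance, the proof the paper actually gives for the main case: push out the monomorphism $\mathsf{E}_L\langle\vec{x}\rangle\hookrightarrow \mathsf{E}_{L(\vec{x}_3)}\langle\vec{x}-\vec{x}_3\rangle$ of Corollary \ref{exact sequence2} along the monomorphism of Proposition \ref{exact sequence11}, and identify the pushout with $\mathsf{E}_{L(\vec{x}_3)}\langle\vec{x}\rangle$ via $\Ext^1(S_3,\mathsf{E}_L\langle\vec{x}+\vec{x}_3\rangle)\cong\Ext^1(S_3,L(\vec{\omega}))\cong\mathbf{k}$. Your primary route, however, leaves its one essential step unproved: commutativity of the $3\times 3$ diagram. ``Absorbing scalar ambiguities by renormalization'' presupposes that the two composites $\mathsf{E}_L\langle\vec{x}\rangle\to\mathsf{E}_{L(\vec{x}_3)}\langle\vec{x}\rangle$ are nonzero scalar multiples of one another, i.e.\ that the relevant $\Hom$-space is one-dimensional (or some substitute for this), and you give no argument for that; this is precisely the difficulty the pushout construction is designed to avoid, since a pushout square commutes by definition. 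Even in the pushout route you must still prove that the induced sequence $0\to\mathsf{E}_L\langle\vec{x}+\vec{x}_3\rangle\to P\to S_3\to 0$ is non-split before the one-dimensionality of $\Ext^1(S_3,-)$ identifies $P$; the paper does this by applying $\Hom(S_3,-)$ to the column of the pushout diagram and using $\Hom(S_3,S_3(\vec{x}+\vec{x}_3))=0$, whereas you only allude to ``the relevant nonsplit class.''

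The more serious gap is your treatment of the boundary cases as ``direct case-by-case verification.'' The case $l_3=n-1$ is where the bulk of the paper's proof lives: there $\mathsf{E}_L\langle\vec{x}\rangle$ and $\mathsf{E}_{L(\vec{x}_3)}\langle\vec{x}\rangle$ decompose as $L(\vec{\omega})\oplus L^*(\vec{\omega})$ and $L\oplus L^*$ (up to a degree shift), the uniform identification of the pushout no longer applies, and the paper has to assemble the bicartesian square by hand from the almost-split sequences $0\to L(\vec{\omega})\to E_L\to L\to 0$ and $0\to L(\vec{\omega})\to L^*\to S_3\to 0$, with explicit matrices of morphisms whose sign choices are exactly what make the square bicartesian rather than merely commutative. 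Similarly, the case $n=2$, $l_3=0$ is handled separately by reading the square off the Auslander--Reiten quiver as an almost-split sequence. A complete proof must engage with these cases; as written, your proposal establishes (modulo the non-splitness check above) only the generic range.
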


\begin{proof}
  If $n=2$ and $l_3=0$, the there exists a subquiver of the Auslander-Reiten quiver $\Gamma({\rm vect}\mbox{-}\mathbb{X})$
  \[\begin{tikzcd}[ampersand replacement=\&,cramped,sep=small]
	\&\& {L(\vec{x}_1-\vec{x}_3)} \\
	\&\& {L^*(\vec{x}_1-\vec{x}_3)} \\
	{\mathsf{E}_L} \&\&\&\& {\mathsf{E}_{L(\vec{x}_3)}.} \\
	\&\& {L^*} \\
	\&\& L
	\arrow[from=1-3, to=3-5]
	\arrow[from=2-3, to=3-5]
	\arrow[from=3-1, to=1-3]
	\arrow[from=3-1, to=2-3]
	\arrow[from=3-1, to=4-3]
	\arrow[from=3-1, to=5-3]
	\arrow[from=4-3, to=3-5]
	\arrow[from=5-3, to=3-5]
\end{tikzcd}\]
 Since $\mathsf{E}_L\langle\vec{x}_3\rangle=L(\vec{x}_1-\vec{x}_3)\oplus L^*(\vec{x}_1-\vec{x}_3)$ and $\mathsf{E}_{L(\vec{x}_3)}\langle -\vec{x}_3\rangle=L\oplus L^*$, this subquiver admits the almost-split sequence 
\[\begin{tikzcd}[ampersand replacement=\&,cramped,sep=small]
	{0 } \& {\mathsf{E}_L} \& {\mathsf{E}_L\langle \vec{x}_3\rangle\oplus \mathsf{E}_{L(\vec{x}_3)}\langle -\vec{x}_3\rangle} \& {\mathsf{E}_{L(\vec{x}_3)}} \& 0
	\arrow[from=1-1, to=1-2]
	\arrow[from=1-2, to=1-3]
	\arrow[from=1-3, to=1-4]
	\arrow[from=1-4, to=1-5]
\end{tikzcd}.\]
Assume $n=2$ and $l_3=1$ or $n\geq 3$. By Proposition \ref{exact sequence11} and Corollary \ref{exact sequence2}, there exist two non-split exact sequences $\xi_1$ and $\xi_2$.  Then we can obtain the pushout commutative diagram
\[\begin{tikzcd}[ampersand replacement=\&,cramped]
	\&\& {} \\
	\&\& 0 \& 0 \\
	{\xi_1:} \& 0 \& {\mathsf{E}_L\langle \vec{x}\rangle} \& {\mathsf{E}_{L(\vec{x}_3)}\langle \vec{x}-\vec{x}_3\rangle} \& {S_3} \& 0 \\
	{\xi_3:} \& 0 \& {\mathsf{E}_L\langle \vec{x}+\vec{x}_3\rangle} \& Z \& {S_3} \& 0 \\
	\&\& {S_3(\vec{x}+\vec{x}_3)} \& {S_3(\vec{x}+\vec{x}_3)} \\
	\&\& 0 \& 0
	\arrow["{{{\eta^\prime}}}", from=3-3, to=3-4]
	\arrow[from=3-4, to=4-4]
	\arrow["\eta"', from=3-3, to=4-3]
	\arrow[from=4-3, to=4-4]
	\arrow[from=4-3, to=5-3]
	\arrow[from=4-4, to=5-4]
	\arrow[from=3-4, to=3-5]
	\arrow[from=2-3, to=3-3]
	\arrow[from=2-4, to=3-4]
	\arrow[from=5-3, to=6-3]
	\arrow[from=4-4, to=4-5]
	\arrow[from=5-4, to=6-4]
	\arrow[from=4-2, to=4-3]
	\arrow[from=3-2, to=3-3]
	\arrow[from=3-5, to=3-6]
	\arrow[from=4-5, to=4-6]
	\arrow[Rightarrow, no head, from=3-5, to=4-5]
	\arrow[Rightarrow, no head, from=5-3, to=5-4]
	\arrow["{{\xi_2:}}"{description, pos=0.3, scale=1.4}, draw=none, from=1-3, to=2-3]
\end{tikzcd}\]
 with exact rows and columns.

For $0\leq l_3\leq n-2$, we apply $\Hom(S_3,-)$ to $\xi_2$. Since $\Hom(S_3,S_3(\vec{x}+\vec{x}_3))$ vanishes, $\Ext^1(S_3,\eta)$ is a monomorphism. It follows that $\xi_3$ is non-split. Further, because $\Ext^1(S_3,\mathsf{E}_L\langle \vec{x}+\vec{x}_3\rangle)\cong\Ext^1(S_3,L(\vec{\omega}))\cong \mathbf{k}$, by Corollary \ref{exact sequence2}, we obtain $Z=\mathsf{E}_{L(\vec{x}_3)}\langle \vec{x}\rangle$. 

For $l_3=n-1$, let $\eta_1$ and $\eta_2$ be the morphisms in \eqref{PPC2}. By pullback of the exact sequence $\xi_4$ along $k_1$, we have the following commutative diagram
\begin{equation}\label{PPC1} 
\begin{tikzcd}[ampersand replacement=\&,cramped]
	\&\&\& 0 \& 0 \\
	\&\&\& {L^*(\vec{\omega})} \& {L^*(\vec{\omega})} \\
	\& 0 \& {L(\vec{\omega})} \& {E_L} \& L \& 0 \\
	{\xi_4:} \& 0 \& {L(\vec{\omega})} \& {L^*} \& {S_3} \& 0 \\
	\&\&\& 0 \& 0
	\arrow[from=1-4, to=2-4]
	\arrow[from=1-5, to=2-5]
	\arrow[Rightarrow, no head, from=2-4, to=2-5]
	\arrow["{{{{{{\eta_2}}}}}}", from=2-4, to=3-4]
	\arrow["{{{{{x_3}}}}}", from=2-5, to=3-5]
	\arrow[from=3-2, to=3-3]
	\arrow["{{{{{{\eta_1}}}}}}", from=3-3, to=3-4]
	\arrow[Rightarrow, no head, from=3-3, to=4-3]
	\arrow["{{{{{{{\pi_1}}}}}}}", from=3-4, to=3-5]
	\arrow["{{{{{{{\pi_2}}}}}}}", from=3-4, to=4-4]
	\arrow[from=3-5, to=3-6]
	\arrow["{{k_1}}", from=3-5, to=4-5]
	\arrow[from=4-2, to=4-3]
	\arrow["{{{{{x_3}}}}}", from=4-3, to=4-4]
	\arrow["{{k_2}}", from=4-4, to=4-5]
	\arrow[from=4-4, to=5-4]
	\arrow[from=4-5, to=4-6]
	\arrow[from=4-5, to=5-5]
\end{tikzcd}
\end{equation}
with exact rows and columns. Particularly, both the middle row and the middle column are almost-split sequences. Note that, by \eqref{hom space}, we have
\[\Hom(L(\vec{\omega}), L^*(\vec{\omega}))\cong\Hom(L^*(\vec{\omega}), L(\vec{\omega}))=0.\]By Proposition \ref{exact sequence11} and Corollary \ref{exact sequence2}, we can obtain the pushout commutative diagram  
\[\begin{tikzcd}[ampersand replacement=\&,cramped]
	\& {} \\
	\& 0 \& 0 \\
	0 \& {L(\vec{\omega})\oplus L^*(\vec{\omega})} \& {\mathsf{E}_{L}} \& {S_3} \& 0 \\
	0 \& {\mathsf{E}_L} \& {L\oplus L^*} \& {S_3} \& 0 \\
	\& {S_3} \& {S_3} \\
	\& 0 \& 0 \\
	\& {}
	\arrow[from=2-2, to=3-2]
	\arrow[from=2-3, to=3-3]
	\arrow[from=3-1, to=3-2]
	\arrow["{\left( \begin{smallmatrix} \eta_1 & \eta_2 \end{smallmatrix} \right )}", from=3-2, to=3-3]
	\arrow["{\left( \begin{smallmatrix} -\eta_1 & \eta_2 \end{smallmatrix} \right )}"', from=3-2, to=4-2]
	\arrow["{2k_1\pi_1}", from=3-3, to=3-4]
	\arrow["{\left( \begin{smallmatrix}\pi_1 \\ \pi_2 \end{smallmatrix} \right )}", from=3-3, to=4-3]
	\arrow[from=3-4, to=3-5]
	\arrow[Rightarrow, no head, from=3-4, to=4-4]
	\arrow[from=4-1, to=4-2]
	\arrow["{\left( \begin{smallmatrix} \pi_1 \\ -\pi_2 \end{smallmatrix} \right )}", from=4-2, to=4-3]
	\arrow["{2k_1\pi_1}"', from=4-2, to=5-2]
	\arrow["{\left( \begin{smallmatrix} k_1 & k_2 \end{smallmatrix} \right )}", from=4-3, to=4-4]
	\arrow["{\left( \begin{smallmatrix} k_1 & -k_2 \end{smallmatrix} \right )}", from=4-3, to=5-3]
	\arrow[from=4-4, to=4-5]
	\arrow[Rightarrow, no head, from=5-2, to=5-3]
	\arrow[from=5-2, to=6-2]
	\arrow[from=5-3, to=6-3]
\end{tikzcd}\]
with exact rows and columns. This bicartesian square admits the exact sequence
\begin{equation}\label{important exact}
   \begin{tikzcd}[ampersand replacement=\&,cramped]
	0 \& {L(\vec{\omega})\oplus L^*(\vec{\omega})} \& {\mathsf{E}_L\oplus\mathsf{E}_L} \& {L\oplus L^*} \& 0.
	\arrow[from=1-1, to=1-2]
	\arrow["{{\left( \begin{smallmatrix} \eta_1 & \eta_2\\-\eta_1 & \eta_2 \end{smallmatrix} \right )}}", from=1-2, to=1-3]
	\arrow["{{\left( \begin{smallmatrix} \pi_1 & -\pi_1\\ \pi_2 &\pi_2 \end{smallmatrix} \right )}}", from=1-3, to=1-4]
	\arrow[from=1-4, to=1-5]
\end{tikzcd}
\end{equation}
Observe that $\mathsf{E}_L\langle\vec{x}\rangle=\mathsf{E}_{L(l\vec{x}_1)}\langle(n-1)\vec{x}_3\rangle=(L(\vec{\omega})\oplus L^*(\vec{\omega}))((l+1)\vec{x}_1)$, $\mathsf{E}_{L(\vec{x}_3)}\langle \vec{x}\rangle=(L\oplus L^*)((l+1)\vec{x}_1)$ and $\mathsf{E}_L\langle\vec{x}+\vec{x}_3\rangle=\mathsf{E}_{L(\vec{x}_3)}\langle \vec{x}-\vec{x}_3\rangle=\mathsf{E}_L((l+1)\vec{x}_1)$ for all $l \in \mathbb{Z}$. Hence, the required exact sequence can be obtained by applying the degree shift $(l+1)\vec{x}_1$ to the exact sequence \eqref{important exact}.
\end{proof}

\begin{prop}\label{bicartesian square}
   Assume $\vec{x}=a\vec{x}_3, \vec{y}=b\vec{x}_3\in \mathbb{L}$ with $a\in \mathbb{N}, b\in \mathbb{Z}_+$. Then there exists a bicartesian square in ${\rm vect}\mbox{-}\mathbb{X} $
     \[\begin{tikzcd}[ampersand replacement=\&,cramped,sep=small]
	{L(\vec{\omega})} \& {L(\vec{\omega}+\vec{y})} \\
	{\mathsf{E}_{L}\langle \vec{x}+\vec{y}\rangle} \& {\mathsf{E}_{L(\vec{y})}\langle \vec{x}\rangle.}
	\arrow[from=1-1, to=1-2]
	\arrow[from=1-2, to=2-2]
	\arrow[from=1-1, to=2-1]
	\arrow[from=2-1, to=2-2]
\end{tikzcd}\]
\end{prop}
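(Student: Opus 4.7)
The plan is to proceed by induction on $b \geq 1$, treating the base case $b = 1$ via a direct pushout construction and realizing the inductive step as a horizontal composition of bicartesian squares.

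For the base case $\vec{y} = \vec{x}_3$, one starts from the defining short exact sequence
\[\xi: 0 \to L(\vec{\omega}) \to \mathsf{E}_L\langle \vec{x}+\vec{x}_3\rangle \to L(\vec{x}+\vec{x}_3) \to 0\]
and pushes it out along the canonical map $x_3: L(\vec{\omega}) \to L(\vec{\omega}+\vec{x}_3)$, producing a bicartesian square whose bottom-right corner $Z$ fits into a short exact sequence
\[\xi': 0 \to L(\vec{\omega}+\vec{x}_3) \to Z \to L(\vec{x}+\vec{x}_3) \to 0.\]
To identify $Z$ with $\mathsf{E}_{L(\vec{x}_3)}\langle\vec{x}\rangle$ it suffices to show that $\xi'$ is non-split, since by \eqref{dim of ext eqe 1} the group $\Ext^1(L(\vec{x}+\vec{x}_3), L(\vec{\omega}+\vec{x}_3))$ is one-dimensional and the defining extension of $\mathsf{E}_{L(\vec{x}_3)}\langle\vec{x}\rangle$ is its unique non-zero class up to scalar. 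Applying $\Hom(L(\vec{x}+\vec{x}_3), -)$ to the exact sequence $0 \to L(\vec{\omega}) \xrightarrow{x_3} L(\vec{\omega}+\vec{x}_3) \to S_3 \to 0$ and using $\Ext^1(L(\vec{x}+\vec{x}_3), S_3) = 0$ (by Serre duality, since there are no morphisms from a torsion sheaf to a line bundle) produces a surjection between one-dimensional spaces, which is therefore an isomorphism; this forces $[\xi'] \neq 0$.

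For the inductive step, assume the statement for $b - 1$. Applying the inductive hypothesis with $\vec{x}$ replaced by $\vec{x}+\vec{x}_3$ yields a bicartesian square
\[\begin{tikzcd}[ampersand replacement=\&,cramped,sep=small]
L(\vec{\omega}) \arrow[r] \arrow[d] \& L(\vec{\omega}+(b-1)\vec{x}_3) \arrow[d] \\
\mathsf{E}_L\langle\vec{x}+b\vec{x}_3\rangle \arrow[r] \& \mathsf{E}_{L((b-1)\vec{x}_3)}\langle\vec{x}+\vec{x}_3\rangle
\end{tikzcd}\]
while the base case applied with $L$ replaced by $L((b-1)\vec{x}_3)$ yields
\[\begin{tikzcd}[ampersand replacement=\&,cramped,sep=small]
L(\vec{\omega}+(b-1)\vec{x}_3) \arrow[r] \arrow[d] \& L(\vec{\omega}+b\vec{x}_3) \arrow[d] \\
\mathsf{E}_{L((b-1)\vec{x}_3)}\langle\vec{x}+\vec{x}_3\rangle \arrow[r] \& \mathsf{E}_{L(b\vec{x}_3)}\langle\vec{x}\rangle.
\end{tikzcd}\]
Since the horizontal composition of bicartesian squares is again bicartesian, pasting these along their shared middle column produces precisely the bicartesian square asserted by the proposition.

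The main obstacle lies in the boundary situations where one of the generalized extension bundles becomes reducible, which happens exactly when the coefficient of $\vec{x}_3$ in the normal form reaches $n-1$; by Definition \ref{generalized extension bundle} the bundle then decomposes as a direct sum of two line bundles, and the defining extension sequence used above must be replaced by the analogue extracted from diagram \eqref{PPC2} in the proof of Proposition \ref{PPS1}. A similar case-by-case adjustment, aided by the identifications in Remark \ref{rem1}(i), handles the situation when $\vec{x}+\vec{y}$ lies outside the range $[\vec{0}, \vec{\delta}]$. After these verifications, both the pushout computation in the base case and the horizontal composition argument in the inductive step go through essentially unchanged.
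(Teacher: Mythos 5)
Your overall architecture (one-step bicartesian squares pasted together via the pullback lemma) is sound and is essentially the paper's strategy, and your base case is correct as long as $\vec{x}+\vec{x}_3$ stays in the range where the sequence you call $\xi$ really is the defining one. The genuine gap is in the base case for $\vec{x}+\vec{x}_3=(a+1)\vec{x}_3$ with $a+1\geq n$. There the normal form is $l_3\vec{x}_3+l\vec{c}$ with $l\geq 1$, so by Definition \ref{generalized extension bundle} the defining sequence of $\mathsf{E}_L\langle\vec{x}+\vec{x}_3\rangle=E_{L(l\vec{x}_1)}\langle l_3\vec{x}_3\rangle$ has end terms $L(\vec{\omega}+l\vec{x}_1)$ and $L(l_3\vec{x}_3+l\vec{x}_1)$, \emph{not} $L(\vec{\omega})$ and $L(\vec{x}+\vec{x}_3)$. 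The sequence $0\to L(\vec{\omega})\to\mathsf{E}_L\langle\vec{x}+\vec{x}_3\rangle\to L(\vec{x}+\vec{x}_3)\to 0$ does exist, but in the paper it is precisely \eqref{1-2-1}, which is \emph{derived from} Proposition \ref{bicartesian square}; you cannot assume it here. Pushing out the actual defining sequence instead produces a square whose top row is $L(\vec{\omega}+l\vec{x}_1)\to L(\vec{\omega}+l\vec{x}_1+\vec{x}_3)$, which is not the asserted square. This is not a peripheral boundary issue: your induction on $b$ calls the base case for the parameters $a,a+1,\dots,a+b-1$, so already for $a=0$ and $b=n$ you need the base case for $\mathsf{E}_L\langle n\vec{x}_3\rangle=E_{L(\vec{x}_1)}$, where the problem occurs. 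The identifications of Remark \ref{rem1}(i) reshuffle the parameters but do not remove the $l\vec{x}_1$-shift in the end terms, so the closing ``similar case-by-case adjustment'' does not dispatch this.

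The repair is exactly what the paper does: establish the square only for small parameters by a single pushout (your base case with $a+1\leq n-1$, which reproduces square \eqref{square 111} and, in the decomposable case $l_3=n-1$, uses the sequence displayed before Definition \ref{generalized extension bundle}), and then reach large $a$ by \emph{vertically} pasting with the all-extension-bundle bicartesian squares of Proposition \ref{PPS1}, i.e., stacking
$\mathsf{E}_L\langle\vec{z}\rangle,\ \mathsf{E}_{L(\vec{x}_3)}\langle\vec{z}-\vec{x}_3\rangle,\ \mathsf{E}_L\langle\vec{z}+\vec{x}_3\rangle,\ \mathsf{E}_{L(\vec{x}_3)}\langle\vec{z}\rangle$
below the top square one step at a time, never invoking a long extension of $L(\vec{z})$ by $L(\vec{\omega})$ for $\vec{z}>(n-1)\vec{x}_3$. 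With that substitution your horizontal induction on $b$ goes through; the non-splitness argument via $\Ext^1(L(\vec{x}+\vec{x}_3),S_3)=0$ and the comparison of one-dimensional $\Ext^1$-spaces is fine in the range where you use it.
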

\begin{proof}

    In the proof of Proposition \ref{exact sequence11}, we observe that for any $L$, there exists a bicartesian square:
\[\begin{tikzcd}[ampersand replacement=\&,cramped,sep=small]
{\mathsf{E}_L} \& {\mathsf{E}_L\langle\vec{x}_3\rangle} \\
L \& {L(\vec{x}_3).}
\arrow[from=1-1, to=1-2]
\arrow[from=1-2, to=2-2]
\arrow[from=1-1, to=2-1]
\arrow[from=2-1, to=2-2]
\end{tikzcd}\]
Replacing $L$ with $L^\vee(-\vec{\omega}-\vec{y})$, and then applying vector bundle duality $^\vee$ to this bicartesian square, we obtain the following bicartesian square:
\begin{equation}\label{square 111}
    \begin{tikzcd}[ampersand replacement=\&,cramped,sep=small]
{L(\vec{\omega})} \& {L(\vec{\omega})(\vec{x}_3)} \\
{\mathsf{E}_{L}\langle \vec{x}_3\rangle} \& {\mathsf{E}_{L(\vec{x}_3).}}
\arrow[from=1-1, to=1-2]
\arrow[from=1-2, to=2-2]
\arrow[from=1-1, to=2-1]
\arrow[from=2-1, to=2-2]
\end{tikzcd}
\end{equation}
By varying the choice of $L$ in square \eqref{square 111} and  Proposition \ref{PPS1}, we obtain the following commutative diagrams
  \[\begin{adjustbox}{scale=1}
  \begin{tikzcd}[ampersand replacement=\&,cramped,sep=small,cells={nodes={font=\tiny}}]
	{L(\vec{\omega})} \& {L(\vec{\omega})(\vec{x}_3)} \& {L(\vec{\omega})(2\vec{x}_3)} \& \cdots \& {L(\vec{\omega})(\vec{y}-\vec{x}_3)} \& {L(\vec{\omega})(\vec{y})} \\
	{\mathsf{E}_{L}\langle \vec{x}_3\rangle} \& {\mathsf{E}_{L(\vec{x}_3)}} \&\& \cdots \\
	{\mathsf{E}_{L}\langle 2\vec{x}_3\rangle} \& {\mathsf{E}_{L(\vec{x}_3)}\langle \vec{x}_3\rangle} \& {\mathsf{E}_{L(2\vec{x}_3)}} \& \cdots \\
	\vdots \& \vdots \& \vdots \& \cdots \& \vdots \\
	{\mathsf{E}_{L}\langle\vec{y}-\vec{x}_3\rangle} \& {\mathsf{E}_{L(\vec{x}_3)}\langle\vec{y}-3\vec{x}_3\rangle} \& {\mathsf{E}_{L(2\vec{x}_3)}\langle\vec{y}-4\vec{x}_3\rangle} \& \cdots \& {\mathsf{E}_{L(\vec{y}-\vec{x}_3)}} \\
	{\mathsf{E}_{L}\langle\vec{y}\rangle} \& {\mathsf{E}_{L(\vec{x}_3)}\langle\vec{y}-\vec{x}_3\rangle} \& {\mathsf{E}_{L(2\vec{x}_3)}\langle\vec{y}-2\vec{x}_3\rangle} \& \cdots \& {\mathsf{E}_{L(\vec{y}-\vec{x}_3)}\langle \vec{x}_3\rangle} \& {\mathsf{E}_{L(\vec{y})}} \\
	{\mathsf{E}_{L}\langle\vec{y}+\vec{x}_3\rangle} \& {\mathsf{E}_{L(\vec{x}_3)}\langle\vec{y}\rangle} \& {\mathsf{E}_{L(2\vec{x}_3)}\langle\vec{y}-\vec{x}_3\rangle} \& \cdots \& {\mathsf{E}_{L(\vec{y}-\vec{x}_3)}\langle 2\vec{x}_3\rangle} \& {\mathsf{E}_{L(\vec{y})}\langle \vec{x}_3\rangle} \\
	\vdots \& \vdots \& \vdots \& \cdots \& \vdots \& \vdots \\
	{\mathsf{E}_{L}\langle\vec{y}+\vec{x}\rangle} \& {\mathsf{E}_{L(\vec{x}_3)}\langle\vec{y}+\vec{x}-\vec{x}_3\rangle} \& {\mathsf{E}_{L(2\vec{x}_3)}\langle\vec{y}+\vec{x}-2\vec{x}_3\rangle} \& \cdots \& {\mathsf{E}_{L(\vec{y}-\vec{x}_3)}\langle\vec{x}+\vec{x}_3\rangle} \& {\mathsf{E}_{L(\vec{y})}\langle \vec{x}\rangle}
	\arrow[from=1-1, to=1-2]
	\arrow[from=1-1, to=2-1]
	\arrow[from=1-2, to=1-3]
	\arrow[from=1-2, to=2-2]
	\arrow[from=1-3, to=1-4]
	\arrow[from=1-3, to=3-3]
	\arrow[from=1-4, to=1-5]
	\arrow[from=1-5, to=1-6]
	\arrow[from=1-5, to=4-5]
	\arrow[from=1-6, to=6-6]
	\arrow[from=2-1, to=2-2]
	\arrow[from=2-1, to=3-1]
	\arrow[from=2-2, to=3-2]
	\arrow[from=3-1, to=3-2]
	\arrow[from=3-1, to=4-1]
	\arrow[from=3-2, to=3-3]
	\arrow[from=3-2, to=4-2]
	\arrow[from=3-3, to=4-3]
	\arrow[from=4-1, to=5-1]
	\arrow[from=4-2, to=5-2]
	\arrow[from=4-3, to=5-3]
	\arrow[from=4-5, to=5-5]
	\arrow[from=5-1, to=5-2]
	\arrow[from=5-1, to=6-1]
	\arrow[from=5-2, to=5-3]
	\arrow[from=5-2, to=6-2]
	\arrow[from=5-3, to=5-4]
	\arrow[from=5-3, to=6-3]
	\arrow[from=5-4, to=5-5]
	\arrow[from=5-5, to=6-5]
	\arrow[from=6-1, to=6-2]
	\arrow[from=6-1, to=7-1]
	\arrow[from=6-2, to=6-3]
	\arrow[from=6-2, to=7-2]
	\arrow[from=6-3, to=6-4]
	\arrow[from=6-3, to=7-3]
	\arrow[from=6-4, to=6-5]
	\arrow[from=6-5, to=6-6]
	\arrow[from=6-5, to=7-5]
	\arrow[from=6-6, to=7-6]
	\arrow[from=7-1, to=7-2]
	\arrow[from=7-1, to=8-1]
	\arrow[from=7-2, to=7-3]
	\arrow[from=7-2, to=8-2]
	\arrow[from=7-3, to=7-4]
	\arrow[from=7-3, to=8-3]
	\arrow[from=7-4, to=7-5]
	\arrow[from=7-5, to=7-6]
	\arrow[from=7-5, to=8-5]
	\arrow[from=7-6, to=8-6]
	\arrow[from=8-1, to=9-1]
	\arrow[from=8-2, to=9-2]
	\arrow[from=8-3, to=9-3]
	\arrow[from=8-5, to=9-5]
	\arrow[from=8-6, to=9-6]
	\arrow[from=9-1, to=9-2]
	\arrow[from=9-2, to=9-3]
	\arrow[from=9-3, to=9-4]
	\arrow[from=9-4, to=9-5]
	\arrow[from=9-5, to=9-6]
\end{tikzcd}
\end{adjustbox}\]
    where the smallest squares are bicartesian. 
     By the pullback lemma, the largest square is the bicartesian square, as required.
\end{proof}

 \noindent {\bf Acknowledgements.}\quad
 Jianmin Chen and Jinfeng Zhang were partially supported by the National Natural Science Foundation of China (No.s 12371040, 12131018 and 12161141001). Shiquan Ruan was partially supported by the Natural Science Foundation of Xiamen (No. 3502Z20227184), and the Natural Science Foundation of Fujian Province (No. 2022J01034).

	\vskip 5pt
	\noindent {\tiny  \noindent Jianmin Chen, Shiquan Ruan and Jinfeng Zhang\\
		School of Mathematical Sciences, \\
		Xiamen University, Xiamen, 361005, Fujian, PR China.\\
		E-mails: chenjianmin@xmu.edu.cn, sqruan@xmu.edu.cn,
		zhangjinfeng@stu.xmu.edu.cn\\ }
	\vskip 3pt

\end{document}